\DeclareSymbolFont{AMSb}{U}{msb}{m}{n}
           \newcommand{\mylabel}[2]{#2\def\@currentlabel{#2}\label{#1}}
           \renewcommand\@biblabel[1]{#1.}
      \numberwithin{equation}{section}
\newcounter{x}
\newcounter{y}
\newcounter{z}
\newcommand\xaxis{210}
\newcommand\yaxis{-30}
\newcommand\zaxis{90}
\newcommand\topside[3]{
  \fill[fill=yellow, draw=black,shift={(\xaxis:#1)},shift={(\yaxis:#2)},
  shift={(\zaxis:#3)}] (0,0) -- (30:1) -- (0,1) --(150:1)--(0,0);
}
\newcommand\leftside[3]{
  \fill[fill=red, draw=black,shift={(\xaxis:#1)},shift={(\yaxis:#2)},
  shift={(\zaxis:#3)}] (0,0) -- (0,-1) -- (210:1) --(150:1)--(0,0);
}
\newcommand\rightside[3]{
  \fill[fill=blue, draw=black,shift={(\xaxis:#1)},shift={(\yaxis:#2)},
  shift={(\zaxis:#3)}] (0,0) -- (30:1) -- (-30:1) --(0,-1)--(0,0);
}
\newcommand\cube[3]{
  \topside{#1}{#2}{#3} \leftside{#1}{#2}{#3} \rightside{#1}{#2}{#3}
}
\newcommand\planepartition[1]{
 \setcounter{x}{-1}
  \foreach \a in {#1} {
    \addtocounter{x}{1}
    \setcounter{y}{-1}
    \foreach \b in \a {
      \addtocounter{y}{1}
      \setcounter{z}{-1}
      \foreach \c in {1,...,\b} {
        \addtocounter{z}{1}
        \cube{\value{x}}{\value{y}}{\value{z}}
      }
    }
  }
}
\definecolor{antiquewhite}{rgb}{0.98, 0.92, 0.84}
\definecolor{buff}{rgb}{0.94, 0.86, 0.51}
\definecolor{palecopper}{rgb}{0.85, 0.54, 0.4}
\definecolor{fluorescentyellow}{rgb}{0.8, 1.0, 0.0}
\definecolor{bole}{rgb}{0.47, 0.27, 0.23}
\DeclareMathOperator{\Res}{Res}
\DeclareMathOperator{\Cyl}{Cyl}
\DeclareMathOperator{\Ind}{Ind}
\DeclareMathOperator{\Ell}{Ell}
\DeclareMathOperator{\ELL}{\mathcal E\ell\ell}
\definecolor{britishracinggreen}{rgb}{0.0, 0.26, 0.15}
\definecolor{cobalt}{rgb}{0.0, 0.28, 0.67}
\DeclareSymbolFont{usualmathcal}{OMS}{cmsy}{m}{n}
\DeclareSymbolFontAlphabet{\mathcal}{usualmathcal}
\newcommand{\TT}{\mathbf{T}}
\newcommand{\BA}{{\mathbb{A}}}
\newcommand{\BC}{{\mathbb{C}}}
\newcommand{\BE}{{\mathbb{E}}}
\newcommand{\BH}{{\mathbb{H}}}
\newcommand{\BL}{{\mathbb{L}}}
\newcommand{\BN}{{\mathbb{N}}}
\newcommand{\BP}{{\mathbb{P}}}
\newcommand{\BQ}{{\mathbb{Q}}}
\newcommand{\BT}{{\mathbb{T}}}
\newcommand{\BZ}{{\mathbb{Z}}}
\newcommand{\CA}{{\mathcal A}}
\newcommand{\CM}{{\mathcal M}}
\newcommand{\simto}{\,\widetilde{\to}\,}
\newcommand{\pt}{{\mathsf{pt}}}
\newcommand{\ch}{{\mathrm{ch}}}
\newcommand{\td}{{\mathrm{td}}}
\newcommand{\fix}{\mathsf{fix}}
\newcommand{\mov}{\mathsf{mov}}
\DeclareMathOperator{\Hilb}{Hilb}
\DeclareMathOperator{\Quot}{Quot}
\DeclareMathOperator{\coker}{coker}
\DeclareMathOperator{\Sym}{Sym}
\DeclareMathOperator{\mot}{mot}
\DeclareMathOperator{\coh}{coh}
\DeclareMathOperator{\crr}{cr}
\DeclareMathOperator{\Coh}{Coh}
\DeclareMathOperator{\Tr}{Tr}
\DeclareMathOperator{\vd}{vd}
\DeclareMathOperator{\Ob}{Ob}
\DeclareMathOperator{\Obs}{Obs}
\DeclareMathOperator{\vir}{\mathrm{vir}}
\DeclareMathOperator{\Exp}{Exp}
\DeclareMathOperator{\Var}{Var}
\DeclareMathOperator{\KK}{K}
\DeclareMathOperator{\Rep}{Rep}
\DeclareMathOperator{\GL}{GL}
\DeclareMathOperator{\rk}{rk}
\DeclareMathOperator{\tr}{tr}
\DeclareMathOperator{\NCQuot}{Quot}
\newcommand{\derived}{\mathbf{D}}
\newcommand{\dd}{\mathrm{d}}
\newcommand{\crit}{\operatorname{crit}}
\newcommand*{\defeq}{\mathrel{\vcenter{\baselineskip0.5ex \lineskiplimit0pt
                     \hbox{\scriptsize.}\hbox{\scriptsize.}}}%
                     =}
\newcommand{\into}{\hookrightarrow}
\newcommand{\onto}{\twoheadrightarrow}
\DeclareFontFamily{OT1}{rsfs}{}
\DeclareFontShape{OT1}{rsfs}{n}{it}{<-> rsfs10}{}
\DeclareMathAlphabet{\curly}{OT1}{rsfs}{n}{it}
\newcommand\Ext{\operatorname{Ext}}
\newcommand\Hom{\operatorname{Hom}}
\newcommand\End{\operatorname{End}}
\DeclareMathOperator{\lHom}{\mathscr{H}\kern-0.3em\mathit{om}}
\DeclareMathOperator{\RRlHom}{\mathbf{R}\kern-0.025em\mathscr{H}\kern-0.3em\mathit{om}}
\DeclareMathOperator{\lExt}{{\mathscr{E}\kern-0.2em\mathit{xt}}}
\DeclareMathOperator{\Def}{Def}
\DeclareMathOperator{\At}{At}
\newcommand{\RR}{\mathbf R}
\newcommand\id{\operatorname{id}}
\newcommand\A{\mathsf{A}}
\newcommand{\Pic}{\mathop{\rm Pic}\nolimits}
\newcommand{\DT}{\mathsf{DT}}
\newcommand{\OO}{\mathscr O}
\newenvironment{proofof}[1]{\par
  \pushQED{\qed}%
  \normalfont \topsep6\p@\@plus6\p@\relax
  \trivlist
  \item[\hskip3\labelsep
        \itshape
    Proof of #1\@addpunct{.}]\ignorespaces
}{%
  \popQED\endtrivlist\@endpefalse
}
\newcommand*{\isoarrow}[1]{\arrow[#1,"\rotatebox{90}{\(\sim\)}"
]}
\tikzset{commutative diagrams/arrow style=math font}
\tikzset{commutative diagrams/.cd,
mysymbol/.style={start anchor=center,end anchor=center,draw=none}}
\tikzset{
shift up/.style={
to path={([yshift=#1]\tikztostart.east) -- ([yshift=#1]\tikztotarget.west) \tikztonodes}
}
}
\theoremstyle{definition}
\newtheorem*{lemma*}{Lemma}
\newtheorem*{theorem*}{Theorem}
\newtheorem*{example*}{Example}
\newtheorem*{fact*}{Fact}
\newtheorem*{notation*}{Notation}
\newtheorem*{definition*}{Definition}
\newtheorem*{prop*}{Proposition}
\newtheorem*{remark*}{Remark}
\newtheorem*{corollary*}{Corollary}
\newtheorem*{conventions*}{Conventions}
\newtheorem{definition}{Definition}[section]
\newtheorem{example}[definition]{Example}
\newtheorem{notation}[definition]{Notation}
\newtheorem{remark}[definition]{Remark}
\newtheorem{conjecture}[definition]{Conjecture}
\newtheoremstyle{thm} % <name> % (ambienti con dimostrazione)
        {3mm}% <Space above>
        {3mm}% <Space below>
        {\slshape}% <Body font> % 
        {0mm}% <Indent amount>
        {\bfseries}% <Theorem head font>
        {.}% <Punctuation after theorem head>
        {1mm}% <Space after theorem head>
        {}% <Theorem head spec (can be left empty, meaning 'normal')> 
\theoremstyle{thm}
\newtheorem{theorem}[definition]{Theorem}
\newtheorem{corollary}[definition]{Corollary}
\newtheorem{lemma}[definition]{Lemma}
\newtheorem{prop}[definition]{Proposition}
\newtheorem{thm}{Theorem}
\newtheoremstyle{ex} % <name> % (ambienti con dimostrazione)
        {3mm}% <Space above>
        {3mm}% <Space below>
        {}% <Body font> % \slshape
        {0mm}% <Indent amount>
        {\scshape}% <Theorem head font>
        {.}% <Punctuation after theorem head>
        {1mm}% <Space after theorem head>
        {}% <Theorem head spec (can be left empty, meaning 'normal')> 
\theoremstyle{ex}
\newtheoremstyle{sol} % <name> % (ambienti con dimostrazione)
        {3mm}% <Space above>
        {3mm}% <Space below>
        {}% <Body font> % 
        {0mm}% <Indent amount>
        {\scshape}% <Theorem head font>
        {.}% <Punctuation after theorem head>
        {1mm}% <Space after theorem head>
        {}% <Theorem head spec (can be left empty, meaning 'normal')> 
\theoremstyle{sol}
\title[Higher rank K-theoretic DT theory of points]{Higher rank K-theoretic Donaldson--Thomas Theory of points}
\author{Nadir Fasola}
\address{SISSA Trieste, Via Bonomea 265, 34136 Trieste, Italy\newline\indent Institute for Geometry and Physics, Via Beirut 4, 34100 Trieste, Italy\newline\indent Istituto Nazionale di Fisica Nucleare, Sezione di Trieste, Via Valerio 2, 34127 Trieste, Italy}
\email{nfasola@sissa.it}
\author{Sergej Monavari}
\address{Mathematical Institute, Utrecht University, P.O.~Box 80010 3508 TA Utrecht, The Netherlands}
\email{s.monavari@uu.nl}
\author{Andrea T. Ricolfi}
\address{SISSA Trieste, Via Bonomea 265, 34136 Trieste, Italy\newline\indent Institute for Geometry and Physics, via Beirut 4, 34100 Trieste, Italy}
\email{aricolfi@sissa.it}
\begin{document}
\maketitle

\begin{abstract}
We exploit the critical structure on the Quot scheme $\Quot_{\BA^3}(\OO^{\oplus r},n)$, in particular the associated symmetric obstruction theory, in order to study rank $r$ \emph{K-theoretic} Donaldson--Thomas invariants of the local Calabi--Yau $3$-fold $\BA^3$. We compute the associated partition function as a plethystic exponential, proving a conjecture proposed in string theory by Awata--Kanno and Benini--Bonelli--Poggi--Tanzini. A crucial step in the proof is the fact, nontrival if $r>1$, that the invariants do not depend on the equivariant parameters of the framing torus $(\BC^\ast)^r$.
Reducing from K-theoretic to \emph{cohomological} invariants, we compute the corresponding DT invariants, proving a conjecture of Szabo.
Reducing further to \emph{enumerative} DT invariants, we solve the higher rank DT theory of a pair $(X,F)$, where $F$ is an equivariant exceptional locally free sheaf on a projective toric $3$-fold $X$.

As a further refinement of the K-theoretic DT invariants, we formulate a mathematical definition of the chiral elliptic genus studied in physics. This allows us to define \emph{elliptic DT invariants} of $\BA^3$ in arbitrary rank, which we use to tackle a conjecture of Benini--Bonelli--Poggi--Tanzini.
\end{abstract}

{\hypersetup{linkcolor=black}
\tableofcontents}

\section{Introduction}

\subsection{Overview}
Classical Donaldson--Thomas (DT in short) invariants of a smooth complex projective Calabi--Yau $3$-fold $Y$, introduced in \cite{ThomasThesis}, are integers that virtually count stable torsion free sheaves on $Y$, with fixed Chern character $\gamma$. However, the theory is much richer than what the bare DT numbers
\begin{equation}\label{classicalDT}
\DT(Y,\gamma)\,\in\,\BZ
\end{equation}
can capture: there are extra symmetries subtly hidden in the local structure of the moduli spaces of sheaves giving rise to the classical DT invariants \eqref{classicalDT}. This idea has been present in the physics literature for some time \cite{RefTopVertex,RMQ}. 

These hidden symmetries suggest that there should exist more \emph{refined} invariants, of which the DT numbers \eqref{classicalDT} are just a shadow. These branch out in two main directions:

\begin{itemize}
    \item [$\circ$] \emph{motivic} Donaldson--Thomas invariants, and 
    \item [$\circ$] \emph{K-theoretic} Donaldson--Thomas invariants.
\end{itemize}

For the former, which at date includes a number of interesting subbranches, the papers by Kontsevich and Soibelman \cite{KS1,KS2} are a good starting point, and Szendr\H{o}i's survey \cite{CohDT} contains an extensive bibliography on the subject. For the latter, see some recent developments after Nekrasov--Okounkov \cite{NO16}, such as \cite{Okounkov_Lectures,arbesfeld2019ktheoretic,thomas2018equivariant} and \cite{nekrasov2017magnificent,Magnificent_colors,cao2019ktheoretic} for a generalisation to Calabi--Yau 4-folds. In this paper we deal with K-theoretic DT theory. The relationship between motivic and K-theoretic, which we briefly sketch in \S\,\ref{motivic_comparison}, will be investigated in future work. 

The subtle structure of the DT moduli spaces is most evident in the \emph{local case}, i.e.~when the theory is applied to the simplest Calabi--Yau $3$-fold of all, namely the affine space $\BA^3$. See \cite{BBS,DavisonR} for the rank $1$ motivic DT theory of $\BA^3$, and \cite{Virtual_Quot} for a higher rank version.
This paper solves the K-theoretic Donaldson--Thomas theory of points of $\BA^3$.
In \cite{BR18} it is shown that the main player in the theory, the Quot scheme
\begin{equation}\label{quot}
\Quot_{\BA^3}(\OO^{\oplus r},n)
\end{equation}
of length $n$ quotients of the free sheaf $\OO^{\oplus r}$,
is a global \emph{critical locus}, i.e.~it can be realised as $\set{\dd f = 0}$, for $f$ a regular function on a smooth scheme. This structural result, revealing in bright light the symmetries we were talking about, is used to define the \emph{higher rank K-theoretic DT theory of points} that is the central character in this paper. The rank $1$ theory, corresponding to $\Hilb^n(\BA^3)$, was already defined, and it was solved by Okounkov \cite[\S~3]{Okounkov_Lectures}, proving a conjecture by Nekrasov \cite{NEKRASOV2005261}. Our first main result (Theorem \ref{mainthm:K-theoretic}) can be seen as an upgrade of his calculation, completing the study of the degree $0$ K-theoretic DT theory of $\BA^3$.

\smallbreak
In physics, remarkably, the definition of the K-theoretic DT invariants studied here already existed, and gave rise to a conjecture that our paper --- again, Theorem \ref{mainthm:K-theoretic} --- proves mathematically. More precisely, our formula for the K-theoretic DT partition function $\DT_r^{\KK}$
of $\BA^3$ was first conjectured by Nekrasov \cite{NEKRASOV2005261} for $r=1$ and by Awata and Kanno \cite{MR2545054} for arbitrary $r$  as the partition function of a quiver matrix model describing instantons of a topological $U(r)$ gauge theory on D6 branes.
The reader is referred to \S\,\ref{sec:string_stuff} of this introduction for more background on the physics picture. 

We also study higher rank \emph{cohomological DT invariants} of $\BA^3$. As we show in Corollary \ref{limit K theory cor}, these can be obtained as a suitable limit of the K-theoretic invariants. Motivated by \cite{MR2545054,Nekrasov_M-theory}, a closed formula for their generating function $\DT_r^{\coh}$ was conjectured by Szabo \cite[Conj.~4.10]{Szabo} as a generalisation of the $r=1$ case established by Maulik--Nekrasov--Okounkov--Pandharipande \cite[Thm.~1]{MNOP2}. We prove this conjecture as our Theorem \ref{mainthm:cohomological}. To get there, in \S\,\ref{sec:higher_rank_vertex} we develop a \emph{higher rank topological vertex} formalism based on the combinatorics of $r$-\emph{colored plane partitions},\footnote{In this paper, an $r$-colored plane partition is an $r$-tuple of classical plane partitions, see Definition \ref{def:colored_partition}.} generalising the classical vertex formalism of \cite{MNOP1,MNOP2}.

We pause for a second to explain a key step in this paper. The Quot scheme \eqref{quot}, which gives rise to most of the invariants we study here, is acted on by an algebraic torus
\[
\TT = (\BC^*)^3 \times (\BC^*)^r,
\]
and by their very definition, both the K-theoretic and the cohomological DT invariants depend, a priori, on the sets $t = (t_1,t_2,t_3)$ and $w = (w_1,\ldots,w_r)$ of equivariant parameters of $\TT$. A technical result of this paper, which is proved as Theorem \ref{thm:independence}, states that
\begin{equation}\label{independence_slogan}
\textrm{The K-theoretic DT invariants do not depend on the framing parameters }w.
\end{equation}
This will allow us to take arbitrary limits to evaluate our formulae. We emphasise that this independence, automatic if $r=1$ (see Remark \ref{rmk:trivial_framing_action}), is quite surprising and highly nontrivial if $r>1$.

\subsection{Main results}
We briefly outline here the main results obtained in this paper. 

\subsubsection{K-theoretic DT invariants}
As we mentioned above, the Quot scheme \eqref{quot} is a critical locus, thus it carries a natural symmetric ($\TT$-equivariant, as we prove) perfect obstruction theory in the sense of Behrend--Fantechi \cite{BFinc,BFHilb}. As we recall in \S\,\ref{sec: twisted structure sheaf}, there is also an induced \emph{twisted virtual structure sheaf} $\widehat{\OO}^{\vir} \in K_0^{\TT}(\Quot_{\BA^3}(\OO^{\oplus r},n))$, which is a twist, by a square root of the virtual canonical bundle, of the ordinary virtual structure sheaf $\OO^{\vir}$. The rank $r$ \emph{K-theoretic DT partition function} of the Quot scheme of points of $\BA^3$, encoding the rank $r$ K-theoretic DT invariants of $\BA^3$, is defined as
\[
\DT_r^{\KK}(\BA^3,q,t,w) = \sum_{n\geq 0} q^n \chi(\Quot_{\BA^3}(\OO^{\oplus r},n),\widehat{\OO}^{\vir}) \,\in\, \BZ(\!(t,(t_1t_2t_3)^{\frac{1}{2}},w)\!)\llbracket q \rrbracket,
\]
where the half power is caused by the twist by the chosen square root of the virtual canonical bundle (this choice does not affect the invariants, cf.~Remark \ref{rmk:independence_of_orientation}).

Granting Theorem \ref{thm:independence}, that we stated informally in \eqref{independence_slogan}, we shall write $\DT_r^{\KK}(\BA^3,q,t) = \DT_r^{\KK}(\BA^3,q,t,w)$, ignoring the framing parameters $w$.
In \S\,\ref{sec:proof_of_K-theoretic_thm} we determine a closed formula for this series, proving the conjecture by Awata--Kanno \cite{MR2545054}. This conjecture was checked for low number of instantons in \cite[\S~4]{MR2545054}.

To state our first main result, we need to recall the definition of the \emph{plethystic exponential}. Given an arbitrary power series
\[
f = f(p_1,\ldots,p_e;u_1,\ldots,u_\ell) 
\,\in\,\BQ (p_1,\ldots,p_e)\llbracket u_1,\ldots,u_\ell \rrbracket,
\]
one sets
\begin{equation}\label{def_Exp}
\Exp\left(f\right) = \exp\left(\sum_{n>0}\frac{1}{n}f(p_1^n,\ldots,p_e^n;u_1^n,\ldots,u_\ell^n)\right),
\end{equation}
viewed as an element of $\BQ (p_1,\ldots,p_e)\llbracket u_1,\ldots,u_\ell \rrbracket$. Consider, for a formal variable $x$, the operator $[x] = x^{1/2} - x^{-1/2}$. In \S\,\ref{sec: preliminaries K theory} we consider this operator on $K_0^{\TT}(\pt)$. See  \S\,\ref{subsec:Witten} for its physical interpretation. We are now able to state our first main result.

\begin{thm}[Theorem \ref{thm: K theoretic of points higher rank}]\label{mainthm:K-theoretic}
The rank $r$ K-theoretic DT partition function of $\BA^3$ is given by
\begin{equation}\label{eqn:DT^K}
\DT_r^{\KK}(\BA^3,(-1)^rq,t)=\Exp\left(\mathsf F_r(  q,t_1,t_2,t_3)\right),
\end{equation}
where, setting $\mathfrak{t} = t_1t_2t_3$, one defines
\[ 
\mathsf F_r( q,t_1,t_2,t_3)= \frac{[\mathfrak{t}^r]}{[\mathfrak{t}][\mathfrak{t}^{\frac{r}{2}}q ][\mathfrak{t}^{\frac{r}{2}} q^{-1} ]}\frac{[t_1t_2][t_1t_3][t_2t_3]}{[t_1][t_2][t_3]}.
\]
\end{thm}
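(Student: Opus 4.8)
The plan is to compute $\DT^{\KK}_r$ by $\TT$-equivariant K-theoretic localisation, bootstrap to the rank one case solved by Okounkov, and finish with an elementary plethystic identity.

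First I would set up the localisation. By \cite{BR18} the Quot scheme $\Quot_{\BA^3}(\OO^{\oplus r},n)$ is a global critical locus, hence carries the $\TT$-equivariant symmetric perfect obstruction theory recalled in \S\ref{sec: twisted structure sheaf}, and its $\TT$-fixed locus is a finite set of reduced points indexed by $r$-colored plane partitions $\vec\pi=(\pi^{(1)},\dots,\pi^{(r)})$ with $\sum_\alpha|\pi^{(\alpha)}|=n$. K-theoretic virtual localisation for the twisted virtual structure sheaf $\widehat{\OO}^{\vir}$ then gives
\[
\chi\bigl(\Quot_{\BA^3}(\OO^{\oplus r},n),\widehat{\OO}^{\vir}\bigr)=\sum_{|\vec\pi|=n}\prod_{\mu}\frac{1}{[\mu]},
\]
the product being over the $\TT$-weights $\mu$ of the virtual tangent space $T^{\vir}_{\vec\pi}\in K_0^{\TT}(\pt)$ (counted with sign and multiplicity), whose character I would read off from the quiver-with-potential presentation of the critical chart. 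On the fixed locus this character splits as $T^{\vir}_{\vec\pi}=\bigoplus_{\alpha}\mathsf V_{\pi^{(\alpha)}}\oplus\bigoplus_{\alpha\neq\beta}\mathsf C_{\alpha\beta}$, where $\mathsf V_{\pi}$ is the rank one virtual tangent space attached to the plane partition $\pi$ and the cross-terms $\mathsf C_{\alpha\beta}$ carry the framing weight $w_\alpha/w_\beta$ and vanish identically when $r=1$.

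The heart of the argument is the reduction to rank one. By Theorem \ref{thm:independence} the sum above does not depend on $w=(w_1,\dots,w_r)$, so I may specialise the framing weights to the geometric progression $w_\alpha=\mathfrak t^{\alpha-(r+1)/2}$. I would then prove that at this specialisation the contributions of the cross-terms $\mathsf C_{\alpha\beta}$ reorganise, so that the whole localisation sum factorises and, summing over $n$,
\[
\DT^{\KK}_r\bigl(\BA^3,(-1)^rq,t\bigr)=\prod_{\alpha=1}^{r}\DT^{\KK}_1\bigl(\BA^3,-q_\alpha,t\bigr),\qquad q_\alpha=\mathfrak t^{\alpha-\frac{r+1}{2}}q.
\]
This is the step I expect to be the main obstacle: it requires a careful analysis of the combinatorics of the $r$-colored vertex in order to exhibit the cancellations among the a priori complicated mixed-colour rational contributions; one must also verify that the specialisation of $w$ produces only removable singularities (which is guaranteed a posteriori by Theorem \ref{thm:independence}) and keep track of the sign $(-1)^r$ relating the two sides.

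Granting this, I would finish using Okounkov's solution of the rank one theory \cite[\S 3]{Okounkov_Lectures}, namely $\DT^{\KK}_1(\BA^3,-q,t)=\Exp(\mathsf F_1(q,t_1,t_2,t_3))$, together with the fact that $\Exp$ converts products into sums:
\[
\DT^{\KK}_r\bigl(\BA^3,(-1)^rq,t\bigr)=\prod_{\alpha=1}^{r}\Exp\bigl(\mathsf F_1(q_\alpha,t_1,t_2,t_3)\bigr)=\Exp\Bigl(\sum_{\alpha=1}^{r}\mathsf F_1(q_\alpha,t_1,t_2,t_3)\Bigr).
\]
It then only remains to check the identity $\sum_{\alpha=1}^{r}\mathsf F_1(\mathfrak t^{\alpha-(r+1)/2}q,t_1,t_2,t_3)=\mathsf F_r(q,t_1,t_2,t_3)$. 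After cancelling the common factor $[t_1t_2][t_1t_3][t_2t_3]/([t_1][t_2][t_3])$, this reduces to
\[
\sum_{\alpha=1}^{r}\frac{1}{[\mathfrak t^{\alpha-r/2}q]\,[\mathfrak t^{r/2+1-\alpha}q^{-1}]}=\frac{[\mathfrak t^{r}]}{[\mathfrak t]\,[\mathfrak t^{r/2}q]\,[\mathfrak t^{r/2}q^{-1}]},
\]
which follows by partial fractions: since $\mathfrak t^{\alpha-r/2}q\cdot\mathfrak t^{r/2+1-\alpha}q^{-1}=\mathfrak t$, each summand equals $\tfrac{1}{[\mathfrak t]}\bigl(\tfrac{1}{1-\mathfrak t^{\alpha-1-r/2}q}-\tfrac{1}{1-\mathfrak t^{\alpha-r/2}q}\bigr)$, the sum telescopes to $\tfrac{1}{[\mathfrak t]}\bigl(\tfrac{1}{1-\mathfrak t^{-r/2}q}-\tfrac{1}{1-\mathfrak t^{r/2}q}\bigr)$, and this equals the right-hand side after clearing denominators (using $(1-\mathfrak t^{-r/2}q)(1-\mathfrak t^{r/2}q)=-q\,[\mathfrak t^{r/2}q][\mathfrak t^{r/2}q^{-1}]$ and $\mathfrak t^{-r/2}-\mathfrak t^{r/2}=-[\mathfrak t^{r}]$). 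This proves the theorem.
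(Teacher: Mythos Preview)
Your overall architecture matches the paper's: use Theorem~\ref{thm:independence} to freely manipulate the framing parameters, obtain the factorisation
\[
\DT_r^{\KK}(\BA^3,(-1)^rq,t)=\prod_{\alpha=1}^r \DT_1^{\KK}\bigl(\BA^3,-q\mathfrak t^{\alpha-\frac{r+1}{2}},t\bigr),
\]
then invoke Okounkov's rank~$1$ formula and finish with the identity $\sum_\alpha \mathsf F_1(q\mathfrak t^{\alpha-(r+1)/2})=\mathsf F_r(q)$. Your treatment of that last identity is in fact cleaner than the paper's: the paper proves it by induction on $r$ in steps of two, whereas your partial-fraction/telescoping argument is direct and correct.

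The genuine gap is in the factorisation step. You propose to \emph{specialise} $w_\alpha=\mathfrak t^{\alpha-(r+1)/2}$ and then argue that at these finite values the cross-terms ``reorganise'' so that the localisation sum splits as a product. You acknowledge this is the main obstacle and leave it as a combinatorial claim to be verified. The difficulty is real: at any finite specialisation of $w$, each individual fixed-point contribution $[-T^{\vir}_{\vec\pi}]$ remains a complicated rational function of $t_1,t_2,t_3$ with no evident factorised structure (indeed your specific choice makes $w_i^{-1}w_j=\mathfrak t^{j-i}$, which can collide with $\BT_1$-weights and create spurious poles in individual terms that only cancel after summing). Independence in $w$ guarantees the \emph{sum} is well-defined at this specialisation, but gives no mechanism for the sum to visibly factorise there.

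The paper's device is different and is what makes this step go through: set $w_i=L^i$ and take the \emph{limit} $L\to\infty$. For each pair $i<j$ one computes directly (Lemma~\ref{lemma: lim in K theory Vij}) that
\[
\lim_{L\to\infty}\,[-\mathsf V_{ij}][-\mathsf V_{ji}]\big|_{w_i=L^i}=(-\mathfrak t^{1/2})^{|\pi_j|-|\pi_i|},
\]
a pure monomial. An elementary combinatorial identity (Lemma~\ref{lemma: combinatorical trick}) then absorbs these monomials into shifts of $q$ by powers of $\mathfrak t$, producing exactly the product over $\alpha$ of rank~$1$ partition functions. No delicate cancellation is needed; the limit kills the rational structure of the cross-terms outright. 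If you replace your finite specialisation by this limit, the rest of your argument goes through unchanged, and your telescoping proof of the plethystic identity is a nice alternative to the paper's induction.
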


The case $r=1$ of Theorem \ref{mainthm:K-theoretic} was proved by Okounkov in \cite{Okounkov_Lectures}. The general case was proposed conjecturally in \cite{MR2545054,BBPT}. Arbesfeld and Kononov informed us during the writing of this paper that they also obtained a different proof of Theorem \ref{mainthm:K-theoretic} \cite{Noah_Yasha}, driven by different motivations.

It is interesting to notice that Formula \eqref{eqn:DT^K} is equivalent to the product decomposition
\begin{equation}\label{eqn:product}
\DT_r^{\KK}(\BA^3,(-1)^rq,t)=\prod_{i=1}^r \DT_1^{\KK}(\BA^3,-q\mathfrak{t}^{\frac{-r-1}{2} +i},t),
\end{equation}
that we obtain in Theorem \ref{thm: r copies of rank 1 K theory}. This  is precisely the product formula \cite[Formula (3.14)]{Magnificent_colors} appearing as a limit of the (conjectural) $4$-fold theory developed by Nekrasov and Piazzalunga.

Formula \eqref{eqn:product} is also related to its motivic cousin: as we observe in \S\,\ref{motivic_comparison}, the motivic partition function $\DT_r^{\mot}$ of the Quot scheme of points of $\BA^3$ (see \cite[Prop.~2.7]{BR18} and the references therein) satisfies the same product formula \eqref{eqn:product}, after the transformation $\mathfrak t^{1/2} \to - \BL^{1/2}$.

\subsubsection{Cohomological DT invariants}
The generating function of \emph{cohomological DT invariants} is defined as
\[
 \DT_r^{\coh}(\mathbb{A}^3,q,s,v) = \sum_{n\geq 0}q^n \int_{[\Quot_{\BA^3}(\OO^{\oplus r},n)]^{\vir}} 1 \,\in\, \mathbb{Q}(\!(s,v)\!)\llbracket q \rrbracket
\]
where $s=(s_1,s_2,s_3)$ and $v = (v_1,\ldots,v_r)$, with $s_i = c_1^{\TT}(t_i)$ and $v_j = c_1^{\TT}(w_j)$ respectively, and the integral is defined in Equation \eqref{def:DT_coh_equivariant_residues} via $\TT$-equivariant residues.
It is a consequence of \eqref{independence_slogan} that $\DT_r^{\coh}(\mathbb{A}^3,q,s,v)$ does not depend on $v$, so we will shorten it as $\DT_r^{\coh}(\mathbb{A}^3,q,s)$. 
In \S\,\ref{sec: cohom reduction} we explain how to recover the cohomological invariants out of the K-theoretic ones. This is the limit formula (Corollary \ref{limit K theory cor})
\[
\DT_r^{\coh}(\BA^3,q,s) = \lim_{b \to 0} \DT_r^{\KK}(\BA^3,q,e^{bs}),
\]
essentially a formal consequence of our explicit expression for the K-theoretic \emph{higher rank vertex} (cf.~\S\,\ref{sec:higher_rank_vertex}) attached to the Quot scheme $\Quot_{\BA^3}(\OO^{\oplus r},n)$.

\begin{thm}[Theorem \ref{thm:cohomological}]\label{mainthm:cohomological}
The rank $r$ cohomological DT partition function of $\BA^3$ is given by
 \[
 \DT_r^{\coh}(\mathbb{A}^3,q,s)=\mathsf M((-1)^rq)^{-r\frac{(s_1+s_2)(s_1+s_3)(s_2+s_3)}{s_1s_2s_3}},
 \]
where $\mathsf M(t) = \prod_{m\geq 1}(1-t^m)^{-m}$ is the MacMahon function.
\end{thm}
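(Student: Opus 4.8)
The plan is to deduce the cohomological formula from the K-theoretic one (Theorem \ref{mainthm:K-theoretic}) by taking the cohomological limit described in Corollary \ref{limit K theory cor}, namely $\DT_r^{\coh}(\BA^3,q,s) = \lim_{b\to 0}\DT_r^{\KK}(\BA^3,q,e^{bs})$. Since the plethystic exponential commutes with such a limit (it is an honest $\exp$ of a sum of rescalings, and each summand will have a well-defined limit after the correct normalization), it suffices to understand the behaviour of the single summand $\mathsf F_r(q,t_1,t_2,t_3)$ under the substitution $t_i = e^{bs_i}$ as $b\to 0$. Recall that $[x] = x^{1/2}-x^{-1/2}$, so under $x = e^{b y}$ one has $[x] = b y + O(b^3)$; thus each bracket contributes a factor of $b$ times its "leading symbol" to leading order.

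The key computation is then a bookkeeping of powers of $b$ in
\[
\mathsf F_r(q,t_1,t_2,t_3)= \frac{[\mathfrak{t}^r]}{[\mathfrak{t}][\mathfrak{t}^{\frac{r}{2}}q ][\mathfrak{t}^{\frac{r}{2}} q^{-1} ]}\cdot\frac{[t_1t_2][t_1t_3][t_2t_3]}{[t_1][t_2][t_3]}.
\]
In the second fraction, numerator and denominator each have three brackets, so the powers of $b$ cancel and the limit is $\frac{(s_1+s_2)(s_1+s_3)(s_2+s_3)}{s_1 s_2 s_3}$. In the first fraction the subtle point is the $q$-dependence: the factors $[\mathfrak t^{r/2}q]$ and $[\mathfrak t^{r/2}q^{-1}]$ do \emph{not} vanish as $b\to0$ because $q$ is a formal variable kept fixed; under $\mathfrak t = e^{b(s_1+s_2+s_3)}$ they tend to $[q]$ and $[q^{-1}] = -[q]$, while $[\mathfrak t^r]/[\mathfrak t]\to r$. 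Hence the first fraction tends to $\dfrac{r}{[q][q^{-1}]} = -\dfrac{r}{[q]^2} = \dfrac{r}{(q^{1/2}-q^{-1/2})^2} = \dfrac{rq}{(1-q)^2}$. Combining, $\lim_{b\to0}\mathsf F_r(q,e^{bs}) = \dfrac{rq}{(1-q)^2}\cdot\dfrac{(s_1+s_2)(s_1+s_3)(s_2+s_3)}{s_1s_2s_3}$, independent of $b$ and of the sign conventions.

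It remains to recognize $\Exp\!\left(\frac{q}{(1-q)^2}\right) = \mathsf M(q)$, which is the classical identity $\mathsf M(q) = \prod_{m\ge1}(1-q^m)^{-m} = \exp\bigl(\sum_{n\ge1}\frac1n\frac{q^n}{(1-q^n)^2}\bigr)$, obtained by expanding $\frac{q}{(1-q)^2} = \sum_{m\ge1} m q^m$ and applying the definition \eqref{def_Exp}. Because the coefficient $\frac{(s_1+s_2)(s_1+s_3)(s_2+s_3)}{s_1s_2s_3}$ is a scalar (in the $s$-variables) multiplying a pure $q$-series inside $\Exp$, pulling it out gives exactly $\mathsf M((-1)^r q)^{-r\frac{(s_1+s_2)(s_1+s_3)(s_2+s_3)}{s_1s_2s_3}}$ after matching the sign $(-1)^r q$ coming from the left-hand side of \eqref{eqn:DT^K}. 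I expect the main obstacle to be purely technical: justifying rigorously that $\lim_{b\to0}$ passes inside $\Exp$ and inside the infinite sum over $n$ in the partition function — i.e. that the limit in Corollary \ref{limit K theory cor} is coefficient-wise in $q$ and that no poles in $s$ appear in intermediate steps — together with tracking the half-integer powers of $\mathfrak t$ and the sign $(-1)^r$ carefully so that the two sides match on the nose. Once the interchange of limits is licensed (which should follow from the explicit rationality of the higher rank vertex established in \S\,\ref{sec:higher_rank_vertex}), the rest is the short calculation above.
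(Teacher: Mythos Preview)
Your approach is essentially identical to the paper's proof: take the cohomological limit of Theorem \ref{mainthm:K-theoretic} via Corollary \ref{limit K theory cor}, compute the limit of $\mathsf F_r$ factor by factor, and recognise the plethystic form of the MacMahon function. The paper carries this out by expanding $\Exp$ as $\exp\sum_{k\ge1}\tfrac1k(\cdots)$ and taking the limit termwise in $k$, which is exactly what your interchange-of-limits remark amounts to (and is harmless coefficient-wise in $q$).

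One sign slip to fix: in your chain $\tfrac{r}{[q][q^{-1}]} = -\tfrac{r}{[q]^2} = \tfrac{r}{(q^{1/2}-q^{-1/2})^2}$, the last equality is off by a sign, since $[q]^2 = (q^{1/2}-q^{-1/2})^2$. The correct limit of the first fraction is $-\tfrac{rq}{(1-q)^2}$ (as the paper also finds), and it is precisely this minus sign that produces the exponent $-r\tfrac{(s_1+s_2)(s_1+s_3)(s_2+s_3)}{s_1s_2s_3}$ in the final formula; as written, your intermediate expression would yield $+r$ in the exponent, contradicting your own stated conclusion. With that sign corrected, your argument goes through verbatim.
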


The case $r=1$ of Theorem \ref{mainthm:cohomological} was proved by Maulik--Nekrasov--Okounkov--Pandharipande \cite[Thm.~1]{MNOP2}. Theorem \ref{mainthm:cohomological} was conjectured by Szabo in \cite{Szabo} and confirmed for $r\leq 8$ and $n\leq 8$ in \cite{BBPT}. The specialisation $\DT_r^{\coh}(\mathbb{A}^3,q,s)\big|_{s_1+s_2+s_3=0}=\mathsf M((-1)^rq)^r$ was already computed in physics \cite{Cir-Sink-Szabo}.

\subsubsection{Elliptic DT invariants}
In \S\,\ref{sec:elliptic_invariants} we define the \emph{virtual chiral elliptic genus} for any scheme with a perfect obstruction theory, which recovers as a special case the virtual elliptic genus defined in \cite{Fantechi_Gottsche}. By means of this new invariant we introduce a refinement $\DT_r^{\rm ell}$ of the generating series $\DT_r^{\KK}$, providing a mathematical definition of the \emph{elliptic DT invariants} studied in \cite{BBPT}. We propose a conjecture (Conjecture \ref{conj: non dependence under some limits of elliptic}) about the behaviour of $\DT_r^{\rm ell}$ and, granting this conjecture, we obtain a proof of a conjecture formulated by Benini--Bonelli--Poggi--Tanzini (Theorem \ref{thm:elliptic}).

\subsubsection{Global geometry}
So far we have only discussed results concerning \emph{local} geometry. When $X$ is a \emph{projective} toric $3$-fold and $F$ is an equivariant exceptional locally free sheaf, by \cite[Thm.~A]{Virtual_Quot} there is a $0$-dimensional torus equivariant (cf.~Proposition \ref{pot_global_equivariant}) perfect obstruction theory on $\Quot_X(F,n)$. Therefore the higher rank Donaldson--Thomas invariants
\[
\DT_{F,n} = \int_{[\Quot_X(F,n)]^{\vir}}1 \,\in \,\BZ
\]
can be computed via the Graber--Pandharipande virtual localisation formula \cite{GPvirtual}. The next result confirms (in the toric case) a prediction \cite[Conj.~3.5]{Virtual_Quot} for their generating function. Before stating it, recall that an \emph{exceptional sheaf} on a variety $X$ is a coherent sheaf $F \in \Coh X$ such that $\Hom(F,F)=\BC$ (i.e.~$F$ is simple), and $\Ext^i(F,F) = 0$ for $i>0$.

\begin{thm}[Theorem \ref{thm for toric proj}]\label{mainthm:projective_toric}
Let $(X,F)$ be a pair consisting of a smooth projective toric $3$-fold $X$ along with an exceptional equivariant locally free sheaf $F$ of rank $r$. Then
\[
\sum_{n\geq 0} \DT_{F,n}q^n= \mathsf M((-1)^rq)^{r\int_{X}c_3(T_X\otimes K_X)}.
\]
\end{thm}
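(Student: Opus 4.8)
The plan is to reduce the global computation to the local one (Theorem \ref{mainthm:cohomological}) via virtual localisation, using the torus action coming from the toric structure of $X$. First I would fix the torus $T = (\BC^*)^3$ acting on $X$ together with a lift of the equivariant structure to $F$ (which exists since $F$ is equivariant); since $F$ is exceptional and locally free, Proposition \ref{pot_global_equivariant} supplies a $T$-equivariant $0$-dimensional perfect obstruction theory on $\Quot_X(F,n)$. By the Graber--Pandharipande formula \cite{GPvirtual}, $\DT_{F,n}$ is a sum of contributions over the connected components of the fixed locus $\Quot_X(F,n)^T$. A length-$n$ quotient fixed by $T$ is supported at the $T$-fixed points $p_1,\ldots,p_\chi$ of $X$ (where $\chi = \chi(X) = \int_X c_3(T_X)$ is the number of maximal cones of the fan), and near each $p_\alpha$ it looks like a $T$-fixed quotient of $F|_{U_\alpha} \cong \OO_{U_\alpha}^{\oplus r}$ on the chart $U_\alpha \cong \BA^3$. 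Hence $\Quot_X(F,n)^T = \coprod_{\sum n_\alpha = n} \prod_\alpha \Quot_{\BA^3}(\OO^{\oplus r},n_\alpha)^{T}$, and the virtual normal bundle splits accordingly into local pieces; this is exactly the standard vertex/edge decomposition of \cite{MNOP1,MNOP2}, except here there are no edge or face contributions because the quotients are $0$-dimensional.

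Next I would identify each local contribution with the equivariant residue integrand computing $\DT_r^{\coh}(\BA^3,q,s)$. On the chart $U_\alpha$ the tangent weights of $X$ are some characters $s_1^{(\alpha)}, s_2^{(\alpha)}, s_3^{(\alpha)}$ of $T$, and the restriction of $F$ contributes the framing weights $v_1^{(\alpha)},\ldots,v_r^{(\alpha)}$; by the independence statement \eqref{independence_slogan}, more precisely its cohomological shadow used to define $\DT_r^{\coh}$ as a $v$-independent series, the local contribution at $p_\alpha$ equals the coefficient extracted from $\DT_r^{\coh}(\BA^3,q,s^{(\alpha)})$ with the slopes $s_i^{(\alpha)}$ in place of $s_i$. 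Summing the generating series over all distributions $(n_\alpha)$ turns the sum into a product over fixed points, so
\[
\sum_{n\geq 0}\DT_{F,n}q^n \;=\; \prod_{\alpha=1}^{\chi} \DT_r^{\coh}\bigl(\BA^3,(-1)^rq,s^{(\alpha)}\bigr)
\;=\; \prod_{\alpha=1}^{\chi}\mathsf M((-1)^rq)^{-r\frac{(s_1^{(\alpha)}+s_2^{(\alpha)})(s_1^{(\alpha)}+s_3^{(\alpha)})(s_2^{(\alpha)}+s_3^{(\alpha)})}{s_1^{(\alpha)}s_2^{(\alpha)}s_3^{(\alpha)}}},
\]
using Theorem \ref{mainthm:cohomological} in the second step. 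The sign $(-1)^r$ is uniform across charts because the rank of $F$ is constant.

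Finally I would recognise the product of exponents as an equivariant Euler characteristic and apply equivariant localisation in the opposite direction. The bundle $T_X\otimes K_X$ has, at the fixed point $p_\alpha$, fibre weights $s_i^{(\alpha)} - (s_1^{(\alpha)}+s_2^{(\alpha)}+s_3^{(\alpha)})$ for $i=1,2,3$, i.e.\ $-(s_j^{(\alpha)}+s_k^{(\alpha)})$ for $\{i,j,k\}=\{1,2,3\}$; therefore
\[
c_3^T\bigl(T_X\otimes K_X\bigr)\big|_{p_\alpha} \;=\; -(s_1^{(\alpha)}+s_2^{(\alpha)})(s_1^{(\alpha)}+s_3^{(\alpha)})(s_2^{(\alpha)}+s_3^{(\alpha)}),
\]
while $e^T(T_{p_\alpha}X) = s_1^{(\alpha)}s_2^{(\alpha)}s_3^{(\alpha)}$, so the Atiyah--Bott/Graber--Pandharipande localisation formula gives
\[
\int_X c_3(T_X\otimes K_X) \;=\; \sum_{\alpha=1}^{\chi} \frac{-(s_1^{(\alpha)}+s_2^{(\alpha)})(s_1^{(\alpha)}+s_3^{(\alpha)})(s_2^{(\alpha)}+s_3^{(\alpha)})}{s_1^{(\alpha)}s_2^{(\alpha)}s_3^{(\alpha)}}.
\]
Substituting this into the displayed product collapses it to $\mathsf M((-1)^rq)^{r\int_X c_3(T_X\otimes K_X)}$, which is the claim. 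The main obstacle I anticipate is not any of these formal manipulations but the bookkeeping in the first step: making sure the $T$-equivariant obstruction theory of Proposition \ref{pot_global_equivariant} restricts, component by component of the fixed locus, to the \emph{symmetric} local obstruction theory on $\Quot_{\BA^3}(\OO^{\oplus r},n_\alpha)$ used to define $\DT_r^{\coh}$ — in particular that the virtual normal bundle contributions are precisely the moving parts of the local vertex, with no leftover global terms — and checking that the framing weights $v^{(\alpha)}$ genuinely drop out so that one may invoke Theorem \ref{mainthm:cohomological} verbatim rather than a $v$-dependent refinement.
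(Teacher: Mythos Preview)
Your proposal is correct and follows essentially the same route as the paper: virtual localisation with respect to the toric torus, splitting of the fixed locus and virtual normal bundle into local vertex contributions (the paper does this via Lemma \ref{T_1 fixed locus projective}, Proposition \ref{prop: relative cech} and Corollary \ref{cor: pot as box product}, which also handle the obstacle you anticipate by comparing K-theory classes of the two obstruction theories, cf.~Proposition \ref{restriction of class in K theory}), then applying Theorem \ref{mainthm:cohomological} at each fixed point and summing the exponents by Atiyah--Bott. One small slip: in your displayed equation the first equality should read $\prod_\alpha \DT_r^{\coh}(\BA^3,q,s^{(\alpha)})$ rather than $\DT_r^{\coh}(\BA^3,(-1)^rq,s^{(\alpha)})$, since the $(-1)^r$ only enters when you invoke Theorem \ref{mainthm:cohomological}; also note that the r\^ole of the framing parameters $v^{(\alpha)}$ is played here by the $\BT$-weights $\lambda_i$ of $F|_{U_\alpha}$, and their irrelevance is exactly Corollary \ref{cor:independence_on_lambda_w}.
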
 
The corresponding formula in the Calabi--Yau case was proved in \cite[\S\,3.2]{Virtual_Quot}, whereas the general rank $1$ case was proved in \cite[Thm.~2]{MNOP2} and \cite[Thm.~0.2]{JLI}.

\subsection{Relation to string theory}\label{sec:string_stuff}
An interpretation for Donaldson--Thomas invariants is available also in the context of supersymmetric string theories. In this framework, one is interested in countings of BPS-bound states on (Calabi--Yau) $3$-folds. The interest in studying the BPS sector of string theories lies in the fact that it consists of quantities which are usually protected from quantum corrections, and which can sometimes be studied non-perturbatively. On the other hand, BPS countings have been shown in many occasions to have a precise mathematical interpretation rooted in counting problems in enumerative geometry. In the case of Donaldson--Thomas theory the interpretation is that of a type IIA theory compactified on $X$, so that the four dimensional effective theory has an $\mathcal N=2$ supersymmetry content. BPS states then preserve half of this supersymmetry and are indexed by a charge vector living on a lattice determined by the cohomology (with compact support) of the $3$-fold $\Gamma\cong H^0(X,\BZ)\oplus H^2(X,\BZ)\oplus H^4(X,\BZ)\oplus H^6(X,\BZ)$. These lattices can also be interpreted as the charge lattices of D-branes wrapping $p$-cycles on $X$, where
\[
{\rm D}p \longleftrightarrow H^{6-p}(X,\BZ)\,\cong\, H_p(X,\BZ),
\]
for $p=0,2,4,6$. The Witten index
\[
\Ind_X(\gamma)=\Tr_{\mathcal H^{(\gamma)}_{{\rm BPS}}}(-1)^F
\]
provides a measure of the degeneracy of the BPS states, where the trace is over the fixed charge sector of the single-particle Hilbert space $H_{{\rm BPS}}=\bigoplus_{\gamma\in\Gamma}\mathcal H^{(\gamma)}_{\rm BPS}$ and $F$ is a given one-particle operator acting on the $\gamma$-component of the Hilbert space. The choice of a charge vector of the form $\gamma=(r,0,-\beta,n)$ is then equivalent to a system of (D0-D4-D6)-branes, where $r$ D6-branes wrap the whole $3$-fold $X$.
The index of the theory is then defined via integration over the virtual class of the moduli space of BPS states $\mathcal M_{\rm BPS}^{(\gamma)}(X)$,
\[
\Ind_X(\gamma)=\int_{[\mathcal M_{\rm BPS}^{(\gamma)}(X)]^{\vir}}1.
\]
The moduli space of BPS states on $X=\BA^3$ with charge vector $\gamma=(r,0,0,n)$ is then identified with the Quot scheme $\Quot_{\BA^3}(\OO^{\oplus r},n)$, and the partition function of the theory reproduces the generating function of degree 0 DT invariants of $\BA^3$ 
\[
\sum_{n\ge 0}\Ind_{\BA^3}(r,0,0,n)q^n=\mathsf{DT}_r^{\coh}(\BA^3,q,s).
\]
An equivalent interpretation can be also given in type IIB theories, where the relevant systems will be those of (D(-1)-D3-D5)-branes. The effective theory on the D(-1)-branes, which in this case are $0$-dimensional objects, is a quiver matrix model encoding the critical structure of $\Quot_{\BA^3}(\OO^{\oplus r},n)$. K-theoretic and elliptic versions of DT invariants can also be studied by suitably generalising the D-branes construction. One can study for instance a D0-D6 brane system with $r$ branes on $X\times S^1$, $S^1$ being the worldvolume of the D0-branes. In this case the D0-branes quantum mechanics describe the K-theoretic generalisation of DT theory, so that the supersymmetric partition function computes the equivariant Euler characteristic of the twisted virtual structure sheaf $\widehat{\OO}^{\vir}$ of the Quot scheme identified with the moduli space of BPS vacua of the theory. Analogously, the D1 string theory of a D1-D7 system on the product of $X$ by an elliptic curve, say $X\times T^2$, provides the elliptic generalisation of higher rank DT invariants, \cite{BBPT}, while the superconformal index realises the virtual elliptic genus of the moduli space of BPS states. In this context, a plethystic formula for the generating function of the K-theoretic DT invariants was conjectured in \cite{Nekrasov_M-theory} for rank $1$ and in \cite{MR2545054} for rank $r$. The elliptic generalisation of DT theory was studied in \cite{BBPT}, where a plethystic formula for the virtual elliptic genus was conjectured in the Calabi--Yau specialisation $t_1t_2t_3=1$ and its generalisation $(t_1t_2t_3)^r=1$ (cf. Theorem \ref{thm:elliptic}).

\subsection{Related works on virtual invariants of Quot schemes}
The study of virtual invariants of Quot schemes is an active research area in enumerative geometry. See for instance the seminal work of Marian--Oprea \cite{MR2271296}, where a virtual fundamental class  on the Quot scheme of the trivial bundle on a smooth projective curve was constructed. More recently, there has been a lot of activity on surfaces (partially motivated by Vafa--Witten theory), see e.g.~\cite{Oprea:2019ab,Johnson:2020aa,Lim:2020aa,MR3621431}. In connection to physics, flags of framed torsion free sheaves on $\BP^2$, as a generalisation of Quot schemes of points on $\BA^2$, were studied in \cite{bonelli2019defects,bonelli2019flags}. Many of the results mentioned above use virtual localisation to study virtual invariants of Quot schemes: our paper exploits virtual localisation as well, though in the K-theoretic setup and in the almost unexplored land of $3$-folds.

Generating functions of Euler characteristics of Quot schemes on $3$-folds (for quotients of sheaves of homological dimension at most $1$) were computed by Gholampour--Kool in \cite{Gholampour2017}. Quot schemes of \emph{reflexive} sheaves (of rank $2$) also appeared in the work of Gholampour--Kool--Young \cite{Gholampour2017a}, as fibres of ``double dual maps'', exploited to compute generating functions of Euler characteristics of more complicated moduli spaces of sheaves. Virtual invariants are also defined (again, in the rank $2$ case) via localisation in \cite{Gholampour2017a}. %Interesting rationality conjectures on such generating functions were presented in \cite{Gholampour2017} and confirmed in some examples in \cite{Gholampour2017a}.
In the rank $1$ case, quotients of the ideal sheaf of a smooth curve $C$ in a $3$-fold $Y$ form a closed subscheme of the Hilbert scheme of curves in $Y$, the key player in rank $1$ DT theory. The series of virtual Euler characteristics of the associated Quot scheme was computed by the third author in \cite{LocalDT}, and later upgraded to a $C$-local DT/PT wall-crossing formula \cite{Ricolfi2018}. The motivic refinement of this DT/PT correspondence was established by Davison and the third author in \cite{DavisonR}.

Virtual classes of dimension $0$ of Quot schemes (for locally free sheaves) on projective $3$-folds are constructed under certain assumptions in \cite{Virtual_Quot}. In this paper we mainly work with the virtual class on $\Quot_{\BA^3}(\OO^{\oplus r},n)$ coming from the critical obstruction theory found in \cite{BR18}.

\subsection{Plan of the paper}
In \S\,\ref{sec:background} we recall the basics on perfect obstruction theories, virtual classes, virtual structure sheaves and how to produce virtual invariants out of these data; we review the K-theoretic virtual localisation theorem in \S\,\ref{sec:vir_loc}. Sections \ref{sec: quot scheme local model}\,--\,\ref{sec:elliptic_invariants} are devoted to the ``local Quot scheme'' $\Quot_{\BA^3}(\OO^{\oplus r},n)$. In \S\,\ref{sec: quot scheme local model} we recall its critical structure and we define a $\TT$-action on it, whose fixed locus is parametrised by the finitely many $r$-colored plane partitions (Proposition \ref{prop:fixedlocus_indexed_by_colored_partitions}); we study the equivariant critical obstruction theory on the Quot scheme and prove that the induced virtual class on the $\TT$-fixed locus is trivial (Corollary \ref{cor:trivial pot on fixed locus}). In \S\,\ref{subsec:virtual_invariants_quot} we introduce cohomological and K-theoretic DT invariants of $\Quot_{\BA^3}(\OO^{\oplus r},n)$. In \S\,\ref{sec:higher_rank_vertex} we develop a higher rank vertex formalism which we exploit to write down a formula (Proposition \ref{prop:Tangent^vir}) for the virtual tangent space of a $\TT$-fixed point in the Quot scheme. In \S\,\ref{sec:K-theory} we prove Theorem \ref{mainthm:K-theoretic} as well as Formula \eqref{eqn:product}. In \S\,\ref{sec:cohomological_invariants} we prove Theorem \ref{mainthm:cohomological} and we show that $\DT_r^{\coh}$ does not depend on any choice of possibly nontrivial $(\BC^\ast)^3$-weights on $\OO^{\oplus r}$. In \S\,\ref{sec:elliptic_invariants}, we give a mathematically rigorous definition of a ``chiral'' version of the virtual elliptic genus of \cite{Fantechi_Gottsche} and use it in \S\,\ref{sec:elliptic DT} to define elliptic DT invariants. In \S\,\ref{sec:limits of elliptic DT} we also give closed formulae for elliptic DT invariants in some limiting cases, based on the conjectural independence on the elliptic parameter --- see Conjecture \ref{conj: non dependence under some limits of elliptic} and Remark \ref{rem: elliptic indepedence}. In particular Theorem \ref{thm:elliptic} proves a conjecture recently appeared in the physics literature \cite[Formula (3.20)]{BBPT}. In \S\,\ref{sec:compact_section} we prove Theorem \ref{mainthm:projective_toric} by gluing vertex contributions from the toric charts of a projective toric $3$-fold.

\begin{conventions*}
We work over $\BC$. A \emph{scheme} is a separated scheme of finite type over $\BC$. If $Y$ is a scheme, we let $\derived^{[a,b]}(Y)$ denote the derived category of coherent sheaves on $Y$, whose objects are complexes with vanishing cohomology sheaves outside the interval $[a,b]$. We let $K^0(Y)$ be the K-group of vector bundles on $Y$. When $Y$ carries an action by an algebraic torus $\TT$, we let $K^0_\TT(Y)$ be the K-group of $\TT$-\emph{equivariant} vector bundles on $Y$. Similarly, we let $K_0(Y)$ denote the K-group of \emph{coherent sheaves} on $Y$, and we let $K_0^{\TT}(Y)$ be the K-group of (the abelian category of) $\TT$-\emph{equivariant} coherent sheaves on $Y$. When $Y$ is smooth, the natural $\BZ$-linear map $K^0(Y) \to K_0(Y)$, resp.~$K^0_{\TT}(\pt)$-linear map $K^0_{\TT}(Y) \to K_0^{\TT}(Y)$, is an isomorphism.
Chow groups $A^\ast (Y)$ and cohomology groups $H^\ast(Y)$ are taken with rational coefficients.
\end{conventions*}

\subsection*{Acknowledgements}
We thank Noah Arbesfeld and Yakov Kononov for generously sharing with us their progress on their paper \cite{Noah_Yasha}, for reading a first draft of this paper and for sending us very interesting comments. Thanks to Alberto Cazzaniga for suggesting the comparison between motivic and K-theoretic factorisations. We thank Barbara Fantechi for enlightening discussions about equivariant obstruction theories. We are grateful to Martijn Kool, Giulio Bonelli and Alessandro Tanzini for the many discussions, insightful suggestions and for commenting on a first  draft of this paper. Special thanks to Richard Thomas for generously sharing with us his ideas and insights on equivariant sheaves and Atiyah classes. Finally, many thanks to the anonymous referee for suggesting valuable improvements and corrections.

S.M.~is supported by NWO grant TOP2.17.004. A.R.~was funded by Dipartimenti di Eccellenza.

%%%%%%%%%%%%%%%%%%%%%%%%%%%%%%%%%%%%%%%%%%%%%%%%%%%%%%%%%%%%
%%%%%%%%%%%%%%%%%%%%%%%%%%%%%%%%%%%%%%%%%%%%%%%%%%%%%%%%%%%%
\section{Background material}\label{sec:background}

\subsection{Obstruction theories and virtual classes}
A \emph{perfect obstruction theory} on a scheme $X$, as defined in \cite{LiTian,BFinc}, is the datum of a morphism 
\[
\phi\colon \BE\to\BL_X
\]
in $\derived^{[-1,0]}(X)$, where $\BE$ is a perfect complex of perfect amplitude contained in $[-1,0]$, such that $h^0(\phi)$ is an isomorphism and $h^{-1}(\phi)$ is surjective. Here, $\BL_X = \tau_{\geq -1}L_X^\bullet$ is the cut-off at $-1$ of the full cotangent complex $L_X^\bullet \in \derived^{[-\infty,0]}(X)$ defined by Illusie \cite{Illusie_I}. A perfect obstruction theory is called \emph{symmetric} (see \cite{BFHilb}) if there exists an isomorphism $\theta\colon \BE \simto \BE^\vee[1]$ such that $\theta = \theta^\vee[1]$. 

The \emph{virtual dimension} of $X$ with respect to $(\BE,\phi)$ is the integer $\vd = \rk \BE$. This is just $\rk E^0 - \rk E^{-1}$ if one can write $\BE = [E^{-1}\to E^0]$. 

\begin{remark}\label{rmk:K-class_of_symmetric_pot}
A symmetric obstruction theory $\BE \to \BL_X$  has virtual dimension $0$, and moreover the \emph{obstruction sheaf}, defined as $\Ob = h^1(\BE^\vee)$, is canonically isomorphic to the cotangent sheaf $\Omega_X$. In particular, one has the K-theoretic identity
\[
\BE = h^0(\BE) - h^{-1}(\BE) = \Omega_X - T_X\,\in\,K_0(X).
\]
\end{remark}

A perfect obstruction theory determines a cone
\[
\mathfrak C \into E_1 = (E^{-1})^\vee.
\]
Letting $\iota\colon X \into E_1$ be the zero section of the vector bundle $E_1$, the induced \emph{virtual fundamental class} on $X$ is the refined intersection
\[
[X]^{\vir} = \iota^![\mathfrak C]\, \in\, A_{\vd}(X). 
\]
By a result of Siebert \cite[Thm.~4.6]{Siebert}, the virtual fundamental class depends only on the K-theory class of $\BE$.

\begin{definition}[\cite{GPvirtual,BFHilb}]
\label{def:equivariant_pot}
Let $G$ be an algebraic group acting on a scheme $X$. A $G$-equivariant perfect obstruction theory on $X$ is a choice of lift of a perfect obstruction theory $\phi\colon \BE \to \BL_X$ to the $G$-equivariant derived category $\derived^{[-1,0]}(\Coh_X^G)$.
\end{definition}

\subsection{From obstruction theories to virtual invariants}
On a proper scheme $X$ with a perfect obstruction theory, one can define virtual enumerative invariants by
\[
\int_{[X]^{\vir}}\alpha \,\in\, \BQ,
\]
where $\alpha\in A^i (X)$. These intersection numbers are going to vanish if $i \neq \vd$.

On the K-theoretic side, it was observed in \cite[\S\,5.4]{BFinc} that a perfect obstruction theory $\BE \to \BL_X$ not only induces a virtual fundamental class, but also a \emph{virtual structure sheaf} 
\[
\OO_X^{\vir} = [\mathbf L \iota^\ast \OO_{\mathfrak C}]\,\in\, K_0(X).
\]
Its construction first appeared in \cite{kontsevich_94,kapranov_2009} in the context of dg-manifolds and  in  \cite{BFinc,Fantechi_Gottsche} in the language of perfect obstruction theories. More recently, Thomas gave a description of $\OO_X^{\vir}$  in terms of the \emph{$K$-theoretic Fulton class}, showing that it only depends on the K-theory class of $\BE$ \cite[Cor.~4.5]{Thomas_Kth_Fulton_Class}. If $\pi\colon X \to \pt$ is proper, one can use $\OO_X^{\vir}$ and K-theoretic pushfoward $\pi_\ast = \chi(X,-)$ to define virtual invariants by
\[
\chi^{\vir}(X,V)=\chi(X, V\otimes \OO_X^{\vir})\, \in\, K_0(\pt) =  \mathbb{Z},\quad V\in K^0(X).
\]
A virtual version \cite[Cor.~3.6]{Fantechi_Gottsche} of the Hirzebruch--Riemann--Roch theorem holds: one has
\[
\chi^{\vir}(X,V)={\int_{[X]^{\vir}}}  \ch(V)\cdot \td(T_X^{\vir}),
\]
where, setting $E_i = (E^{-i})^\vee$, one defines the \emph{virtual tangent bundle} of $X$ by the formula 
\[
T^{\vir}_X = \BE^{\vee} = E_0 - E_1\,\in\,K^0(X).
\]

%%%%%%%%%%%%%%%%%%%%%%%%%%%%%%%%%%%%%%%%%%%%%
\subsection{Torus representations and their weights}
Let $\TT = (\BC^*)^g$ be an algebraic torus, with character lattice $\widehat{\TT} = \Hom(\TT,\BC^\ast) \cong \BZ^g$. Let $K_0^{\TT}(\pt)$ be the K-group of the category of $\TT$-representations. Any finite dimensional $\TT$-representation $V$ splits as a sum of $1$-dimensional representations called the \emph{weights} of $V$. Each weight corresponds to a character $\mu \in \widehat{\TT}$, and in turn each character corresponds to a monomial $t^\mu = t_1^{\mu_1}\cdots t_g^{\mu_g}$ in the coordinates of $\TT$. The map
\begin{equation}\label{eqn:trace}
\tr\colon K_0^{\TT}(\pt) \to \BZ \left[t^\mu \mid \mu \in \widehat{\TT}\right],\quad V\mapsto \tr_V,
\end{equation}
sending the class of a $\TT$-module to its decomposition into weight spaces is a ring isomorphism, where tensor product on the left corresponds to the natural multiplication on the right. We will therefore sometimes identify a (virtual) $\TT$-module with its character. Sometimes, to ease notation, we shall write $\tr(V)$ instead of $\tr_V$.

\begin{example}
Let $V=\sum_\mu t^\mu$ be a $\TT$-module. Define $\Lambda^\bullet_p V = \sum_{i=0}^{\rk V}p^i\Lambda^i V$ to be the \emph{total wedge of} $V$. We shall write $\Lambda^\bullet V = \Lambda^\bullet_{-1} V$. As shown for instance in \cite[Ex.~2.1.5]{Okounkov_Lectures}, its trace satisfies
\begin{equation*}
    \tr_{\Lambda^\bullet_{-p} V}=\prod_\mu\,(1-pt^\mu).
\end{equation*}
\end{example}

\subsection{Virtual normal bundle and virtual tangent space}
\label{subsec:Nvir}
Let $\TT = (\BC^\ast)^g$ be an algebraic torus. If $Y$ is a scheme carrying the trivial $\TT$-action, any $\TT$-equivariant coherent sheaf $B \in \Coh Y$ admits a direct sum decomposition $B = \bigoplus_{\mu} B^{\mu}$ into eigensheaves, where $\mu \in \widehat \TT \cong \BZ^g$ ranges over the characters of $\TT$. The $\TT$-\emph{fixed part} and the $\TT$-\emph{moving part} of $B$ are defined as
\[
B^{\fix} = B^0,\quad B^{\mov} = \bigoplus_{\mu\neq 0}B^{\mu}.
\]
This definition extends to complexes of coherent sheaves.

If $X$ is a scheme carrying a $\TT$-action and a $\TT$-equivariant perfect obstruction theory $\BE \to \BL_{X}$, and $Y \subset X^{\TT}$ is a component of the fixed locus, then the \emph{virtual normal bundle} of $Y\subset X$ is the complex
\[
N^{\vir}_{Y/X}=T_X^{\vir}\big|_{Y}^{\mov} = \BE^\vee \big|_{Y}^{\mov}.
\]
If $p \in X^{\TT}$ is a fixed point, the \emph{virtual tangent space} of $X$ at $p$ is $T_p^{\vir} = \BE^\vee \big|_{p} \in K_0^{\TT}(\pt)$.

\subsection{Virtual localisation}\label{sec:vir_loc}
A very useful tool to compute virtual K-theoretic invariants is the \emph{virtual localisation theorem}. Its first version was proven in equivariant Chow theory in \cite{GPvirtual}, and a K-theoretic version appeared in \cite[\S~7]{Fantechi_Gottsche} and \cite[\S~3]{Qu_virtual}. Suppose that $X$ carries an action of an algebraic torus $\TT$ and a $\TT$-equivariant perfect obstruction theory. Then $\OO_X^{\vir}$ is naturally an element of $K_0^{\TT}(X)$ and if $X$ is proper one can define $\chi^{\vir}(X,-) = \chi(X,-\otimes \OO_X^{\vir})$ by means of the equivariant pushforward $\chi(X,-)\colon K_0^{\TT}(X)\to K_0^{\TT}(\pt)$. The K-theoretic virtual localisation formula states that, for an arbitrary element $V\in K_\TT^0(X)$  of the $\TT$-equivariant K-theory, one has the identity
\begin{equation}\label{K-theoretic_localisation}
\chi^{\vir}(X,V)=\chi^{\vir}\left(X^\TT, \frac{V|_{X^\TT}}{\Lambda^\bullet N^{\vir,\vee}} \right)\,\,\in\,\, K^\TT_0(\pt)\left[\frac{1}{1-t^{\mu}}\, \Bigg{|}\, \mu \in \widehat{\TT}\right],
\end{equation}
where $X^\TT\subset X$ is the $\TT$-fixed locus, $N^{\vir} \in K^0_{\TT}(X^\TT)$ is K-theory class of the virtual normal bundle, i.e.~of the $\TT$-moving part of $T_X^{\vir}=\BE^\vee$ restricted to $X^{\TT}$.

More than just being a powerful theorem, the localisation formula allows one to define invariants for \emph{quasiprojective} $\TT$-varieties, provided that they have proper $\TT$-fixed locus: if this is the case, one defines $\chi^{\vir}(X,V)$ to be the right hand side of \eqref{K-theoretic_localisation}. 
If $X$ is proper, this definition coincides with the usual one thanks to the localisation theorem.

\begin{remark}
We thank  Noah Arbesfeld for pointing out to us that for quasiprojective schemes, in contrast to the case of equivariant \emph{cohomology}, one can directly define the equivariant Euler characteristic of a  coherent sheaf via its character as a torus representation (without invoking virtual localisation), provided that the weight spaces of the sheaf are finite dimensional.
\end{remark}

%%%%%%%%%%%%%%%%%%%%%%%%%%%%%%%%%%%%%%%%%%%%%%%%%%%%%%%%%%%%%%
\section{The local Quot scheme: critical and equivariant structure}\label{sec: quot scheme local model}

%%%%%%%%%%%%%%%%%%%%%%%%%%%%%%%%%%%%%%%%%%%%%%%%%%%%%%%%%%%%%%
\subsection{Overview}
In this section we start working on the local Calabi--Yau $3$-fold $\BA^3$. Fix integers $r\geq 1$ and $n\geq 0$. Our focus will be on the \emph{local Quot scheme}
\[
\Quot_{\BA^3}(\mathscr O^{\oplus r},n),
\]
whose points correspond to short exact sequences
\[
0\to S\to \mathscr O^{\oplus r}\to T \to 0
\]
where $T$ is a $0$-dimensional $\OO_{\BA^3}$-module with $\chi(T)=n$.

We shall use the following notation throughout.

\begin{notation}
If $F$ is a locally free sheaf on a variety $X$, and $F\onto T$ is a surjection onto a $0$-dimensional sheaf of length $n$, with kernel $S\subset F$, we denote by 
\[
[S] \in \Quot_{X}(F,n)
\]
the corresponding point in the Quot scheme. 
\end{notation}

In this section, we will:

\begin{itemize}
    \item [$\circ$] recall from \cite{BR18} the description of the Quot scheme as a critical locus (\S\,\ref{sec:critical_structure_Quot}),
    \item [$\circ$] describe a $\TT$-action (for $\TT = (\BC^*)^3\times (\BC^*)^r$ a torus of dimension $3+r$) on $\Quot_{\BA^3}(\mathscr O^{\oplus r},n)$, with isolated fixed locus consisting of direct sums of monomial ideals (\S\,\ref{sec:torus_actions}), 
    \item [$\circ$] reinterpret the fixed locus $\Quot_{\BA^3}(\mathscr O^{\oplus r},n)^{\TT}$ in terms of colored partitions (\S\,\ref{sec: combinatorial description of fixed locus}),
    \item [$\circ$] prove that the critical perfect obstruction theory on $\Quot_{\BA^3}(\mathscr O^{\oplus r},n)$ is $\TT$-equivariant (Lemma \ref{lemma:T_equivariance_of_POT}), and that the induced $\TT$-fixed obstruction theory on the fixed locus is trivial (Corollary \ref{cor:trivial pot on fixed locus}).
\end{itemize}
The content of this section is the starting point for the definition (see \S\,\ref{subsec:virtual_invariants_quot}) of virtual invariants on $\Quot_{\BA^3}(\mathscr O^{\oplus r},n)$, as well as our construction (see \S\,\ref{sec:higher_rank_vertex}) of the \emph{higher rank vertex formalism}.

%%%%%%%%%%%%%%%%%%%%%%%%%%%%%%%%%%%%%%%%%%%%%%%%%%%%%%%%%%%%%%
\subsection{The critical structure on the Quot scheme}\label{sec:critical_structure_Quot}
Let $V$ be an $n$-dimensional complex vector space. Consider the space $R_{r,n} = \Rep_{(n,1)}(\widetilde{\mathsf L}_3)$ of $r$-framed $(n,1)$-dimensional representations of the $3$-loop quiver $\mathsf L_3$, depicted in Figure \ref{fig:3loopquiver_framed}. The notation ``$(n,1)$'' means that the main vertex (the one belonging to the $3$-loop quiver, labelled ``$0$'' in the figure) carries a copy of $V$, whereas the framing vertex (labelled ``$\infty$'') carries a copy of $\BC$.

\begin{figure}[ht]
\begin{tikzpicture}[>=stealth,->,shorten >=2pt,looseness=.5,auto]
  \matrix [matrix of math nodes,
           column sep={3cm,between origins},
           row sep={3cm,between origins},
           nodes={circle, draw, minimum size=7.5mm}]
{ 
|(A)| \infty & |(B)| 0 \\         
};
\tikzstyle{every node}=[font=\small\itshape]
\path[->] (B) edge [loop above] node {$A_1$} ()
              edge [loop right] node {$A_2$} ()
              edge [loop below] node {$A_3$} ();

\node [anchor=west,right] at (-0.2,0.1) {$\vdots$};
\node [anchor=west,right] at (-0.3,0.95) {$u_1$};              
\node [anchor=west,right] at (-0.3,-0.85) {$u_r$};              
\draw (A) to [bend left=25,looseness=1] (B) node [midway,above] {};
\draw (A) to [bend left=40,looseness=1] (B) node [midway] {};
\draw (A) to [bend right=35,looseness=1] (B) node [midway,below] {};
\end{tikzpicture}
\caption{The $r$-framed $3$-loop quiver $\widetilde{\mathsf L}_3$.}\label{fig:3loopquiver_framed}
\end{figure}
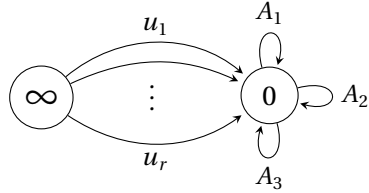

We have that $R_{r,n}$ is an affine space of dimension $3n^2+rn$, with an explicit description as
\begin{align*}
R_{r,n} &= \Set{(A_1,A_2,A_3,u_1,\ldots,u_r)|A_j \in \End(V),\,u_i \in V} \\
&=\End(V)^{\oplus 3}\oplus V^{\oplus r}.
\end{align*}
By \cite[Prop.~2.4]{BR18}, there exists a stability parameter $\theta$ on the $3$-loop quiver such that $\theta$-stable framed  representations $(A_1,A_2,A_3,u_1,\ldots,u_r) \in R_{r,n}$ are precisely those satisfying the condition:
\[
\textrm{the vectors }u_1,\ldots,u_r \in V \textrm{ jointly generate }(A_1,A_2,A_3) \in \Rep_n (\mathsf L_3).
\]
Imposing this stability condition on $R_{r,n}$ we obtain an open subscheme
\[
U_{r,n} \subset R_{r,n}
\]
on which $\GL(V)$ acts freely by the rule
\[
g \cdot (A_1,A_2,A_3,u_1,\ldots,u_r) = (gA_1g^{-1},gA_2g^{-1},gA_3g^{-1},gu_1,\ldots,gu_r). 
\]
The quotient
\begin{equation}\label{def:NCquot}
\NCQuot_{r}^n = U_{r,n}/\GL(V)
\end{equation}
is a smooth quasiprojective variety of dimension $2n^2+rn$. In \cite{BR18} the scheme $\NCQuot_{r}^n$ is referred to as the \emph{non-commutative Quot scheme}, by analogy with the \emph{non-commutative Hilbert scheme} \cite{Nori1}, i.e.~the moduli space of left ideals of codimension $n$ in $\BC\langle x_1,x_2,x_3\rangle$ (which of course exists for an arbitrary number of free variables).

\smallbreak
On $R_{r,n}$ one can define the function
\[
h_n\colon R_{r,n} \to \BA^1, \quad (A_1,A_2,A_3,u_1,\ldots,u_r) \mapsto \Tr A_1[A_2,A_3],
\]
induced by the superpotential $\mathsf W=A_1[A_2,A_3]$ on the $3$-loop quiver. Note that this function
\begin{itemize}
    \item is symmetric under cyclic permutations of $A_1$, $A_2$ and $A_3$, and
    \item does not touch the vectors $u_i$, which are only used to define its domain.
\end{itemize} 
Moreover, $h_n|_{U_{r,n}}$ is $\GL(V)$-invariant, and thus descends to a regular function
\begin{equation}\label{eqn:superpotential_ncquot}
f_n \colon \NCQuot_{r}^n \to \BA^1.
\end{equation}

\begin{prop}[{\cite[Thm.~2.6]{BR18}}]\label{prop:SPOT_A^3}
There is an identity of closed subschemes
\[
\Quot_{\BA^3}(\mathscr O^{\oplus r},n) = \crit(f_n) \subset \NCQuot_{r}^n.
\]
In particular, $\Quot_{\BA^3}(\mathscr O^{\oplus r},n)$ carries a symmetric perfect obstruction theory.
\end{prop}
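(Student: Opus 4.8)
The plan is to realize $\Quot_{\BA^3}(\OO^{\oplus r},n)$ as the critical locus of the descended superpotential $f_n$, and then invoke the general fact that critical loci carry a canonical symmetric perfect obstruction theory. The content therefore splits into two halves: an identification of schemes, and a formal consequence about obstruction theories.

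For the scheme-theoretic identity, I would first recall the description of $\NCQuot_r^n$ as the GIT quotient $U_{r,n}/\GL(V)$, where $U_{r,n}\subset R_{r,n} = \End(V)^{\oplus 3}\oplus V^{\oplus r}$ is the locus of $\theta$-stable framed representations, i.e.\ those $(A_1,A_2,A_3,u_1,\dots,u_r)$ for which the $u_i$ jointly generate $V$ under the action of $A_1,A_2,A_3$. The key point is that the partial derivatives of $h_n = \Tr A_1[A_2,A_3]$ encode the commutator relations: one computes $\partial h_n/\partial A_1 = [A_2,A_3]$, and cyclically $\partial h_n/\partial A_2 = [A_3,A_1]$, $\partial h_n/\partial A_3 = [A_1,A_2]$, while the derivatives in the $u_i$ directions vanish identically since $h_n$ does not involve the framing vectors. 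Hence $\crit(h_n)\subset R_{r,n}$ is exactly the closed subscheme cut out by $[A_i,A_j]=0$ for all $i,j$ — that is, the space of representations of the \emph{commutative} polynomial ring $\BC[x_1,x_2,x_3]$. Intersecting with the stable locus $U_{r,n}$ and passing to the quotient, $\crit(f_n)\subset\NCQuot_r^n$ becomes the moduli space of pairs consisting of a $\BC[x_1,x_2,x_3]$-module structure on $V$ together with $r$ generators, which is precisely the data of a surjection $\OO_{\BA^3}^{\oplus r}\onto T$ onto a length-$n$ sheaf $T$. This is the content of \cite[Thm.~2.6]{BR18} that the statement cites, so I would simply quote it rather than reprove it; the one subtlety worth flagging is that the identification must be scheme-theoretic (matching nilpotents), which is why one works with the Jacobian ideal of $h_n$ rather than just its set-theoretic zero locus.

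Given the identity $\Quot_{\BA^3}(\OO^{\oplus r},n) = \crit(f_n)$ inside the smooth quasiprojective variety $\NCQuot_r^n$, the second half is the standard construction: on $\crit(f)\subset M$ with $M$ smooth and $f\in\Gamma(M,\OO_M)$, the Hessian of $f$ furnishes a section and one obtains the two-term complex
\[
\BE = \bigl[\,T_M^\vee\big|_{\crit(f)} \xrightarrow{\ \mathrm{Hess}(f)\ } T_M\big|_{\crit(f)}^\vee\,\bigr]
\]
sitting in degrees $[-1,0]$, together with a natural map $\BE\to\BL_{\crit(f)}$ making it a perfect obstruction theory; the symmetry isomorphism $\theta\colon\BE\simto\BE^\vee[1]$ with $\theta=\theta^\vee[1]$ comes from the symmetry of the Hessian as a bilinear form. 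I would cite \cite{BFHilb} (and \cite{BR18} itself) for this, rather than spelling out the derived-category bookkeeping.

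The main obstacle — such as it is — is really bundled entirely into the first half, and is already absorbed by citing \cite[Thm.~2.6]{BR18}: the nontrivial assertion is that the stability-cut, GIT-quotiented critical locus of $h_n$ matches the Quot functor \emph{as schemes}, including non-reduced structure, which requires a careful comparison of the Jacobian ideal with the ideal defining the Quot scheme inside the appropriate smooth ambient space. Everything downstream — that $\crit$ of a function on a smooth scheme carries a symmetric perfect obstruction theory — is formal. So in practice this "proof" is a two-line deduction: quote Proposition's cited source for the identity of schemes, then quote the critical-locus construction for the obstruction theory.
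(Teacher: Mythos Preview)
Your proposal is correct and matches the paper's treatment exactly: the paper does not prove this statement but simply cites \cite[Thm.~2.6]{BR18} for the scheme-theoretic identity, and then remarks (immediately after the proposition) that every critical locus carries a canonical symmetric obstruction theory via the Hessian complex. Your sketch of the derivative computation $\partial h_n/\partial A_i = [A_j,A_k]$ is the correct intuition behind the cited result, and you rightly flag that the scheme-theoretic (as opposed to set-theoretic) identification is the nontrivial content absorbed into the citation.
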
  

We use the notation $\crit(f)$ for the zero scheme $\set{\dd f = 0}$, for $f$ a function on a smooth scheme. The embedding of the Quot scheme inside a non-commutative quiver model had appeared (conjecturally, and in a slightly different language) in the physics literature~\cite{Cir-Sink-Szabo}.

Every critical locus $\crit(f)$ has a canonical symmetric obstruction theory, determined by the Hessian complex attached to the function $f$. It will  be referred to as the \emph{critical obstruction theory} throughout. In the case of $\mathrm{Q} = \Quot_{\BA^3}(\mathscr O^{\oplus r},n)$, this symmetric obstruction theory is the morphism 
\begin{equation}\label{symmetric_POT_quot}
\begin{tikzpicture}[baseline=(current  bounding  box.center)]
\node (N1) at (-1.98,0.95) {$\mathbb E_{\crit}$};
\node (N2) at (-1.37,0.94) {$=$};
\node (N3) at (-1.97,-0.88) {$\BL_{\mathrm{Q}}$};
\node (N4) at (-1.38,-0.88) {$=$};
\node (O1) at (-0.1,0.93) {$\big[T_{\NCQuot_{r}^n}\big|_{\mathrm{Q}}$};
\node (O2) at (2.99,0.93) {$\Omega_{\NCQuot_{r}^n}\big|_{\mathrm{Q}}\big]$};
\node (O3) at (-0.1,-0.88) {$\big[\mathscr I/\mathscr I^2$};
\node (O4) at (2.99,-0.88) {$\Omega_{\NCQuot_{r}^n}\big|_{\mathrm{Q}}\big]$};
\path[commutative diagrams/.cd, every arrow, every label]
(N1) edge node[swap] {$\phi$} (N3)
(O1) edge node {$\mathsf{Hess}(f_n)$} (O2)
(O1) edge node[swap] {$(\dd f_n)^\vee|_{\mathrm{Q}}$} (O3)
(O3) edge node {$\dd$} (O4)
(O2) edge node {$\mathrm{id}$} (O4);
\end{tikzpicture}
\end{equation}
in $\derived^{[-1,0]}(\mathrm{Q})$, where we represented the truncated cotangent complex by means of the exterior derivative $\dd$ constructed out of the ideal sheaf $\mathscr I \subset \OO_{\NCQuot^n_r}$ of the inclusion $\mathrm{Q} \into \NCQuot^n_r$. 

\begin{remark}
As proved by Cazzaniga and the third author in \cite{cazzaniga2020framed}, for any integer $m\geq 3$, the Quot scheme $\Quot_{\BA^m}(\OO^{\oplus r},n)$ is canonically isomorphic to the moduli space of \emph{framed sheaves} on $\BP^m$, i.e.~the moduli space of pairs $(E,\varphi)$ where $E$ is a torsion free sheaf on $\BP^m$ with Chern character $(r,0,\ldots,0,-n)$ and $\varphi\colon E|_D \simto \OO_D^{\oplus r}$ is an isomorphism, for $D \subset \BP^m$ a fixed hyperplane.
\end{remark}

%%%%%%%%%%%%%%%%%%%%%%%%%%%%%%%%%%%%%%%%%%%%%%%%%%%%

\subsection{Torus actions on the local Quot scheme}\label{sec:torus_actions}
In this section we define a torus action on the Quot scheme.
Set
\begin{equation}\label{tori_T1_and_T2}
\BT_1 = (\BC^\ast)^3, \quad \BT_2 = (\BC^\ast)^r,\quad \TT = \BT_1 \times \BT_2.
\end{equation}
The torus $\BT_1$ acts on $\BA^3$ by the standard action
\begin{equation}\label{standard_action}
t\cdot x_i = t_ix_i,
\end{equation}
and this action lifts to an action on $\Quot_{\BA^3}(\mathscr O^{\oplus r},n)$. 
At the same time, the torus $\BT_2 = (\BC^\ast)^r$ acts on the Quot scheme by scaling the fibres of $\OO^{\oplus r}$. Thus we obtain a $\TT$-action on $\Quot_{\BA^3}(\mathscr O^{\oplus r},n)$.

\begin{remark}\label{lemma:compact_fixed_locus_Quot}
The fixed locus $\Quot_{\BA^3}(\OO^{\oplus r},n)^{\BT_1}$ is proper. Indeed, a $\BT_1$-invariant surjection $\OO^{\oplus r}\onto T$ necessarily has the quotient $T$  entirely supported at the origin $0\in \BA^3$. Hence
\[
\Quot_{\BA^3}(\OO^{\oplus r},n)^{\BT_1} \into \Quot_{\BA^3}(\OO^{\oplus r},n)_0
\]
sits inside the \emph{punctual Quot scheme} as a closed subscheme. But $\Quot_{\BA^3}(\OO^{\oplus r},n)_0$ is proper, since it is a fibre of the Quot-to-Chow morphism $\Quot_{\BA^3}(\OO^{\oplus r},n)\to \Sym^n\BA^3$, which is a proper morphism.
\end{remark}

We recall, verbatim from \cite[Lemma 2.10]{BR18}, the description of the full $\TT$-fixed locus induced by the product action on the local Quot scheme.

\begin{lemma}\label{lemma:T_fixed_points}
There is an isomorphism of schemes
\begin{equation}\label{eqn:T-fixedlocus}
\Quot_{\BA^3}(\mathscr O^{\oplus r},n)^{\TT} = \coprod_{n_1+\cdots+n_r=n}\prod_{i=1}^r \Hilb^{n_i}(\BA^3)^{\BT_1}.
\end{equation}
In particular, the $\TT$-fixed locus is isolated and compact. Moreover, letting $\TT_0 \subset \TT$ be the subtorus defined by $t_1t_2t_3 = 1$, one has a scheme-theoretic identity
\begin{equation}\label{eqn:T0_fixed_locus}
\Quot_{\BA^3}(\mathscr O^{\oplus r},n)^{\TT_0} = \Quot_{\BA^3}(\mathscr O^{\oplus r},n)^{\TT}.
\end{equation}
\end{lemma}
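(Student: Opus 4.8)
The plan is to analyze the $\TT$-fixed locus of $\Quot_{\BA^3}(\OO^{\oplus r},n)$ using the quiver description of Proposition~\ref{prop:SPOT_A^3}, reducing to a concrete statement about framed quiver representations. A point of $\NCQuot_r^n$ is a $\GL(V)$-orbit of a stable tuple $(A_1,A_2,A_3,u_1,\ldots,u_r)$, and such a point is $\TT$-fixed precisely when for every $(t,w)\in\BT_1\times\BT_2$ there exists $g=g(t,w)\in\GL(V)$ (unique, by freeness of the action on $U_{r,n}$) with $gA_jg^{-1}=t_jA_j$ and $gu_i=w_iu_i$. The first step is to observe that $g$ is then a homomorphism $\TT\to\GL(V)$, so $V$ decomposes into $\TT$-weight spaces. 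Since the $u_i$ are eigenvectors of weight $w_i$ and they jointly generate $V$ under the $A_j$ (each of which shifts the $\BT_1$-weight by the $j$-th standard character and fixes the $\BT_2$-weight), the $\BT_2$-weight of any vector reachable from $u_i$ stays $w_i$; hence $V=\bigoplus_{i=1}^r V_i$ where $V_i$ is the span of vectors reachable from $u_i$, and this direct sum is $(A_1,A_2,A_3)$-stable. This exhibits the fixed point as a direct sum of $r$ cyclic modules, i.e.\ an $r$-tuple of points in $\Hilb^{n_i}(\BA^3)^{\BT_1}$ with $\sum n_i=n$; conversely any such tuple is visibly $\TT$-fixed. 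One must check this identification is scheme-theoretic, not just on points: this follows because the same weight-space argument applies to families, or more simply because the fixed locus of a torus action on a quasiprojective scheme is closed and the decomposition is functorial in the base. That gives \eqref{eqn:T-fixedlocus}; that the right-hand side is isolated and reduced (hence the fixed locus is compact, a finite set of reduced points) then follows from the classical fact that $\Hilb^m(\BA^3)^{\BT_1}$ consists of the finitely many monomial ideals, each an isolated reduced point, which can be cited from \cite{MNOP1} or re-derived from the $\TT$-fixed-point structure.

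For the second assertion \eqref{eqn:T0_fixed_locus}, the inclusion $\Quot^{\TT}\subseteq\Quot^{\TT_0}$ is automatic since $\TT_0\subset\TT$. For the reverse, note $\TT_0$ is a $2$-dimensional subtorus and $\TT/\TT_0\cong\BC^\ast$ acts on $\Quot^{\TT_0}$; it suffices to show this residual $\BC^\ast$ acts trivially. I would argue via the weight decomposition again: a $\TT_0$-fixed point, by the same reasoning as above applied to the action of $\TT_0$ together with $\BT_2$ (noting $\BT_2\subset\TT_0$ is false in general — $\BT_2$ meets $\TT_0$ in the subtorus where the $w_i$ multiply to give the constraint from $t_1t_2t_3=1$, so one must be slightly careful and instead use that $\BT_2\cap\TT_0$ still contains enough to separate the framing vectors, or simply observe $\BT_2$ normalizes everything), yields that $V$ is generated by weight vectors under operators of definite $\TT_0$-weight, forcing $V$ to be a sum of $\TT_0$-weight spaces; since any $\TT_0$-weight lifts (non-uniquely) to a $\TT$-weight and the residual $\BC^\ast$ then acts on each such space, one checks the grading is already fine enough that the $A_j$ and $u_i$ are homogeneous for the full $\TT$. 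Concretely: the monomial-ideal description shows each $\BT_1$-fixed cyclic module is automatically graded by the full $\BT_1$ (monomials have definite multidegree), and $t_1t_2t_3=1$ does not collapse distinct monomials of $\BC[x_1,x_2,x_3]$ into the same $\TT_0$-class when they sit inside a finite-length quotient — this is the key elementary point.

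The main obstacle I anticipate is precisely that last elementary point in \eqref{eqn:T0_fixed_locus}: a priori the map $\widehat\TT\to\widehat\TT_0$ has a kernel (generated by $t_1t_2t_3$), so two distinct $\BT_1$-monomials $x^\alpha$, $x^\beta$ could become $\TT_0$-equivalent if $\alpha-\beta$ is a multiple of $(1,1,1)$, and one needs that this never happens among the monomials appearing in a finite-colength subscheme supported at the origin — equivalently that a monomial ideal $I\subset\BC[x_1,x_2,x_3]$ of finite colength never contains $x^\alpha$ while missing $x^\alpha\cdot(x_1x_2x_3)^k$ for $k>0$, which is immediate since $I$ is an ideal (if $x^\alpha\in I$ then $x^\alpha(x_1x_2x_3)^k\in I$), and conversely if $x^{\alpha+k(1,1,1)}\notin I$ then neither is $x^\alpha$; so a monomial ideal is determined by, and determines, its class under the coarser $\TT_0$-grading. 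Packaging this cleanly — ideally by noting that $\Quot^{\TT_0}$ is already a finite union of reduced points, each of which is $\TT$-fixed by the monomial argument, so the two loci coincide as schemes — is where the care goes, but no hard machinery is needed. I would also double-check that the scheme structure on $\Quot^{\TT}$ in \eqref{eqn:T-fixedlocus} matches the product scheme structure on the right, using that fixed loci commute with products and that $\Hilb^{n_i}(\BA^3)^{\BT_1}$ is reduced.
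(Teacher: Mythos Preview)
Your approach to \eqref{eqn:T-fixedlocus} via the quiver model is correct and genuinely different from the paper's: the paper simply cites Bifet's general result on torus-fixed loci of Quot schemes to obtain $\Quot_{\BA^3}(\OO^{\oplus r},n)^{\BT_2} = \coprod_{\sum n_i=n}\prod_i \Hilb^{n_i}(\BA^3)$, then takes $\BT_1$-invariants. Your argument through weight decompositions of the framed representation is more hands-on and has the merit of being self-contained, though you should be explicit that you are working inside $\Quot\subset\NCQuot$ (where the $A_j$ commute) so that a cyclic module really is a point of the commutative Hilbert scheme.

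For \eqref{eqn:T0_fixed_locus}, however, there are real problems. First, $\TT_0$ is not $2$-dimensional: it is the hypersurface $t_1t_2t_3=1$ inside $\TT=(\BC^*)^{3+r}$, hence has dimension $2+r$. More importantly, since the defining relation involves only the $\BT_1$-coordinates, one has $\TT_0=\BT_0\times\BT_2$ with $\BT_0=\{t_1t_2t_3=1\}\subset\BT_1$; in particular $\BT_2\subset\TT_0$ \emph{is} true, contrary to your parenthetical worry. This is exactly what the paper uses: apply Bifet again to pass to $\prod_i\Hilb^{n_i}(\BA^3)$, then invoke the known fact $\Hilb^k(\BA^3)^{\BT_0}=\Hilb^k(\BA^3)^{\BT_1}$ from \cite{BFHilb}.

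Your ``key elementary point'' does not prove the latter. What must be shown is that a $\BT_0$-invariant finite-colength ideal is monomial. Your statement that distinct monomials in a finite-colength quotient never share a $\BT_0$-weight is false (e.g.\ $1$ and $x_1x_2x_3$ in $\BC[x_1,x_2,x_3]/(x_1^2,x_2^2,x_3^2)$), and your alternative formulation --- that $x^\alpha\in I$ implies $x^\alpha(x_1x_2x_3)^k\in I$ --- is a property of ideals that are \emph{already} monomial, not a proof that $\BT_0$-fixed implies monomial. The correct argument is that each $\BT_0$-weight space of $R=\BC[x_1,x_2,x_3]$ is a free rank-one $\BC[x_1x_2x_3]$-module, so any $\BC[x_1x_2x_3]$-submodule of it is generated by a single monomial; hence a $\BT_0$-homogeneous ideal has each weight piece spanned by monomials, i.e.\ is monomial. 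Either supply this, or follow the paper and cite \cite[Lemma~4.1]{BFHilb}.
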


\begin{proof}
The main result proved by Bifet in \cite{Bifet} (in greater generality) implies that 
\begin{equation}\label{Bifet_theorem}
\Quot_{\BA^3}(\mathscr O^{\oplus r},n)^{\BT_2} = \coprod_{n_1+\cdots+n_r=n}\prod_{i=1}^r \Hilb^{n_i}(\BA^3).
\end{equation}
The isomorphism \eqref{eqn:T-fixedlocus} follows by taking $\BT_1$-invariants. Since $\Hilb^{k}(\BA^3)^{\BT_1}$ is isolated (a disjoint union of reduced points, each corresponding to a monomial ideal of colength $k$), the first claim follows. Let now $\BT_0 \subset \BT_1$ be the subtorus defined by $t_1t_2t_3=1$, so that $\TT_0 = \BT_0 \times \BT_2$. Equation \eqref{eqn:T0_fixed_locus} follows combining Equation \eqref{Bifet_theorem} and the isomorphism $\Hilb^{k}(\BA^3)^{\BT_1} \cong \Hilb^k(\BA^3)^{\BT_0}$ proved in \cite[Lemma 4.1]{BFHilb}.
\end{proof}

\begin{remark}
The $\TT$-action on $\Quot_{\BA^3}(\OO^{\oplus r},n)$ just described can be seen as the restriction of a $\TT$-action on the larger space $\NCQuot^n_r$. Indeed, a torus element $\mathbf t = (t_1,t_2,t_3,w_1,\ldots,w_r) \in \TT$ acts on $ (A_1,A_2,A_3,u_1,\ldots,u_r) \in \NCQuot^n_r$ by 
\begin{equation}\label{T-action_on_NCQUOT}
\mathbf t\cdot P = (t_1A_1,t_2A_2,t_3A_3,w_1u_1,\ldots,w_ru_r).
\end{equation}
The critical locus $\crit(f_n)$ is $\TT$-invariant, and the induced action is precisely the one we described earlier in this section.
\end{remark}

\subsection{Combinatorial description of the $\TT$-fixed locus}\label{sec: combinatorial description of fixed locus}
The $\TT$-fixed locus $\Quot_{\BA^3}(\OO^{\oplus r},n)^\TT$ is described purely in terms of $r$-\emph{colored plane partitions} of size $n$, as we now explain.

We first recall the definition of a partition of arbitrary dimension.

\begin{definition}\label{def:partitions_arbitrary_dim}
Let $d\geq 1$ and $n \geq 0$ be integers. A $(d-1)$-dimensional partition of $n$ is a collection of $n$ points $\CA=\set{\mathbf a_1,\ldots,\mathbf a_n}\subset \BZ_{\geq 0}^{d}$ with the following property: if $\mathbf a_i = (a_{i1},\ldots,a_{id}) \in \CA$, then whenever a point $\mathbf y = (y_1,\ldots,y_d)\in\BZ_{\geq 0}^{d}$ satisfies $0\leq y_j\leq a_{ij}$ for all $j=1,\ldots,d$, one has $\mathbf y \in \CA$. The integer $n = \lvert \mathcal A \rvert$ is called the \emph{size} of the partition. 
\end{definition}

There is a bijective correspondence between the sets of
\begin{itemize}
    \item [$\circ$] $(d-1)$-dimensional partitions of size $n$,
    \item [$\circ$] $(\BC^\ast)^d$-fixed points of $\Hilb^n(\BA^d)$, and
    \item [$\circ$] monomial ideals $I \subset \BC[x_1,\ldots,x_d]$ of colength $n$.
\end{itemize}

We will be interested in the case $d=3$, corresponding by definition to \emph{plane partitions}. These can be visualised (cf.~Figure \ref{fig:2D_partition}) as configurations of $n$ boxes stacked in the corner of a room (with gravity pointing in the $(-1,-1,-1)$ direction).

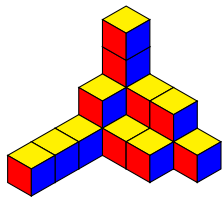
\begin{figure}[ht]
\captionsetup{width=0.68\textwidth}
\centering
\begin{tikzpicture}[scale=0.36] 
\planepartition{{4,2,2,1},{2,1,1},{1},{1},{1}}
\end{tikzpicture}
\caption{\small{A plane partition of size $n=16$.}} \label{fig:2D_partition}
\end{figure}

\begin{example}
If $\mathcal A$ is a $(d-1)$-dimensional partition of size $n$ as in Definition \ref{def:partitions_arbitrary_dim}, the associated monomial ideal is
\[
I_{\mathcal A} = \Braket{x_1^{i_1}\cdots x_d^{i_d}|(i_1,\ldots,i_d) \in \BZ^d_{\geq 0}\setminus \mathcal A}\,\subset\,\BC[x_1,\ldots,x_d].
\]
For instance, if $d=3$, in the case of the plane partition pictured in Figure \ref{fig:2D_partition}, the associated monomial ideal of colength $16$ is generated by the monomials shaping the staircase of the partition, and is thus equal to
\[
\Braket{x_3^4,x_1x_3^2,x_1^2x_3,x_1^5,x_1^2x_2,x_1x_2x_3,x_2x_3^2,x_1x_2^3,x_2^3x_3,x_2^4}\,\subset\, \BC[x_1,x_2,x_3].
\]
\end{example}

Here is an alternative definition of plane partitions.

\begin{definition}\label{def:plane_partition}
A (finite) \emph{plane partition} is a sequence $\pi=\set{\pi_{ij}|i,j\geq 0} \subset \BZ_{\geq 0}$ such that $\pi_{ij}=0$ for $i,j\gg 0$ and
\[ 
\pi_{ij}\geq \pi_{i+1,j}, \quad  \pi_{ij}\geq \pi_{i,j+1}\quad \textrm{for all } i,j\geq 0.
\]
\end{definition}

\begin{definition}\label{def:colored_partition}
An \emph{$r$-colored plane partition} is a tuple $\overline{\pi}=(\pi_1,\ldots,\pi_r)$, where each $\pi_i$ is a plane partition.
\end{definition}

Denote by
\[
|\pi|=\sum_{i,j\geq 0}\pi_{ij}
\]
the \emph{size} of a plane partition (i.e.~the number $n$ in Definition \ref{def:partitions_arbitrary_dim}) and by $|\overline{\pi}|=\sum_{i=1}^r|\pi_i|$ the size of an $r$-colored plane partition.

In the light of Definition \ref{def:plane_partition}, the monomial ideal associated to a plane partition $\pi$ is
\[
I_\pi = \Braket{x_1^ix^j_2x^{\pi_{ij}}_3 | i,j\geq 0}\,\subset\, \BC[x_1,x_2,x_3].
\]
It is clear that the colength of the ideal $I_{\pi}$ is $|\pi|$.

\begin{remark}
A general plane partition may have infinite legs, each shaped by (i.e.~asymptotic to) a standard ($1$-dimensional) partition, or Young diagram. We are not concerned with infinite plane partitions here, since we only deal with quotients $\OO^{\oplus r} \onto T$ with finite support.
\end{remark}

\begin{prop}\label{prop:fixedlocus_indexed_by_colored_partitions}
There is a bijection between $\TT$-fixed points $[S]\in\Quot_{\BA^3}(\OO^{\oplus r},n)^\TT$ and $r$-colored plane partitions $\overline{\pi}$ of size $n$.
\end{prop}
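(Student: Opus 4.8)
The plan is to combine the description of the $\TT$-fixed locus already recorded in Lemma \ref{lemma:T_fixed_points} with the classical combinatorial classification of torus-fixed points on the Hilbert scheme of points of $\BA^3$. By \eqref{eqn:T-fixedlocus} we have
\[
\Quot_{\BA^3}(\mathscr O^{\oplus r},n)^{\TT} = \coprod_{n_1+\cdots+n_r=n}\prod_{i=1}^r \Hilb^{n_i}(\BA^3)^{\BT_1},
\]
so a $\TT$-fixed point $[S]$ of the Quot scheme is the same datum as an ordered tuple $(n_1,\ldots,n_r)$ of nonnegative integers summing to $n$ together with a choice, for each $i$, of a $\BT_1$-fixed point of $\Hilb^{n_i}(\BA^3)$, i.e.\ a monomial ideal $I_{\pi_i}\subset \BC[x_1,x_2,x_3]$ of colength $n_i$.

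The second ingredient is the bijective correspondence, recalled just before Definition \ref{def:plane_partition}, between $\BT_1$-fixed points of $\Hilb^{k}(\BA^3)$, monomial ideals of colength $k$ in $\BC[x_1,x_2,x_3]$, and plane partitions of size $k$ (in the sense of Definition \ref{def:plane_partition}, via $I_\pi = \Braket{x_1^ix_2^jx_3^{\pi_{ij}}\mid i,j\geq 0}$, whose colength is $|\pi|=k$). Thus, for each $i$, the datum of a $\BT_1$-fixed point of $\Hilb^{n_i}(\BA^3)$ is equivalent to that of a plane partition $\pi_i$ with $|\pi_i|=n_i$. Assembling these over $i=1,\ldots,r$, a $\TT$-fixed point of $\Quot_{\BA^3}(\OO^{\oplus r},n)$ corresponds exactly to an $r$-tuple $\overline{\pi}=(\pi_1,\ldots,\pi_r)$ of plane partitions with $\sum_i |\pi_i|=n$, which is precisely an $r$-colored plane partition of size $n$ in the sense of Definition \ref{def:colored_partition}. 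The integers $n_i$ are recovered as $n_i=|\pi_i|$, so the intermediate choice of the composition $(n_1,\ldots,n_r)$ is subsumed by the tuple $\overline{\pi}$; this shows the two assignments are mutually inverse.

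Concretely, the fixed point attached to $\overline{\pi}$ is the subsheaf $S = \bigoplus_{i=1}^r I_{\pi_i}\subset \OO^{\oplus r}$, where $I_{\pi_i}$ is placed in the $i$-th summand; the corresponding quotient $T=\bigoplus_i \OO/I_{\pi_i}$ is $0$-dimensional of length $\sum_i|\pi_i|=n$, and it is visibly fixed by the full torus $\TT$ — by $\BT_1$ because each $I_{\pi_i}$ is a monomial ideal, and by $\BT_2$ because $S$ is a direct sum of subsheaves of the individual summands. I do not expect any serious obstacle here: the statement is essentially a bookkeeping reformulation, the only point requiring a word of care being that the splitting $\prod_i$ in \eqref{eqn:T-fixedlocus} is an \emph{ordered} product, matching the ordered tuple in Definition \ref{def:colored_partition}, and that passing from the disjoint union over compositions to the single set of colored partitions is harmless because the size of $\pi_i$ determines which piece of the coproduct one lands in.
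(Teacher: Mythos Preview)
Your proof is correct and follows essentially the same approach as the paper: both invoke Lemma \ref{lemma:T_fixed_points} to reduce to the $r=1$ case, then use the standard bijection between $\BT_1$-fixed points of $\Hilb^k(\BA^3)$ and plane partitions of size $k$. Your version simply spells out the bookkeeping (mutual inverses, the role of the ordered composition $(n_1,\ldots,n_r)$) more explicitly than the paper does.
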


\begin{proof}
For $r = 1$ this is well known: as we recalled above, monomial ideals $I\subset \BC[x_1,x_2,x_3]$ are in bijective correspondence with plane partitions. 
Similarly, to each $r$-colored plane partition $\overline{\pi}=(\pi_1,\ldots,\pi_r)$ there corresponds a subsheaf $S_{\overline{\pi}}=\bigoplus_{i=1}^r I_{\pi_i} \subset \OO^{\oplus r}$. But these are the $\TT$-fixed points of the Quot scheme by Lemma \ref{lemma:T_fixed_points}.
\end{proof}

%%%%%%%%%%%%%%%%%%%%%%%%%%%%%%%%%%%%%%%%%%%%%%%%%%%%%%%
\subsubsection{Computing the trace of a monomial ideal}
Recall the map \eqref{eqn:trace} sending a torus representation to its weight space decomposition. Consider the $3$-dimensional torus $\BT_1$ acting on the coordinate ring $R = \BC[x_1,x_2,x_3]$ of $\BA^3$. Then we have
\[
\tr_{R} = \sum_{\textrm{\ding{114}}\,\,\in\, \BZ_{\geq 0}^3}t^{\textrm{\ding{114}}} =\sum_{(i,j,k) \,\in\, \BZ_{\geq 0}^3}t_1^it_2^jt_3^k = \frac{1}{(1-t_1)(1-t_2)(1-t_3)}.
\]
For a cyclic monomial ideal $\mathfrak m_{abc} = x_1^ax_2^bx_3^c\cdot R \subset R$, one has
\[
\tr_{\mathfrak m_{abc}} = \sum_{i\geq a}\sum_{j\geq b}\sum_{k\geq c} t_1^it_2^jt_3^k = \frac{t_1^at_2^bt_3^c}{(1-t_1)(1-t_2)(1-t_3)}.
\]
More generally, for a monomial ideal $I_{\pi}\subset \BC[x_1,x_2,x_3]$, one has
\begin{equation}\label{eqn:trace_ideal}
    \tr_{I_\pi} = \sum_{(i,j,k)\,\notin\,\pi}t_1^it_2^jt_3^k.
\end{equation}
These are the building blocks needed to compute $\tr_S$ for an arbitrary sheaf $S = \bigoplus_{i=1}^r I_{\pi_i}$ corresponding to a $\TT$-fixed point $[S] \in \Quot_{\BA^3}(\OO^{\oplus r},n)^{\TT}$.

%%%%%%%%%%%%%%%%%%%%%%%%%%%%%%%%%%%%%%%%%%%%%%%%%%%%%%%
\subsection{The \texorpdfstring{$\TT$}{}-fixed obstruction theory}
Recall from \cite[Prop.~1]{GPvirtual} that a torus equivariant obstruction theory on a scheme $Y$ induces a canonical perfect obstruction theory, and hence a virtual fundamental class, on each component of the torus fixed locus. In this subsection we show that the reduced isolated locus
\[
\Quot_{\BA^3}(\OO^{\oplus r},n)^{\TT} \into \Quot_{\BA^3}(\OO^{\oplus r},n)
\]
carries the trivial $\TT$-fixed perfect obstruction theory, so the induced virtual fundamental class agrees with the actual ($0$-dimensional)  fundamental class.

We first need to check the equivariance (cf.~Definition \ref{def:equivariant_pot}) of the critical obstruction theory $\BE_{\crit}$ obtained in Proposition \ref{prop:SPOT_A^3}. In fact, this follows from the general fact that the critical obstruction theory on $\crit(f) \subset Y$, for $f$ a function on a smooth scheme $Y$, acted on by an algebraic torus $\TT$, is naturally $\TT$-equivariant as soon as $f$ is $\TT$-homogeneous. However, for the sake of completeness, we include a direct proof below for the case at hand.

\begin{lemma}\label{lemma:T_equivariance_of_POT}
The critical obstruction theory on $\mathrm{Q} = \Quot_{\BA^3}(\OO^{\oplus r},n)$  is $\TT$-equivariant. 
\end{lemma}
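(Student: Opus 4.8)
The plan is to establish $\TT$-equivariance by exhibiting each ingredient of the critical obstruction theory \eqref{symmetric_POT_quot} as an object or morphism in the $\TT$-equivariant derived category. The key observation is that the ambient smooth scheme $\NCQuot^n_r$ carries the $\TT$-action \eqref{T-action_on_NCQUOT}, and all the objects appearing in \eqref{symmetric_POT_quot} are built functorially out of this action, so it suffices to check that the superpotential function $f_n$ (equivalently $h_n$ on $R_{r,n}$) is $\TT$-\emph{semi-invariant}, i.e.\ $\TT$-homogeneous of some fixed weight. Indeed, under \eqref{T-action_on_NCQUOT} one has $h_n(\mathbf t\cdot P) = \Tr\bigl((t_1A_1)[(t_2A_2),(t_3A_3)]\bigr) = t_1t_2t_3\,h_n(P)$, so $h_n$, and hence the descended function $f_n$, is an eigenvector of weight $\mathfrak t = t_1t_2t_3$ for the $\TT$-action (the framing torus $\BT_2$ acts trivially on $h_n$ since the vectors $u_i$ do not enter its definition).

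From here the steps are: (i) Since $f_n \colon \NCQuot^n_r \to \BA^1$ is $\TT$-semi-invariant of weight $\mathfrak t$, its differential $\dd f_n$ is a $\TT$-equivariant section of $\Omega_{\NCQuot^n_r}\otimes \mathfrak t$, hence the zero scheme $\mathrm{Q} = \crit(f_n) = \set{\dd f_n = 0}$ is a $\TT$-invariant closed subscheme, and its ideal sheaf $\mathscr I \subset \OO_{\NCQuot^n_r}$ is $\TT$-equivariant. Consequently $\mathscr I/\mathscr I^2$, the restriction $\Omega_{\NCQuot^n_r}|_{\mathrm Q}$, and the de Rham differential $\dd\colon \mathscr I/\mathscr I^2 \to \Omega_{\NCQuot^n_r}|_{\mathrm Q}$ are all canonically $\TT$-equivariant, so the truncated cotangent complex $\BL_{\mathrm Q} = [\mathscr I/\mathscr I^2 \to \Omega_{\NCQuot^n_r}|_{\mathrm Q}]$ lifts canonically to $\derived^{[-1,0]}(\Coh_{\mathrm Q}^{\TT})$. (ii) Likewise $T_{\NCQuot^n_r}|_{\mathrm Q}$ and $\Omega_{\NCQuot^n_r}|_{\mathrm Q}$ are $\TT$-equivariant, and the Hessian $\mathsf{Hess}(f_n)\colon T_{\NCQuot^n_r}|_{\mathrm Q} \to \Omega_{\NCQuot^n_r}|_{\mathrm Q}$, being the second derivative of the weight-$\mathfrak t$ function $f_n$, is a $\TT$-equivariant morphism (of weight $\mathfrak t$, which does not obstruct equivariance), so $\BE_{\crit} = [T_{\NCQuot^n_r}|_{\mathrm Q} \to \Omega_{\NCQuot^n_r}|_{\mathrm Q}]$ is an object of $\derived^{[-1,0]}(\Coh_{\mathrm Q}^{\TT})$. (iii) Finally the map $\phi\colon \BE_{\crit} \to \BL_{\mathrm Q}$ is given in \eqref{symmetric_POT_quot} by the pair $\bigl((\dd f_n)^\vee|_{\mathrm Q},\, \mathrm{id}\bigr)$; since both components are built from the $\TT$-equivariant datum $\dd f_n$, the morphism $\phi$ is itself $\TT$-equivariant. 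This exhibits $(\BE_{\crit},\phi)$ as a $\TT$-equivariant perfect obstruction theory in the sense of Definition \ref{def:equivariant_pot}.

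The one point requiring a little care — and the place I expect the argument to need the most attention — is the bookkeeping of the nontrivial weight $\mathfrak t$ by which $f_n$ fails to be strictly invariant. One must check that a function semi-invariant of a nonzero weight still produces an \emph{equivariant} (not merely equivariant-up-to-twist) obstruction theory: concretely, $\dd f_n$ lives in $\Gamma(\NCQuot^n_r, \Omega_{\NCQuot^n_r}\otimes \mathfrak t)$ rather than in $\Gamma(\NCQuot^n_r,\Omega_{\NCQuot^n_r})$, but the vanishing locus $\set{\dd f_n = 0}$ — and hence $\mathscr I$, $\mathscr I/\mathscr I^2$, and the induced maps — is insensitive to this twist, and the Hessian, viewed appropriately as a symmetric map $T \to \Omega$ after trivialising the one-dimensional twist on the invariant locus, assembles into a genuine $\TT$-equivariant two-term complex. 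Alternatively, and perhaps more cleanly, one can invoke the general principle stated just before the lemma: the Hessian complex of a $\TT$-homogeneous function on a smooth $\TT$-scheme is automatically $\TT$-equivariant, and verifying $\TT$-homogeneity of $f_n$ via the computation above is then the whole content. I would present the direct computation of the weight of $h_n$ under \eqref{T-action_on_NCQUOT} as the heart of the proof, and dispatch the functoriality of ideal sheaves, conormal sheaves, cotangent complexes and Hessians under the equivariant structure as routine.
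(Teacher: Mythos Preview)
Your proposal is correct and follows precisely the route the paper itself acknowledges just before the lemma: equivariance of the critical obstruction theory follows from $\TT$-homogeneity of $f_n$, which you verify via $h_n(\mathbf t\cdot P)=t_1t_2t_3\,h_n(P)$. The paper, ``for the sake of completeness,'' instead carries out an explicit verification in local coordinates on $\NCQuot^n_r$: it prescribes the $\TT$-action on tangent vectors and $1$-forms (equations \eqref{eqn:T-action_forms_vectors}--\eqref{T2actiondimerda}) and checks the relation $\mathbf t\cdot \mathsf{Hess}(f_n)(\xi)=\mathsf{Hess}(f_n)(\mathbf t\cdot\xi)$ by hand, using the degree-$3$ homogeneity and the weight-$\mathfrak t$ semi-invariance of $f_n$. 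Your functorial argument is cleaner and is the one the authors themselves say suffices; the paper's coordinate computation has the virtue of making the $\mathfrak t$-twist completely explicit. On that last point, your phrase ``after trivialising the one-dimensional twist'' is not quite right: one does not trivialise $\mathfrak t$ but rather records it --- the $\TT$-equivariant lift of $\BE_{\crit}$ is the complex $[\mathfrak t\otimes T_{\NCQuot^n_r}|_{\mathrm Q}\to \Omega_{\NCQuot^n_r}|_{\mathrm Q}]$, as written in \eqref{equivariant_symmetric_POT_quot}, and this is a lift of the underlying non-equivariant $\BE_{\crit}$ because $\mathfrak t$ is trivial as a line bundle. With that adjustment your argument is complete.
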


\begin{proof}
We start with two observations:
\begin{enumerate}
    \item The potential $f_n = \Tr A_1[A_2,A_3]$ recalled in \eqref{eqn:superpotential_ncquot} is homogeneous (of degree $3$) in the matrix coordinates of the non-commutative Quot scheme.\label{obs1}
    \item The potential $f_n$ satisfies the relation
    \begin{equation}\label{equivariance_of_f}
    f_n(t\cdot P) = t_1t_2t_3\cdot f_n(P)
    \end{equation}
    for every $t = (t_1,t_2,t_3)\in \BT_1$ and $P\in \NCQuot^n_r$.\label{obs2}
\end{enumerate}
 Fix a point $p \in \mathrm{Q} = \crit (f_n) \subset \NCQuot^n_r$. Then, setting $N = 2n^2+rn = \dim \NCQuot^n_r$, let $x_1,\ldots,x_N$ be local holomorphic coordinates of $\NCQuot^n_r$ around $p$. Let the torus $\TT$ act on these coordinates as prescribed by Equation \eqref{T-action_on_NCQUOT}, i.e.~$t_1$ (resp.~$t_2$ and $t_3$) rescales each $x_k$ corresponding to the entries of the first (resp.~second and third) matrix, and $w_l$ rescales the coordinates of the vector $u_l$, for $l=1,\ldots,r$. Formally, for a matrix coordinate $x_k$, we set
\[
(t_1,t_2,t_3,w_1,\ldots,w_r) \cdot x_k = t_{\ell(k)} x_k
\]
where $\ell(k)\in \set{1,2,3}$ depends on whether $x_k$ comes from an entry of $A_1$, $A_2$ or $A_3$. We also have to prescribe an action on tangent vectors and $1$-forms. For a matrix coordinate $x_k$, we set
\begin{equation}\label{eqn:T-action_forms_vectors}
\begin{split}
    (t_1,t_2,t_3,w_1,\ldots,w_r) \cdot \frac{\partial}{\partial x_k} &= \frac{t_1t_2t_3}{t_{\ell(k)}} \frac{\partial}{\partial x_k}\\
    (t_1,t_2,t_3,w_1,\ldots,w_r) \cdot \dd x_k &= t_{\ell(k)} \dd x_k.
\end{split}
\end{equation}
If $x_k$ comes from a vector component of the $l$-th vector, we set
\begin{equation}\label{T2actiondimerda}
\begin{split}
    (t_1,t_2,t_3,w_1,\ldots,w_r) \cdot \frac{\partial}{\partial x_k} &= w_l^{-1}\frac{\partial}{\partial x_k}\\
    (t_1,t_2,t_3,w_1,\ldots,w_r) \cdot \dd x_k &=w_l \dd x_k.
\end{split}
\end{equation}
However, the $\BT_2$-action \eqref{T2actiondimerda} will be invisible in the Hessian since the function $f_n$ does not touch the vectors.

The Hessian can be seen as a section
\[
\mathsf{Hess}(f_n) \in \Gamma\left(\mathrm{Q},T^\ast_{\NCQuot^n_r}\big|_\mathrm{Q}\otimes T^\ast_{\NCQuot^n_r}\big|_\mathrm{Q}\right).
\]
In checking the equivariance relation
\[
\mathbf t\cdot \mathsf{Hess}(f_n)(\xi) = \mathsf{Hess}(f_n)(\mathbf t\cdot\xi), \quad \mathbf t \in \TT,
\]
we may ignore local coordinates $x_k$ corresponding to vector entries, because the Hessian is automatically equivariant in these coordinates (equivariance translates into the identity $0=0$). 

So, let us fix an $x_k$ coming from one of the matrices. The $(i,j)$-component of the Hessian applied to $\partial / \partial x_k$ is given by
\[
\mathsf{Hess}_{ij}(f_n)\left(\frac{\partial}{\partial x_k} \right) =
\frac{\partial^2 f_n}{\partial x_i\partial x_j}(x_1,\ldots,x_N)\,\dd x_j.
\]
This will vanish unless $k \in \set{i,j}$. Without loss of generality we may assume $k=i$. In this case we obtain, up to a sign convention,
\begin{equation}\label{equivariance_Hess_LHS}
\mathsf{Hess}_{ij}(f_n)\left(\frac{t_1t_2t_3}{t_{\ell(k)}} \frac{\partial}{\partial x_k} \right) = \frac{t_1t_2t_3}{t_{\ell(k)}}
\frac{\partial^2 f_n}{\partial x_k\partial x_j}(x_1,\ldots,x_N)\,\dd x_j.
\end{equation}
On the other hand, combining the observations \eqref{obs1} and \eqref{obs2} with  \eqref{eqn:T-action_forms_vectors}, we obtain
\begin{align*}
    \mathbf t\cdot \mathsf{Hess}_{ij}(f_n)\left(\frac{\partial}{\partial x_k} \right)%&= \mathbf t\cdot \left(\frac{\partial^2 f_n}{\partial x_k\partial x_j}(x_1,\ldots,x_N)\,\dd x_j\right) \\
    &=\frac{\partial^2 f_n}{(\partial t_{\ell(k)}x_k)(\partial t_{\ell(j)}x_j)}(t_{\ell(1)}x_1,\ldots,t_{\ell(N)}x_N)t_{\ell(j)}\dd x_j\\
    &=\frac{t_1t_2t_3}{t_{\ell(k)}t_{\ell(j)}}t_{\ell(j)}\frac{\partial^2 f_n}{\partial x_k\partial x_j}(x_1,\ldots,x_N)\,\dd x_j,%\\
    %&=\frac{t_1t_2t_3}{t_{\ell(k)}}\frac{\partial^2 f_n}{\partial x_k\partial x_j}(x_1,\ldots,x_N)\,\dd x_j,
\end{align*}
which agrees with the right hand side of Equation \eqref{equivariance_Hess_LHS}. Thus we conclude that the Hessian complex is $\TT$-equivariant, as well as the morphism \eqref{symmetric_POT_quot} to the cotangent complex. This finishes the proof.
\end{proof}

The property \eqref{equivariance_of_f} of $f_n$ exhibits the differential $\dd f_n$ as a $\GL_3$-equivariant section
\[
\dd f_n \otimes \mathfrak{t}^{-1} \colon \OO_{\NCQuot^n_r} \to \Omega_{\NCQuot^n_r}\otimes \mathfrak{t}^{-1},
\]
where $\mathfrak{t}^{-1} = (t_1t_2t_3)^{-1}$ is the determinant representation of $\BC^3 = \bigoplus_{1\leq i\leq 3}t_i^{-1}\cdot \BC$. Therefore, explicitly, the morphism in $\derived^{[-1,0]}(\Coh^{\TT}_{\mathrm{Q}})$ lifting the critical obstruction theory \eqref{symmetric_POT_quot} is
\begin{equation}\label{equivariant_symmetric_POT_quot}
\begin{tikzpicture}[baseline=(current  bounding  box.center)]
\node (O1) at (-0.1,0.93) {$ \big[\mathfrak t\otimes T_{\NCQuot_{r}^n}\big|_{\mathrm{Q}}$};
\node (O2) at (2.99,0.93) {$\Omega_{\NCQuot_{r}^n}\big|_{\mathrm{Q}}\big]$};
\node (O3) at (-0.1,-0.88) {$\big[\mathscr I/\mathscr I^2$};
\node (O4) at (2.99,-0.88) {$\Omega_{\NCQuot_{r}^n}\big|_{\mathrm{Q}}\big]$};
\path[commutative diagrams/.cd, every arrow, every label]
(O1) edge node {$\mathsf{Hess}(f_n)$} (O2)
(O1) edge node[swap] {$(\dd f_n)^\vee|_{\mathrm{Q}}$} (O3)
(O3) edge node {$\dd$} (O4)
(O2) edge node {$\mathrm{id}$} (O4);
\end{tikzpicture}
\end{equation}
so that, in particular, the equivariant K-theory class of the virtual tangent bundle attached to the (equivariant) perfect obstruction theory \eqref{equivariant_symmetric_POT_quot} is
\begin{equation}\label{eqn:virtual_tg_quot}
 T_{\mathrm{Q}}^{\vir} = T_{\NCQuot_{r}^n}\big|_{\mathrm{Q}}-\Omega_{\NCQuot_{r}^n}\big|_{\mathrm{Q}}\otimes\mathfrak{t}^{-1} \,\in\,K^0_{\TT}(\mathrm{Q}).
\end{equation}
This fact will be recalled and used in Propositions \ref{prop:Tangent^vir} and \ref{restriction of class in K theory}.

\smallbreak
Lemma \ref{lemma:T_equivariance_of_POT} implies the existence of a ``$\TT$-fixed'' obstruction theory
\begin{equation}\label{T-fixed_pot}
\BE_{\crit}\big|_{\Quot_{\BA^3}(\OO^{\oplus r},n)^{\TT}}^{\fix} \to \BL_{\Quot_{\BA^3}(\OO^{\oplus r},n)^{\TT}}
\end{equation}
on the fixed locus of the Quot scheme. We proved in Lemma \ref{lemma:T_fixed_points} that this fixed locus is $0$-dimensional, isolated and reduced. The next result will imply that the virtual fundamental class induced by \eqref{T-fixed_pot} on the fixed locus agrees with the actual fundamental class.

\begin{prop}\label{prop:tangents_are_T_movable}
Let $[S] \in \Quot_{\BA^3}(\OO^{\oplus r},n)^{\TT}$ be a torus fixed point. The deformations and obstructions of $\Quot_{\BA^3}(\OO^{\oplus r},n)$ at $[S]$ are entirely $\TT$-movable. In particular, the virtual tangent space at $[S]$ can be written as
\begin{equation}\label{tg_vir}
T^{\vir}_S = \BE_{\crit}^\vee\big|^{\mov}_{[S]} = T_{\mathrm{Q}}\big|_{[S]} - \Omega_{\mathrm{Q}}\big|_{[S]} \otimes\mathfrak{t}^{-1} \in K^0_{\TT}(\pt).
\end{equation}
\end{prop}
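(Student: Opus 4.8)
The statement to prove is that at a $\TT$-fixed point $[S] \in \Quot_{\BA^3}(\OO^{\oplus r},n)^{\TT}$, there are no $\TT$-fixed infinitesimal deformations or obstructions; equivalently, the virtual tangent space $T^{\vir}_S = \BE_{\crit}^\vee|_{[S]}$ has vanishing $\TT$-fixed part. The most conceptual route goes through the fact (Lemma \ref{lemma:T_fixed_points}) that the $\TT$-fixed locus is $0$-dimensional, isolated and reduced, combined with the symmetry of the obstruction theory. First I would recall from Remark \ref{rmk:K-class_of_symmetric_pot} that symmetry gives $\BE_{\crit}^\vee|_{[S]} = T_{\mathrm{Q}}|_{[S]} - \Omega_{\mathrm{Q}}|_{[S]} \otimes \mathfrak t^{-1}$ in $K^0_{\TT}(\pt)$, which is already the displayed formula \eqref{tg_vir}; the content is entirely in the word \emph{movable}, i.e.\ that no weight-zero summand survives.

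The deformation part is the easy half: the Zariski tangent space $T_{\mathrm{Q}}|_{[S]}$, which is $h^0$ of the obstruction theory, has trivial $\TT$-fixed part precisely because the fixed locus is reduced and isolated. Indeed $(T_{\mathrm{Q}}|_{[S]})^{\fix}$ is the Zariski tangent space of the fixed scheme $\Quot_{\BA^3}(\OO^{\oplus r},n)^{\TT}$ at $[S]$, which is a reduced point by Lemma \ref{lemma:T_fixed_points}, hence zero. The obstruction part is the genuine obstacle: I need that the obstruction space $\Ob|_{[S]} = h^1(\BE_{\crit}^\vee)|_{[S]}$, which by symmetry is canonically $\Omega_{\mathrm{Q}}|_{[S]} \otimes \mathfrak t^{-1}$, has no $\TT$-fixed part. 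Here the twist by $\mathfrak t^{-1} = (t_1t_2t_3)^{-1}$ is the key point, as emphasised in the discussion surrounding \eqref{equivariant_symmetric_POT_quot}: a $\TT$-fixed vector in $\Omega_{\mathrm{Q}}|_{[S]} \otimes \mathfrak t^{-1}$ corresponds to a weight-$t_1t_2t_3$ vector in $\Omega_{\mathrm{Q}}|_{[S]}$, hence (using that $[S]$ is fixed also by the subtorus $\TT_0$ cut out by $t_1t_2t_3 = 1$, via \eqref{eqn:T0_fixed_locus}) a $\TT_0$-fixed vector in $\Omega_{\mathrm{Q}}|_{[S]}$. But $\Omega_{\mathrm{Q}}|_{[S]}$ is dual to the Zariski tangent space, whose $\TT_0$-fixed part is the tangent space to $\Quot_{\BA^3}(\OO^{\oplus r},n)^{\TT_0} = \Quot_{\BA^3}(\OO^{\oplus r},n)^{\TT}$ at $[S]$, again zero by reducedness. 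Therefore the fixed part of the obstruction space vanishes as well.

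Putting the two halves together, both $T_{\mathrm{Q}}|_{[S]}$ and $\Omega_{\mathrm{Q}}|_{[S]} \otimes \mathfrak t^{-1}$ have vanishing $\TT$-fixed part, so all deformations and obstructions are $\TT$-movable, and $T^{\vir}_S = \BE_{\crit}^\vee|^{\mov}_{[S]} = \BE_{\crit}^\vee|_{[S]}$, which equals $T_{\mathrm{Q}}|_{[S]} - \Omega_{\mathrm{Q}}|_{[S]} \otimes \mathfrak t^{-1}$ by \eqref{eqn:virtual_tg_quot} (or by Remark \ref{rmk:K-class_of_symmetric_pot}). The one subtlety to handle with care is the passage from "weight $t_1t_2t_3$ under $\TT$" to "$\TT_0$-fixed": this uses that the relevant representation is a representation of the quotient $\TT/\ker(\mathfrak t) \cong \TT_0 \times \BC^\ast$-type splitting — concretely, a character $\mu$ of $\TT$ restricts trivially to $\TT_0$ iff $\mu$ is a power of $\mathfrak t$, so a $\TT_0$-fixed vector has $\TT$-weight $\mathfrak t^k$ for some $k \in \BZ$; I then need to know that the only such weight occurring in $\Omega_{\mathrm{Q}}|_{[S]}$ with $\TT$-fixed image after the $\mathfrak t^{-1}$-twist is $k=1$, which is automatic. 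I expect writing this last reduction cleanly to be the main place where care is needed; everything else is bookkeeping around Lemmas \ref{lemma:T_fixed_points} and \ref{lemma:T_equivariance_of_POT}.
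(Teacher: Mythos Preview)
Your proposal is correct and follows essentially the same route as the paper: the key input is that $\mathrm{Q}^{\TT_0} = \mathrm{Q}^{\TT}$ is reduced and $0$-dimensional (Lemma \ref{lemma:T_fixed_points}), hence no power of $\mathfrak t$ occurs as a weight of $T_{\mathrm{Q}}|_{[S]}$, and the equivariant symmetry then forces both the fixed deformations and the fixed obstructions to vanish. The paper packages this in one stroke --- observing that the absence of any $\mathfrak t^k$-weight in $T_{\mathrm{Q}}|_{[S]}$ simultaneously kills $\Omega_{\mathrm{Q}}|_{[S]}^{\fix}$ and $(T_{\mathrm{Q}}|_{[S]}\otimes \mathfrak t)^{\fix}$ --- whereas you split the two halves and use the $\TT$-fixed locus for deformations and the $\TT_0$-fixed locus for obstructions, but the content is identical. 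One cosmetic remark: your sentence about ``$\TT/\ker(\mathfrak t)\cong \TT_0\times \BC^\ast$-type splitting'' is garbled (it is $\TT_0 = \ker(\mathfrak t)$, and what you want is simply that a character of $\TT$ restricts trivially to $\TT_0$ iff it is a power of $\mathfrak t$), but you state the correct conclusion immediately afterwards.
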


\begin{proof}
The perfect obstruction theory $\BE_{\crit}$ on $\mathrm{Q}=\Quot_{\BA^3}(\OO^{\oplus r},n)$, made explicit in Diagram \eqref{equivariant_symmetric_POT_quot}, satisfies $\BE_{\crit} \cong \BE_{\crit}^\vee[1]\otimes \mathfrak t$. Its equivariant K-theory class is therefore 
\[
\BE_{\crit} = \Omega_{\mathrm{Q}}-T_{\mathrm{Q}} \otimes \mathfrak t \,\in\, K_0^{\TT}(\mathrm{Q}).
\]
We know by Equation \eqref{eqn:T0_fixed_locus} in Lemma \ref{lemma:T_fixed_points} that no power of $\mathfrak t$ is a weight of $T_{\mathrm Q}|_p$ for any fixed point $p\in \mathrm{Q}^{\TT}$, which implies that
\begin{equation}\label{no_fixed_tangents}
    \left(T_{\mathrm{Q}}\big|_p \otimes \mathfrak t\right)^{\fix} = 0, \quad \Omega_{\mathrm Q}\big|_p^{\fix} = 0.
\end{equation}
\begin{comment}
Every such fixed point inherits a perfect obstruction theory whose associated virtual fundamental class is of dimension at most $\dim \set{p}=0$. Explicitly, the obstruction theory is given by the complex 
\[
\BE_{\crit}\big|_{p}^{\fix},
\]
whose K-theory class is
\[
\Omega_{\mathrm{Q}}\big|_p^{\fix} - (T_{\mathrm{Q}}\big|_p \otimes \mathfrak t)^{\fix} = \Omega_{\mathrm{Q}}\big|_p^{\fix}
\]
thanks to the vanishing \eqref{no_fixed_tangents}.
For this to have rank at most $0$, we must have
\[
\Omega_{\mathrm{Q}}\big|_p^{\fix} = 0.
\]
So there are neither fixed tangents nor fixed obstructions at $p$, i.e.~the tangent and cotangent spaces are entirely $\TT$-movable, as claimed.
\end{comment}
The claim follows.
\end{proof}

\begin{corollary}\label{cor:trivial pot on fixed locus}
There is an identity
\[
\left[\Quot_{\BA^3}(\OO^{\oplus r},n)^{\TT}\right]^{\vir} = \left[\Quot_{\BA^3}(\OO^{\oplus r},n)^{\TT}\right]\in A_0^{\TT}(\Quot_{\BA^3}(\OO^{\oplus r},n)^{\TT}).
\]
\end{corollary}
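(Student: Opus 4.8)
The plan is to combine Proposition~\ref{prop:tangents_are_T_movable} with the construction of the $\TT$-fixed obstruction theory \eqref{T-fixed_pot}, using the compatibility of virtual classes under passing to torus-fixed loci \cite[Prop.~1]{GPvirtual}. By Lemma~\ref{lemma:T_equivariance_of_POT}, the critical obstruction theory $\BE_{\crit}$ on $\mathrm{Q}=\Quot_{\BA^3}(\OO^{\oplus r},n)$ is $\TT$-equivariant, hence it induces a perfect obstruction theory on the fixed locus $\mathrm{Q}^{\TT}$, namely the one given by the fixed part $\BE_{\crit}|_{\mathrm{Q}^{\TT}}^{\fix} \to \BL_{\mathrm{Q}^{\TT}}$, and this in turn determines a virtual fundamental class $[\mathrm{Q}^{\TT}]^{\vir} \in A_0^{\TT}(\mathrm{Q}^{\TT})$.

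First I would recall from Lemma~\ref{lemma:T_fixed_points} that $\mathrm{Q}^{\TT}$ is $0$-dimensional, isolated and reduced, so that its ordinary fundamental class $[\mathrm{Q}^{\TT}]$ is simply the sum of the (reduced) points, living in $A_0^{\TT}(\mathrm{Q}^{\TT})$. The key point is then to identify the $\TT$-fixed obstruction theory as trivial, i.e.\ to show that the complex $\BE_{\crit}|_{\mathrm{Q}^{\TT}}^{\fix}$ is quasi-isomorphic to the zero complex (equivalently, that it has vanishing K-theory class and, being a two-term complex of vector bundles over a point, is actually acyclic). This follows at once from Proposition~\ref{prop:tangents_are_T_movable}: at every fixed point $[S]$ one has, by \eqref{no_fixed_tangents}, that $(T_{\mathrm{Q}}|_{[S]}\otimes \mathfrak t)^{\fix}=0$ and $\Omega_{\mathrm{Q}}|_{[S]}^{\fix}=0$, so the two-term complex $\BE_{\crit}|_{[S]}^{\fix} = [\,(T_{\mathrm{Q}}|_{[S]}\otimes\mathfrak t)^{\fix} \to \Omega_{\mathrm{Q}}|_{[S]}^{\fix}\,]$ is the zero complex. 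A perfect obstruction theory equal to the zero complex on a reduced $0$-dimensional scheme yields the actual fundamental class: indeed its virtual dimension is $0$, its intrinsic normal cone is the zero section, and the refined intersection with the zero section gives back $[\mathrm{Q}^{\TT}]$.

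The only mild subtlety — and the step I expect to require the most care — is the bookkeeping to make sure that ``fixed part is zero complex'' really forces ``virtual class equals fundamental class'' in the equivariant Chow group $A_0^{\TT}$ rather than just after some localisation; this is handled by noting that $\mathrm{Q}^{\TT}$ carries the trivial $\TT$-action, so all equivariant subtleties collapse and the statement reduces to the non-equivariant fact that a trivial (zero) obstruction theory on a smooth — here, reduced $0$-dimensional — scheme recovers $[\mathrm{Q}^{\TT}]$, cf.\ the discussion in \cite{BFinc,BFHilb}. Hence
\[
\bigl[\Quot_{\BA^3}(\OO^{\oplus r},n)^{\TT}\bigr]^{\vir} = \bigl[\Quot_{\BA^3}(\OO^{\oplus r},n)^{\TT}\bigr] \in A_0^{\TT}\bigl(\Quot_{\BA^3}(\OO^{\oplus r},n)^{\TT}\bigr),
\]
which is the claim.
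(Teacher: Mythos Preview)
Your proposal is correct and follows exactly the approach the paper intends: the corollary is stated immediately after Proposition~\ref{prop:tangents_are_T_movable} without an explicit proof, precisely because the vanishing \eqref{no_fixed_tangents} forces the $\TT$-fixed obstruction theory on each reduced point of $\mathrm{Q}^{\TT}$ to be the zero complex, so the induced virtual class is the ordinary fundamental class. You have simply spelled out the details the paper leaves implicit.
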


%%%%%%%%%%%%%%%%%%%%%%%%%%%%%%%%%%%%%%%%%%%%%%%%%%%%%
\section{Invariants attached to the local Quot scheme}\label{sec:invariants}
In this section we introduce cohomological and K-theoretic DT invariants of $\BA^3$, the main object of study of this paper, starting from the Quot scheme $\Quot_{\BA^3}(\OO^{\oplus r},n)$ studied in the previous section. We first need to introduce some notation and terminology.

\subsection{Some notation}
Recall the tori $\BT_1 = (\BC^\ast)^3$ and $\BT_2 = (\BC^\ast)^r$ from \eqref{tori_T1_and_T2}.
We let $t_1,t_2,t_3$ and $w_1,\ldots,w_r$ be the generators of the representation rings $K^0_{\BT_1}(\pt)$ and $K^0_{\BT_2}(\pt)$, respectively. Then one can write the equivariant cohomology rings of $\BT_1$ and $\BT_2$ as
\[
H^\ast_{\BT_1} = \BQ[s_1,s_2,s_3],\quad H^\ast_{\BT_2} = \BQ[v_1,\ldots,v_r],
\]
where $s_i = c_1^{\BT_1}(t_i)$ and $v_j=c_1^{\BT_2}(w_j)$.
For a virtual $\TT$-module $V \in K^0_{\TT}(\pt)$, we let 
\[
\tr_{V} \in \BZ(\!(t_1,t_2,t_3,w_1,\ldots,w_r)\!)
\]
denote its character, i.e.~its decomposition into weight spaces. We denote by $\overline{(\,\cdot\,)}$ the involution defined on $\BZ(\!(t_1,t_2,t_3,w_1,\ldots,w_r)\!)$ by
\[
\overline P(t_1,t_2,t_3,w_1,\ldots,w_r) = P(t_1^{-1},t_2^{-1},t_3^{-1},w_1^{-1},\ldots,w_r^{-1}).
\]

\subsubsection{Twisted virtual structure sheaf}\label{sec: twisted structure sheaf} 

For any scheme $X$ endowed with a perfect obstruction theory $\BE \to \BL_X$, define as in \cite[Def.~3.12]{Fantechi_Gottsche}, the \emph{virtual canonical bundle}
\[
\mathcal{K}_{X,\vir}=\det \BE = \det(T_X^{\vir})^\vee.
\]
This is just $\det E^0 \otimes (\det E^{-1})^\vee$ if $\BE = E^0 - E^{-1} \in K^0(X)$. We will simply write $\mathcal{K}_{\vir}$ when $X$  is clear from the context.

\begin{lemma}\label{prop:there_is_a_square_root}
Let $A$ be a smooth variety equipped with a regular function $f\colon A\to \BA^1$, and let $X= \crit(f) \subset A$ be the critical locus of $f$, with its critical (symmetric) perfect obstruction theory $\BE_{\crit} \to \BL_{X}$. Then $\mathcal{K}_{X,\vir} \in \Pic(X)$ admits a square root, i.e.~there exists a line bundle
\[
\mathcal{K}_{X,\vir}^{\frac{1}{2}}\in \Pic(X) 
\]
whose second tensor power equals $\det \BE_{\crit}$.
\end{lemma}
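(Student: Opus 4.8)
The plan is to exhibit the square root explicitly in terms of the local model, exploiting the fact that $X=\crit(f)\subset A$ carries the critical obstruction theory $\BE_{\crit}=[T_A|_X\to\Omega_A|_X]$, so that its determinant is
\[
\mathcal{K}_{X,\vir}=\det\BE_{\crit}=\det(\Omega_A|_X)\otimes\det(T_A|_X)^\vee=\bigl(\det\Omega_A|_X\bigr)^{\otimes 2}=\bigl(K_A|_X\bigr)^{\otimes 2}.
\]
Thus the obvious candidate is $\mathcal{K}_{X,\vir}^{\frac12}=K_A|_X=\det\Omega_A|_X$, the restriction to $X$ of the canonical bundle of the ambient smooth variety $A$. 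First I would recall that for the critical obstruction theory one has the presentation $\BE_{\crit}=[T_A|_X\xrightarrow{\mathsf{Hess}(f)}\Omega_A|_X]$ as a two-term complex of locally free sheaves, so $\det\BE_{\crit}=\det E^0\otimes(\det E^{-1})^\vee$ with $E^0=\Omega_A|_X$ and $E^{-1}=T_A|_X$; since $\det T_A=(\det\Omega_A)^\vee$, the determinant telescopes to $(\det\Omega_A|_X)^{\otimes 2}$, and one sets $\mathcal{K}_{X,\vir}^{\frac12}\defeq\det\Omega_A|_X=K_A|_X\in\Pic(X)$, whose square is by construction $\det\BE_{\crit}$.

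The only subtlety worth spelling out is the compatibility of this with Siebert's / Thomas's result that the virtual invariants (and in particular $\det\BE_{\crit}$ as an element defined up to the relevant equivalences) depend only on the K-theory class of $\BE_{\crit}$: since two representatives of the same class have the same determinant line bundle, the equality $(\det\Omega_A|_X)^{\otimes 2}=\det\BE_{\crit}$ as an honest identity in $\Pic(X)$ is unambiguous and no further argument is needed. In the $\TT$-equivariant setting relevant to the Quot scheme one would moreover note, using the explicit equivariant presentation \eqref{equivariant_symmetric_POT_quot} with $\BE_{\crit}=[\mathfrak t\otimes T_{\NCQuot^n_r}|_{\mathrm Q}\to\Omega_{\NCQuot^n_r}|_{\mathrm Q}]$, that $\det\BE_{\crit}=\det\Omega_{\NCQuot^n_r}|_{\mathrm Q}\otimes(\det T_{\NCQuot^n_r}|_{\mathrm Q})^\vee\otimes\mathfrak t^{-N}=(K_{\NCQuot^n_r}|_{\mathrm Q})^{\otimes 2}\otimes\mathfrak t^{-N}$ with $N=\dim\NCQuot^n_r=2n^2+rn$, so that an equivariant square root exists whenever $\mathfrak t^{-N}$ admits one as a character of $\TT$ — i.e.\ one tensors $K_A|_X$ with $\mathfrak t^{-N/2}$, which is why the half-power $(t_1t_2t_3)^{1/2}$ appears in the coefficient ring of $\DT_r^{\KK}$; but for the bare statement of the lemma only the non-equivariant assertion is claimed.

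The honest main obstacle, if any, is purely bookkeeping: making sure the sign/degree conventions for $\det$ of a two-term complex are consistent ($\det[E^{-1}\to E^0]=\det E^0\otimes(\det E^{-1})^{-1}$ versus the opposite convention) and that $\mathcal{K}_{X,\vir}=\det\BE$ rather than $\det\BE^\vee$; with the conventions fixed in the excerpt ($\mathcal K_{X,\vir}=\det\BE=\det(T_X^{\vir})^\vee$, and $\BE_{\crit}$ symmetric so $T_X^{\vir}=E^0-E^{-1}$ with $E^0=\Omega_A|_X$) the computation is immediate and the proof is essentially one line, namely displaying $\mathcal K_{X,\vir}^{1/2}=K_A|_X$ and computing its square. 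I do not expect any genuine difficulty beyond this.
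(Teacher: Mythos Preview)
Your proof is correct and follows the same approach as the paper: both compute $\mathcal{K}_{X,\vir}=\det(\Omega_A|_X)\otimes\det(T_A|_X)^{-1}=(K_A|_X)^{\otimes 2}$ directly from the two-term presentation $\BE_{\crit}=[T_A|_X\to\Omega_A|_X]$, and set $\mathcal{K}_{X,\vir}^{1/2}=K_A|_X$. Your additional remarks on conventions and the equivariant twist by $\mathfrak t^{-N/2}$ are accurate elaborations (the latter is precisely the content of the paper's Remark~\ref{rmk: weight of twisted sheaf}), but the core argument is the same one-line computation.
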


\begin{proof}
The K-theory class of the critical perfect obstruction theory is
\[
\BE_{\crit} = \Omega_A\big|_X - T_A\big|_{X},
\]
and by definition one has
\[
\mathcal{K}_{X,\vir}=\frac{\det \Omega_A|_X}{\det {T_A}|_{X}}=\frac{\det \Omega_A|_X}{(\det \Omega_A|_X)^{-1}}=K_A\big|_{X}\otimes K_A\big|_{X}.\qedhere
\]
\end{proof}

Let $X$ be a scheme endowed with a perfect obstruction theory, and let $\OO_X^{\vir} \in K_0(X)$ be the induced virtual structure sheaf. Assume the virtual canonical bundle admits a square root.
Following \cite{NO16}, we define the \emph{twisted} (or \emph{symmetrised}) \emph{virtual structure sheaf} as
\[
\widehat{\OO}_X^{\vir}=\OO_X^{\vir}\otimes \mathcal{K}_{X,\vir}^{\frac{1}{2}}.
\]
In case $X$ carries a torus action, we will see in Remark \ref{rmk: weight of twisted sheaf} that $\widehat{\OO}_X^{\vir}$ acquires a canonical weight.

%%%%%%%%%%%%%%%%%%%%%%%%%%%%%%%%%%%%%%%%%%%%%%%%%%%
\subsection{Classical enumerative invariants}
Degree $0$ Donaldson--Thomas invariants of various flavours have been computed in \cite{LEPA,JLI,BFHilb,RGKummer,MNOP2,Virtual_Quot,BBS,cazzaniga2020higher}. This paper is concerned with the general theory of \emph{Quot schemes}, hence in the (virtual) enumeration of $0$-dimensional quotients of locally free sheaves on $3$-folds. 

The naive (topological) Euler characteristic of the Quot scheme $\Quot_{\BA^3}(\mathscr O^{\oplus r},n)$ is computed via the Gholampour--Kool formula \cite[Prop.~2.3]{Gholampour2017}
\[
\sum_{n\geq 0}e(\Quot_{\BA^3}(\mathscr O^{\oplus r},n))q^n = \mathsf M(q)^r,
\]
where $\mathsf M(q) = \prod_{m\geq 1}(1-q^m)^{-m}$
is the MacMahon function, the generating function for the number of plane partitions of non-negative integers.
On the other hand, the Behrend weighted Euler characteristic of the Quot scheme can be computed via the formula 
\begin{equation}\label{eqn:chi_vir_quot_affinespace}
\sum_{n\geq 0}e_{\vir}(\Quot_{\BA^3}(\mathscr O^{\oplus r},n))q^n = \mathsf M((-1)^rq)^r,
\end{equation}
proved in \cite[Cor.~2.8]{BR18}.
For a complex scheme $Z$ of finite type over $\BC$, we have set $e_{\vir}(Z) = e(Z,\nu_Z)$, where $\nu_Z$ is Behrend's constructible function \cite{Beh}. See \cite[Thm.~A]{BR18} for a proof of the analogue of \eqref{eqn:chi_vir_quot_affinespace} for an arbitrary pair $(Y,F)$ consisting of a smooth $3$-fold $Y$ along with a locally free sheaf $F$ on it. It was shown by Toda \cite{Toda2} that, on a projective Calabi--Yau $3$-fold $Y$, the wall-crossing factor in the higher rank DT/PT correspondence is precisely $\mathsf M((-1)^rq)^{re(Y)}$. The relationship between Toda's wall-crossing formula \cite{Toda2} and the Gholampour--Kool's formula for Euler characteristics of Quot schemes on $3$-folds \cite{Gholampour2017} was clarified in \cite{BR18} via a Hall algebra argument.

%%%%%%%%%%%%%%%%%%%%%%%%%%%%%%%%%%%%%%%%%%%%%%%%%%%
\subsection{Virtual invariants of the Quot scheme}\label{subsec:virtual_invariants_quot}
The scheme $\Quot_{\BA^3}(\mathscr O^{\oplus r},n)$ is not proper, but carries a torus action with proper fixed locus. Thus we may define virtual invariants via equivariant residues, by setting
\begin{equation}
\label{def:DT_coh_equivariant_residues}
\int_{[\Quot_{\BA^3}(\mathscr O^{\oplus r},n)]^{\vir}}1\defeq \sum_{[S]\in \Quot_{\BA^3}(\mathscr O^{\oplus r},n)^{\TT}}\frac{1}{e^{\TT}(T_S^{\vir})}\in \mathbb{Q}(\!(s,v)\!),
\end{equation}
where $s=(s_1,s_2,s_3) $ and $v=(v_1,\ldots,v_r)$ are the equivariant parameters of the torus $\TT$ and $T_S^{\vir}$ is the virtual tangent space \eqref{tg_vir}. The sum runs over all $\TT$-fixed points $[S]$, which are isolated, reduced and with the trivial perfect obstruction theory induced from the critical obstruction theory on the Quot scheme (cf.~Corollary \ref{cor:trivial pot on fixed locus}). We refer to these invariants as (degree $0$) \emph{cohomological rank $r$ DT invariants}, as they take value in (an extension of) the fraction field $\mathbb{Q}(s,v)$ of the $\TT$-equivariant cohomology ring $H^\ast_{\TT}$. We will study their generating function
\begin{equation}\label{eqn:cohomological_DT_series}
\DT_r^{\coh}(\BA^3,q,s,v)=\sum_{n\geq 0} q^n\int_{[\Quot_{\BA^3}(\mathscr O^{\oplus r},n)]^{\vir}}1 \in \mathbb{Q}(\!(s,v)\!)\llbracket q \rrbracket
\end{equation}
in \S\,\ref{sec:cohomological_invariants}. On the other hand,
K-theoretic invariants arise as natural refinements of their cohomological counterpart. Naively, one would like to study the virtual holomorphic Euler characteristic
\begin{align*}
\chi(\Quot_{\mathbb{A}^3}(\OO^{\oplus r},n), \OO^{\vir})&=\sum_{[S]\in \Quot_{\BA^3}(\mathscr O^{\oplus r},n)^{\TT}}\tr\left(\frac{1}{\Lambda^\bullet T_S^{\vir,\vee}} \right) \in \BZ(\!(t,w)\!),
\end{align*}
where $t = (t_1,t_2,t_3)$, $w=(w_1,\ldots, w_r)$, and via the trace map $\tr$ we identify a (possibly infinite-dimensional) virtual $\TT$-module with its decomposition into weight spaces. It turns out that guessing a closed formula for these invariants is incredibly difficult and, after all, not what one should look at. Instead, Nekrasov--Okounkov \cite{NO16} teach us that we should focus our attention on
\begin{equation}\label{eqn: K theoretic invariants}
\chi(\Quot_{\mathbb{A}^3}(\OO^{\oplus r},n), \widehat{\OO}^{\vir}) 
=  \sum_{[S]\in \Quot_{\BA^3}(\mathscr O^{\oplus r},n)^{\TT}}\tr\left(\frac{\mathcal{K}_{\vir}^{\frac{1}{2}}}{\Lambda^\bullet T_S^{\vir,\vee}}\right)  \in \BZ(\!(t, (t_1t_2t_3)^{\frac{1}{2}},w)\!),\footnote{In theory all equivariant weights could appear with half powers. However, by Remark \ref{rmk: weight of twisted sheaf},   $(t_1t_2t_3)^{\frac{1}{2}}$ is the only one that truly appears.}
\end{equation}
where the \emph{twisted} \emph{virtual structure sheaf} $\widehat{\OO}^{\vir}$ is defined in \S\,\ref{sec: twisted structure sheaf} --- a square root of the virtual canonical bundle exists by Lemma \ref{prop:there_is_a_square_root} and Proposition \ref{prop:SPOT_A^3}. The generating function of rank $r$ K-theoretic DT invariants
\begin{equation}\label{eqn:K_partition_function}
\DT^{\KK}_r(\BA^3,q,t,w)=\sum_{n\geq 0} q^n\chi(\Quot_{\mathbb{A}^3}(\OO^{\oplus r},n), \widehat{\OO}^{\vir})\in \BZ(\!(t, (t_1t_2t_3)^{\frac{1}{2}},w)\!)\llbracket q \rrbracket
\end{equation}
will be studied in \S\,\ref{sec:K-theory}.

\begin{remark}\label{rmk: weight of twisted sheaf}
To be precise, we should replace the torus $\TT$ with its double cover $\TT_{\mathfrak t}$, the minimal cover  of $\TT$ where the character  $\mathfrak{t}^{-1/2}$ is defined, as in \cite[\S\,7.1.2]{NO16}. Then $\mathcal K_{\vir}^{1/2}$ is a $\TT_{\mathfrak t}$-equivariant sheaf with character $\mathfrak{t}^{-(\dim \NCQuot_{r}^n)/2}$. To ease the notation, we keep denoting the torus acting as $\TT$.
\end{remark}

\begin{remark}\label{rmk:independence_of_orientation}
As remarked in \cite{NO16,arbesfeld2019ktheoretic},  choices of square roots of $\mathcal{K}_{\vir}$ differ by a 2-torsion element in the Picard group, which implies that  $\chi(\Quot_{\mathbb{A}^3}(\OO^{\oplus r},n), \widehat{\OO}^{\vir})$ does not depend on such choices of square roots. Thus there is no ambiguity in \eqref{eqn:K_partition_function}.
\end{remark}

%%%%%%%%%%%%%%%%%%%%%%%%%%%%%%%%%%%%%%%%%%%%%%%%%%%%%%%%%%%%%%%%%%
%%%%%%%%%%%%%%%%%%%%%%%%%%%%%%%%%%%%%%%%%%%%%%%%%%%%%%%%%%%%%%%%%%
\section{Higher rank vertex on the local Quot scheme}\label{sec:higher_rank_vertex}

%%%%%%%%%%%%%%%%%%%%%%%%%%%%%%%%%%%%%%%%%%%%%%%%%%%%%%%%%%%%%%%%%%
\subsection{The virtual tangent space of the local Quot scheme}\label{sec: virtual tangent space}
By Lemma \ref{lemma:T_fixed_points}, we can represent the sheaf corresponding to a $\TT$-fixed point 
\[
[S] \in \Quot_{\BA^3}(\OO^{\oplus r},n)^{\TT}
\]
as a direct sum of ideal sheaves
\[
S = \bigoplus_{\alpha=1}^r\mathscr I_{Z_\alpha} \,\subset\,\OO^{\oplus r},
\]
with $Z_\alpha \subset \BA^3$ a finite subscheme of length $n_\alpha$ supported at the origin, and satisfying $n = \sum_{1\leq \alpha\leq r} n_\alpha$. In this subsection we derive a formula for the character of
\[
\chi(\OO^{\oplus r},\OO^{\oplus r})-\chi(S,S) \in K^0_{\TT}(\pt),
\]
where for $F$ and $G$ in $K^0_{\TT}(\pt)$, we set 
\[
\chi(F,G) =\mathbf{R}\Hom(F,G)=\sum_{i\geq 0}(-1)^{i}\Ext^{i}(F,G).
\]
Our method follows the approach of \cite[\S~4.7]{MNOP1}. Moreover, we show in Proposition \ref{prop:Tangent^vir} that such character agrees with the virtual tangent space $T_S^{\vir}$ induced by the critical obstruction theory.

\smallbreak
Let $\mathsf Q_\alpha$ be the $\BT_1$-character of the  $\alpha$-summand of $\OO^{\oplus r}/S$, i.e.~(cf.~Equation \eqref{eqn:trace_ideal})
\[
\mathsf Q_\alpha = \tr_{\OO_{Z_\alpha}}= \sum_{(i,j,k) \,\in\, \pi_\alpha}t_1^{i}t_2^{j}t_3^{k},
\]
where $\pi_\alpha \subset \BZ_{\geq 0}^3$ is the plane partition corresponding to the monomial ideal $\mathscr I_{Z_\alpha} \subset R = \BC[x,y,z]$.
Let $\mathsf P_\alpha(t_1,t_2,t_3)$ be the Poincar\'e polynomial of $\mathscr I_{Z_\alpha}$. This can be computed via a $\BT_1$-equivariant free resolution
\[
0 \to E_{\alpha,s} \to \cdots \to E_{\alpha,1} \to E_{\alpha,0} \to \mathscr I_{Z_\alpha} \to 0.
\]
Writing 
\[
E_{\alpha,i} = \bigoplus_{j}R(d_{\alpha,ij}),\quad d_{\alpha,ij} \in \BZ^3,
\]
one has, independently of the chosen resolution, the formula
\[
\mathsf P_\alpha(t_1,t_2,t_3) = \sum_{i,j}(-1)^it^{d_{\alpha,ij}}.
\]
By \cite[\S\,4.7]{MNOP1} we know that there is an identity
\begin{equation}\label{eqn_Q_and_P}
\mathsf Q_\alpha = \frac{1+\mathsf P_\alpha}{(1-t_1)(1-t_2)(1-t_3)}.
\end{equation}
For each $1\leq \alpha,\beta\leq r$, we can compute
\begin{align*}
\chi(\mathscr I_{Z_\alpha},\mathscr I_{Z_\beta}) &= \sum_{i,j,k,l}(-1)^{i+k} \Hom_R(R(d_{\alpha,ij}),R(d_{\beta,kl})) \\
&=\sum_{i,j,k,l}(-1)^{i+k} R(d_{\beta,kl}-d_{\alpha,ij}),
\end{align*}
which immediately yields the identity
\[
\tr_{\chi(\mathscr I_{Z_\alpha},\mathscr I_{Z_\beta})} = \frac{\mathsf P_\beta \overline{\mathsf P}_\alpha}{(1-t_1)(1-t_2)(1-t_3)} \in \BZ(\!(t_1,t_2,t_3)\!).
\]
It follows that, as a $\TT$-representation, one has
\begin{align*}
    \chi(S,S) 
    &= \chi\left(\sum_\alpha w_\alpha\otimes \mathscr I_{Z_\alpha},\sum_\beta w_\beta\otimes \mathscr I_{Z_\beta}\right) \\
    &= \sum_{1\leq \alpha,\beta\leq r}\chi(w_\alpha\otimes \mathscr I_{Z_\alpha},w_\beta\otimes \mathscr I_{Z_\beta}),
\end{align*}
which yields
\begin{equation}\label{eqn:trace_chi_FF}
\tr_{\chi(S,S)} = \sum_{1\leq \alpha,\beta\leq r} \frac{w_\alpha^{-1}w_\beta\cdot \mathsf P_\beta \overline{\mathsf P}_\alpha}{(1-t_1)(1-t_2)(1-t_3)}.
\end{equation}
On the other hand,
\begin{equation}\label{eqn:trace_chi_OO}
\tr_{\chi(\OO^{\oplus r},\OO^{\oplus r})} = \sum_{1\leq \alpha,\beta\leq r} \frac{w_\alpha^{-1}w_\beta}{(1-t_1)(1-t_2)(1-t_3)}.
\end{equation}
Combining Formulae \eqref{eqn:trace_chi_FF} and \eqref{eqn:trace_chi_OO} with Formula \eqref{eqn_Q_and_P} yields the following result.

\begin{prop}\label{prop:character_virtual_tangent}
Let $[S] \in \Quot_{\BA^3}(\OO^{\oplus r},n)^{\TT}$ be a torus fixed point.
There is an identity
\begin{equation}\label{eqn:trace_T_vir}
\tr_{\chi(\OO^{\oplus r},\OO^{\oplus r})-\chi(S,S)} = \sum_{1\leq \alpha,\beta\leq r}w_\alpha^{-1}w_\beta\left(\mathsf Q_\beta-\frac{\overline{\mathsf Q}_\alpha}{t_1t_2t_3}+\frac{(1-t_1)(1-t_2)(1-t_3)}{t_1t_2t_3}\mathsf Q_\beta \overline{\mathsf Q}_\alpha \right)
\end{equation}
in $\BZ(\!(t_1,t_2,t_3,w_1,\ldots,w_r)\!)$.
\end{prop}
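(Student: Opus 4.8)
The plan is to assemble the claimed formula \eqref{eqn:trace_T_vir} by starting from the tautological identity
\[
\tr_{\chi(\OO^{\oplus r},\OO^{\oplus r})-\chi(S,S)} = \sum_{1\leq \alpha,\beta\leq r} w_\alpha^{-1}w_\beta\cdot\frac{1-\mathsf P_\beta\overline{\mathsf P}_\alpha}{(1-t_1)(1-t_2)(1-t_3)},
\]
which is the direct consequence of Formulae \eqref{eqn:trace_chi_FF} and \eqref{eqn:trace_chi_OO}. Here $\overline{\mathsf P}_\alpha$ is the image of $\mathsf P_\alpha$ under the involution $\overline{(\,\cdot\,)}$; note that since the free resolution of $\mathscr I_{Z_\alpha}$ dualizes to a resolution computing $\RHom(\mathscr I_{Z_\alpha},R)$, the character $\tr_{\chi(\mathscr I_{Z_\alpha},\mathscr I_{Z_\beta})}$ does indeed come out as $\mathsf P_\beta\overline{\mathsf P}_\alpha/\prod_i(1-t_i)$ — this is the one place I would be careful about which factor gets barred, and I would fix the convention so that it is consistent with \cite[\S\,4.7]{MNOP1}.

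The core of the argument is then a purely formal substitution. From \eqref{eqn_Q_and_P} one has $\mathsf P_\alpha = \mathsf Q_\alpha\cdot(1-t_1)(1-t_2)(1-t_3) - 1$, and applying the involution (which sends each $t_i$ to $t_i^{-1}$, hence $(1-t_i)\mapsto (1-t_i^{-1}) = -t_i^{-1}(1-t_i)$) gives
\[
\overline{\mathsf P}_\alpha = \overline{\mathsf Q}_\alpha\cdot\frac{(1-t_1)(1-t_2)(1-t_3)}{-t_1t_2t_3}-1 = -\frac{(1-t_1)(1-t_2)(1-t_3)}{t_1t_2t_3}\overline{\mathsf Q}_\alpha-1.
\]
First I would substitute these two expressions into $1-\mathsf P_\beta\overline{\mathsf P}_\alpha$, expand the product, and collect terms. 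Writing $\kappa = (1-t_1)(1-t_2)(1-t_3)$ for brevity during the computation, one finds $\mathsf P_\beta\overline{\mathsf P}_\alpha = (\kappa\mathsf Q_\beta-1)(-\kappa\overline{\mathsf Q}_\alpha/\mathfrak t - 1) = -\kappa^2\mathsf Q_\beta\overline{\mathsf Q}_\alpha/\mathfrak t - \kappa\mathsf Q_\beta + \kappa\overline{\mathsf Q}_\alpha/\mathfrak t + 1$, so that $1-\mathsf P_\beta\overline{\mathsf P}_\alpha = \kappa^2\mathsf Q_\beta\overline{\mathsf Q}_\alpha/\mathfrak t + \kappa\mathsf Q_\beta - \kappa\overline{\mathsf Q}_\alpha/\mathfrak t$. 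Dividing by $\kappa$ then yields exactly
\[
\mathsf Q_\beta - \frac{\overline{\mathsf Q}_\alpha}{\mathfrak t} + \frac{\kappa}{\mathfrak t}\mathsf Q_\beta\overline{\mathsf Q}_\alpha,
\]
which is the summand in \eqref{eqn:trace_T_vir} after multiplying by $w_\alpha^{-1}w_\beta$ and summing over $\alpha,\beta$.

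There is no serious obstacle here: the only thing requiring a little care is bookkeeping the sign and the factor of $\mathfrak t = t_1t_2t_3$ that arises because the involution $\overline{(\,\cdot\,)}$ does not fix $\kappa$ but instead multiplies it by $(-1)^3/\mathfrak t = -1/\mathfrak t$. Everything takes place inside the Laurent series ring $\BZ(\!(t_1,t_2,t_3,w_1,\ldots,w_r)\!)$, where all the denominators $\prod_i(1-t_i)$ are invertible, so no convergence or well-definedness issue intervenes; the manipulations are legitimate term by term. Thus the proof amounts to: (i) record the identity for $\tr_{\chi(\OO^{\oplus r},\OO^{\oplus r})-\chi(S,S)}$ obtained by subtracting \eqref{eqn:trace_chi_FF} from \eqref{eqn:trace_chi_OO}; (ii) compute $\overline{\mathsf P}_\alpha$ in terms of $\overline{\mathsf Q}_\alpha$ using the involution; (iii) substitute and simplify. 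The main step I expect to want to double-check is step (ii), since an erroneous sign there would propagate.
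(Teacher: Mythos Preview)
Your proposal is correct and follows exactly the route the paper indicates: the paper's proof is simply the one-line remark that combining \eqref{eqn:trace_chi_FF}, \eqref{eqn:trace_chi_OO} and \eqref{eqn_Q_and_P} yields the result, and you have spelled out that substitution in full detail. The algebra (in particular the computation of $\overline{\mathsf P}_\alpha$ via $\overline{\kappa}=-\kappa/\mathfrak t$) is right, so there is nothing to add.
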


For every $\TT$-fixed point $[S]$ we define associated \emph{vertex} terms
\begin{equation}\label{eqn:vertex_terms}
\mathsf V_{ij}=w_i^{-1}w_j\left(\mathsf Q_j-\frac{\overline{\mathsf Q}_i}{t_1t_2t_3}+\frac{(1-t_1)(1-t_2)(1-t_3)}{t_1t_2t_3}\mathsf Q_j \overline{\mathsf Q}_i \right)
\end{equation}
for every $i,j=1,\ldots, r$. It is immediate to see that for $r=1$ (forcing $i=j$) we recover the vertex formalism developed in \cite{MNOP1}.

Proposition \ref{prop:character_virtual_tangent} can then be restated as
\[
\tr_{\chi(\OO^{\oplus r},\OO^{\oplus r})}-\tr_{\chi(S,S)} = \sum_{1\leq i,j\leq r} \mathsf V_{ij}.
\]
We now relate this to the virtual tangent space (cf.~\S\,\ref{subsec:Nvir}) $T_{S}^{\vir}$ of a point $[S] \in \Quot_{\BA^3}(\OO^{\oplus r},n)^{\TT}$.

\begin{prop}\label{prop:Tangent^vir}
Let $[S] \in \Quot_{\BA^3}(\OO^{\oplus r},n)^{\TT}$ be a $\TT$-fixed point. Let $T^{\vir}_S = \BE_{\crit}^\vee\big|_{[S]}$ be the virtual tangent space induced by the $\TT$-equivariant critical obstruction theory. Then there are identities
\begin{align*}
T^{\vir}_S &= \chi(\OO^{\oplus r},\OO^{\oplus r}) - \chi(S,S)=\sum_{1\leq i,j\leq r} \mathsf V_{ij}\in K_0^{\TT}(\pt).
\end{align*}
\end{prop}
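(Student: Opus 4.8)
The plan is to establish the two claimed identities in Proposition \ref{prop:Tangent^vir} by comparing the virtual tangent space obtained from the critical obstruction theory with the deformation-theoretic complex $\chi(\OO^{\oplus r},\OO^{\oplus r})-\chi(S,S)$, and then invoking Proposition \ref{prop:character_virtual_tangent} for the final equality with $\sum_{i,j}\mathsf V_{ij}$. The last equality is immediate once the first is proved, so the content is the identification $T^{\vir}_S = \chi(\OO^{\oplus r},\OO^{\oplus r})-\chi(S,S)$.

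First I would recall from \eqref{eqn:virtual_tg_quot} that, globally on $\mathrm Q = \Quot_{\BA^3}(\OO^{\oplus r},n)$, the virtual tangent bundle of the critical obstruction theory is $T^{\vir}_{\mathrm Q} = T_{\NCQuot^n_r}|_{\mathrm Q} - \Omega_{\NCQuot^n_r}|_{\mathrm Q}\otimes \mathfrak t^{-1}$ in $K^0_\TT(\mathrm Q)$. Restricting to a fixed point $[S]$ and using the presentation $\NCQuot^n_r = U_{r,n}/\GL(V)$, the tangent space $T_{\NCQuot^n_r}|_{[S]}$ is the $\GL(V)$-equivariant deformation complex of the framed representation, i.e.~the two-term complex $\Hom(V,V)^{\oplus 3}\oplus V^{\oplus r} \to \Hom(V,V)$ given by the differential of the moment-map-type equations plus the infinitesimal $\GL(V)$-action, weighted according to \eqref{T-action_on_NCQUOT}. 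The standard computation — this is exactly the $r=1$ argument of \cite[\S\,4.7]{MNOP1} adapted with the framing weights $w_\alpha$ — expresses this as a character in the $t_i$ and $w_\alpha$. The cleanest route, however, is the one already laid out in \S\,\ref{sec: virtual tangent space}: the Ext-theoretic quantity $\chi(\OO^{\oplus r},\OO^{\oplus r})-\chi(S,S)$ computes precisely the virtual tangent space to the moduli of the short exact sequence $0\to S\to \OO^{\oplus r}\to T\to 0$, because the deformation-obstruction theory of a Quot-scheme point $[S\subset F]$ is governed by $\Hom(S, F/S) - \Ext^1(S,F/S)+\cdots$, and the self-duality coming from Serre duality on $\BA^3$ (with the $\mathfrak t^{-1}$ twist by the canonical bundle) reorganizes this into $\chi(F,F)-\chi(S,S)$.

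So the key steps, in order, are: (1) identify $T_{\NCQuot^n_r}|_{[S]}$ as the cohomology of the $\GL(V)$-equivariant framed deformation complex, with its $\TT$-weights read off from \eqref{T-action_on_NCQUOT} and \eqref{eqn:T-action_forms_vectors}; (2) match this complex, together with the $-\Omega\otimes\mathfrak t^{-1}$ term dictated by \eqref{eqn:virtual_tg_quot}, against the $\ELL$xt complex of the sheaf injection $S\hookrightarrow \OO^{\oplus r}$ — equivalently, against the complex $\RHom_{\BA^3}(I^\bullet, I^\bullet)_0$ controlling deformations of the quotient, which by the computation in \S\,\ref{sec: virtual tangent space} has character \eqref{eqn:trace_T_vir}; (3) conclude $T^{\vir}_S = \chi(\OO^{\oplus r},\OO^{\oplus r})-\chi(S,S)$, and then quote Proposition \ref{prop:character_virtual_tangent} for $\sum_{i,j}\mathsf V_{ij}$. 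A convenient shortcut for step (2): by Remark \ref{rmk:K-class_of_symmetric_pot} both $\BE_{\crit}$ and the Ext-complex are self-dual up to the twist by $\mathfrak t$, each has virtual rank $0$, and both restrict to the same $\GL(V)$-equivariant deformation complex on the dense locus $\NCQuot^n_r$ where the quotient is free of relations; since a symmetric obstruction theory in K-theory is determined by its underlying two-term complex, comparing the positive (deformation) parts suffices.

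The main obstacle I expect is step (2): making rigorous the identification of the quiver-theoretic deformation complex on $\NCQuot^n_r$ with the sheaf-theoretic $\RHom$ on $\BA^3$, i.e.~checking that the $\TT$-equivariant isomorphism $T_{\NCQuot^n_r}|_{[S]} - \Omega_{\NCQuot^n_r}|_{[S]}\otimes\mathfrak t^{-1} \cong \chi(\OO^{\oplus r},\OO^{\oplus r})-\chi(S,S)$ holds on the nose in $K^0_\TT(\pt)$ and not merely after forgetting weights. This is where the homogeneity relation \eqref{equivariance_of_f} for $f_n$ and the precise weight bookkeeping of \eqref{T-action_on_NCQUOT}, \eqref{eqn:T-action_forms_vectors}, \eqref{T2actiondimerda} do the real work — in particular one must verify that the framing weights $w_\alpha$ enter the quiver tangent space in the pattern $w_\alpha^{-1}w_\beta$ matching \eqref{eqn:vertex_terms}, which follows from the $\GL(V)$-quotient identifying the $\alpha$-th framing vector with the summand $w_\alpha\otimes\mathscr I_{Z_\alpha}$ of $S$. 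Everything else — the $r=1$ case being \cite{MNOP1}, the reduction to characters via the trace isomorphism \eqref{eqn:trace}, and the final substitution of Proposition \ref{prop:character_virtual_tangent} — is then routine.
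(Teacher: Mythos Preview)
Your overall plan is sound, but you are making step (2) harder than it needs to be, and your ``shortcut'' is not rigorous. The paper's proof bypasses the conceptual matching you describe entirely: it computes the character of $T^{\vir}_S$ directly from the quiver description and observes that the answer is literally $\sum_{i,j}\mathsf V_{ij}$; the equality with $\chi(\OO^{\oplus r},\OO^{\oplus r})-\chi(S,S)$ then follows from Proposition \ref{prop:character_virtual_tangent}, not the other way around. Concretely, since $\NCQuot^n_r = U_{r,n}/\GL(V)$ is a free quotient at stable points, one has
\[
T_{\NCQuot^n_r}\big|_{[S]} = (\BC^3 - 1)\otimes (\overline V \otimes V) + \bigoplus_{\alpha=1}^r \Hom(w_\alpha\BC,V),
\]
where $\BC^3 = t_1^{-1}+t_2^{-1}+t_3^{-1}$ records the weights on the three loops, the $-1$ is the gauge direction, and $V = \sum_\alpha w_\alpha \mathsf Q_\alpha$. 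Substituting these characters and then subtracting the dual twisted by $\mathfrak t^{-1}$ (as dictated by \eqref{eqn:virtual_tg_quot}) gives, after a short manipulation, exactly the right-hand side of \eqref{eqn:trace_T_vir}. No abstract comparison between quiver-theoretic and sheaf-theoretic deformation complexes is needed.

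Two smaller issues. First, your description of the tangent space as a complex $\Hom(V,V)^{\oplus 3}\oplus V^{\oplus r}\to \Hom(V,V)$ has the arrow pointing the wrong way: the infinitesimal gauge action maps $\mathfrak{gl}(V)$ \emph{into} the representation space, and the tangent space of the quotient is the cokernel (there are no relations imposed on $\NCQuot^n_r$ itself --- those come from $\dd f_n$ and are absorbed into the obstruction part). Second, your shortcut (``compare on a dense locus where the quotient is free of relations'') does not establish an identity in $K^0_\TT(\pt)$: K-theory classes at a point are not determined by behaviour on an open set, and in any case the dense locus you have in mind lies in $\NCQuot^n_r$, not in $\mathrm Q$. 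The paper's direct character computation avoids all of this.
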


\begin{proof}
Let $\NCQuot_{r}^n$ be the non-commutative Quot scheme defined in \eqref{def:NCquot}. The superpotential $f_n\colon \NCQuot_{r}^n \to \BA^1$ defined in \eqref{eqn:superpotential_ncquot} is equivariant with respect to the character $(t_1,t_2,t_3)\mapsto t_1t_2t_3$, so it gives rise to a $\GL_3$-equivariant section
\begin{equation}\label{GL3_equiv_section}
\OO_{\NCQuot_{r}^n} \xrightarrow{\dd f_n \otimes \mathfrak{t}^{-1}} \Omega_{\NCQuot_{r}^n}\otimes \mathfrak{t}^{-1}
\end{equation}
where, starting from the representation $\BC^3 = \bigoplus_{1\leq i\leq 3}t_i^{-1}\cdot \BC \in K^0_{\BT_1}(\pt)$, we have set
\[
\mathfrak{t}^{-1} = \det \BC^3 = (t_1t_2t_3)^{-1}.
\]
Here, and throughout this proof, we are identifying a representation with its own character via the isomorphism \eqref{eqn:trace}. The zero locus of the section \eqref{GL3_equiv_section} is our Quot scheme
\[
\mathrm Q=\Quot_{\BA^3}(\OO^{\oplus r},n),
\]
endowed with the $\TT$-action described in \S\,\ref{sec:torus_actions}.
According to Equation \eqref{eqn:virtual_tg_quot}, the virtual tangent space computed with respect to the critical $\TT$-equivariant obstruction theory on $\mathrm Q$ is
\begin{equation}\label{tg_vir_definition}
T^{\vir}_S = \left(T_{\NCQuot_{r}^n} - \Omega_{\NCQuot_{r}^n}\otimes \mathfrak{t}^{-1}\right)\big|_{[S]}.
\end{equation}
The tangent space to the smooth scheme $\NCQuot_{r}^n$ can be written, around a point $S \into \OO^{\oplus r} \onto V$, as
\begin{equation}\label{eqn:tangents_NCQUOT}
T_{\NCQuot_{r}^n}\big|_{[S]} = (\BC^3-1)\otimes (\overline{V}\otimes V) + \bigoplus_{\alpha=1}^r \Hom(w_\alpha \BC,V),
\end{equation}
where 
\[
\bigoplus_{\alpha=1}^r \Hom(w_\alpha \BC,V) = \bigoplus_{\alpha=1}^r w_\alpha^{-1}\otimes V
\]
represents the $r$ framings on the $3$-loop quiver. Let $V$ be written as a direct sum of structure sheaves
\[
V = \bigoplus_{\alpha=1}^r \OO_{Z_\alpha}, 
\]
where the $\alpha$-summand has $\TT$-character $w_\alpha \mathsf Q_\alpha$.
Substituting
\begin{align*}
    \BC^3 -1 &= t_1^{-1}+t_2^{-1}+t_3^{-1} - 1 = \frac{t_1t_2+t_1t_3+t_2t_3-t_1t_2t_3}{t_1t_2t_3}\\
    V &= \sum_{\alpha=1}^r w_\alpha \mathsf Q_\alpha 
\end{align*}
into Formula \eqref{eqn:tangents_NCQUOT} yields
\[
T_{\NCQuot_{r}^n}\big|_{[S]} = \frac{t_1t_2+t_1t_3+t_2t_3-t_1t_2t_3}{t_1t_2t_3} \sum_{1\leq \alpha,\beta\leq r} w_\alpha^{-1}w_\beta \overline{\mathsf Q}_\alpha \mathsf Q_\beta+\sum_{1\leq \alpha,\beta\leq r}w_\alpha^{-1}w_\beta \mathsf Q_\beta,
\]
and hence
\[
\left(\Omega_{\NCQuot_{r}^n}\otimes \mathfrak{t}^{-1}\right) \big|_{[S]} =
\frac{t_1+t_2+t_3-1}{t_1t_2t_3} \sum_{1\leq \alpha,\beta\leq r} w_\alpha^{-1}w_\beta \overline{\mathsf Q}_\alpha \mathsf Q_\beta + \sum_{1\leq \alpha,\beta\leq r}w_\alpha^{-1}w_\beta \frac{\overline{\mathsf Q}_\alpha}{t_1t_2t_3},
\]
which by Formula \eqref{tg_vir_definition} yields
\[
T^{\vir}_S = \sum_{1\leq \alpha,\beta\leq r} w_\alpha^{-1}w_\beta\left(\mathsf Q_\beta - \frac{\overline{\mathsf Q}_\alpha}{t_1t_2t_3} + \frac{(1-t_1)(1-t_2)(1-t_3)}{t_1t_2t_3}\overline{\mathsf Q}_\alpha \mathsf Q_\beta \right).
\]
The right hand side is shown to be equal to $\chi(\OO^{\oplus r},\OO^{\oplus r}) - \chi(S,S)$ in Proposition \ref{prop:character_virtual_tangent}.
\end{proof}

\subsection{A small variation of the vertex formalism}\label{sec: variation vertex}
All locally free sheaves on $\mathbb{A}^3$ are trivial, but this is not true equivariantly. For example, we have $K_{\mathbb{A}^3}=\OO_{\mathbb{A}^3}\otimes t_1t_2t_3\in K^0_{\BT_1}({\mathbb{A}^3})$, even though the ordinary canonical bundle is trivial. Consider 
\begin{equation}\label{eqn:sheaf_with_equiv_weights}
F=\bigoplus_{i=1}^r \OO_{\mathbb{A}^3}\otimes \lambda_i\in K^0_{\BT_1}({\mathbb{A}^3})
\end{equation}
where $\lambda = (\lambda_i)_i$ are weights of the $\BT_1$-action, i.e.~monomials in the representation ring of $\BT_1$. Let $[S] \in \Quot_{\BA^3}(F,n)^{\TT}$ be a $\TT$-fixed point. It decomposes as $S = \bigoplus_{i=1}^r\mathscr I_{Z_i}\otimes \lambda_i\in K_{\BT_1}^0 (\BA^3 )$, where the weights $\lambda_i$ are naturally inherited from $F$. This generalises the discussion in \S\,\ref{sec: virtual tangent space}, which can be recovered by setting all weights $\lambda_i$ to be trivial. Just as in Proposition \ref{prop:Tangent^vir}, we can compute
\[
T^{\vir}_{S,\lambda}=\chi(F,F)-\chi(S,S)\in  K_{\TT}^0 (\pt).
\]
We find
 \begin{align*}
     T^{\vir}_{S,\lambda}&=\chi\left(\bigoplus_{i=1}^r \OO_{\mathbb{A}^3}\otimes \lambda_i w_i,\bigoplus_{j=1}^r \OO_{\mathbb{A}^3}\otimes \lambda_j w_j \right)-\chi\left(\bigoplus_{j=1}^r \mathscr I_{Z_i}\otimes \lambda_i w_i, \bigoplus_{i=1}^r\mathscr I_{Z_j}\otimes \lambda_j w_j \right).
   \end{align*}

Therefore we derive the same expression for the vertex formalism as before, just substituting $w_i$ with $\lambda_i w_i$. 
This variation will be crucial in \S\,\ref{section on compact invariants} where we indentify the Quot scheme of the local model with the restriction of $\Quot_X(F,n)$ to an open toric chart, with $X$ a toric projective 3-fold and $F$ a $\BT_1$-equivariant exceptional locally free  sheaf. 

Define, for $\lambda = (\lambda_1,\ldots,\lambda_r)$ as above and $F$ as in \eqref{eqn:sheaf_with_equiv_weights}, the equivariant integral
\begin{equation}\label{eqn:lambda_invariants}
\int_{[\Quot_{\BA^3}( F,n)]^{\vir}}1\defeq \sum_{[S]\in \Quot_{\BA^3}(\mathscr O^{\oplus r},n)^{\TT}}\frac{1}{e^{\TT}(T_{S,\lambda}^{\vir})}\in \mathbb{Q}(\!(s,v)\!),
\end{equation}
and let 
\begin{equation}\label{eqn:cohomological_DT_series_lambda}
\DT_r^{\coh}(\BA^3,q,s,v)_\lambda = \sum_{n\geq 0}q^n \int_{[\Quot_{\BA^3}( F,n)]^{\vir}}1
\end{equation}
be the generating function of the invariants \eqref{eqn:lambda_invariants}.
We shall see (cf.~Corollary \ref{cor:independence_on_lambda_w}) that this expression does not depend on the equivariant weights $\lambda_i$.

\section{The higher rank K-theoretic DT partition function}\label{sec:K-theory}

\subsection{Symmetrised exterior algebras and brackets}\label{sec: preliminaries K theory}
We recall some constructions is equivariant K-theory which will be used to prove Theorem \ref{mainthm:K-theoretic}. For a recent and more complete reference, the reader may consult \cite[\S~2]{Okounkov_Lectures}.

Let $\TT$ be a torus, $V=\sum_\mu t^\mu$ a $\TT$-module. Assume that $\det(V)$ is a  square in $K_{\TT}^0(\pt)$. Define the \emph{symmetrised exterior algebra} of $V$ as
\[
\widehat{\Lambda}^\bullet V \defeq \frac{\Lambda^\bullet V}{\det(V)^{\frac{1}{2}}}.
\]
It satisfies the relation
\[ 
\widehat{\Lambda}^\bullet V^{\vee}=(-1)^{\rk V}\widehat{\Lambda}^\bullet V.
\]
Define the operator $[\,\cdot\,]$ by
\[
[t^{\mu}]=t^{\frac{\mu}{2}}-t^{-\frac{\mu}{2}}.
\]
One can compute
\[
\tr( \widehat{\Lambda}^\bullet V^\vee)=\prod_{\mu}\frac{1-t^{-\mu}}{t^{-\frac{\mu}{2}}}=\prod_{\mu}\,(t^{\frac{\mu}{2}}-t^{-\frac{\mu}{2}})=\prod_{\mu}\,[t^{\mu}],
\]
For a virtual $\TT$-representation $V=\sum_{\mu}t^{\mu} -\sum_{\nu}t^{\nu}$, where the weight $\nu=0$ never appears, we extend $  \widehat{\Lambda}^\bullet$ and $[\,\cdot\,]$ by linearity and  find
%Extending $  \widehat{\Lambda}^\bullet$ and $[\,\cdot\,]$ by linearity to the whole of  $K^0_{\TT}(\pt)$, for any virtual $\TT$-representation $V=\sum_{\mu}t^{\mu} -\sum_{\nu}t^{\nu}$ we find
\[
\tr( \widehat{\Lambda}^\bullet V^\vee)=\frac{\prod_{ \mu}[t^{\mu}]}{\prod_{ \nu}[t^{\nu}]}.
\]

\subsection{Proof of Theorem \ref{mainthm:K-theoretic}}\label{sec:proof_of_K-theoretic_thm}
By the description of the $\TT$-fixed locus $\Quot_{\mathbb{A}^3}(\OO^{\oplus r},n)^{\TT}$ given in \S\,\ref{sec: combinatorial description of fixed locus},  every colored plane partition $ \overline{\pi}=(\pi_1,\dots, \pi_r)$ corresponds to a unique $\TT$-fixed point $S = \bigoplus_{i=1}^r\mathscr I_{Z_i}$, for which we defined in Equation \eqref{eqn:vertex_terms} the vertex terms  $\mathsf V_{ij}$ by
\[
\mathsf V_{ij}=w_i^{-1}w_j\left(\mathsf Q_j-\frac{\overline{\mathsf Q}_i}{t_1t_2t_3}+\frac{(1-t_1)(1-t_2)(1-t_3)}{t_1t_2t_3}\mathsf Q_j \overline{\mathsf Q}_i \right)
\]
with notation as in \S\,\ref{sec: virtual tangent space}. The generating function \eqref{eqn:cohomological_DT_series} of higher rank cohomological DT invariants can be rewritten in a purely combinatorial fashion as
\begin{align*}
    \DT_r^{\coh}(\BA^3, q,s,v)=\sum_{\overline{\pi}} q^{\lvert \overline \pi\rvert} \prod_{i,j=1}^r e^{\TT}(-\mathsf V_{ij}).
\end{align*}
Similarly, the generating function \eqref{eqn:K_partition_function} of the K-theoretic invariants can be rewritten as
\begin{align*}
    \DT_r^{\KK}(\BA^3, q,t,w)=\sum_{\overline{\pi}} q^{\lvert \overline \pi\rvert} \prod_{i,j=1}^r [-\mathsf V_{ij}].
\end{align*}

A closed formula for $\DT_1^{\KK}(\BA^3, q,t,w)$ was conjectured in \cite{NEKRASOV2005261} and  has recently been proven by Okounkov.
\begin{theorem}[{\cite[Thm.~3.3.6]{Okounkov_Lectures}}]\label{thm: Okounkov rank 1}
The rank $1$ K-theoretic DT partition function of $\BA^3$ is given by
  \begin{align*}
    \DT_1^{\KK}(\BA^3,-q,t,w)=\Exp\left(\mathsf F_1( q,t_1,t_2,t_3)\right)
\end{align*}
where, setting $\mathfrak{t}=t_1t_2t_3$, one defines
\[
\mathsf F_1( q,t_1,t_2,t_3)= \frac{1}{[\mathfrak{t}^{\frac{1}{2}}q ][\mathfrak{t}^{\frac{1}{2}} q^{-1}]}\frac{[t_1t_2][t_1t_3][t_2t_3]}{[t_1][t_2][t_3]}.
\]
\end{theorem}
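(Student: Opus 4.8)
The plan is to prove the identity after applying the plethystic logarithm $\operatorname{PLog}$, the inverse of the operator $\Exp$ of \eqref{def_Exp}: the assertion $\DT_1^{\KK}(\BA^3,-q,t,w)=\Exp(\mathsf F_1(q,t_1,t_2,t_3))$ is equivalent to the identity of power series $\mathsf G(q,t)\defeq\operatorname{PLog}\DT_1^{\KK}(\BA^3,-q,t)=\mathsf F_1(q,t_1,t_2,t_3)$. Observe first that for $r=1$ the framing torus $\BT_2=\BC^\ast$ acts trivially on $\OO$ and hence on $\Hilb^n(\BA^3)$, so $\DT_1^{\KK}$ genuinely does not involve $w$ (the elementary $r=1$ case of \eqref{independence_slogan}, cf.\ Remark~\ref{rmk:trivial_framing_action}). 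By Lemma~\ref{lemma:T_fixed_points}, Proposition~\ref{prop:Tangent^vir}, and the rewriting recalled in \S\,\ref{sec:K-theory}, one has $\DT_1^{\KK}(\BA^3,-q,t)=\sum_\pi(-q)^{|\pi|}[-T^{\vir}_\pi]$, the sum over all plane partitions $\pi$, where $T^{\vir}_\pi$ is the single-leg vertex term $\mathsf V_{11}$ of \eqref{eqn:vertex_terms} and $[\,\cdot\,]$ is the operator of \S\,\ref{sec: preliminaries K theory}; in particular $\mathsf G$ is a power series in $q$ with no constant term, each of whose coefficients is a rational function of the equivariant variables.

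The heart of the argument is a rigidity statement for $\mathsf G$ viewed as a function of $q$. I would establish two facts. First, \emph{rationality with controlled poles}: $\mathsf G(q,t)$ is a rational function of $q$ whose only poles are \emph{simple} poles at $q=\mathfrak t^{1/2}$ and $q=\mathfrak t^{-1/2}$, where $\mathfrak t=t_1t_2t_3$. Second, a \emph{functional equation} $\mathsf G(q^{-1},t)=\mathsf G(q,t)$; conceptually this is the K-theoretic avatar of the $q\leftrightarrow q^{-1}$ symmetry of the degree-$0$ DT series of a Calabi--Yau $3$-fold, reflecting the self-duality of the \emph{symmetric} perfect obstruction theory of Proposition~\ref{prop:SPOT_A^3} together with Serre duality on $\BA^3$ (the monomial correction that is present in nonzero curve class being trivial here).

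Granting these, recall from \S\,\ref{sec: preliminaries K theory} that $[\mathfrak t^{1/2}q][\mathfrak t^{1/2}q^{-1}]=\mathfrak t^{1/2}+\mathfrak t^{-1/2}-q-q^{-1}$, a Laurent polynomial in $q$ vanishing precisely at $q=\mathfrak t^{\pm 1/2}$ and with a simple pole at $q=0$ and at $q=\infty$. Then $\mathsf G\cdot[\mathfrak t^{1/2}q][\mathfrak t^{1/2}q^{-1}]$ is regular at every finite $q\ne 0$ by the first fact, regular at $q=0$ because $\mathsf G$ vanishes there to first order, and regular at $q=\infty$ because, by the functional equation, $\mathsf G$ vanishes to first order at $q=\infty$ as well; a rational function of $q$ with no poles on $\BP^1$ is a constant, so $\mathsf G(q,t)=c(t)/\bigl([\mathfrak t^{1/2}q][\mathfrak t^{1/2}q^{-1}]\bigr)$ with $c(t)$ independent of $q$. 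To pin down $c(t)$ I would compute the coefficient of $q^1$: $\Hilb^1(\BA^3)=\BA^3$ has a single $\TT$-fixed point with virtual tangent space $T^{\vir}=t_1^{-1}+t_2^{-1}+t_3^{-1}-(t_1t_2)^{-1}-(t_1t_3)^{-1}-(t_2t_3)^{-1}$, so the $q^1$-coefficient of $\DT_1^{\KK}(\BA^3,-q,t)$ --- hence of $\log$, hence of $\mathsf G$ --- equals $-[-T^{\vir}]=-\tfrac{[t_1t_2][t_1t_3][t_2t_3]}{[t_1][t_2][t_3]}$, using $[x^{-1}]=-[x]$. Comparing with $c(t)/\bigl([\mathfrak t^{1/2}q][\mathfrak t^{1/2}q^{-1}]\bigr)=-c(t)\,q+O(q^2)$ gives $c(t)=\tfrac{[t_1t_2][t_1t_3][t_2t_3]}{[t_1][t_2][t_3]}$, i.e.\ $\mathsf G=\mathsf F_1$, which is the claim.

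The main obstacle is the first fact: that $\operatorname{PLog}\DT_1^{\KK}$ is rational in $q$ with poles \emph{exactly} at $q=\mathfrak t^{\pm 1/2}$. This is a K-theoretic integrality/rationality statement for the degree-$0$ DT series of $\BA^3$, strictly finer than the (already substantial) cohomological fact --- underlying Theorem~\ref{mainthm:cohomological} --- that $\operatorname{PLog}\DT_1^{\coh}(\BA^3,q,s)$ is rational in $q$ with a single, double, pole at $q=-1$. One route is to prove it directly from the vertex combinatorics \eqref{eqn:vertex_terms}, as a K-theoretic refinement of the manipulations producing the MacMahon formula; a softer route is to prove only that the $q$-poles of $\mathsf G$ lie in $\set{\mathfrak t^{\pm 1/2}}$ together with some fixed $t$-independent finite set, and then identify that set by taking the cohomological limit $b\to 0$ of Corollary~\ref{limit K theory cor} and observing that under $\mathfrak t\to 1$ the poles must collapse onto $q=-1$. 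A transfer-matrix / vertex-operator computation that slices a $3$-dimensional partition into its diagonal Young diagrams is available in principle, but the nonlocal term $\mathsf Q_\pi\overline{\mathsf Q}_\pi$ in \eqref{eqn:vertex_terms} makes the bookkeeping of edge contributions delicate; I would regard the rigidity route above as the most economical.
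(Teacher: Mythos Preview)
The paper does not supply its own proof of this statement: it is quoted from \cite[Thm.~3.3.6]{Okounkov_Lectures} and used as a black-box input to the proof of the higher-rank Theorem~\ref{thm: K theoretic of points higher rank}. So there is no in-paper argument to compare against; your proposal should be measured against Okounkov's original proof.

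Your outline is close in spirit to Okounkov's rigidity strategy, and your computation of the $q^1$ coefficient is correct. But there are two genuine gaps. The first you already flag: establishing that $\mathsf G=\operatorname{PLog}\DT_1^{\KK}(\BA^3,-q,t)$ is rational in $q$ with only simple poles at $q=\mathfrak t^{\pm 1/2}$ is the substantive content of the theorem. Neither the symmetry of the obstruction theory nor the cohomological limit of Corollary~\ref{limit K theory cor} gives this; in Okounkov's treatment it comes from a delicate geometric analysis of which poles can survive equivariant pushforward, and there is no shortcut.

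The second gap you do not flag: your justification of the functional equation $\mathsf G(q^{-1},t)=\mathsf G(q,t)$ is incorrect. The variable $q$ is the box-counting formal variable, not an equivariant parameter, and the self-duality $\BE_{\crit}\cong\BE_{\crit}^\vee[1]\otimes\mathfrak t$ (or Serre duality on $\BA^3$) relates deformations to obstructions at a \emph{fixed} point $[I]\in\Hilb^n(\BA^3)$; it does not exchange $\Hilb^n$ with any ``$\Hilb^{-n}$'' and hence says nothing about $q\leftrightarrow q^{-1}$. The symmetry is of course visible in the target $\mathsf F_1$, but proving it a priori for $\mathsf G$ is part of the same analytic package as the pole control, not a separate free input. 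Since your conclusion that $\mathsf G\cdot[\mathfrak t^{1/2}q][\mathfrak t^{1/2}q^{-1}]$ extends to a constant on $\BP^1$ relies on the vanishing of $\mathsf G$ at $q=\infty$ (which you deduce from the functional equation), this gap is load-bearing.
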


\begin{remark}\label{rmk:trivial_framing_action}
It is clear from the expression of the vertex in rank 1 that there is no dependence on $w_1$. As pointed out to us by N.~Arbesfeld, this can in fact be seen as a shadow of the fact that $\BT_2 = \BC^\ast$ acts trivially on $\NCQuot^n_1$ and on $\dd f_n$. 
Surprisingly, the same phenomenon occurs in the higher rank case (cf.~Theorem \ref{thm:independence}). 
\end{remark}
We devote the rest of this section to proving a generalisation of Theorem \ref{thm: Okounkov rank 1} to higher rank.
\begin{theorem}\label{thm: K theoretic of points higher rank}
The rank $r$ K-theoretic DT partition function of $\BA^3$ is given by
 \begin{align*}
    \DT_r^{\KK}(\BA^3,(-1)^r q,t,w)=\Exp\left(\mathsf F_r(  q,t_1,t_2,t_3)\right),
\end{align*}
where, setting $\mathfrak t = t_1t_2t_3$, one defines
\[ 
\mathsf F_r( q,t_1,t_2,t_3)= \frac{[\mathfrak{t}^r]}{[\mathfrak{t}][\mathfrak{t}^{\frac{r}{2}}q ][\mathfrak{t}^{\frac{r}{2}} q^{-1} ]}\frac{[t_1t_2][t_1t_3][t_2t_3]}{[t_1][t_2][t_3]}.
\]
\end{theorem}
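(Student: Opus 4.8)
The plan is to bootstrap from Okounkov's rank one formula (Theorem \ref{thm: Okounkov rank 1}) using the framing-independence statement (Theorem \ref{thm:independence}), and then to conclude with an elementary identity for the bracket operator $[\,\cdot\,]$. For the first step: by Theorem \ref{thm:independence} the series $\DT_r^{\KK}(\BA^3,q,t,w)$ is independent of the framing weights $w=(w_1,\dots,w_r)$, so I am free to evaluate it along any generic degeneration of the $w_i$. I would use the iterated limit $w_1/w_2,\dots,w_{r-1}/w_r\to 0$ inside the fixed-point expansion
\[
\DT_r^{\KK}(\BA^3,q,t,w)=\sum_{\overline\pi}q^{\lvert\overline\pi\rvert}\prod_{i,j=1}^r[-\mathsf V_{ij}],
\]
with the vertex terms $\mathsf V_{ij}$ as in \eqref{eqn:vertex_terms}. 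Each off-diagonal bracket $[-\mathsf V_{ij}]$ carries the overall weight $w_i^{-1}w_j$ and, in this limit, degenerates to its leading Laurent monomial; since $(1-t_1)(1-t_2)(1-t_3)$ has rank $0$, one has $\rk\mathsf V_{ij}=\lvert\pi_j\rvert-\lvert\pi_i\rvert$, so the $w$-powers contributed by $(i,j)$ and by $(j,i)$ are mutually inverse --- as forced by framing-independence --- and off the diagonal only $t$-monomials survive, multiplying the diagonal rank one vertices $\prod_i[-\mathsf V_{ii}]$. These residual $t$-monomials are controlled by determinant (hence multiplicative) data, and bookkeeping them shows that the colored-partition sum factorises into a product over $i$ of rank one partition functions, the $i$-th one evaluated at $q\mathfrak t^{\frac{-r-1}{2}+i}$; this is precisely the decomposition \eqref{eqn:product}, $\DT_r^{\KK}(\BA^3,(-1)^rq,t)=\prod_{i=1}^r\DT_1^{\KK}\bigl(\BA^3,-q\mathfrak t^{\frac{-r-1}{2}+i},t\bigr)$, of Theorem \ref{thm: r copies of rank 1 K theory}.

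For the second step, Theorem \ref{thm: Okounkov rank 1} gives $\DT_1^{\KK}(\BA^3,-q',t)=\Exp(\mathsf F_1(q',t))$. Substituting $q'=q\mathfrak t^{\frac{-r-1}{2}+i}$ in each factor of \eqref{eqn:product} and using the multiplicativity $\prod_i\Exp(f_i)=\Exp\bigl(\sum_i f_i\bigr)$ of the plethystic exponential, I obtain
\[
\DT_r^{\KK}(\BA^3,(-1)^rq,t)=\Exp\Bigl(\textstyle\sum_{i=1}^r\mathsf F_1\bigl(q\mathfrak t^{\frac{-r-1}{2}+i},t\bigr)\Bigr).
\]
It then remains to check $\sum_{i=1}^r\mathsf F_1(q\mathfrak t^{\frac{-r-1}{2}+i},t)=\mathsf F_r(q,t_1,t_2,t_3)$. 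The factor $[t_1t_2][t_1t_3][t_2t_3]/([t_1][t_2][t_3])$ is common to both sides, so, writing $y=\mathfrak t$ and $x=q\mathfrak t^{-r/2}$, this reduces to the purely formal bracket identity
\[
\sum_{i=1}^r\frac{1}{[xy^{i-1}]\,[xy^i]}=\frac{[y^r]}{[y]\,[x]\,[xy^r]},
\]
which follows by induction on $r$, the inductive step being the relation $[y^{r+1}][xy^r]-[y^r][xy^{r+1}]=[x][y]$ (an immediate expansion of the definition of $[\,\cdot\,]$).

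The substantive point --- and the step I expect to be the main obstacle --- is the reduction to \eqref{eqn:product}: making rigorous the iterated degeneration of the off-diagonal brackets and proving that the residual $t$-monomials reorganise exactly into the $q$-shifts $\mathfrak t^{\frac{-r-1}{2}+i}$, i.e.~that the colored-partition sum genuinely factorises. Theorem \ref{thm:independence} is what legitimises the limit (it guarantees the limit exists term by term and still computes $\DT_r^{\KK}$), but extracting the product form requires a careful analysis of the weights of $\mathsf V_{ij}$ for $i\neq j$; once \eqref{eqn:product} is in hand, everything downstream is formal.
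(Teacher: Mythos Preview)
Your proposal is correct and follows essentially the same route as the paper's proof: use the framing-independence (Theorem \ref{thm:independence}) to take a limit in the $w_i$ (the paper takes $w_i=L^i$, $L\to\infty$, which is exactly your iterated limit $w_i/w_{i+1}\to 0$), obtain the product decomposition \eqref{eqn:product}, apply Okounkov's rank $1$ formula, and finish with a bracket identity. The step you flag as the main obstacle is precisely what the paper makes explicit as Lemma \ref{lemma: lim in K theory Vij} (computing $\lim[-\mathsf V_{ij}][-\mathsf V_{ji}]=(-\mathfrak t^{1/2})^{|\pi_j|-|\pi_i|}$) together with the reindexing Lemma \ref{lemma: combinatorical trick}; your one-step induction $r\to r+1$ for the final bracket identity is a slightly cleaner variant of the paper's two-step induction $r-2\to r$.
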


\begin{remark}
This result was conjectured in \cite{MR2545054} by Awata and Kanno, who also proved it $\bmod q^4$, i.e.~up to $3$ instantons. The conjecture was confirmed numerically up to some order by Benini--Bonelli--Poggi--Tanzini \cite{BBPT}.
\end{remark}

The proof of Theorem \ref{thm: K theoretic of points higher rank} will follow essentially by taking suitable limits of the weights $w_i$. To perform such limits, we prove the slogan \eqref{independence_slogan}, already anticipated   in the Introduction.

\begin{theorem}\label{thm:independence}
The generating function $\DT_r^{\KK}(\BA^3,q,t,w)$ is independent of the weights $w_1,\dots,w_r$.
\end{theorem}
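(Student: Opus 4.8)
The plan is to prove the statement by a rigidity argument applied coefficient by coefficient in $q$. Fix $n$ and write $Z_n(t,w)=\chi(\Quot_{\BA^3}(\OO^{\oplus r},n),\widehat\OO^{\vir})=\sum_{|\overline\pi|=n}\prod_{i,j=1}^r[-\mathsf V_{ij}]$, which is the coefficient of $q^n$ in $\DT_r^{\KK}(\BA^3,q,t,w)$ and is, by $\TT$-localisation, a rational function of the framing variables $w=(w_1,\dots,w_r)$ over the coefficient field $\BQ(t,\mathfrak t^{1/2})$. The goal is to show $Z_n$ does not involve $w$. I would deduce this from two facts: (i) $Z_n$ is regular on the whole torus $(\BC^\ast)^r$, i.e.\ a Laurent polynomial in $w$; and (ii) $Z_n$ has a finite limit as $w_k\to 0$ and as $w_k\to\infty$ for each $k$ (with the remaining variables and $t$ generic). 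Indeed, (i) and (ii) together force the Newton polytope of $Z_n$ in the $w$-variables to be a single point, so $Z_n\in\BQ(t,\mathfrak t^{1/2})$; one then sets $w$ to any convenient value (e.g.\ suitable limits, as is done in the proof of Theorem~\ref{thm: K theoretic of points higher rank}).

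Step (ii) is the elementary part and I would carry it out directly from the vertex formula of \S\,\ref{sec:proof_of_K-theoretic_thm}. Writing $\mathsf V_{ij}=w_i^{-1}w_j\,P_{ij}(t_1,t_2,t_3)$, where $P_{ij}$ is a virtual $\BT_1$-character of virtual rank $n_j-n_i$ (the factor $(1-t_1)(1-t_2)(1-t_3)$ has virtual rank $0$), every weight of $\mathsf V_{ij}$ carries $w_k$ to the power $+1$ if $k=j$, $-1$ if $k=i$, and $0$ otherwise. Hence, as $w_k\to 0$ or $w_k\to\infty$, the bracket $[-\mathsf V_{ij}]$ contributes a monomial in $w_k$ of exponent $\pm\tfrac12(n_j-n_i)$ precisely when $k\in\{i,j\}$, and summing these exponents over the $k$-th row and the $k$-th column telescopes: $\sum_{j}(n_k-n_j)+\sum_{i}(n_i-n_k)=0$. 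Thus each summand $\prod_{i,j}[-\mathsf V_{ij}]$, and therefore $Z_n$ itself, tends to a finite (generically nonzero) limit at $w_k=0$ and $w_k=\infty$; this is (ii).

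Step (i) is the heart of the matter and is where $r>1$ becomes delicate: individual fixed-point contributions genuinely have poles in $w$, along the divisors $\{t^\mu w_j/w_i=1\}$ coming from the monomials of the $P_{ij}$, and one must exhibit the cancellation of all of these in the localisation sum. I would combine two inputs. First, a structural one: $Z_n$ can be computed by $\BT_1$-localisation onto the proper fixed locus $\Quot_{\BA^3}(\OO^{\oplus r},n)^{\BT_1}$ (Remark~\ref{lemma:compact_fixed_locus_Quot}); expanding the denominator $\Lambda^\bullet N^{\vir,\vee}$ as a series along a generic $\BT_1$-direction reduces the $\BT_2$-equivariant Euler characteristic to (infinitely many) honest $\BT_2$-equivariant Euler characteristics over a proper scheme, each an honest Laurent polynomial in $w$; recombining shows that all pole divisors of $Z_n$ carry a nontrivial $\BT_1$-weight, so in particular $Z_n$ has no poles along $\{w_j=w_i\}$ (and no purely $w$-poles). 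Second, a combinatorial one handling the remaining mixed poles $\{t^\mu w_j/w_i=1\}$: using the self-duality $\mathsf V_{ji}=-\mathfrak t^{-1}\overline{\mathsf V_{ij}}$ forced by the symmetric obstruction theory (Proposition~\ref{prop:tangents_are_T_movable}), the contribution of each off-diagonal pair $(i,j),(j,i)$ to $\prod_{k,l}[-\mathsf V_{kl}]$ reduces to $(-1)^{n_j-n_i}\prod_a\bigl([\mathfrak t\,t^{\mu_a}u]/[t^{\mu_a}u]\bigr)^{\pm1}$ with $u=w_j/w_i$; grouping the colored partitions $\overline\pi$ by their underlying multiset $\{\pi_1,\dots,\pi_r\}$ and summing over color-orderings, the residues at the would-be poles cancel by bracket addition identities of the type $[\mathfrak t^{-1}u]+[\mathfrak t u]=(\mathfrak t^{1/2}+\mathfrak t^{-1/2})[u]$ (exactly the mechanism that makes the case $r=2$, $n=1$ come out $w$-independent). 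Making this cancellation uniform in $r$ and $n$ is the step I expect to be the main obstacle; the degree count of (ii) and the final "Laurent polynomial with trivial Newton polytope is constant" deduction are routine once (i) is established.
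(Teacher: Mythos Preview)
Your overall architecture --- prove that each coefficient $Z_n$ is (i) a Laurent polynomial in $w$ and (ii) homogeneous of degree $0$, hence constant --- is exactly the framework the paper uses. Step~(ii) is correct and matches the paper's observation, stated slightly differently there: by \eqref{eqn:poles_showing_up} each fixed-point contribution is visibly homogeneous of total $w$-degree $0$, so once regularity is known the conclusion is immediate.

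The genuine gap is Step~(i). Your ``structural'' input, namely $\BT_1$-localising to the proper locus and expanding, at best shows that any pole divisor of $Z_n$ carries a nontrivial $\BT_1$-weight; but this is precisely the situation you still have to deal with, since by the vertex formula every candidate pole already has the mixed form $1-w_i^{-1}w_j t^{\nu}$. Your ``combinatorial'' input, a residue cancellation after grouping colored partitions by their underlying multiset, is not carried out, and you correctly flag it as the main obstacle. Note that the identity you invoke, $[\mathfrak t^{-1}u]+[\mathfrak t u]=(\mathfrak t^{1/2}+\mathfrak t^{-1/2})[u]$, only organises a single pair of poles; making such cancellations uniform over all $(i,j)$, all $\nu$, and all $r$-colored partitions of $n$ is a substantial combinatorial problem with no evident inductive structure, and the paper does not attempt this route.

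The paper bypasses the combinatorics entirely with a geometric compactness argument. The key external input is \cite[Prop.~3.2]{arbesfeld2019ktheoretic}: for a quasiprojective $\TT$-scheme $M$ with equivariant perfect obstruction theory and proper fixed locus, the only factors $(1-\mathsf w)$ that can occur as poles of $\chi(M,V\otimes\OO^{\vir})$ are those for which $\mathsf w$ is a \emph{noncompact} weight, i.e.\ the fixed locus of the maximal torus $\TT_{\mathsf w}\subset\ker(\mathsf w)$ is not proper. One then checks directly that every $\mathsf w=w_i^{-1}w_jt^{\nu}$ is a \emph{compact} weight of $\mathrm Q=\Quot_{\BA^3}(\OO^{\oplus r},n)$. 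Since $\mathsf w$ is primitive, $\TT_{\mathsf w}=\ker(\mathsf w)$ is itself a torus; the automorphism $\tau_\nu\colon\TT\to\TT$ sending $w_i\mapsto w_it^{-\nu}$ identifies $\TT_{\mathsf w}$ with $\BT_1\times\{w_i=w_j\}$, and after twisting the $\TT$-action on $\mathrm Q$ accordingly (rescaling the $i$-th framing by $t^{\nu}$), one obtains $\mathrm Q^{\TT_{\mathsf w}}\hookrightarrow\mathrm Q^{\BT_1}$ for this twisted action. But any $\BT_1$-fixed quotient must still be supported at the origin, so this fixed locus lies in the punctual Quot scheme and is proper. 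Thus no pole $1-w_i^{-1}w_jt^{\nu}$ survives, and Step~(i) follows.

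If you prefer a proof closer in spirit to your rigidity outline, the paper notes an alternative (Remark~\ref{rmk:rigidity_principle}): push the invariants forward along the proper Quot-to-Chow map $\mathrm Q\to\Sym^n\BA^3$; since $\BT_2$ acts trivially on the target, Okounkov's rigidity principle applies directly and gives $w$-independence without any pole-by-pole analysis.
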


\begin{proof}
The $n$-th coefficient of $\DT^{\KK}_r(\BA^3,q,t,w)$ is a sum of contributions
\[
[-T_{\overline{\pi}}^{\vir}],\qquad \lvert \overline{\pi}\rvert = n.
\]
A simple manipulation shows that
\begin{equation}\label{eqn:poles_showing_up}
[-T_{\overline{\pi}}^{\vir}] = A(t)\prod_{1\leq i<j\leq r}\frac{\prod_{\mu_{ij}}w_i-w_j t^{\mu_{ij}}}{\prod_{\nu_{ij}}w_i-w_j t^{\nu_{ij}}} = A(t)\prod_{1\leq i<j\leq r}\frac{\prod_{\mu_{ij}}(1-w_i^{-1}w_j t^{\mu_{ij}})}{\prod_{\nu_{ij}}(1-w_i^{-1}w_j t^{\nu_{ij}})},
\end{equation}
where $A(t)\in \BQ(\!(t_1,t_2,t_3, (t_1t_2t_3)^{\frac{1}{2}})\!) $ and the number of weights $\mu_{ij}$ and $\nu_{ij}$ is the same. Thus, $\DT^{\KK}_r(\BA^3,q,t,w)$ is a homogeneous rational expression of total degree 0 with respect to the variables $w_1,\ldots,w_r$. We aim to show that $\DT^{\KK}_r(\BA^3,q,t,w)$ has  no poles of the form $1-w_i^{-1}w_j t^{\nu_{ij}}$, implying that it is a degree $0$ polynomial in the $w_i$, hence constant in the $w_i$. This generalises the strategy of \cite[\S\,4]{MR2545054}.

Set $\mathsf w =  w_i^{-1}w_jt^\nu$ for fixed $i<j$ and $\nu \in \widehat{\BT}_1$. To see that $1-\mathsf w$ is not a pole, we use \cite[Prop.~3.2]{arbesfeld2019ktheoretic}, which asserts the following: if $M$ is a quasiprojective $\TT$-scheme with a $\TT$-equivariant perfect obstruction theory and proper (nonempty) fixed locus, then for any $V \in K_0^{\TT}(M)$, the only poles of the form $(1-\mathsf w)$ that \emph{may} appear in $\chi(M,V \otimes \OO^{\vir})$ arise from \emph{noncompact weights} $\mathsf w \in \widehat{\TT}$. A weight $\mathsf w$ is called compact if the fixed locus $M^{\TT_{\mathsf w}} \subset M$ is proper, where $\TT_{\mathsf w}$ is the maximal torus in $\ker(\mathsf w) \subset \TT$, and is called noncompact otherwise \cite[Def.~3.1]{arbesfeld2019ktheoretic}.

We of course want to apply \cite[Prop.~3.2]{arbesfeld2019ktheoretic} to $M = \mathrm{Q} = \Quot_{\BA^3}(\OO^{\oplus r},n)$, $\TT = \BT_1\times \BT_2$ and $V = \mathcal K_{\vir}^{1/2}$. By Equation \eqref{eqn:poles_showing_up}, our goal is to prove that $\mathsf w = w_i^{-1}w_jt^\nu$ is a compact weight for all $i<j$ and $\nu \in \widehat{\BT}_1$.

First of all, we observe that $\TT_{\mathsf w} = \ker (\mathsf w)$. Indeed $\mathsf w$ is not a product of powers of weights of $\TT$, hence 
\[
\OO(\ker \mathsf w) = \OO(\TT)/(w_i^{-1}w_jt^\nu-1) \cong \BC\left[t_1^{\pm 1},t_2^{\pm 1},t_3^{\pm 1},w_1^{\pm 1},\ldots,w_{i-1}^{\pm 1},w_{i+1}^{\pm 1},\ldots,w_r^{\pm 1}\right],
\]
which shows that $\ker(\mathsf w)$ is itself a torus (of dimension $3+r-1$). Next, consider the automorphism $\tau_\nu\colon \TT \simto \TT$ defined by
\[
(t_1,t_2,t_3,w_1,\ldots,w_r) \mapsto (t_1,t_2,t_3,w_1,\ldots,w_it^{-\nu},\ldots,w_j,\ldots,w_r).
\]
It maps $\TT_{\mathsf w}\subset \TT$ isomorphically onto the subtorus $\BT_1 \times \set{w_i=w_j} \subset \TT$. This yields an inclusion of tori
\begin{equation}\label{inclusions_tori}
\BT_1 \simto \BT_1 \times \Set{(1,\ldots,1)} \into \tau(\TT_{\mathsf w}).
\end{equation}
We consider the action $\sigma_\nu \colon \TT \times \mathrm{Q} \to \mathrm{Q}$ where $\BT_1$ translates the support of the quotient sheaf in the usual way, the $i$-th summand of $\OO^{\oplus r}$ gets scaled by $w_it^\nu$ and all other summands by $w_k$ for $k\neq i$. In other words, in terms of the matrix-and-vectors description of $\mathrm{Q}$, we set
\[
\sigma_\nu(\mathbf t,(A_1,A_2,A_3,u_1,\ldots,u_r)) = (t_1A_1,t_2A_2,t_3A_3,w_1u_1,\ldots,w_it^\nu u_i,\ldots,w_ru_r),
\]
just a variation of Equation \eqref{T-action_on_NCQUOT} in the $i$-th vector component. Then, upon restricting this action to $\TT_{\mathsf w}$, we have a commutative diagram
\[
\begin{tikzcd}[row sep=large]
\TT_{\mathsf w} \times \mathrm{Q}\arrow{r}{\sigma}\arrow[swap]{d}{\tau_\nu\times\id} & \mathrm{Q}\arrow[equal]{d} \\
\tau_\nu(\TT_{\mathsf w}) \times \mathrm{Q}\arrow{r}{\sigma_\nu} & \mathrm{Q}
\end{tikzcd}
\]
where $\sigma$ is the restriction of the usual action \eqref{T-action_on_NCQUOT}. This diagram induces a natural isomorphism $\mathrm{Q}^{\TT_{\mathsf w}}\simto\mathrm{Q}^{\tau_\nu(\TT_{\mathsf w})}$, which combined with \eqref{inclusions_tori}
yields an inclusion
\[
\mathrm{Q}^{\TT_{\mathsf w}}\simto\mathrm{Q}^{\tau_\nu(\TT_{\mathsf w})} \into \mathrm{Q}^{\BT_1},
\]
where $\mathrm{Q}^{\BT_1}$ is the fixed locus with respect to the action $\sigma_\nu$. But by the same reasoning as in Remark \ref{lemma:compact_fixed_locus_Quot}, this fixed locus is proper (because, again, a $\BT_1$-fixed surjection $\OO^{\oplus r}\onto T$ necessarily has the quotient $T$ entirely supported at the origin $0 \in \BA^3$). Thus $\mathsf w$ is a compact weight, and the result follows.
\end{proof}

%\begin{comment}
\begin{remark}\label{rmk:rigidity_principle}
%We devote Appendix \ref{sec:framing_independence} to the proof of this crucial fact. 
After a first draft of this work was already finished, we were informed of an alternative way to prove Theorem \ref{thm:independence}, which, in a nutshell, goes as follows: one exploits the (proper) Quot-to-Chow morphism $\Quot_{\BA^3}(\OO^{\oplus r},n) \to \Sym^n \BA^3$ to express the K-theoretic DT invariants as equivariant holomorphic Euler characteristics on $\Sym^n \BA^3$, where the framing torus $\BT_2$ is acting trivially on the symmetric product. One concludes by an application of Okounkov's \emph{rigidity principle} \cite[\S\,2.4.1]{Okounkov_Lectures}. This strategy will be carried out  in  \cite{Noah_Yasha}.
\end{remark}
%\end{comment}

Thanks to Theorem \ref{thm:independence}, we may now  specialise $w_1,\ldots,w_r$ to arbitrary values and take arbitrary limits. We set  $w_i=L^{i}$ for $i=1,\ldots,r$ and compute the limit for $L\to \infty$.
\begin{lemma}\label{lemma: lim in K theory Vij}
Let $i< j$. Then we have
\[
\lim_{L\to \infty}[-\mathsf V_{ij}][-\mathsf V_{ji}]\big|_{w_i=L^{i}}= (-\mathfrak{t}^{\frac{1}{2}})^{|\pi_j|-|\pi_i|}.
\]
\end{lemma}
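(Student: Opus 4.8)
The plan is to reduce everything to the multiplicativity of the bracket operator together with a duality between $\mathsf V_{ij}$ and $\mathsf V_{ji}$, and then simply read off leading terms in $L$. Write $\mathsf V_{ij} = w_i^{-1}w_j\,\mathsf v_{ij}$, where $\mathsf v_{ij} = \mathsf Q_j - \overline{\mathsf Q}_i/\mathfrak t + \mathfrak t^{-1}(1-t_1)(1-t_2)(1-t_3)\mathsf Q_j\overline{\mathsf Q}_i$ is a Laurent polynomial in $t_1,t_2,t_3$ only. The first step I would carry out is the (purely computational) identity $\overline{\mathsf v_{ji}} = -\mathfrak t\,\mathsf v_{ij}$, which follows from $\overline{\mathfrak t} = \mathfrak t^{-1}$ together with $(1-t_1^{-1})(1-t_2^{-1})(1-t_3^{-1}) = -\mathfrak t^{-1}(1-t_1)(1-t_2)(1-t_3)$; this is a shadow of the symmetry $\BE_{\crit}\cong\BE_{\crit}^\vee[1]\otimes\mathfrak t$ recorded in the proof of Proposition \ref{prop:tangents_are_T_movable}. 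Expanding $\mathsf v_{ij} = \sum_k c_k t^{\mu_k}$ with $c_k\in\BZ$ and the $\mu_k$ pairwise distinct, and setting $z_k = w_i^{-1}w_j t^{\mu_k}$, this identity gives $\mathsf V_{ij} = \sum_k c_k z_k$ and $\mathsf V_{ji} = -\sum_k c_k(\mathfrak t z_k)^{-1}$.

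Next I would record that $\sum_k c_k = \mathsf v_{ij}\big|_{t_1=t_2=t_3=1} = |\pi_j| - |\pi_i|$, since the product term vanishes at $t=1$ and $\mathsf Q_\alpha\big|_{t=1} = |\pi_\alpha|$. Because $w_i^{-1}w_j$ is a nontrivial character, each monomial $z_k$ and each $\mathfrak t z_k$ is genuinely nontrivial, so $[z_k]$ and $[\mathfrak t z_k]$ are nonzero and all expressions below are legitimate — this is the same point that makes specialising the $w$'s harmless, cf.\ Theorem \ref{thm:independence}. Using that $[\,\cdot\,]$ is multiplicative on virtual representations and that $[x^{-1}] = -[x]$, I get
\[
[-\mathsf V_{ij}][-\mathsf V_{ji}] = \prod_k [z_k]^{-c_k}\cdot\prod_k\bigl(-[\mathfrak t z_k]\bigr)^{c_k} = (-1)^{|\pi_j|-|\pi_i|}\prod_k\left(\frac{[\mathfrak t z_k]}{[z_k]}\right)^{c_k}.
\]

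Finally I would take the limit: under $w_i = L^i$ one has $z_k = L^{j-i}t^{\mu_k}$ with $j-i\geq 1$, so $z_k^{-1}\to 0$ as $L\to\infty$, whence $[\mathfrak t z_k]/[z_k] = (\mathfrak t^{1/2} - \mathfrak t^{-1/2}z_k^{-1})/(1 - z_k^{-1}) \to \mathfrak t^{1/2}$; therefore the product converges to $(-1)^{|\pi_j|-|\pi_i|}(\mathfrak t^{1/2})^{\sum_k c_k} = (-\mathfrak t^{1/2})^{|\pi_j|-|\pi_i|}$, as claimed. The only step with any real content is the duality $\overline{\mathsf v_{ji}} = -\mathfrak t\,\mathsf v_{ij}$; once that is in hand the rest is bookkeeping, the single thing to watch being that no bracket in a denominator degenerates, which the nontriviality of $w_i^{-1}w_j$ guarantees.
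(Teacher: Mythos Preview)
Your proof is correct and rests on the same underlying observation as the paper's, namely the duality between $\mathsf V_{ij}$ and $\mathsf V_{ji}$ coming from the bar involution. The paper uses this implicitly, writing $[-\mathsf V_{ji}] = (-1)^{\rk(-\mathsf V_{ji})}[-\overline{\mathsf V}_{ji}]$ and then expanding $\overline{\mathsf V}_{ji}$ directly; you make the identity $\overline{\mathsf v_{ji}} = -\mathfrak t\,\mathsf v_{ij}$ explicit and use it to rewrite the product at once.

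The organization differs in a way worth noting. The paper extracts the leading $L$-behaviour of $[-\mathsf V_{ij}]$ and $[-\mathsf V_{ji}]$ separately---each is of the form $L^{\pm\frac{j-i}{2}(|\pi_j|-|\pi_i|)}$ times a $t$-expression---and then observes that the $L$-powers cancel in the product; strictly speaking neither individual limit exists, so this is a leading-term argument. Your approach instead pairs the factors upfront, rewriting the product as $(-1)^{|\pi_j|-|\pi_i|}\prod_k\bigl([\mathfrak t z_k]/[z_k]\bigr)^{c_k}$, where each ratio already has a finite limit $\mathfrak t^{1/2}$. This sidesteps any manipulation of divergent intermediate quantities and is a bit tidier, at the cost of isolating and verifying the duality identity first.
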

\begin{proof}

Notice that all monomials in $\mathsf V_{ij}$ are of the form $w_i^{-1}w_j \lambda$ for $\lambda$ a monomial in $t_1,t_2,t_3$. Then
\[
[w_i^{-1}w_j\lambda]\big|_{w_i=L^{i}}=(L^{j-i}\lambda)^{\frac{1}{2}}(1- L^{i-j}\lambda^{-1}).
\]
Write $\mathsf Q_i=\sum_{\mu}t^{\mu}$ and $\mathsf Q_j=\sum_{\nu}t^{\nu}$. Taking limits, we obtain
\begin{align*}
 \lim_{L\to \infty}[-\mathsf V_{ij}]\big|_{w_i=L^{i}}&=\lim_{L\to \infty}[-w_i^{-1}w_j(\mathsf Q_j- \overline{\mathsf Q}_i\mathfrak{t}^{-1}+\overline{\mathsf Q}_i \mathsf Q_j\mathfrak{t}^{-1}(1-t_1)(1-t_2)(1-t_3)  )  ]\big|_{w_i=L^{i}} \\
 &=\lim_{L\to \infty} L^{\frac{j-i}{2}(|\pi_i|-|\pi_j|)}\frac{\prod_\mu (t^{-\frac{\mu}{2}}\mathfrak{t}^{-\frac{1}{2}})}{\prod_{\nu} t^{\frac{\nu}{2}}}.
\end{align*}
Similarly, we obtain
\begin{align*}
 \lim_{L\to \infty}[-\mathsf V_{ji}]\big|_{w_i=L^{i}}&= (-1)^{\rk (-\mathsf V_{ji})}\lim_{L\to \infty}[-\overline{\mathsf V}_{ji}]\big|_{w_i=L^{i}} \\      
 &=(-1)^{|\pi_i|-|\pi_j|}\lim_{L\to \infty}[-w_i^{-1}w_j(\overline{\mathsf Q}_i- {\mathsf Q}_j\mathfrak{t}-\overline{\mathsf Q}_i \mathsf Q_j(1-t_1)(1-t_2)(1-t_3)  )  ]\big|_{w_i=L^{i}}\\
 &=(-1)^{|\pi_i|-|\pi_j|}\lim_{L \to \infty} L^{\frac{j-i}{2}(|\pi_j|-|\pi_i|)}\frac{\prod_\nu (t^{\frac{\nu}{2}}\mathfrak{t}^{\frac{1}{2}})}{\prod_{\mu} t^{-\frac{\mu}{2}}}.
\end{align*}
We conclude, as required, that
\[
\lim_{L\to \infty}[-\mathsf V_{ij}][-\mathsf V_{ji}]\big|_{w_i=L^{i}}= (-\mathfrak{t}^{\frac{1}{2}})^{|\pi_j|-|\pi_i|}.\qedhere
\]
\end{proof}

\begin{lemma}\label{lemma: combinatorical trick}
Let $x$ be a 
variable and $c_i\in \mathbb{Z}$, for $i=1,\ldots,r$. Then we have
\[
\prod_{1\leq i<j\leq r}x^{c_j-c_i}=\prod_{i=1}^r x^{(-r-1+2i)c_i}.
\]
\end{lemma}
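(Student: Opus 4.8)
The plan is to reduce the identity to a comparison of exponents of the single formal variable $x$. Since $x$ is one variable, the asserted equality $\prod_{1\le i<j\le r} x^{c_j-c_i} = \prod_{i=1}^r x^{(-r-1+2i)c_i}$ is equivalent to the numerical identity
\[
\sum_{1\le i<j\le r}(c_j-c_i) \;=\; \sum_{i=1}^r (-r-1+2i)\,c_i \qquad \text{in }\BZ,
\]
so it suffices to prove this.

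First I would rewrite the left-hand sum by collecting, for each fixed index $k\in\{1,\dots,r\}$, the total coefficient with which $c_k$ appears. In a summand $c_j-c_i$ the variable $c_k$ occurs with coefficient $+1$ exactly when $j=k$ and $i<k$, i.e.\ for the $k-1$ pairs $(i,k)$ with $i<k$, and with coefficient $-1$ exactly when $i=k$ and $j>k$, i.e.\ for the $r-k$ pairs $(k,j)$ with $j>k$. Hence the coefficient of $c_k$ in $\sum_{1\le i<j\le r}(c_j-c_i)$ equals $(k-1)-(r-k)=2k-r-1$.

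Then I would simply note that $2k-r-1=-r-1+2k$ is precisely the coefficient of $c_k$ on the right-hand side, so the two $\BZ$-linear expressions in $c_1,\dots,c_r$ agree coefficient by coefficient; exponentiating $x$ gives the lemma. There is no real obstacle here: the only point requiring care is the elementary count of how many ordered pairs $(i,j)$ with $i<j$ have a prescribed coordinate equal to $k$, which is immediate. (If one prefers, an induction on $r$ also works — adjoining the index $r$ adds the pairs $(i,r)$, whose contribution $\sum_{i<r}(c_r-c_i)=(r-1)c_r-\sum_{i<r}c_i$ one checks against the change in the right-hand exponents — but the direct coefficient count above is cleaner and shorter.)
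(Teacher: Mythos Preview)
Your proof is correct. The paper, however, argues by induction on $r$: assuming the identity for $r-1$, it peels off the pairs $(i,r)$ and checks that $\prod_{i=1}^{r-1} x^{c_r-c_i} = x^{(r-1)c_r}\prod_{i=1}^{r-1} x^{-c_i}$ combines with the inductive hypothesis to give the claimed right-hand side. Your direct coefficient count is cleaner and avoids the bookkeeping of tracking how the exponents shift from $r-1$ to $r$; the inductive approach has the mild advantage of making the recursive structure explicit, which is sometimes useful if one wants to generalise, but here it buys nothing. You even sketch the induction as an aside, so you are aware of both routes.
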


\begin{proof}
The assertion holds for $r=1$ as the productory on the left hand side is empty. Assume it holds for $r-1$. Then we have:
\begin{align*}
\prod_{1\leq i<j\leq r}x^{c_j-c_i}&= \prod_{1\leq i<j\leq r-1}x^{c_j-c_i}\prod_{i=1}^{r-1}x^{c_r-c_i}\\
&= x^{(r-1)c_r}\prod_{i=1}^{r-1} x^{(-r-1+2i)c_i}\\
&= \prod_{i=1}^r x^{(-r-1+2i)c_i}. \qedhere
\end{align*}
\end{proof}

Combining Lemma \ref{lemma: lim in K theory Vij} with Lemma \ref{lemma: combinatorical trick} we can express the rank $r$ K-theoretic DT theory of $\BA^3$ as a product of $r$ copies of the rank $1$ K-theoretic DT theory. This product formula already appeared  as a limit of the (conjectural) $4$-fold theory developed by Nekrasov and Piazzalunga \cite[Formula (3.14)]{Magnificent_colors}.\footnote{Typo warning: N. Piazzalunga kindly pointed out to us that in \cite[Formula (3.14)]{Magnificent_colors} one should read `$\frac{N+1}{2}-l$' instead of  `$N+1-2l$'.}
\begin{theorem}\label{thm: r copies of rank 1 K theory}
There is an identity
\[
\DT_r^{\KK}(\BA^3,(-1)^rq,t,w)=\prod_{i=1}^r \DT_1^{\KK}(\BA^3,-q\mathfrak{t}^{\frac{-r-1}{2} +i},t).
\]
\end{theorem}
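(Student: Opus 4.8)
The plan is to stay entirely on the combinatorial side. By the colored-partition expansion recalled above,
\[
\DT_r^{\KK}(\BA^3,q,t,w)=\sum_{\overline{\pi}} q^{\lvert \overline{\pi}\rvert}\prod_{i,j=1}^r[-\mathsf V_{ij}],
\]
and by Theorem \ref{thm:independence} this series is independent of $w=(w_1,\ldots,w_r)$; in particular, for each $n$ the coefficient of $q^n$ is a \emph{finite} sum over the colored partitions of size $n$ which is literally independent of $w$. Consequently it equals its own limit as $w_k\to L^k$ with $L\to\infty$, and because the sum is finite this limit may be computed summand by summand. So the first step is to split the product over ordered pairs,
\[
\prod_{i,j=1}^r[-\mathsf V_{ij}]=\Bigl(\prod_{i=1}^r[-\mathsf V_{ii}]\Bigr)\prod_{1\leq i<j\leq r}[-\mathsf V_{ij}][-\mathsf V_{ji}],
\]
and to analyse the diagonal and off-diagonal factors separately. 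When $i=j$ the prefactor $w_i^{-1}w_j$ in \eqref{eqn:vertex_terms} equals $1$, so $\mathsf V_{ii}$ does not involve the framing variables and is precisely the rank-$1$ vertex term attached to $\pi_i$; thus $[-\mathsf V_{ii}]$ is exactly the contribution of $\pi_i$ to $\DT_1^{\KK}(\BA^3,q,t)$. For $i<j$, Lemma \ref{lemma: lim in K theory Vij} gives $\lim_{L\to\infty}[-\mathsf V_{ij}][-\mathsf V_{ji}]\big|_{w_i=L^i}=(-\mathfrak t^{1/2})^{|\pi_j|-|\pi_i|}$.

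Combining these, and using $w$-independence to identify $\DT_r^{\KK}(\BA^3,q,t,w)$ with the $L\to\infty$ limit, one gets
\[
\DT_r^{\KK}(\BA^3,q,t)=\sum_{\overline\pi}q^{\lvert\overline\pi\rvert}\Bigl(\prod_{i=1}^r[-\mathsf V_{ii}]\Bigr)\prod_{1\leq i<j\leq r}(-\mathfrak t^{1/2})^{|\pi_j|-|\pi_i|}.
\]
Next I would reorganise the last product via Lemma \ref{lemma: combinatorical trick} applied with $x=\mathfrak t^{1/2}$ and $c_i=|\pi_i|$, which rewrites $\prod_{i<j}(\mathfrak t^{1/2})^{|\pi_j|-|\pi_i|}$ as $\prod_{i=1}^r\mathfrak t^{(\frac{-r-1}{2}+i)|\pi_i|}$; the leftover sign is $(-1)^{\sum_{i<j}(|\pi_j|-|\pi_i|)}=(-1)^{(r+1)\lvert\overline\pi\rvert}$, since each exponent $-r-1+2i$ has the same parity as $r+1$. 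Substituting $q\mapsto(-1)^rq$ absorbs this sign, as $((-1)^rq)^{\lvert\overline\pi\rvert}(-1)^{(r+1)\lvert\overline\pi\rvert}=(-q)^{\lvert\overline\pi\rvert}$, yielding
\[
\DT_r^{\KK}(\BA^3,(-1)^rq,t)=\sum_{\overline\pi}\ \prod_{i=1}^r\Bigl((-q)^{|\pi_i|}\,\mathfrak t^{(\frac{-r-1}{2}+i)|\pi_i|}\,[-\mathsf V_{ii}]\Bigr).
\]
Now the $i$-th factor of the summand depends only on $\pi_i$, so the sum over $\overline\pi=(\pi_1,\ldots,\pi_r)$ factorises as $\prod_{i=1}^r\sum_{\pi_i}\bigl(-q\,\mathfrak t^{\frac{-r-1}{2}+i}\bigr)^{|\pi_i|}[-\mathsf V_{ii}]$, and recognising the $i$-th factor as $\DT_1^{\KK}(\BA^3,-q\,\mathfrak t^{\frac{-r-1}{2}+i},t)$ through the rank-$1$ colored-partition expansion finishes the proof.

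The only genuinely delicate step is the sign bookkeeping in the two displays above -- the parity computation $-r-1+2i\equiv r+1\pmod 2$ and the ensuing cancellation of $(-1)^{r\lvert\overline\pi\rvert}$ against part of $(-1)^{(r+1)\lvert\overline\pi\rvert}$ after the substitution $q\mapsto(-1)^rq$. Everything else is formal: passing to the limit $L\to\infty$ is harmless precisely because Theorem \ref{thm:independence} makes each $q^n$-coefficient a finite $w$-independent expression, so it equals its own limit and the limit distributes over the finitely many colored partitions of size $n$. (That the resulting product is consistent with the closed formula of Theorem \ref{mainthm:K-theoretic} is the equivalence of \eqref{eqn:DT^K} and \eqref{eqn:product}, which is checked independently.)
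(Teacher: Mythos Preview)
Your proof is correct and follows essentially the same approach as the paper's own proof: invoke Theorem \ref{thm:independence} to justify taking the limit $w_i=L^i$, $L\to\infty$ term by term, apply Lemma \ref{lemma: lim in K theory Vij} to the off-diagonal pairs, reorganise via Lemma \ref{lemma: combinatorical trick}, and factorise. Your treatment of the limit interchange (finite sum at each $q^n$) and the parity computation $-r-1+2i\equiv r+1\pmod 2$ is a bit more explicit than the paper's, but the argument is the same.
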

\begin{proof}
Set $w_i=L^i$. The generating series $\DT_r^{\KK}(\BA^3,q,t,w) $ can be computed in the limit $L\to \infty$:
\begin{align*}
   \lim_{L\to \infty} \DT_r^{\KK}(\BA^3,q,t,w) &= \lim_{L\to \infty}\sum_{\overline{\pi}}q^{|\overline{\pi}|}\prod_{i,j=1}^r [-\mathsf V_{ij}]\\
    &= \lim_{L\to \infty}\sum_{\overline{\pi}}\prod_{i=1}^r q^{|\pi_i|}[-\mathsf V_{ii}]\prod_{1\leq i<j\leq r}[-\mathsf V_{ij}][-\mathsf V_{ji}]\\
    &= \sum_{\overline{\pi}}\prod_{i=1}^r q^{|\pi_i|}[-\mathsf V_{ii}]\prod_{1\leq i<j\leq r}(-\mathfrak{t}^{\frac{1}{2}})^{|\pi_j|-|\pi_i|}\\
    &= \sum_{\overline{\pi}}\prod_{i=1}^r q^{|\pi_i|}[-\mathsf V_{ii}]\prod_{i=1}^r (-\mathfrak{t}^{\frac{1}{2}})^{(-r-1+2i)|\pi_i|}\\
     &= \sum_{\overline{\pi}}\prod_{i=1}^r[-\mathsf V_{ii}] q^{|\pi_i|}(-1)^{(r+1)|\pi_i|}\mathfrak{t}^{(\frac{-r-1}{2} +i)|\pi_i|}\\
    &=\sum_{\overline{\pi}}\prod_{i=1}^r[-\mathsf V_{ii}] \left( (-1)^{(r+1)}q\mathfrak{t}^{\frac{-r-1}{2} +i} \right)^{|\pi_i|}\\
     &=\prod_{i=1}^r \DT_1^{\KK}(\BA^3,(-1)^{(r+1)}q\mathfrak{t}^{\frac{-r-1}{2} +i}, t). \qedhere
\end{align*}
\end{proof}
We can now prove Theorem \ref{thm: K theoretic of points higher rank} (i.e.~Theorem \ref{mainthm:K-theoretic} from the Introduction).
\begin{proofof}{Theorem \ref{thm: K theoretic of points higher rank}}

Define 
\[      G_{r,i}(q,t_1,t_2,t_3)=\mathsf F_1(q\mathfrak{t}^{\frac{-r-1}{2} +i}, t_1,t_2,t_3).\]
 We have
\begin{align*}
 \DT_1^{\KK}(\BA^3,-q\mathfrak{t}^{\frac{-r-1}{2} +i},t)
 &=\exp{\left(\sum_{n\geq 1}\frac{1}{n}\frac{1}{[\mathfrak{t}^{\frac{n}{2}}q^n\mathfrak{t}^{n(\frac{-r-1}{2} +i)}][\mathfrak{t}^{\frac{n}{2}}q^{-n}\mathfrak{t}^{n(\frac{r+1}{2} -i)}]}\frac{[t^n_1t^n_2][t^n_1t^n_3][t^n_2t^n_3]}{[t^n_1][t^n_2][t^n_3]}  \right)} \\
 &=\Exp\left(G_{r,i}(q,t_1,t_2,t_3)\right).
\end{align*}
By Theorem \ref{thm: r copies of rank 1 K theory} and Theorem \ref{thm: Okounkov rank 1} it is enough to show that $\mathsf F_r=\sum_{i=1}^r G_{r,i} $, or equivalently
\[
\sum_{i=1}^r \frac{1}{[\mathfrak{t}^{\frac{1}{2}}q\mathfrak{t}^{\frac{-r-1}{2} +i}][\mathfrak{t}^{\frac{1}{2}}q^{-1}\mathfrak{t}^{\frac{r+1}{2} -i}]} = \frac{[\mathfrak{t}^r]}{[\mathfrak{t}][\mathfrak{t}^{\frac{r}{2} }q][\mathfrak{t}^{\frac{r}{2}}q^{-1}]}.
\]
It is easy to check this is true for $r=1,2$. Let now $r\geq 3 $: we perform induction separately on even and odd cases. Assume the claimed identity holds for $r-2$. In both cases we have
\begin{align*}
\sum_{i=1}^r&\frac{1}{[\mathfrak{t}^{\frac{1}{2}}q\mathfrak{t}^{\frac{-r-1}{2} +i}][\mathfrak{t}^{\frac{1}{2}}q^{-1}\mathfrak{t}^{\frac{r+1}{2}-i}]}\\
&=\sum_{i=1}^{r-2} \frac{1}{[\mathfrak{t}^{\frac{1}{2}}q\mathfrak{t}^{\frac{-(r-2)-1}{2} +i}][\mathfrak{t}^{\frac{1}{2}}q^{-1}\mathfrak{t}^{\frac{(r-2)+1}{2} -i}]}+ \frac{1}{[q\mathfrak{t}^{-\frac{r}{2} +1}][q^{-1}\mathfrak{t}^{\frac{r}{2}}]}+\frac{1}{[q\mathfrak{t}^{\frac{r}{2} }][q^{-1}\mathfrak{t}^{-\frac{r}{2}+1}]}\\
&= \frac{[\mathfrak{t}^{r-2}]}{[\mathfrak{t}][\mathfrak{t}^{\frac{r-2}{2}}q][\mathfrak{t}^{\frac{r-2}{2}}q^{-1}]}- \frac{1}{[\mathfrak{t}^{\frac{r-2}{2}}q^{-1}][\mathfrak{t}^{\frac{r}{2}}q^{-1}]}-\frac{1}{[\mathfrak{t}^{\frac{r}{2} }q][\mathfrak{t}^{\frac{r-2}{2}}q]}\\
&= \frac{1}{[\mathfrak{t}][\mathfrak{t}^{\frac{r}{2} }q][\mathfrak{t}^{\frac{r}{2}}q^{-1}]}\cdot \frac{[\mathfrak{t}^{r-2}][\mathfrak{t}^{\frac{r}{2} }q][\mathfrak{t}^{\frac{r}{2}}q^{-1}]-[\mathfrak{t}][\mathfrak{t}^{\frac{r}{2} }q][\mathfrak{t}^{\frac{r-2}{2}}q]-[\mathfrak{t}][\mathfrak{t}^{\frac{r}{2} }q^{-1}][\mathfrak{t}^{\frac{r-2}{2}}q^{-1}] }{[\mathfrak{t}^{\frac{r-2}{2}}q][\mathfrak{t}^{\frac{r-2}{2}}q^{-1}]}\\
&= \frac{[\mathfrak{t}^r]}{[\mathfrak{t}][\mathfrak{t}^{\frac{r}{2} }q][\mathfrak{t}^{\frac{r}{2}}q^{-1}]}
\end{align*}
by which we conclude the proof.
\end{proofof}

%%%%%%%%%%%%%%%%%%%%%%%%%%%%%%%%%%%%%%%%%%%%%%%%%
\subsection{Comparison with motivic DT invariants}\label{motivic_comparison}
Let $f\colon U\to \BA^1$ be a regular function on a smooth scheme $U$, and let $\hat\mu$ be the group of all roots of unity. The critical locus $Z = \crit (f) \subset U$ inherits a canonical \emph{virtual motive} \cite{BBS}, i.e.~a $\hat\mu$-equivariant motivic class
\[
[Z]_{\vir} = -\BL^{-\frac{\dim U}{2}}\left[\phi_f\right]\,\in\,\CM^{\hat\mu}_{\BC} = K_0^{\hat\mu}(\Var_{\BC})\bigl[\BL^{-\frac{1}{2}}\bigr]
\]
such that $e [Z]_{\vir} = e_{\vir}(Z)$, where $e_{\vir}(-)$ is Behrend weighted Euler characteristic and the Euler number specialisation prescribes $e(\BL^{-1/2}) = -1$. The motivic class $[\phi_f]$ is the (absolute) motivic vanishing cycle class introduced by Denef and Loeser \cite{DenefLoeser1}.

The virtual motive of $\Quot_{\BA^3}(\OO^{\oplus r},n) = \crit(f_n)$, with respect to the critical structure of Proposition \ref{prop:SPOT_A^3}, was computed in \cite[Prop.~2.3.6]{ThesisR}. The result is as follows. Let $\DT_r^{\mot}(\BA^3,q)\in \CM_{\BC} \llbracket q \rrbracket$ be the generating function of the virtual motives $[\Quot_{\BA^3}(\OO^{\oplus r},n)]_{\vir}$. Then one has
\begin{equation}\label{sbidigudi}
\DT_r^{\mot}(\BA^3,q) 
= \prod_{m\geq 1}\prod_{k=0}^{rm-1}\left(1-\BL^{2+k-\frac{rm}{2}}q^m\right)^{-1}.
\end{equation}
The case $r=1$ was computed in \cite{BBS}. The general proof of Formula \eqref{sbidigudi} is obtained in a similar fashion in \cite{ThesisR,Cazzaniga_Thesis}, and via a wall-crossing technique in \cite{cazzaniga2020higher}.
Moreover, it is immediate to verify that $\DT_r^{\mot}$ satisfies a product formula analogous to the one proved in Theorem \ref{thm: r copies of rank 1 K theory} for the K-theoretic invariants: we have
\begin{equation}\label{eqn:motivic_factorisation}
    \DT_r^{\mot}(\BA^3,q) 
    =\prod_{i=1}^r \DT_1^{\mot}\left(\BA^3,q \BL^{\frac{-r-1}{2}+i}\right).
\end{equation}
In particular, up to the substitution $\mathfrak t^{\frac{1}{2}} \to -\BL^{\frac{1}{2}}$, the factorisation \eqref{eqn:motivic_factorisation} is equivalent to the K-theoretic one (Theorem \ref{thm: r copies of rank 1 K theory}). 
As observed in \cite[\S~4]{Virtual_Quot}, the (signed) motivic partition function admits an expression in terms of the motivic exponential, namely
\begin{equation}\label{eqn:motivic_exp}
\DT_r^{\mot}(\BA^3,(-1)^rq) 
= \Exp\left(\frac{(-1)^rq \BL^{\frac{3}{2}}}{\bigl(1-(-1)^rq\BL^{\frac{r}{2}}\bigr)\bigl(1-(-1)^rq\BL^{-\frac{r}{2}}\bigr)}\frac{\BL^{\frac{r}{2}}-\BL^{-\frac{r}{2}}}{\BL^{\frac{1}{2}}-\BL^{-\frac{1}{2}}} \right).
\end{equation}

Given their structural similarities, we believe it is an interesting problem to compare the K-theoretic partition function with the motivic one.

It is worth noticing that Formula \eqref{eqn:motivic_exp} can be recovered from the factorisation \eqref{eqn:motivic_factorisation}, just as we discovered in the K-theoretic case during the proof of Theorem \ref{thm: K theoretic of points higher rank}. This fact follows immediately from the properties of the plethystic exponential.

\begin{remark}
A virtual motive for $\Quot_Y(F,n)$ was defined in \cite[\S\,4]{Virtual_Quot} for every locally free sheaf $F$ on a $3$-fold $Y$. Just as in the case of the naive motives of the Quot scheme \cite{ricolfi2019motive}, the resulting partition function $\DT_r^{\mot}(Y,q)$ only depends on the motivic class $[Y] \in K_0(\Var_{\BC})$ and on $r = \rk F$. See also \cite{Cazzaniga:2020aa} for calculations of motivic higher rank DT and PT invariants in the presence of nonzero curve classes: the generating function $\DT_r^{\mot}(Y,q)$, computed easily starting with Formula \eqref{sbidigudi}, is precisely the DT/PT wall-crossing factor.
\end{remark}

%%%%%%%%%%%%%%%%%%%%%%%%%%%%%%%%%%%%%%%%%%%%%%%%%%%%%%%%%%%%%%%%%
%%%%%%%%%%%%%%%%%%%%%%%%%%%%%%%%%%%%%%%%%%%%%%%%%%%%%%%%%%%%%%%%%
\section{The higher rank cohomological DT partition function}\label{sec:cohomological_invariants}

%%%%%%%%%%%%%%%%%%%%%%%%%%%%%%%%%%%%%%%%%%%%%%%%%%%%%%%%%%%%%%%%%
\subsection{Cohomological reduction}\label{sec: cohom reduction}
One should think of K-theoretic invariants as refinements of the cohomological ones, as by taking suitable limits one  fully recovers $\DT_r^{\coh}(\mathbb{A}^3,q,s)$ from $\DT^{\KK}_r(\mathbb{A}^3,q,t)$. We make this precise in the remainder of this section.

Let $\TT\cong (\BC^\ast)^g$  be an algebraic torus and let $t_1,\dots, t_g$ be its coordinates.
Recall that the Chern character gives a natural transformation from (equivariant) K-theory to the  (equivariant) Chow group with rational coefficients by sending $t_i\mapsto e^{s_i}$, where  $s_i=c_1^\TT(t_i)$. We can formally extend it to
\[
\begin{tikzcd}
\mathbb{Z}[t_1^{\pm 1},\dots,t_g^{\pm 1}]\arrow{r}{\ch}\arrow{d}& \BQ \llbracket s_1,\dots,s_g\rrbracket\arrow{d}\\
 \mathbb{Z}[t_1^{\pm b},\dots,t_g^{\pm b} | b\in \BC]\arrow{r}{ \ch} & \BC\llbracket s_1,\dots,s_g\rrbracket
\end{tikzcd}
\]
by sending $t_i^b\mapsto e^{b s_i}$, where $b\in \BC$.

In \S\,\ref{sec: preliminaries K theory} we defined the symmetrised transformation $[t^\mu]=t^{\frac{\mu}{2}}-t^{-\frac{\mu}{2}}$. We set $[\ch(t^{b \mu})]= e^{\frac{b  \mu\cdot s}{2}}- e^{-\frac{b  \mu\cdot s}{2}}$ as an expression in rational cohomology, which enjoys the following \emph{linearisation} property:
\begin{align*}
    [\ch(t^{b \mu})]&= e^{\frac{b  \mu\cdot s}{2}}(1-e^{-b  \mu \cdot s})= b e^{\TT}(t^{\mu} )+ o(b^2).
\end{align*}
In other words, $ e^{\TT}(\,\cdot\,)$ is the first-order approximation of $[\,\cdot\,]$ in $\TT$-equivariant Chow groups. For a virtual representation $V=\sum_\mu t^{\mu}- \sum_\nu t^\nu\in K^\TT_0(\pt)$, denote by $V^b=\sum_\mu t^{b\mu}- \sum_\nu t^{b\nu} $ the virtual representation where we formally substitute each weight $t^\mu$ with $t^{b \mu}$. We have the identity 
\begin{align*}
    [\ch(V^b)]=\frac{\prod_\mu[\ch(t^{b\mu})]}{\prod_\nu[\ch(t^{b\nu})]}  =b^{\rk V} \frac{\prod_{\mu} ( e^{\TT}(t^{\mu} ) + o(b))}{\prod_{\nu} ( e^{\TT}(t^{\nu} ) + o(b))}.
\end{align*}
If $\rk V=0$, by taking the limit for $b\to 0$ we conclude
\begin{align}
    \lim_{b\to 0}\, [\ch(V^b)]= e^{\TT}(V).
\end{align}

It is clear from the definition of $\ch(\,\cdot\,)$ and $[\,\cdot\,]$ that these two transformations commute with each other. This proves the following relation between K-theoretic invariants and cohomological invariants of the local model.

\begin{corollary}\label{limit K theory cor}
There is an identity
\[
\DT_r^{\coh}(\mathbb{A}^3,q,s,v)=\lim_{b\to 0}\DT_r^{\KK}(\mathbb{A}^3,q,e^{b s},e^{bv}).
\]
\end{corollary}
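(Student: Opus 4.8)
The plan is to reduce the statement to a term-by-term comparison of the coefficients of $q^n$ on the two sides. Both $\DT_r^{\KK}$ and $\DT_r^{\coh}$ are power series in $q$ whose $n$-th coefficient is a \emph{finite} sum indexed by the $r$-colored plane partitions $\overline{\pi}$ of size $n$, so the substitution $t_i\mapsto e^{bs_i}$, $w_j\mapsto e^{bv_j}$ and the limit $b\to 0$ operate coefficient-wise in $q$; in particular the limit may be pushed inside each finite sum, and it suffices to analyse a single $\TT$-fixed contribution $[S]=S_{\overline{\pi}}$.

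First I would put the two fixed-point contributions in closed form. By Proposition \ref{prop:Tangent^vir} the virtual tangent space at $[S]$ is $T^{\vir}_{\overline{\pi}}=\sum_{1\le i,j\le r}\mathsf V_{ij}$ with $\mathsf V_{ij}$ as in \eqref{eqn:vertex_terms}; by Remark \ref{rmk:K-class_of_symmetric_pot} the critical obstruction theory is symmetric, so $\rk T^{\vir}_{\overline{\pi}}=0$, and by Proposition \ref{prop:tangents_are_T_movable} no trivial $\TT$-weight occurs in $T^{\vir}_{\overline{\pi}}$. Since the operators $[\,\cdot\,]$ and $e^{\TT}(\,\cdot\,)$ are multiplicative on virtual representations without trivial weights, the combinatorial expressions for the partition functions recalled in \S\ref{sec:proof_of_K-theoretic_thm} give, for the coefficients of $q^n$,
\[
\sum_{\lvert\overline{\pi}\rvert=n}\prod_{i,j=1}^r[-\mathsf V_{ij}]=\sum_{\lvert\overline{\pi}\rvert=n}\bigl[-T^{\vir}_{\overline{\pi}}\bigr],\qquad
\sum_{\lvert\overline{\pi}\rvert=n}\prod_{i,j=1}^r e^{\TT}(-\mathsf V_{ij})=\sum_{\lvert\overline{\pi}\rvert=n}e^{\TT}\bigl(-T^{\vir}_{\overline{\pi}}\bigr).
\]

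Then I would invoke the linearisation computation carried out just above the statement: the change of variables $t_i\mapsto e^{bs_i}$, $w_j\mapsto e^{bv_j}$ sends $[t^\mu]$ to $[\ch(t^{b\mu})]$, and because $\ch(\,\cdot\,)$ and $[\,\cdot\,]$ commute this identifies $\bigl[-T^{\vir}_{\overline{\pi}}\bigr]$ evaluated at $t=e^{bs}$, $w=e^{bv}$ with $[\ch((-T^{\vir}_{\overline{\pi}})^b)]$. Since $\rk(-T^{\vir}_{\overline{\pi}})=0$, the displayed identity $\lim_{b\to 0}[\ch(V^b)]=e^{\TT}(V)$ applies and yields $\lim_{b\to 0}\bigl[-T^{\vir}_{\overline{\pi}}\bigr]\big|_{t=e^{bs},\,w=e^{bv}}=e^{\TT}(-T^{\vir}_{\overline{\pi}})$. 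Summing over the finitely many $\overline{\pi}$ of size $n$ and matching with the two formulae above gives equality of the $q^n$-coefficients for every $n$, hence the identity of generating series.

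The point requiring care — and the one I would flag as the main potential pitfall rather than a genuine obstacle — is that one must group the vertex factors into $T^{\vir}_{\overline{\pi}}$ \emph{before} letting $b\to 0$: the off-diagonal summands $\mathsf V_{ij}$ ($i\ne j$) have virtual rank $\lvert\pi_j\rvert-\lvert\pi_i\rvert\ne 0$, so $[\ch((-\mathsf V_{ij})^b)]$ acquires a zero or pole of that order at $b=0$ and has no factorwise limit; only the rank-zero aggregate $-T^{\vir}_{\overline{\pi}}$ is governed by the linearisation identity. The absence of trivial weights (Proposition \ref{prop:tangents_are_T_movable}) is exactly what makes both $[-T^{\vir}_{\overline{\pi}}]$ and $e^{\TT}(-T^{\vir}_{\overline{\pi}})$ well-defined nonzero rational expressions, so no degeneration intervenes in the comparison.
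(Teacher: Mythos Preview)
Your proof is correct and follows exactly the paper's approach: express $\DT_r^{\KK}$ as a finite sum of terms $[-T^{\vir}_S]$ over the $\TT$-fixed points, observe that $\rk T^{\vir}_S=0$, and apply the linearisation identity $\lim_{b\to 0}[\ch(V^b)]=e^{\TT}(V)$ established immediately before the corollary. The paper's own proof is a two-line remark to this effect; your version is simply more explicit about the bookkeeping (finiteness of the sum, absence of trivial weights, and the caveat about not taking limits of individual $\mathsf V_{ij}$), which adds clarity without changing the argument.
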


\begin{proof}
Follows from the description of the generating series of K-theoretic invariants as 
\[
\DT_r^{\KK}(\mathbb{A}^3,q,t,w) =\sum_{n\geq 0}q^n\sum_{[S]\in \Quot_{\BA^3}(\mathscr O^{\oplus r},n)^{\TT}} [-T^{\vir}_S]
\]
and by noticing that $\rk T_S^{\vir}=0$.
\end{proof}

Thanks to the $v$-independence, we can now rename
\[
\DT_r^{\coh}(\BA^3,q,s) = \DT_r^{\coh}(\BA^3,q,s,v).
\]
We are ready to prove Theorem \ref{mainthm:cohomological} from the Introduction.
\begin{theorem}\label{thm:cohomological}
The rank $r$ cohomological DT partition function of $\BA^3$ is given by
 \[
 \DT_r^{\coh}(\mathbb{A}^3,q,s)=\mathsf M((-1)^rq)^{-r\frac{(s_1+s_2)(s_1+s_3)(s_2+s_3)}{s_1s_2s_3}}.
 \]
\end{theorem}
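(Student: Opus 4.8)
The plan is to derive Theorem \ref{thm:cohomological} as the $b\to 0$ cohomological limit of the K-theoretic result Theorem \ref{thm: K theoretic of points higher rank}, using Corollary \ref{limit K theory cor}. By that corollary, $\DT_r^{\coh}(\BA^3,q,s) = \lim_{b\to 0}\DT_r^{\KK}(\BA^3,q,e^{bs},e^{bv})$, and by Theorem \ref{mainthm:K-theoretic} the right-hand side equals $\lim_{b\to 0}\Exp(\mathsf F_r(q,e^{bs_1},e^{bs_2},e^{bs_3}))$ after absorbing the sign $(-1)^r$ into $q$. Since the plethystic exponential is continuous in the coefficients in the appropriate sense (it is built out of the operations $p_i\mapsto p_i^n$ and multiplication, which commute with taking $b\to 0$ termwise in $q$), it suffices to compute $\lim_{b\to 0}\mathsf F_r(q,e^{bs_1},e^{bs_2},e^{bs_3})$ and then recognize the resulting plethystic exponential as a power of the MacMahon function.

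First I would record the behaviour of the bracket $[x]=x^{1/2}-x^{-1/2}$ under $x\mapsto e^{bu}$: one has $[e^{bu}] = e^{bu/2}-e^{-bu/2} = bu + O(b^3)$, so $[e^{bu}]\sim bu$ as $b\to 0$. Writing $\mathfrak t = t_1t_2t_3$ and substituting $t_i = e^{bs_i}$ (so $\mathfrak t = e^{b(s_1+s_2+s_3)}$), each bracket in
\[
\mathsf F_r(q,t_1,t_2,t_3)= \frac{[\mathfrak{t}^r]}{[\mathfrak{t}][\mathfrak{t}^{\frac{r}{2}}q ][\mathfrak{t}^{\frac{r}{2}} q^{-1} ]}\frac{[t_1t_2][t_1t_3][t_2t_3]}{[t_1][t_2][t_3]}
\]
that involves only the $t_i$'s contributes a factor of $b$ times the corresponding linear form; the brackets $[\mathfrak t^{r/2}q^{\pm 1}]$ involving $q$ do \emph{not} vanish as $b\to 0$, tending to $[q]=q^{1/2}-q^{-1/2}$. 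Counting powers of $b$: the numerator $[\mathfrak t^r]$ gives one factor of $b$, the denominator $[\mathfrak t]$ gives one factor of $b$, and the fraction $[t_1t_2][t_1t_3][t_2t_3]/[t_1][t_2][t_3]$ gives $b^{3}/b^{3}=b^{0}$; so the powers of $b$ cancel and the limit is finite. Explicitly,
\[
\lim_{b\to 0}\mathsf F_r(q,e^{bs_1},e^{bs_2},e^{bs_3}) = \frac{r(s_1+s_2+s_3)}{(s_1+s_2+s_3)}\cdot\frac{(s_1+s_2)(s_1+s_3)(s_2+s_3)}{s_1s_2s_3}\cdot\frac{1}{[q][q^{-1}]}.
\]
Here $[q][q^{-1}] = (q^{1/2}-q^{-1/2})(q^{-1/2}-q^{1/2}) = -(q^{1/2}-q^{-1/2})^2 = 2 - q - q^{-1}$, so $1/([q][q^{-1}]) = -q/(1-q)^2 = -\sum_{m\ge 1}m\,q^m$. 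After the cancellation $r(s_1+s_2+s_3)/(s_1+s_2+s_3) = r$, the limit simplifies to
\[
-r\,\frac{(s_1+s_2)(s_1+s_3)(s_2+s_3)}{s_1s_2s_3}\sum_{m\ge 1}m\,q^m.
\]

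It then remains to apply $\Exp(-)$ and substitute $q\mapsto (-1)^rq$. Since the prefactor $c := -r(s_1+s_2)(s_1+s_3)(s_2+s_3)/(s_1s_2s_3)$ does not involve $q$, the plethystic exponential acts only on the variable $q$, and $\Exp\!\big(c\sum_{m\ge1}mq^m\big) = \exp\!\big(\sum_{n\ge1}\tfrac{c}{n}\sum_{m\ge1}m\,q^{nm}\big) = \prod_{m\ge1}(1-q^m)^{-cm} = \mathsf M(q)^{c}$, by the very definition of $\mathsf M$. Replacing $q$ by $(-1)^rq$ and recalling $c = -r(s_1+s_2)(s_1+s_3)(s_2+s_3)/(s_1s_2s_3)$ yields exactly
\[
\DT_r^{\coh}(\BA^3,q,s) = \mathsf M((-1)^rq)^{-r\frac{(s_1+s_2)(s_1+s_3)(s_2+s_3)}{s_1s_2s_3}},
\]
as claimed. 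The only genuinely delicate point is justifying that $\lim_{b\to 0}$ commutes with $\Exp$ and with the infinite sum over $n$ defining the series in $q$; this is handled coefficient-by-coefficient in $q$, where everything is a finite manipulation of Laurent polynomials in $e^{bs_i}$, together with the observation (made in \S\,\ref{sec: cohom reduction}) that each virtual tangent space has rank $0$ so that no spurious negative powers of $b$ appear in any fixed $q$-degree. For $r=1$ this recovers \cite[Thm.~1]{MNOP2}.
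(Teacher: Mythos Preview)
Your proof is correct and follows essentially the same approach as the paper: both derive the result by taking the $b\to 0$ limit of the K-theoretic formula via Corollary~\ref{limit K theory cor} and Theorem~\ref{thm: K theoretic of points higher rank}, reduce each bracket $[e^{bu}]\sim bu$, and recognise the outcome as a power of the MacMahon function through its plethystic form $\mathsf M(q)=\Exp\bigl(q/(1-q)^2\bigr)$. The only organisational difference is that the paper first expands $\Exp$ as $\exp\sum_{k\ge 1}\tfrac{1}{k}(\cdots)$ and then takes the limit of each $k$-th summand, whereas you compute $\lim_{b\to 0}\mathsf F_r$ once and then apply $\Exp$ treating the $s$-prefactor as a scalar; your closing remark that the commutation is justified coefficient-by-coefficient in $q$ is exactly how the paper's computation implicitly proceeds.
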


\begin{proof}
By Corollary    \ref{limit K theory cor} and Theorem \ref{thm: K theoretic of points higher rank}, we just need to compute the limit
\[
    \lim_{b\to 0}\DT_r^{\KK}\left(\mathbb{A}^3,(-1)^rq,e^{b s}\right)= \lim_{b\to 0}\Exp\left(\mathsf F_r(  q,t_1^b,t_2^b,t_3^b)\right).
\]
Denote for ease of notation  $\mathfrak{s}=c_1^{\TT}(\mathfrak{t})=s_1+s_2+s_3$. By the definition of plethystic exponential, recalled in \eqref{def_Exp}, we have
\begin{multline*}
    \lim_{b\to 0}\Exp\left(\mathsf F_r(q,t_1^b,t_2^b,t_3^b)\right) \\
    =\exp \sum_{k\geq 1}\frac{1}{k}\left( \lim_{b\to 0} \frac{[e^{bkr\mathfrak{s}  }]}{[ e^{bk\mathfrak{s}  }][e^{\frac{bkr}{2}\mathfrak{s}  } q^k    ][ e^{\frac{bkr}{2}\mathfrak{s}  } q^{-k}    ]}\frac{[e^{bk (s_1+s_2)}][e^{bk (s_1+s_3)}][e^{bk (s_2+s_3)}]}{[e^{bk s_1}][e^{bk s_2}][e^{bk s_3}]} \right).
\end{multline*}
We have
\begin{align*}
    \lim_{b\to 0}\frac{[e^{bk (s_1+s_2)}][e^{bk (s_1+s_3)}][e^{bk (s_2+s_3)}]}{[e^{bk s_1}][e^{bk s_2}][e^{bk s_3}]}=
\frac{(s_1+s_2)(s_1+s_3)(s_2+s_3)}{s_1s_2s_3},
\end{align*}
and
\begin{align*}
\lim_{b\to 0} \frac{[e^{bkr\mathfrak{s}  }]}{[ e^{bk\mathfrak{s}  }][e^{\frac{bkr}{2}\mathfrak{s}  }  q^k    ][ e^{\frac{bkr}{2}\mathfrak{s}  }  q^{-k}    ]}&= \frac{r}{[q^{k} ][ q^{-k} ]}
    = -r\cdot \frac{q^{k}}{(1-q^{k})^2}.
\end{align*}
Recall the plethystic exponential form of the MacMahon function
\begin{align*}
 \mathsf M(q)= \prod_{n\geq 1} (1-q^n)^{-n}=\mathrm{Exp}\bigg(\frac{q}{(1-q)^2} \bigg).
\end{align*}
We conclude
\begin{align*}
      \lim_{b\to 0} \DT_r^{\KK}(\mathbb{A}^3,(-1)^r q,  e^{b s})&= \exp\left(-r\cdot \frac{(s_1+s_2)(s_1+s_3)(s_2+s_3)}{s_1s_2s_3} \sum_{k\geq 1} \frac{1}{k} \frac{q^{k}}{(1-q^{k})^2}\right)\\
      &= \mathsf M(q)^{-r\frac{(s_1+s_2)(s_1+s_3)(s_2+s_3)}{s_1s_2s_3}}.\qedhere
\end{align*}
%The proof is complete.
\end{proof}

Thus we proved Szabo's conjecture \cite[Conj.~4.10]{Szabo}.

\begin{remark}
The specialisation
\[
\DT_r^{\coh}(\A^3,q,s)\big|_{s_1+s_2+s_3 = 0} = \mathsf M((-1)^rq)^r,
\]
recovering Formula \eqref{eqn:chi_vir_quot_affinespace},
was already known in physics, see e.g.~\cite{Cir-Sink-Szabo}.
\end{remark}

We end this section with a small variation of Theorem \ref{mainthm:cohomological}.

\begin{corollary}\label{cor:independence_on_lambda_w}
Fix an $r$-tuple $\lambda = (\lambda_1,\ldots,\lambda_r)$ of $\BT_1$-equivariant line bundles on $\BA^3$. Then there is an identity
\[
\DT_r^{\coh}(\BA^3,q,s) = \DT_r^{\coh}(\BA^3,q,s,v)_\lambda,
\]
where the right hand side was defined in \eqref{eqn:cohomological_DT_series_lambda}.
\end{corollary}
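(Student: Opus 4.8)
The plan is to reduce Corollary \ref{cor:independence_on_lambda_w} to the vertex-level computation carried out in \S\,\ref{sec: variation vertex}, and then import exactly the limiting argument used to prove Theorem \ref{thm:cohomological}. Recall from \S\,\ref{sec: variation vertex} that if $F = \bigoplus_{i=1}^r \OO_{\BA^3}\otimes \lambda_i$, then for a $\TT$-fixed point $[S]\in\Quot_{\BA^3}(F,n)^{\TT}$ the virtual tangent space is $T^{\vir}_{S,\lambda} = \chi(F,F)-\chi(S,S)$, and the explicit formula there shows that this is obtained from the untwisted vertex of Proposition \ref{prop:character_virtual_tangent} by the substitution $w_i\mapsto \lambda_i w_i$. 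Passing to K-theoretic brackets, this means the $\lambda$-twisted K-theoretic partition function $\DT^{\KK}_r(\BA^3,q,t,w)_\lambda \defeq \sum_{n\geq 0}q^n\sum_{[S]}[-T^{\vir}_{S,\lambda}]$ is literally $\DT^{\KK}_r(\BA^3,q,t,w)$ with each $w_i$ replaced by $\lambda_i w_i$.

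First I would make this substitution statement precise. Each monomial occurring in $\mathsf V_{ij}$ (for $i\ne j$) has the shape $w_i^{-1}w_j\cdot t^\mu$ with $\mu\in\widehat{\BT}_1$; under $w_i\mapsto \lambda_i w_i$ it becomes $w_i^{-1}w_j\cdot \lambda_i^{-1}\lambda_j t^\mu$, and since the $\lambda_i$ are themselves $\BT_1$-weights the product $\lambda_i^{-1}\lambda_j t^\mu$ is again a monomial in $t_1,t_2,t_3$ (possibly with a different exponent vector). The diagonal terms $\mathsf V_{ii}$ are unchanged because the framing weights cancel. Hence $\DT^{\KK}_r(\BA^3,q,t,w)_\lambda$ has exactly the same structural form \eqref{eqn:poles_showing_up} as $\DT^{\KK}_r(\BA^3,q,t,w)$: a homogeneous degree-$0$ rational expression in $w_1,\dots,w_r$ whose potential poles are all of the form $1-w_i^{-1}w_j\,t^{\nu}$ for $i<j$ and $\nu\in\widehat{\BT}_1$.

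Next I would invoke Theorem \ref{thm:independence}, or rather re-run its proof verbatim, to conclude that $\DT^{\KK}_r(\BA^3,q,t,w)_\lambda$ is in fact independent of $w_1,\dots,w_r$. The point is that the proof of Theorem \ref{thm:independence} never used the specific form of the framing action beyond the fact that it is a torus rescaling the summands of the free sheaf and that the induced $\sigma_\nu$-type modified action still has proper $\BT_1$-fixed locus (supported at the origin). Twisting the free sheaf by fixed $\BT_1$-weights $\lambda_i$ does not change any of this: the moduli space $\Quot_{\BA^3}(F,n)$ is still $\crit(f_n)\subset\NCQuot^n_r$ with the same superpotential $f_n$, acted on by $\TT$ via $\mathbf t\cdot P = (t_1A_1,t_2A_2,t_3A_3,\lambda_1 w_1 u_1,\dots,\lambda_r w_r u_r)$, and the compactness argument of Remark \ref{lemma:compact_fixed_locus_Quot} applies unchanged. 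Therefore each weight $\mathsf w = w_i^{-1}w_j\lambda_i^{-1}\lambda_j t^\nu$ is compact, \cite[Prop.~3.2]{arbesfeld2019ktheoretic} forbids the corresponding poles, and the degree-$0$ homogeneity forces constancy in the $w_i$.

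Finally I would set all $w_i = 1$, and also observe that the $\lambda_i$-dependence disappears for exactly the same reason: choosing $w_i = L^i$ as in Theorem \ref{thm: r copies of rank 1 K theory} and letting $L\to\infty$ (noting that since the expression is genuinely $w$-independent the limit equals the value) gives, by Lemma \ref{lemma: lim in K theory Vij} applied with $w_i^{-1}w_j$ replaced by $w_i^{-1}w_j\lambda_i^{-1}\lambda_j$ — whose limit is still $(-\mathfrak t^{1/2})^{|\pi_j|-|\pi_i|}$, the extra fixed weight $\lambda_i^{-1}\lambda_j$ being subdominant — the same product formula $\DT^{\KK}_r(\BA^3,(-1)^rq,t,w)_\lambda = \prod_{i=1}^r \DT_1^{\KK}(\BA^3,-q\mathfrak t^{\frac{-r-1}{2}+i},t)$ as in Theorem \ref{thm: r copies of rank 1 K theory}, with no $\lambda$ surviving. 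Applying the cohomological limit $b\to 0$ exactly as in Corollary \ref{limit K theory cor} and the proof of Theorem \ref{thm:cohomological} then yields $\DT_r^{\coh}(\BA^3,q,s,v)_\lambda = \DT_r^{\coh}(\BA^3,q,s)$, as claimed. I expect the only genuinely delicate point to be bookkeeping in Lemma \ref{lemma: lim in K theory Vij} with the extra constants $\lambda_i$ present: one must check they do not alter the dominant power of $L$, which they do not since $\lambda_i^{-1}\lambda_j$ contributes only a bounded $t$-monomial factor; everything else is a routine transcription of arguments already in the paper.
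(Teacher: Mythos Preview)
Your proposal is correct but substantially more involved than necessary, and the paper's argument is a two-line shortcut that you actually set up but then fail to exploit. You observe correctly that the $\lambda$-twisted virtual tangent space is obtained from the untwisted one by the substitution $w_i\mapsto \lambda_i w_i$. At the cohomological level this is simply the additive shift $v_i\mapsto v_i+c_1^{\TT}(\lambda_i)\eqqcolon \overline v_i$, because $e^{\TT}(\lambda_i^{-1}\lambda_j w_i^{-1}w_j t^\mu)=\mu\cdot s+\overline v_j-\overline v_i$. Hence $\DT_r^{\coh}(\BA^3,q,s,v)_\lambda=\DT_r^{\coh}(\BA^3,q,s,\overline v)$, and the right-hand side is independent of $\overline v$ by Theorem~\ref{thm:independence} (via Corollary~\ref{limit K theory cor}). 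That is the entire proof in the paper.

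Everything you do after your first paragraph---re-running the compact-weight argument of Theorem~\ref{thm:independence} for the modified action, redoing the $L\to\infty$ limit of Lemma~\ref{lemma: lim in K theory Vij} with the extra $\lambda_i^{-1}\lambda_j$ factors, and then taking the cohomological reduction---is valid but redundant. Once you have written $\DT_r^{\KK}(\BA^3,q,t,w)_\lambda=\DT_r^{\KK}(\BA^3,q,t,\lambda w)$, Theorem~\ref{thm:independence} already tells you this is constant in its $w$-argument, hence equal to $\DT_r^{\KK}(\BA^3,q,t)$ with no $\lambda$ left; there is no need to re-prove independence or to rerun the product formula. Your longer route buys nothing extra, while the paper's route isolates the single conceptual point: twisting by $\BT_1$-equivariant line bundles is absorbed into a shift of the framing parameters, which are already known to be irrelevant.
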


\begin{proof}
We have 
\[
T_{S,\lambda}^{\vir}=\sum_{i,j}\lambda_i^{-1}\lambda_j \mathsf V_{ij}.
\]
Let $\mathsf V_{ij}=\sum_\mu w_i^{-1}w_j t^\mu $ be the decomposition into weight spaces. A monomial in $T_{S,\lambda}^{\vir}$ is of the form $ \lambda_i^{-1}\lambda_jw_i^{-1}w_jt^{\mu}$ and its Euler class is
\begin{align*}
e^{\TT}( \lambda_i^{-1}\lambda_jw_i^{-1}w_jt^{\mu})&=\mu\cdot s + v_j+c^{\TT}_1(\lambda_j)-v_i-c^{\TT}_1(\lambda_i)\\
&= \mu\cdot s + \overline{v}_j-\overline{v}_i
\end{align*}  
where we define $  \overline{v}_i= v_i+c^{\TT}_1(\lambda_i)$. We conclude that
\[
\DT_r^{\coh}(\BA^3,q,s,v)_\lambda=\DT_r^{\coh}(\BA^3,q,s,\overline{v}),
\]
which does not depend on $\overline{v}$ by Theorem \ref{thm:independence}.
\end{proof}

\begin{example}\label{example r=2, n=1}
Set $r=2$, $n=1$, so that the only $\TT$-fixed points in $\Quot_{\BA^3}(\OO^{\oplus 2},1)$ are the direct sums of ideal sheaves
\[
S_1 = \mathscr I_{\pt}\oplus \OO \subset \OO^{\oplus 2},\quad S_2 = \OO \oplus \mathscr I_{\pt} \subset \OO^{\oplus 2},
\]
where $\pt = (0,0,0) \in \BA^3$ is the origin.
One computes
\begin{align*}
    T^{\vir}_{S_1} &= 1-\frac{1}{t_1t_2t_3}+\frac{(1-t_1)(1-t_2)(1-t_3)}{t_1t_2t_3} - w_1^{-1}w_2\frac{1}{t_1t_2t_3} + w_2^{-1}w_1 \\
    T^{\vir}_{S_2} &= 1-\frac{1}{t_1t_2t_3}+\frac{(1-t_1)(1-t_2)(1-t_3)}{t_1t_2t_3}- w_2^{-1}w_1\frac{1}{t_1t_2t_3} + w_1^{-1}w_2.
\end{align*}
Therefore, the cohomological DT invariant is
\[
\int_{[\Quot_{\BA^3}(\OO^{\oplus 2},1)]^{\vir}} 1 = e^{\TT}(-T^{\vir}_{S_1}) + e^{\TT}(-T^{\vir}_{S_2}).
\]
The part that could possibly depend on the framing parameters $v_1$ and $v_2$ is, in fact, constant:
\[
e^{\TT}\left(w_1^{-1}w_2\frac{1}{t_1t_2t_3} - w_2^{-1}w_1\right) + e^{\TT}\left(w_2^{-1}w_1\frac{1}{t_1t_2t_3} - w_1^{-1}w_2\right)
= \frac{-v_1+v_2-\mathfrak s}{v_1-v_2} + \frac{v_1-v_2-\mathfrak s}{v_2-v_1} = -2.
\]
Let $\lambda_1$ and $\lambda_2$ be two $\BT_1$-equivariant line bundles. After the substitutions $w_i \to w_i\lambda_i$, and setting $\overline v_i = v_i + c_1(\lambda_i)$, the final sum of Euler classes depending on $\overline v$ becomes
\[
2\frac{\overline v_2-\overline v_1}{\overline v_1-\overline v_2} = -2.
\]
\end{example}

\section{Elliptic Donaldson--Thomas invariants}\label{sec:elliptic_invariants}
\subsection{Chiral elliptic genus}
In \cite{BBPT} an elliptic (string-theoretic) generalisation of the $K$-theoretic DT invariants is given. In  physics the invariants computed in loc.~cit.~are obtained as the superconformal index of a D1-D7 brane system on a type IIB  $\mathcal N=1$ supersymmetric background, where $r$ D7-branes wrap the product of a $3$-fold by a torus, i.e.~$X_3\times T^2$, while $n$ D1-branes wrap $T^2$. The connection with enumerative geometry is then given via BPS-bound states counting, as brane systems often provide interesting constructions of relevant moduli spaces. In the case at hand, when $X_3=\BA^3$, the D1-D7 brane system considered has a BPS moduli space which can be naturally identified with the moduli space parametrising quotients of length $n$ of $\OO_{\BA^3}^{\oplus r}$. The superconformal index is usually identified in the physics literature with the elliptic genus of such a moduli space. This however does not coincide with the usual notion of (virtual) elliptic genus, as the coupling to the D7-branes breaks half of the chiral supersymmetry, thus leading to an effective $2d$ $\mathcal N=(0,2)$ GLSM on $T^2$ for the D1-brane dynamics, whose Witten index generalises the K-theoretic DT invariants of $\BA^3$ and provide a sort of chiral (or $1/2$ BPS) version of the elliptic genus of the Quot scheme. In this section we give a mathematical definition of the elliptic invariants computed in \cite{BBPT}, and show that our definition leads precisely to the computation of the same quantities found in \cite[\S\,3]{BBPT}.

\smallbreak
Let $X$ be a scheme carrying a perfect obstruction theory $\BE\to \BL_X$ of virtual dimension $\vd=\rk \BE$.

\begin{definition}
If $F$ is a rank $r$ vector bundle on $X$ we define
\begin{equation}\label{E_1/2}
\mathcal E_{1/2}(F)=\bigotimes_{n\ge 1}\Sym_{p^n}^\bullet\left(F\oplus F^\vee\right)\in 1+p\cdot K^0(X)\llbracket p\rrbracket
\end{equation}
where the total symmetric algebra $\Sym^\bullet_p(F)=\sum_{i\ge 0}p^i[S^iF]\in K^0(X)\llbracket p \rrbracket$ satisfies $\Sym^\bullet_p(F)=1/\Lambda^\bullet_{-p}(F)$. Note that $\mathcal E_{1/2}$ defines a homomorphism from the additive group $K^0(X)$ to the multiplicative group $1+p\cdot K^0(X)\llbracket p\rrbracket$. Set
\begin{equation}
\ELL_{1/2}(F;p)=(-p^{-\frac{1}{12}})^{\rk F}\ch\left(\mathcal E_{1/2}(F)\right)\cdot\td(F)\in A^*(X)\llbracket p\rrbracket[p^{\pm \frac{ 1}{12}}],
\end{equation}
where $\td(-)$ is the Todd class, so that $\ELL_{1/2}(-;p)$ extends to a group homomorphism from $K^0(X)$ to the multiplicative group of units in $A^*(X)\llbracket p\rrbracket[p^{\pm\frac{1}{12}}]$.
\end{definition}
We can then define the virtual chiral elliptic genus as follows. 

\begin{definition}\label{half bps elliptic genus}
Let $X$ be a proper scheme with a perfect obstruction theory and $V\in K^0(X)$. The \emph{virtual chiral elliptic genus} is defined as 
\[
    \Ell_{1/2}^{\vir}(X,V;p)=(-p^{-\frac{1}{12}})^{\vd}\chi^{\vir}\left(X,\mathcal E_{1/2}(T_X^{\vir})\otimes V\right)\in\BZ \llbracket p\rrbracket[p^{\pm\frac{1}{12}}].
\]
By the virtual Riemann--Roch theorem of \cite{Fantechi_Gottsche} we can also compute the virtual chiral elliptic genus as
\[
\Ell_{1/2}^{\vir}(X,V;p)=\int_{[X]^{\vir}}\ELL_{1/2}(T_X^{\vir};p)\cdot\ch(V).
\]
\end{definition}

\begin{remark}
One may give a  more general definition by adding a ``mass deformation'' and defining $\mathcal E_{1/2}^{(y)}(F)$ for $F\in K^0(X)$ as
\[
\mathcal E_{1/2}^{(y)}(F;p)=\bigotimes_{n\ge 1}\Sym_{y^{-1}p^n}^\bullet\left(F\right)\otimes \Sym_{yp^n}^\bullet\left(F^\vee\right)\in 1+p\cdot K^0(X)[y,y^{-1}]\llbracket p \rrbracket,
\]
so we recover the standard definition of virtual elliptic genus  by taking $\mathcal E(F)=\mathcal E_{1/2}^{(1)}(F;p)\otimes\mathcal E_{1/2}^{(y)}(-F;p)$, cf.~\cite[\S~6]{Fantechi_Gottsche}.
\end{remark}
\iffalse\begin{SM}
We have:
$$\bigwedge_{y}(F)=S_{-y}(-F) $$
and so
$$E_{1/2}^{(1)}(F;p)E_{1/2}^{(y)}(-F;p)= \bigotimes_{n\ge 1}\bigwedge_{-y^{-1}p^n}^\bullet\left(F\right)\otimes \bigwedge_{-yp^n}^\bullet\left(F^\vee\right) \otimes \Sym_{p^n}^\bullet\left(F\oplus F^\vee\right)$$
\end{SM}\fi

\begin{prop}
Let $X$ be a proper scheme with a perfect obstruction theory and let $V\in K^0(X)$. Then the virtual chiral elliptic genus $\Ell_{1/2}^{\vir}(X,V;p)$ is deformation invariant.
\end{prop}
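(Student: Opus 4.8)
The plan is to reduce to the classical deformation invariance of virtual intersection numbers, following verbatim the argument for the non-chiral virtual elliptic genus in \cite[\S\,6]{Fantechi_Gottsche}. By \emph{deformation invariance} one means the following: suppose $\pi\colon \mathscr X\to B$ is a morphism to a smooth connected scheme $B$ equipped with a $B$-relative perfect obstruction theory $\BE_{\mathscr X/B}\to \BL_{\mathscr X/B}$ whose fibre over each $b\in B$ is a perfect obstruction theory $\BE_b\to \BL_{X_b}$ on the (proper) fibre $X_b$, and let $\mathscr V\in K^0(\mathscr X)$ restrict to $V_b = \mathscr V|_{X_b}$; then $\Ell_{1/2}^{\vir}(X_b,V_b;p)$ is independent of $b$. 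The first step is to rewrite this invariant, using the virtual Riemann--Roch formula recalled in Definition \ref{half bps elliptic genus}, as
\[
\Ell_{1/2}^{\vir}(X_b,V_b;p)=(-p^{-1/12})^{\vd}\int_{[X_b]^{\vir}}\ELL_{1/2}(T_{X_b}^{\vir};p)\cdot\ch(V_b),
\]
so that everything is reduced to constancy of the right-hand side in $b$.

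The second step is to check that the integrand is the restriction of a single class on the total space. The relative virtual tangent complex $T_{\mathscr X/B}^{\vir}=\BE_{\mathscr X/B}^\vee\in K^0(\mathscr X)$ restricts on each fibre to $T_{X_b}^{\vir}$; since $\mathcal E_{1/2}(\,\cdot\,)$ is assembled from symmetric powers and is therefore a functorial operation in $K$-theory — in particular it depends only on the $K$-class, by the homomorphism property noted after \eqref{E_1/2} — and since $\ch$ and $\td$ commute with pullback, the class
\[
\gamma\defeq \ELL_{1/2}\bigl(T_{\mathscr X/B}^{\vir};p\bigr)\cdot\ch(\mathscr V)\in A^\ast(\mathscr X)\llbracket p\rrbracket[p^{\pm 1/12}]
\]
restricts on $X_b$ to $\ELL_{1/2}(T_{X_b}^{\vir};p)\cdot\ch(V_b)$. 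That the $\chi^{\vir}$-definition itself is insensitive to the chosen representatives of the $K$-classes follows from \cite{Siebert,Thomas_Kth_Fulton_Class}.

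The third and last step is to invoke deformation invariance of virtual classes in the form $[X_b]^{\vir}=\iota_b^![\mathscr X]^{\vir}_\pi$, where $\iota_b\colon \{b\}\into B$ is the (regular) inclusion and $[\mathscr X]^{\vir}_\pi$ is the $B$-relative virtual class \cite{BFinc}; standard properties of refined Gysin maps then give that $\int_{[X_b]^{\vir}}\gamma|_{X_b}$ does not depend on $b$, applied coefficient-by-coefficient to the expansion of $\gamma$ in $p$ and $p^{1/12}$, each coefficient being an ordinary cohomology class of bounded degree. Finally the prefactor $(-p^{-1/12})^{\vd}$ is itself locally constant, since $\vd=\rk\BE_{\mathscr X/B}|_{X_b}$ is locally constant on the connected base $B$. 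I expect no real obstacle here: the argument is purely formal and is word-for-word that of \cite[\S\,6]{Fantechi_Gottsche}, the only points needing a line of verification being that $\ELL_{1/2}$ makes sense relatively and is compatible with base change (clear, as it is built from $\Sym$, $\ch$ and $\td$) and the coefficientwise application of the deformation invariance of virtual classes.
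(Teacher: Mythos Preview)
Your proposal is correct and follows essentially the same approach as the paper, which simply cites Definition \ref{half bps elliptic genus} together with \cite[Theorem 3.15]{Fantechi_Gottsche}; you have unpacked what that citation contains (the relative virtual class and base change of the integrand), but the logical content is identical.
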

\begin{proof}
The statement follows directly from Definition \ref{half bps elliptic genus} and \cite[Theorem 3.15]{Fantechi_Gottsche}.
\end{proof}

\iffalse
\begin{prop}
Let $X$ be equipped with a symmetric perfect obstruction theory $[E^{-1}\to E^0]$. Then $\Ell_{1/2}^{\vir}(X,(\mathcal K_X^{\vir})^{1/2};p)$ is $SL(2,\BZ)$-invariant.
\end{prop}
\begin{proof}
Let $p=e^{2\pi i\tau}$, $\Im\tau>0$. The modular group $SL(2,\BZ)$ acts as
\[
\tau\mapsto\frac{a\tau+b}{c\tau+d},\qquad \begin{pmatrix}
a & b\\
c & d
\end{pmatrix}\in SL(2,\BZ).
\]
In order to prove the statement, let $F$ be any vector bundle on $X$, with Chern roots $f_i$, $i=1,\dots,r$. Assume $\det F^\vee$ admits a square root in $K^0(X)$, so we have
\[
\ELL_{1/2}(F;p)\cdot\ch(-p^{2/3}\det F^\vee)^{1/2}=\prod_{\ell=1}^rf_\ell\frac{\eta(p)}{\theta\left(\frac{f_\ell}{2\pi i},p\right)},
\]
where $\theta(y,z)$ denotes the Jacobi theta function
\[
\theta(y,z)=-iz^{1/8}(y^{1/2}-y^{-1/2})\prod_{n=1}^\infty(1-z^n)(1-yz^n)(1-y^{-1}z^n).
\]
It is then known that $\theta(y,z)$ is a Jacobi form of weight $1/2$ and index $1/2$ and that $\eta(z)$ is a modular form of weight $1/2$ with character. Indeed we have
\begin{align*}
&\eta(\tau+1)=e^{i\pi/12}\eta(\tau)\\
&\eta(-1/\tau)=\sqrt{-i\tau}\eta(\tau)
\end{align*}
and
\begin{align*}
&\theta(z|\tau+1)=e^{i\pi/4}\theta(z|\tau)\\
&\theta(z/\tau|-1/\tau)=-i\sqrt{-i\tau}e^{\pi i z^2/\tau}\theta(z|\tau)
\end{align*}
Moreover, as $T_X^{\vir}=[E_0-E_1]$ we have
$\ELL_{1/2}(T_X^{\vir};p)=\ELL_{1/2}(E_0;p)/\ELL_{1/2}(E_1;p)$. The fact that the perfect obstruction theory on $X$ is symmetric is enough to conclude.
\end{proof}\fi

Let now $V=\sum_\mu t^\mu$ be a $\TT$-module as in \S\,\ref{sec: preliminaries K theory}. The trace of its symmetric algebra is given by
\[
\tr\left(\Sym^\bullet_p(V)\right)=\tr\left(\frac{1}{\Lambda^\bullet_{-p}(V)}\right)=\prod_\mu\frac{1}{1-pt^\mu}.
\]
Let us now assume as in \S\,\ref{sec: preliminaries K theory} that $\det V$ is a square in $K_{\TT}^0(\pt)$ and $\mu=0$ is not a weight of $V$. We can then compute the trace of the symmetric product in \eqref{E_1/2} as
\begin{align*}
\tr\left(\bigotimes_{n\ge 1}\Sym_{p^n}^\bullet\left(V\oplus V^\vee\right)\right)&=\prod_{\mu}\prod_{n\ge 1}\frac{1}{(1-p^nt^\mu)(1-p^nt^{-\mu})},
\end{align*}
\begin{align*}
\nonumber\tr\left(\frac{\bigotimes_{n\ge 1}\Sym_{p^n}^\bullet\left(V\oplus V^\vee\right)}{\Lambda^\bullet V^\vee}\right)&=\prod_{\mu}\frac{1}{1-t^{-\mu}}\prod_{n\ge 1}\frac{1}{(1-p^nt^\mu)(1-p^nt^{-\mu})}\\
&=\left(-ip^{1/8}\phi(p)\right)^{\rk V}\prod_\mu\frac{t^{\mu/2}}{\theta(p;t^\mu)},\label{trace_thetas}
\end{align*}
where $\phi(p)$ is the Euler function, i.e.~$\phi(p)=\prod_n(1-p^n)$, and $\theta(p;y)$ denotes the Jacobi theta function
\[
\theta(p;y)=-ip^{1/8}(y^{1/2}-y^{-1/2})\prod_{n=1}^\infty(1-p^n)(1-yp^n)(1-y^{-1}p^n).
\]
Combining everything together we  get the identity
\begin{align*}
(-p^{-\frac{1}{12}})^{\rk V}\tr\left(\frac{\bigotimes_{n\ge 1}\Sym_{p^n}^\bullet\left(V\oplus V^\vee\right)\otimes\det (V^\vee)^{1/2}}{\Lambda^\bullet V^\vee}\right)&=(-p^{-\frac{1}{12}})^{\rk V}\left(-ip^{1/8}\phi(p)\right)^{\rk V}\prod_\mu\frac{1}{\theta(p;t^\mu)}\\
&=\prod_\mu i\frac{\eta(p)}{\theta(p;t^\mu)},
\end{align*}
where $\eta(p)$ is the Dedekind eta function
\[
\eta(p)=p^{\frac{1}{24}}\prod_{n\ge 1}(1-p^n).
\]
For a  virtual $\TT$-representation $ V=\sum_\mu t^\mu-\sum_\nu t^\nu\in K_{\TT}^0(\pt)$ where  $\mu=0$ is not a weight of $V$, we compute  
\begin{align*}
(-p^{-\frac{1}{12}})^{\rk  V}\tr\left(\frac{\bigotimes_{n\ge 1}\Sym_{p^n}^\bullet\left( V\oplus V^\vee\right)\otimes\det ( V^\vee)^{1/2}}{\Lambda^\bullet V^\vee}\right)=     (i\cdot \eta(p))^{\rk V} \frac{\prod_{\nu}\theta(p;t^\nu)}{\prod_{\mu}\theta(p;t^\mu)}.
\end{align*}
For the remainder of the section we set $p=e^{2\pi i \tau}$, with $\tau\in \mathbb{H}=\set{\tau\in \BC | \Im(\tau)>0}$.   Denoting $\theta(\tau|z)=\theta(e^{2\pi i\tau};e^{2\pi iz})$,  $\theta$ enjoys the  modular behaviour %\cite{Mumford_Tata_I}
\[
\theta(\tau|z+a+b\tau)=(-1)^{a+b}e^{-2\pi ibz}e^{-i\pi b^2\tau}\theta(\tau|z),\quad a,b\in\BZ.
\]

Analogously to the measure $[\,\cdot\,]$ for K-theoretic invariants, we  define the \emph{elliptic measure}
\[
\theta[V]=     (i\cdot \eta(p))^{-\rk V} \frac{\prod_{\mu}\theta(p;t^\mu)}{\prod_{\nu}\theta(p;t^\nu)},
\]
which satisfies $\theta[\overline V]=(-1)^{\rk V}\theta[V]$. Notice that, if $\rk V=0$, the elliptic measure refines both $[\,\cdot\,]$ and $e^\TT(\,\cdot\,)$
\[\theta[V]\xrightarrow{p\to 0} [V]\xrightarrow{b\to 0} e^{\TT}(V)\]
where the second limit was dicussed in \S\,\ref{sec: cohom reduction}.
\begin{remark}
The definition we gave for virtual chiral elliptic genus is reminiscent of what is commonly known in physics as the elliptic genus (or superconformal index) of a $2d$ $\mathcal N=(0,2)$ sigma model. Our definition actually matches the one in \cite{KawaiMohri} for $\mathcal N=(0,2)$ Landau--Ginzburg models. Indeed, in this case we are given an $n$-dimensional (compact) K\"ahler manifold $X$ together with a holomorphic vector bundle $E\to X$ such that $c_1(E)-c_1(T_X)=0\mod 2$. If we then consider the K-theory class $[V]=[T_X]-[E]$, the superconformal index of \cite{KawaiMohri} can be written as
\[
\boldsymbol{\mathcal I}(X,E;p)=(-ip^{-\frac{1}{12}})^{\rk V}\chi\left(X,\mathcal E_{1/2}(V)\otimes\det(V^\vee)^{\frac{1}{2}}\right)
\]
and in terms of the Chern roots $v_i$, $w_j$ of $T_X$ and $E$, respectively, we also have (cf.~\cite{FrancoGhimLeeSeong})
\[
\boldsymbol{\mathcal I}(X,E;p)=\int_X\prod_{i=1}^r\frac{\theta(\tau|\frac{w_i}{2\pi i})}{\eta(p)}\prod_{j=1}^n\frac{v_j\eta(p)}{\theta(\tau|\frac{-v_j}{2\pi i})}.
\]
\end{remark}

\subsection{Elliptic DT invariants}\label{sec:elliptic DT}
\begin{definition}\label{def:elliptic DT of A^3}
The generating series of elliptic DT  invariants $\DT_r^{\rm ell}(\BA^3,q,t,w;p)$ is defined as
\[
\DT_r^{\rm ell}(\BA^3,q,t,w;p)=\sum_{n\ge 0}q^n\Ell^{\vir}_{1/2}(\Quot_{\BA^3}(\OO^{\oplus r},n),\mathcal K_{\vir}^{\frac{1}{2}};p)\in \BZ(\!(t,\mathfrak t^{\frac{1}{2}},w)\!)\llbracket p,q \rrbracket.
\]
Being that $\Quot_{\BA^3}(\OO^{\oplus r},n)$ is not projective, but nevertheless carries the action of an algebraic torus $\TT$ with proper $\TT$-fixed locus, we  define the invariants  by means of virtual localisation, as we explained in \S\,\ref{sec:vir_loc}.
\end{definition}
At each $\TT$-fixed point $[S]\in\Quot_{\BA^3}(\OO^{\oplus r},n)^{\TT}$, the localised contribution is
\[
\tr\left(\frac{\bigotimes_{n\ge 1}\Sym_{p^n}^\bullet\left(T_S^{\vir}\oplus T_S^{\vir,\vee}\right)}{\widehat{\Lambda}^\bullet T_S^{\vir,\vee}}\right)
\]
from which we deduce that we can recover the K-theoretic invariants $\chi(\Quot_{\BA^3}(\OO^{\oplus r},n),\widehat{\OO}^{\vir})$  in the limit  $p\to 0$.
As for K-theoretic invariants, we have
\begin{align*}
    \DT_r^{\rm ell}(\BA^3,q,t,w;p)\,&=\,\sum_{n\geq 0}q^n\sum_{[S]\in \Quot_{\BA^3}(\mathscr O^{\oplus r},n)^{\TT}}\theta[-T^{\vir}_S]\\
    \,&=\,\sum_{\overline\pi}q^{\lvert \overline\pi\rvert}\prod_{i,j=1}^r\theta[-\mathsf V_{ij}],
\end{align*}
where $\overline{\pi}$ runs over all $r$-colored plane partitions.

\smallbreak
Contrary to the case of K-theoretic and cohomological invariants, there exists no conjectural closed formula for elliptic DT invariants yet, even for the rank 1 case. Moreover, the generating series  depends on the equivariant parameters of the framing torus, as shown in the following example. 
\begin{example}\label{example dependence elliptic}
Consider $Q_1^3=\Quot_{\BA^3}(\OO^{\oplus 3},1)$, whose only $\TT$-fixed points are
\[
S_1=\mathscr I_\pt\oplus\OO\oplus\OO\subset\OO^{\oplus 3},\quad S_2= \OO\oplus\mathscr I_\pt\oplus\OO\subset\OO^{\oplus 3},\quad S_3=\OO\oplus\OO\oplus\mathscr I_\pt\subset\OO^{\oplus 3},
\]
with $\pt=(0,0,0)\in\BA^3$ as in Example \ref{example r=2, n=1}.  We have
\begin{align*}
    T^{\vir}_{S_1} &= 1-\frac{1}{t_1t_2t_3}+\frac{(1-t_1)(1-t_2)(1-t_3)}{t_1t_2t_3} - w_1^{-1}w_2\frac{1}{t_1t_2t_3} + w_2^{-1}w_1 - w_1^{-1}w_3\frac{1}{t_1t_2t_3} + w_3^{-1}w_1 \\
    T^{\vir}_{S_2} &= 1-\frac{1}{t_1t_2t_3}+\frac{(1-t_1)(1-t_2)(1-t_3)}{t_1t_2t_3}- w_2^{-1}w_1\frac{1}{t_1t_2t_3} + w_1^{-1}w_2 - w_2^{-1}w_3\frac{1}{t_1t_2t_3} + w_3^{-1}w_2 \\
    T^{\vir}_{S_3} &= 1-\frac{1}{t_1t_2t_3}+\frac{(1-t_1)(1-t_2)(1-t_3)}{t_1t_2t_3}- w_3^{-1}w_1\frac{1}{t_1t_2t_3} + w_1^{-1}w_3 - w_3^{-1}w_2\frac{1}{t_1t_2t_3} + w_2^{-1}w_3
\end{align*}
by which we may compute the corresponding elliptic invariant. Set $w_j=e^{2\pi iv_j}$ and $t_\ell=e^{2\pi is_\ell}$, so that
\begin{multline*}
    \Ell^{\vir}_{1/2}(Q_1^3,\mathcal K_{\vir}^{\frac{1}{2}},t,w;p)=\diamondsuit\cdot\left( \frac{\theta(\tau|v_2-v_1-s)\theta(\tau|v_3-v_1-s)}{\theta(\tau|v_1-v_2)\theta(\tau|v_1-v_3)}\right.\\
    \left.+\frac{\theta(\tau|v_1-v_2-s)\theta(\tau|v_3-v_2-s)}{\theta(\tau|v_2-v_1)\theta(\tau|v_2-v_3)}+
    \frac{\theta(\tau|v_1-v_3-s)\theta(\tau|v_2-v_3-s)}{\theta(\tau|v_3-v_1)\theta(\tau|v_3-v_2)}\right),
\end{multline*}
where $s=s_1+s_2+s_3$, with the overall factor
\[
    \diamondsuit=\frac{\theta(\tau|s_1+s_2)\theta(\tau|s_1+s_3)\theta(\tau|s_2+s_3)}{\theta(\tau|s_1)\theta(\tau|s_2)\theta(\tau|s_3)}.
\]
Moreover, by evaluating residues in $v_i-v_j=0$ one can realise that $\Ell^{\vir}_{1/2}(Q_1^3,\mathcal K_{\vir}^{1/2},t,w;p)$ has no poles in $v_i$. Indeed
\[
\Res_{v_1-v_2=0}\Ell^{\vir}_{1/2}(Q_1^3,\mathcal K_{\vir}^{\frac{1}{2}};p)=\diamondsuit\cdot\left(\frac{\theta(\tau|-s)\theta(\tau|v_3-v_2-s)}{\theta(\tau|v_2-v_3)}-\frac{\theta(\tau|-s)\theta(\tau|v_3-v_2-s)}{\theta(\tau|v_2-v_3)}\right)=0,
\]
and the same occurs for any other pole involving the $v_i$'s. However, this does not imply the independence of the elliptic invariants from $v$, as we now suggest.
%\smallbreak

Set $\overline{v}_i=v_i + a_i+b_i\tau$, with $a_i, b_i\in \BZ$, for $i=1,2,3$. Applying the quasi-periodicity of theta functions, we get
\begin{multline*}
     \Ell^{\vir}_{1/2}(Q_1^3,\mathcal K_{\vir}^{\frac{1}{2}},t,\overline{w};p)=\\
     \frac{\diamondsuit}{\theta(\tau|\overline{v}_1-\overline{v}_2)\theta(\tau|\overline{v}_1-\overline{v}_3)\theta(\tau|\overline{v}_2-\overline{v}_3)}\cdot\left(\theta(\tau|\overline{v}_2-\overline{v}_1-s)\theta(\tau|\overline{v}_3-\overline{v}_1-s)\theta(\tau|\overline{v}_2-\overline{v}_3) \right.\\
     \left. -\theta(\tau|\overline{v}_1-\overline{v}_2-s)\theta(\tau|\overline{v}_3-\overline{v}_2-s)\theta(\tau|\overline{v}_1-\overline{v}_3)+\theta(\tau|\overline{v}_1-\overline{v}_3-s)\theta(\tau|\overline{v}_2-\overline{v}_3-s)\theta(\tau|\overline{v}_1-\overline{v}_2) \right)\\
     =\frac{\diamondsuit}{\theta(\tau|v_1-v_2)\theta(\tau|v_1-v_3)\theta(\tau|v_2-v_3)}\cdot\left(e^{2\pi is (b_2+b_3-2b_1)}\theta(\tau|v_2-v_1-s)\theta(\tau|v_3-v_1-s)\theta(\tau|v_2-v_3) \right.\\
     \left. -e^{2\pi is (b_1+b_3-2b_2)}\theta(\tau|v_1-v_2-s)\theta(\tau|v_3-v_2-s)\theta(\tau|v_1-v_3) \right.\\ 
     \left.
     +e^{2\pi is (b_1+b_2-2b_3)}\theta(\tau|v_1-v_3-s)\theta(\tau|v_2-v_3-s)\theta(\tau|v_1-v_2) \right).
\end{multline*}
Notice that for general values of $s$ the above expression is different from $\Ell^{\vir}_{1/2}(Q_1^3,\mathcal K_{\vir}^{1/2},t,w;p)$. However, if we specialise  $s\in \frac{1}{3}\BZ$, we see that in the previous example $ \Ell^{\vir}_{1/2}(Q_1^3,\mathcal K_{\vir}^{1/2},t,w;p)$ becomes constant and periodic with respect to $v$ on the lattice $\BZ+ 3\tau \BZ$ and is holomorphic in $v$, from which we conclude that it is constant on $v$ under this specialisation. Therefore, by choosing $w_j=e^{2\pi i \frac{j}{3}}$ to be third roots of unity, one can show
\[
 \left.\Ell^{\vir}_{1/2}(Q_1^3,\mathcal K_{\vir}^{\frac{1}{2}},t,w;p)\right|_{\mathfrak{t}=e^{2\pi i \frac{k}{3}}}= \begin{cases} (-1)^{m+1}3, & \mbox{if } k=3m, \quad m\in \BZ \\ 0, & \mbox{if } k\notin 3\BZ. \end{cases}
\]
\end{example}

\subsection{Limits of elliptic DT invariants}\label{sec:limits of elliptic DT}
Even if  a closed formula for the higher rank generating series of elliptic DT invariants is not available, we can still study its behaviour by looking at some particular limits of the variables $p,t_i,w_j$. 

\smallbreak
It is easy to see that, under the  Calabi--Yau restriction $\mathfrak{t}=1$, the generating series of elliptic DT invariants does not carry any more refined information than the cohomological one; in particular, we have no more dependence on the framing parameters $w_j$ and the elliptic parameter $p$. We generalise this phenomenon in the following setting. Denote by $\TT_k\subset\BT_1$ the subtorus where  $\mathfrak t^{\frac{1}{2}}=e^{\pi ik/r}$, $k\in\BZ$. Define by
\[\DT_{r,k}^{\rm ell}(\BA^3,q,t,w;p)=\left.\DT_{r}^{\rm ell}(\BA^3,q,t,w;p)\right|_{\TT_k}\]
the restriction of the generating series to the subtorus  $\TT_k\subset\BT_1$, which is well-defined as no powers of the Calabi--Yau weight appear in the vertex terms \eqref{eqn:vertex_terms} by Lemma \ref{lemma:T_fixed_points}. 

\begin{prop}\label{corollary to elliptic macmahon prop}
If $k=rm\in r\BZ$, then
\[
\DT_{r, k}^{\rm ell}(\BA^3,q,t,w;p)=\mathsf M((-1)^{r(m+1)}q)^{r}.
\]
In particular, the dependence on $t_i, w_j$ and $p$ drops.
\end{prop}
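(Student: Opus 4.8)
The strategy is to reduce to the rank one case via the product formula and then analyze the resulting elliptic bracket on the Calabi--Yau subtorus $\TT_k$. First I would recall that by Theorem \ref{thm: r copies of rank 1 K theory} (and its obvious elliptic analogue, which follows from the same combinatorial manipulation using Lemma \ref{lemma: lim in K theory Vij} and Lemma \ref{lemma: combinatorical trick}, now with the elliptic measure $\theta[\,\cdot\,]$ in place of $[\,\cdot\,]$) the higher rank elliptic series factorizes as a product of $r$ shifted copies of the rank one series. Concretely, taking $w_i = L^i$ and letting $L \to \infty$ should yield
\[
\DT_r^{\rm ell}(\BA^3,(-1)^rq,t,w;p) = \prod_{i=1}^r \DT_1^{\rm ell}\bigl(\BA^3,-q\mathfrak t^{\frac{-r-1}{2}+i},t;p\bigr),
\]
with the $L \to \infty$ limit being legitimate because the cross terms $\theta[-\mathsf V_{ij}]\theta[-\mathsf V_{ji}]$ for $i<j$ converge (the relevant computation mirrors Lemma \ref{lemma: lim in K theory Vij}, with the theta-function version of the limit producing a power of a Calabi--Yau weight times a sign). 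The key point: on the subtorus $\TT_k$ with $k = rm$, we have $\mathfrak t^{1/2} = e^{\pi i m}= (-1)^m$, so each shift $\mathfrak t^{\frac{-r-1}{2}+i}$ becomes a fixed sign, namely $(-1)^{m(-r-1+2i)} = (-1)^{m(r+1)}$ independently of $i$. Hence all $r$ factors become the \emph{same} rank one series evaluated at $\mathfrak t = 1$ and with $q$ replaced by $(-1)^{m(r+1)}q$ up to the overall $(-1)^r$.

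Next I would analyze the rank one elliptic series on the Calabi--Yau locus $\mathfrak t = 1$. The vertex term $\mathsf V_{11}$ for a plane partition $\pi$ is, by Proposition \ref{prop:character_virtual_tangent} with $r=1$, a genuine (non-virtual after cancellation) $\BT_1$-representation whose specialization at $\mathfrak t = 1$ one computes directly; the standard MNOP redistribution argument shows that at $t_1t_2t_3=1$ the $\BT_1$-character of $\mathsf V_{11}$ is $|\pi|$ copies of the trivial weight minus $|\pi|$ copies of some weight, but more to the point the elliptic bracket $\theta[-\mathsf V_{11}]$ collapses. The cleanest route is to observe that at $\mathfrak t=1$ the symmetrized structure sheaf contribution becomes purely the sign factor coming from $\widehat\Lambda^\bullet$, because the Jacobi theta factors cancel in pairs once $t_1t_2t_3=1$ is imposed — this is the elliptic refinement of the statement that at the Calabi--Yau point the $K$-theoretic vertex reduces to $\pm 1$. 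Concretely $\theta[-\mathsf V_{11}]\big|_{\mathfrak t=1} = (-1)^{|\pi|}$, exactly as in the cohomological/$K$-theoretic degeneration, and crucially with no residual dependence on $t_i$, $w_j$, or $p$. Summing over plane partitions with the weight $q^{|\pi|}$ then gives $\sum_\pi ((-1)q)^{|\pi|}\cdot(\text{MacMahon count})$, i.e. $\DT_1^{\rm ell}(\BA^3,q,t;p)\big|_{\mathfrak t=1} = \mathsf M(-q)$.

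Finally I would assemble the pieces: plugging $\mathfrak t = 1$ and the shift signs into the product formula, the $r$-fold product becomes $\mathsf M\bigl(-(-1)^{m(r+1)}q\bigr)^r$ after tracking the overall $(-1)^r$, and simplifying the compound sign $(-1)\cdot(-1)^{m(r+1)}\cdot(-1)^r$ gives $(-1)^{r(m+1)}$, yielding the claimed $\mathsf M((-1)^{r(m+1)}q)^r$; and since every factor was independent of $t_i$, $w_j$, $p$, so is the product. The main obstacle I anticipate is the first step: justifying the $L\to\infty$ limit and the elliptic product formula with full rigor, since the elliptic measure involves theta functions rather than rational functions, so one must check that the limits of the individual $\theta[-\mathsf V_{ij}]$ exist and that Theorem \ref{thm:independence}'s $w$-independence argument (via \cite{arbesfeld2019ktheoretic}) still applies — it does, because that argument is about poles of equivariant Euler characteristics and the elliptic genus is built from the same $\TT$-equivariant data; but making the theta-function limit in the analogue of Lemma \ref{lemma: lim in K theory Vij} precise (e.g. that $\theta[w_i^{-1}w_j\lambda]$ degenerates correctly as $w_i/w_j \to 0$, which requires care since theta is only quasi-periodic, not a rational function of the argument) is the delicate part. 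The secondary subtlety is verifying that $\theta[-\mathsf V_{11}]\big|_{\mathfrak t=1}$ really is exactly $(-1)^{|\pi|}$ with no leftover theta factors; this should follow from the same weight-cancellation bookkeeping used implicitly in the $K$-theoretic specialization, but it deserves an explicit check that the numerator and denominator theta products in $\theta[-\mathsf V_{11}]$ pair up under $t_1t_2t_3=1$.
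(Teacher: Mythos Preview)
Your approach has a genuine gap at its first and most critical step: the elliptic product formula. The route via Theorem~\ref{thm: r copies of rank 1 K theory} rests on two ingredients, namely Theorem~\ref{thm:independence} (independence of $w$) to justify the specialisation $w_i=L^i$, and Lemma~\ref{lemma: lim in K theory Vij} to evaluate the limit $L\to\infty$. Neither survives in the elliptic setting. First, the $w$-independence argument in Theorem~\ref{thm:independence} uses that each localised contribution $[-T^{\vir}_{\overline\pi}]$ is a \emph{rational} function of total degree zero in the $w_i$, so that absence of poles forces it to be constant; with $\theta[\,\cdot\,]$ in place of $[\,\cdot\,]$ the contribution is a quasi-periodic (non-rational) function of the $v_i=\log w_i/(2\pi i)$, and absence of poles no longer implies constancy. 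Example~\ref{example dependence elliptic} exhibits this explicitly: the elliptic invariant has no poles in the $v_i$ yet genuinely depends on them. Your assertion that ``Theorem~\ref{thm:independence}'s argument still applies'' is therefore incorrect. Second, even granting some form of $w$-independence on the subtorus $\TT_k$, the limit $L\to\infty$ in the analogue of Lemma~\ref{lemma: lim in K theory Vij} is ill-behaved for theta functions: $\theta(p;L^{j-i}\lambda)$ diverges as $L\to\infty$ through the infinite product $\prod_{n\geq 1}(1-L^{j-i}\lambda\, p^n)$, so the clean asymptotic used in the K-theoretic case has no direct elliptic counterpart.

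The paper's proof bypasses both issues by working directly at each $\TT$-fixed point. Using the critical description \eqref{eqn:virtual_tg_quot} one writes $T_S^{\vir}=W-\overline W\mathfrak t^{-1}$ for a representation $W$ containing no pure Calabi--Yau weights; on $\TT_k$ with $k=rm$ the twist by $\mathfrak t^{-1}$ is an \emph{integer} lattice shift in the additive theta variable, and the quasi-periodicity $\theta(\tau|z+m)=(-1)^m\theta(\tau|z)$ together with $\theta[\overline W]=(-1)^{\rk W}\theta[W]$ gives $\theta[-T_S^{\vir}]=(-1)^{(m+1)\rk W}$ immediately. No product formula, no limits in $w$, and the $p$- and $t_i$-independence is manifest. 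Note that your Step~3 (the rank~$1$ collapse at $\mathfrak t=1$) is exactly this computation specialised to $r=1$; the paper's argument is simply its uniform version for all $r$, so nothing is gained by passing through the (unavailable) elliptic factorisation.
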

\begin{proof}
Let $S\in \Quot_{\BA^3}(\OO^{\oplus r},n)^{\TT}$. Denote $T_S^{\vir}=T^{nc}_{S}-\overline{T^{nc}_{S}}\mathfrak{t}^{-1}$ as in Equation \eqref{eqn:virtual_tg_quot}, where $T^{nc}_{S}$ is the tangent space of $\NCQuot_{r}^n$ at $S$. Denote by $T^{nc}_{S,l}$ the sub-representation of $ T^{nc}_{S} $ corresponding to $\mathfrak{t}^l$, with $l\in \mathbb{Z}$. As there are no powers of the Calabi--Yau weight in $T_S^{\vir}$, we have an identity $T^{nc}_{S,l}=\overline{T^{nc}_{S,-l-1}}\mathfrak{t}^{-1}$. Set 
\[W=T^{nc}_{S}-\sum_{n\in \BZ}T^{nc}_{S,n}. \] 
We have that
\[T^{nc}_{S}-\overline{T^{nc}_{S}}\mathfrak{t}^{-1}=W-\overline{W}\mathfrak{t}^{-1}\]
and, in particular, neither $W$ nor $\overline{W}\mathfrak{t}^{-1}$ contain constant terms and powers of the Calabi-Yau weight.
Using the quasi-periodicity of the theta function $\theta(\tau|z)$, we have
\begin{align*}
    \theta[-T_S^{\vir}]=\frac{\theta[\overline{W}\mathfrak{t}^{-1}]}{\theta[W]}=(-1)^{m\rk W}\frac{\theta[\overline{W}]}{\theta[W]}=(-1)^{\rk W(m+1)}.
\end{align*}
We conclude by noticing that
\[
\rk W=\rk T_S^{nc}=rn \, \mod{2}.\qedhere
\]
\end{proof}

Motivated by Example \ref{example dependence elliptic} and Proposition \ref{corollary to elliptic macmahon prop}, we propose the following conjecture.

\begin{conjecture}\label{conj: non dependence under some limits of elliptic}
The series $\DT_{r,k}^{\rm ell}(\BA^3,q,t,w;p)$ does not depend on the elliptic parameter $p$.
\end{conjecture}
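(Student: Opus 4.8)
The plan is to deduce the $p$-independence from the modular properties of the virtual chiral elliptic genus. The starting point is the structural identity $T^{\vir}_S = W-\overline{W}\mathfrak{t}^{-1}$ established inside the proof of Proposition \ref{corollary to elliptic macmahon prop}, valid at every $\TT$-fixed point $[S]\in\Quot_{\BA^3}(\OO^{\oplus r},n)^{\TT}$ because the critical obstruction theory is symmetric: here $W$ contains neither the trivial weight (by Proposition \ref{prop:tangents_are_T_movable}) nor any power of the Calabi--Yau weight, and $\rk W\equiv rn\pmod 2$. Writing $W=\sum_a t^{\mu_a}$ and using oddness of the Jacobi theta function together with $\theta[\overline V]=(-1)^{\rk V}\theta[V]$, one gets
\[
\theta[-T^{\vir}_S]=(-1)^{rn}\prod_a\frac{\theta(\tau|\mu_a+\mathfrak{s})}{\theta(\tau|\mu_a)},
\]
where $\mathfrak{s}$ denotes the equivariant parameter of $\mathfrak{t}$, which on $\TT_k$ is a torsion class. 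Thus every $q^n$-coefficient $\sum_{\lvert\overline\pi\rvert=n}\prod_{i,j}\theta[-\mathsf V_{ij}]$ of $\DT_{r,k}^{\rm ell}$ is, up to the common prefactor, a finite product of ratios of theta functions with equally many factors in numerator and denominator, and all $\eta(\tau)$-factors cancel since $\rk T^{\vir}_S=0$. The plan is to show that, on $\TT_k$, each such coefficient is a \emph{holomorphic} weak Jacobi form of weight $0$ and index $0$ in the remaining equivariant parameters $(s_1,s_2,v_1,\dots,v_r)$, hence a constant; this in particular gives independence of $p$ (and, as a byproduct, of $t$ and $w$).

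The weight-$0$ statement in $\tau$ is the $SL(2,\BZ)$-covariance of $\Ell^{\vir}_{1/2}(-,\mathcal K_{\vir}^{\frac12})$ for a symmetric obstruction theory: the balanced theta count makes the modular weights cancel, and the $z^{2}$-anomaly of $\theta$ under $\tau\mapsto-1/\tau$ cancels between the factors $\theta(\tau|\mu_a+\mathfrak{s})$ and $\theta(\tau|\mu_a)$ precisely because of the self-duality $T^{\vir}_S=W-\overline W\mathfrak{t}^{-1}$. The index-$0$ statement uses the restriction to $\TT_k$ crucially: the quantity governing the Jacobi index is $\sum_a\bigl((\mu_a+\mathfrak{s})^2-\mu_a^2\bigr)=2\mathfrak{s}\sum_a\mu_a+(rn)\mathfrak{s}^2$, which on $\TT_k$ (where $\mathfrak{s}$ is a constant) is affine, i.e.\ of degree $\le 1$, in the free parameters, so the quadratic part vanishes; quasi-periodicity of $\theta$ then multiplies each $\theta[-T^{\vir}_S]$ only by phases that are roots of unity (because $\mathfrak{s}$ is torsion), hence genuine periodicity holds after passing to a finite-index sublattice such as $\BZ\oplus r\tau\BZ$. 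Holomorphy is where the sum over fixed points enters: the denominator factors $\theta(\tau|\mu_a)$ vanish only along loci $v_i\equiv v_j\ (\mathrm{mod}\ \BZ\oplus\tau\BZ)$, and the residues there cancel in $\sum_{\overline\pi}$ by pairing $\overline\pi$ with the colored partition obtained by swapping its $i$-th and $j$-th components — a residue computation of exactly the kind carried out in Example \ref{example dependence elliptic} — or, alternatively, one invokes an elliptic analogue of \cite[Prop.~3.2]{arbesfeld2019ktheoretic}, the weights $w_i^{-1}w_jt^\nu$ being compact by the argument proving Theorem \ref{thm:independence}; holomorphy at the cusp $p\to0$ is immediate from $\theta[V]\xrightarrow{p\to0}[V]$. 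A holomorphic weak Jacobi form of weight $0$ and index $0$ is constant: ellipticity (up to the root-of-unity characters) plus holomorphy forces independence of the elliptic variables by Liouville, leaving a weight-$0$ holomorphic modular form for a congruence subgroup, which is constant.

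The hard part I expect is making the modular bookkeeping of the second paragraph airtight in this localised, multi-parameter setting: one must verify that the anomaly factors really cancel once the extra equivariant directions and the $\mathcal K_{\vir}^{\frac12}$-twist are tracked (the $SL(2,\BZ)$-invariance of $\Ell^{\vir}_{1/2}$ needs to be upgraded accordingly, including the passage to the double cover $\TT_{\mathfrak t}$ of Remark \ref{rmk: weight of twisted sheaf}), that specialising to $\TT_k$ genuinely lands in a space of holomorphic (not merely weakly holomorphic) modular forms — so that weight $0$ forces constancy rather than producing a $j$-function — and that the half-integral powers coming from $\mathfrak{t}^{\frac12}$ do not spoil the index count. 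The pole-cancellation over all $r$-colored plane partitions is a second, more combinatorial, obstacle requiring its own argument. A hands-on alternative, imitating the limit argument of Theorem \ref{thm: r copies of rank 1 K theory} to reduce to rank $1$, does not obviously simplify matters, since the resulting rank-$1$ assertion is at an $r$-th root of unity value of $\mathfrak{t}$ and is therefore not covered by Proposition \ref{corollary to elliptic macmahon prop} when $k\notin r\BZ$; the modular route seems the more promising one.
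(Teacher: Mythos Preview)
The paper does \emph{not} prove this statement: it is stated as an open conjecture (Conjecture~\ref{conj: non dependence under some limits of elliptic}), and Theorem~\ref{thm:elliptic} is proved only \emph{conditionally} on it. The paper's Remark~\ref{rem: elliptic indepedence} sketches precisely the modular/rigidity strategy you outline --- Jacobi-form structure, pole cancellation giving holomorphy in the equivariant parameters, and then constancy in $\tau$ via modular invariance --- but explicitly leaves it as future work. So your proposal is not to be compared against a proof; rather, it is an attempt to fill a gap the authors themselves identify and leave open.

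As for the argument itself, it is a reasonable plan but not a proof, and you have already flagged most of the soft spots. Two points deserve sharper emphasis. First, the modular-invariance step: once you restrict to $\TT_k$ the parameter $\mathfrak{s}=k/r$ is a \emph{fixed rational number}, not an elliptic variable, so the naive Jacobi-form transformation law no longer applies to it. The ratio $\prod_a\theta(\tau|\mu_a+k/r)/\theta(\tau|\mu_a)$ is not automatically invariant under $SL(2,\BZ)$; one must pass to theta functions with rational characteristics and a congruence subgroup of level dividing $r$ (this is what the paper means by the analogy with the ``level-$N$ elliptic genus''), and then check holomorphy at \emph{all} cusps of the relevant modular curve, not just at $p\to 0$. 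Your computation of the anomaly exponent $2\mathfrak{s}\sum_a\mu_a+(\rk W)\mathfrak{s}^2$ shows exactly why: the linear term in the $\mu_a$ survives and produces a nontrivial automorphy factor, which only becomes a root of unity after restricting to a congruence subgroup. Second, the holomorphy step: the swap-of-colours residue cancellation is checked in the paper only for $r=3$, $n=1$ (Example~\ref{example dependence elliptic}), and extending it to all $\overline\pi$ and to all lattice translates of the poles is a genuine combinatorial task; invoking an ``elliptic analogue'' of \cite[Prop.~3.2]{arbesfeld2019ktheoretic} is not a proof, since no such analogue is established. Until both of these are nailed down, the argument remains a heuristic --- which is exactly the status the paper assigns it.
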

\begin{remark}\label{rem: independence of p then equal to K theoretic}
Notice that the independence from the elliptic parameter $p$ implies that we can reduce our invariants to the K-theoretic ones by setting $p=0$, i.e.
\[ \DT_{r,k}^{\rm ell}(\BA^3,q,t,w;p)=\DT_{r}^{ \KK}\left.(\BA^3,q,t)\right|_{\TT_k},\]
which in particular do not depend on the framing parameters.
\end{remark}

Assuming Conjecture \ref{conj: non dependence under some limits of elliptic}, we derive a closed expression for $\DT_{r,k}^{\rm ell}(\BA^3,q,t,w;p)$, which was conjectured in \cite[Equation (3.20)]{BBPT}, motivated by string-theoretic phenomena.

\begin{theorem}\label{thm:elliptic}
Assume Conjecture \ref{conj: non dependence under some limits of elliptic} holds and let $k\in \BZ$. Then there is an identity
\begin{equation*}\label{eqn:conjecture elliptic}
\DT_{r,k}^{\rm ell}(\BA^3,q,t,w;p)=\mathsf M\left((-1)^{kr}((-1)^rq)^{\frac{r}{\gcd(k,r)}}\right)^{\gcd(k,r)}.
\end{equation*}
\end{theorem}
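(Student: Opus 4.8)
The plan is to reduce the statement, via the conjecture, to the closed formula of Theorem~\ref{thm: K theoretic of points higher rank} and to compute its restriction to $\TT_k$. Granting Conjecture~\ref{conj: non dependence under some limits of elliptic}, the series $\DT_{r,k}^{\rm ell}(\BA^3,q,t,w;p)$ is independent of $p$, so by Remark~\ref{rem: independence of p then equal to K theoretic} it equals $\DT_r^{\KK}(\BA^3,q,t)\big|_{\TT_k}$; the restriction makes sense because no pure power of the Calabi--Yau weight $\mathfrak t$ occurs in the vertex terms \eqref{eqn:vertex_terms} (Lemma~\ref{lemma:T_fixed_points}). Solving for $\DT_r^{\KK}(\BA^3,q,t)$ in Theorem~\ref{thm: K theoretic of points higher rank} and unwinding the definition \eqref{def_Exp} of the plethystic exponential (observe that substituting $q\mapsto(-1)^rq$ under $\Exp$ produces $(-1)^{rn}$, not $(-1)^r$, in the $n$-th term) gives
\[
\DT_r^{\KK}(\BA^3,q,t)=\exp\!\left(\sum_{n\geq 1}\frac{1}{n}\,\mathsf F_r\big((-1)^{rn}q^n,t_1^n,t_2^n,t_3^n\big)\right),
\]
so it remains to restrict the right-hand side and simplify. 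Set $d=\gcd(k,r)$.

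The main mechanism is a vanishing. On $\TT_k$ we have $\mathfrak t^{1/2}=e^{\pi i k/r}$, hence $\mathfrak t^{nr/2}=(-1)^{kn}$ and $[\mathfrak t^{nr}]\big|_{\TT_k}=0$ for all $n$, while $[\mathfrak t^n]\big|_{\TT_k}=e^{\pi i kn/r}-e^{-\pi i kn/r}$ vanishes precisely when $(r/d)\mid n$. Hence only the summands with $(r/d)\mid n$ survive, and for $n=\tfrac rd m$ the ratio $[\mathfrak t^{nr}]/[\mathfrak t^n]$ is a genuine $0/0$ which, approaching $\TT_k$ along $\mathfrak t^{1/2}=e^{\pi i k/r}s$, $s\to 1$, tends to $r\,(-1)^{(r+1)(k/d)m}$. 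For these $n$ one also has $t_1^nt_2^nt_3^n=\mathfrak t^n=1$ as monomials, so $[t_1^nt_2^n]=-\mathfrak t^{n/2}[t_3^n]$ and cyclically; thus the factor $[t_1^nt_2^n][t_1^nt_3^n][t_2^nt_3^n]\big/\big([t_1^n][t_2^n][t_3^n]\big)$ collapses to the $t$-independent sign $-(-1)^{(k/d)m}$ (there is no $w$-dependence to begin with, by Theorem~\ref{thm:independence}), and a direct expansion gives $[\mathfrak t^{nr/2}(-1)^{rn}q^n][\mathfrak t^{nr/2}(-1)^{rn}q^{-n}]\big|_{\TT_k}=-(-1)^{rn}q^{-n}\big(q^n-(-1)^{(k-r)n}\big)^2$.

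Multiplying the three factors, and using $(-1)^{(k-r)n}=(-1)^{kn-rn}$ together with the fact that $2r^2/d$ is even (as $d\mid r$), one finds, for $n=\tfrac rd m$,
\[
\mathsf F_r\big((-1)^{rn}q^n,t_1^n,t_2^n,t_3^n\big)\big|_{\TT_k}=r\,\frac{(\sigma u)^m}{\big(1-(\sigma u)^m\big)^2},\qquad u=q^{r/d},\ \ \sigma=(-1)^{r(r+k)/d},
\]
so that, summing over $n=\tfrac rd m$,
\[
\DT_r^{\KK}(\BA^3,q,t)\big|_{\TT_k}=\exp\!\left(d\sum_{m\geq 1}\frac1m\frac{(\sigma u)^m}{(1-(\sigma u)^m)^2}\right)=\mathsf M(\sigma u)^{d}.
\]
Finally, writing $r=dr'$ and $k=dk'$, the exponent $\tfrac{rk}{d}(d-1)=r'k'\,d(d-1)$ is even, whence $\sigma=(-1)^{kr+r^2/d}=(-1)^{kr}\big((-1)^r\big)^{r/d}$ and $\sigma u=(-1)^{kr}\big((-1)^rq\big)^{r/d}$, giving the asserted identity. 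I expect the only genuinely delicate point to be the bookkeeping of half-integral powers of $\mathfrak t$: the symbol $\mathfrak t^{1/2}$ lives on the double cover $\TT_{\mathfrak t}$ of Remark~\ref{rmk: weight of twisted sheaf}, and on $\TT_k$ it equals the prescribed root of unity $e^{\pi i k/r}$ rather than $\pm1$, so that essentially every sign above — the value of the $0/0$ limit, the collapse of the $t$-dependent factor, and the final simplification — hinges on carrying that datum correctly; it is also what makes the restriction to $\TT_k$ commute with the plethystic exponential only once the $0/0$ summands are read as limits.
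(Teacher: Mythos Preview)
Your proof is correct and follows the same strategy as the paper: reduce via the conjecture to $\DT_r^{\KK}(\BA^3,q,t)\big|_{\TT_k}$, invoke Theorem~\ref{thm: K theoretic of points higher rank}, and evaluate the restriction term by term in the plethystic sum. The final sign manipulation to reach $(-1)^{kr}((-1)^rq)^{r/d}$ is also the same.

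There are two minor differences of execution worth noting. First, the paper rewrites $\mathsf F_r$ entirely in the form
\[
\frac{(1-t_1^{-n}t_2^{-n})(1-t_1^{-n}t_3^{-n})(1-t_2^{-n}t_3^{-n})}{(1-t_1^{-n})(1-t_2^{-n})(1-t_3^{-n})}\cdot \frac{1-\mathfrak t^{-rn}}{1-\mathfrak t^{-n}}\cdot\frac{1}{(1-\mathfrak t^{-rn/2}q^{-n})(1-\mathfrak t^{-rn/2}q^n)},
\]
which involves only integer powers of $t_i,q$ and half-integer powers of $\mathfrak t$; all three pieces are then unambiguous on $\TT_k$, and the ratio $(1-\mathfrak t^{-rn})/(1-\mathfrak t^{-n})$ is literally the polynomial $\sum_{j=0}^{r-1}\mathfrak t^{-jn}$, so no limit is needed. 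Your bracket computations give the same values but, as you note, require care: the expression $[\mathfrak t^{nr/2}(-1)^{rn}q^n]$ is not a bracket of a monomial in the representation ring of $\TT_{\mathfrak t}$, so ``direct expansion'' is only meaningful if one first simplifies $[\mathfrak t^{r/2}q][\mathfrak t^{r/2}q^{-1}]=\mathfrak t^{r/2}+\mathfrak t^{-r/2}-q-q^{-1}$ and \emph{then} substitutes $q\mapsto(-1)^{rn}q^n$ --- which indeed yields your formula $-(-1)^{rn}q^{-n}(q^n-(-1)^{(k-r)n})^2$. Second, the paper computes $\DT_r^{\KK}(\BA^3,(-1)^rq,t)\big|_{\TT_k}$ and then substitutes $q\to(-1)^rq$ at the end, whereas you carry the sign $(-1)^{rn}$ through the plethystic sum from the start; both are legitimate.
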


\begin{proof}
Assuming Conjecture \ref{conj: non dependence under some limits of elliptic}, by Remark \ref{rem: independence of p then equal to K theoretic} we just have to prove the result for K-theoretic invariants. By Theorem \ref{thm: K theoretic of points higher rank},
\begin{multline*}
    \DT_r^{\KK}(\BA^3,(-1)^r q,t) \\
    =\exp{\sum_{n\geq 1}\frac{1}{n}\frac{(1-t_1^{-n}t_2^{-n})(1-t_1^{-n}t_3^{-n})(1-t_2^{-n}t_3^{-n})}{(1-t_1^{-n})(1-t_2^{-n})(1-t_3^{-n})}  \frac{1-\mathfrak{t}^{-rn}}{1-\mathfrak{t}^{-n}}\frac{1}{(1-\mathfrak{t}^{-\frac{rn}{2}}q^{-n})(1-\mathfrak{t}^{-\frac{rn}{2}}q^{n})}}.
\end{multline*}
Assume now that $\mathfrak{t}^{\frac{1}{2}}=e^{\pi i \frac{k}{r}}$, with $k\in \mathbb{Z}$; we have clearly that $\mathfrak{t}^{-\frac{rn}{2}}=(-1)^{kn} $. Moreover, we have
\[
 \frac{1-\mathfrak{t}^{-rn}}{1-\mathfrak{t}^{-n}}= \begin{cases} r, & \mbox{if } n\in \frac{r}{\gcd(r,k)}\BZ \\ 0, & \mbox{if } n\notin \frac{r}{\gcd(r,k)}\BZ \end{cases}
\]
In particular, if $n\in \frac{r}{\gcd(r,k)}\BZ$, we have
\[
\frac{(1-t_1^{-n}t_2^{-n})(1-t_1^{-n}t_3^{-n})(1-t_2^{-n}t_3^{-n})}{(1-t_1^{-n})(1-t_2^{-n})(1-t_3^{-n})}  =-1
\]
Setting $n=\frac{r}{\gcd(r,k)} m$, with $m\in \BZ$, we have
\begin{align*}
 \DT_r^{\KK}(\BA^3,(-1)^r q,t)&=\exp{\sum_{m\geq 1}\frac{1}{m}\gcd(r,k)\cdot \frac{-1}{(1-\overline{q}^{-m})(1-\overline{q}^m)}}
\end{align*}
where to ease notation we have set $\overline{q}=((-1)^kq)^{\frac{r}{\gcd(r,k)}}$. We conclude by using the description of the MacMahon function as a plethystic exponential
\[
\DT_r^{\KK}(\BA^3,(-1)^r q,t)=\mathsf M\left((-1)^{kr}q^{\frac{r}{\gcd(r,k)}}\right)^{\gcd(r,k)}.\qedhere
\]
\end{proof}

\begin{remark}\label{rem: elliptic indepedence}
A key technical point in the proof of the conjecture proposed in \cite[Equation (3.20)]{BBPT} is the assumption of the independence of $\DT_{r,k}^{\rm ell}(\BA^3)$ on $p$, as in Conjecture \ref{conj: non dependence under some limits of elliptic}. We strongly believe it should be possible to prove this assumption by exploiting  modular properties of the generating series of elliptic DT invariants. One could proceed by considering the integral representation of the DT invariants  given in \cite{BBPT}.
The analysis of the K-thoretic case, which we carried out in the proof of Theorem \ref{thm:elliptic}, shows that no dependence whatsoever is present in the limit $\mathfrak t^{1/2}=e^{\pi ik/r}$. As elliptic DT invariants take the form of meromorphic Jacobi forms, given by quotients of theta functions, poles in the equivariant parameters are only given by shifts along the lattice $\BZ+\tau\BZ$ of the poles found in K-theoretic DT invariants. Then $\DT_{r,k}^{\rm ell}(\BA^3)$, as a function of each of the equivariant parameters $v_i$, $i=1,\dots,r$, and $s_j$, $j=1,2,3$, is holomorphic on the torus $\BC/\BZ+\tau\BZ$, so it also carries no dependence on them. This observation may be not very surprising, if one considers the striking resemblance of the chiral virtual elliptic genus to the usual level$-N$ elliptic genus of almost complex manifolds, which is known to be often rigid. In our case, each $q$-term in the elliptic generating series restricted to $\TT_k$ would be now invariant under modular transformations on $\tau$, hence a constant in $p=e^{2\pi i\tau}$.
\end{remark}
\subsection{Relation to string theory}\label{subsec:Witten}
The definition for the elliptic version of Donaldson--Thomas invariants is motivated by an argument due to Witten \cite{Dirac_index}, which goes as follows: let $M$ be a $2k$-dimensional spin manifold, and take $\mathscr LM=C^0(S^1,M)$ to be the free loop space on $M$. Then $\mathscr LM$ always carries a natural $S^1$-action, given by the rotation of loops, so that fixed points under this action of $S^1$ will only be constant maps $S^1\to M$, and $(\mathscr LM)^{S^1}\cong M\hookrightarrow\mathscr LM$. One can then study the Dirac operator on $\mathscr LM$ by formally computing its index using fixed point formulae. In particular, if $D\colon \Gamma(S_+)\to\Gamma(S_-)$ is the Dirac operator on $M$ then
\[
\Ind(D)=\dim\ker D-\dim\coker D,
\]
and whenever $M$ admits the action of a compact Lie group $G$ one can define the $G$-equivariant index of $D$ as the virtual character
\[
\Ind_G(D)(g)=\Tr_{\ker D}g-\Tr_{\coker D}g,\qquad g\in G,
\]
which only depends on the conjugacy class of $g$ in $G$. In the case of Dirac operators on loop spaces over spin manifolds, a formal computation yields
\begin{align*}
\Ind_{S^1}(D)(q)&=q^{-\frac{d}{24}}\hat A(M)\cdot\ch\left(\bigotimes_{n\ge 1}\Sym^\bullet_{q^n}T_M\right)\cap[M],
\end{align*}
where $q$ denotes a topological generator of $S^1$. The $\hat A$-genus is the characteristic class which computes the index of the Dirac complex on a spin manifold. In general, if $E$ is any rank $r$ complex vector bundle on $M$, one can define $\hat A(E)$ in terms of the Chern roots $x_i$ of $E$ as
\[
\hat A(E)=\prod_{i=1}^r\frac{x_i/2}{\sinh x_i/2},
\]
and $\hat A(M)=\hat A(TM)$, which is completely analogous to the K--theoretic invariants we have been studying so far.
The previous formula can also be classically interpreted as the index of a twisted Dirac operator over the spin structure of $M$. It is worth noticing that the DT partition functions coming from physics are indeed interpreted as being computing indices of twisted Dirac operators, where the twist by a vector bundle $V\to M$ makes sense only if $w_2(T_M)=w_2(V)$ so as to extend $D$ to an operator $D\colon \Gamma(S_+\otimes V)\to\Gamma(S_-\otimes V)$. In this same spirit one might also justify the definition of the half-BPS elliptic genus in terms of computations of Euler characteristics of loop spaces over (compact) almost complex manifolds. Let then $X$ be a $d$-dimensional almost complex manifold, with holomorphic tangent bundle $T_X$, and whose corresponding free loop space will be denoted by $\mathscr LX$. As it was the case also in the previous situation, $\mathscr LX$ is naturally equipped with an $S^1$ action, whose fixed point will be $(\mathscr LX)^{S^1}\cong X\hookrightarrow\mathscr LX$. By formally applying the virtual localisation formula to the computation of the Euler characteristic of $\mathscr LX$ one gets
\[
\chi_{S^1}(\mathscr LX)=q^{-\frac{d}{12}}\td(X)\cdot\ch\left(\bigotimes_{n\ge 1}\Sym_{q^n}^\bullet (T_X\oplus\Omega_X)\right)\cap[X],
\]
which can also be seen as the index of a twisted ${\rm Spin}^c$-Dirac operator $\overline\partial+\overline\partial^\ast$. Moreover, if $c_1(T_X)=0\mod 2$, $X$ is also spin, and it is possible to compute the index of the Dirac operator on $\mathscr LX$ as before.

%%%%%%%%%%%%%%%%%%%%%%%%%%%%%%%%%%%%%%%%%%%%%%%%%%%
%%%%%%%%%%%%%%%%%%%%%%%%%%%%%%%%%%%%%%%%%%%%%%%%%%%
\section{Higher rank DT invariants of compact toric 3-folds}\label{sec:compact_section}
Let $X$ be a smooth \emph{projective} toric $3$-fold, along with an exceptional locally free sheaf $F$ of rank $r$. By \cite[Thm.~A]{Virtual_Quot}, the Quot scheme $\Quot_X(F,n)$ has a $0$-dimensional perfect obstruction theory, so that the rank $r$ Donaldson--Thomas invariant
\[
\DT_{F,n} = \int_{[\Quot_X(F,n)]^{\vir}}1\,\in\,\BZ
\]
is well defined. In this section we confirm the formula
\[
\sum_{n\geq 0}\DT_{F,n} q^n = \mathsf M((-1)^rq)^{r\int_X c_3(T_X \otimes K_X)},
\]
suggested in \cite[Conj.~3.5]{Virtual_Quot}, in the case where $F$ is \emph{equivariant}. This will prove Theorem \ref{mainthm:projective_toric} from the Introduction. The next subsection is an interlude on how to induce a torus action on the Quot scheme and on the associated universal short exact sequence starting from an equivariant structure on $F$. More details are given in \cite{Equivariant_Atiyah_Class}, including a proof that the obstruction theory obtained in \cite[Thm.~A]{Virtual_Quot} is equivariant (but see Proposition \ref{pot_global_equivariant} for a sketch).

%%%%%%%%%%%%%%%%%%%%%%%%%%%%%%%%%%%%%%%%%%%%%%%%%%%
\subsection{Inducing a torus action on the Quot scheme}\label{sec:induced_T-action_quot}
Let $X$ be a quasiprojective toric variety with torus $\BT\subset X$. Let $\sigma_X\colon \BT \times X \to X$ denote the action. If $F$ is a $\BT$-equivariant coherent sheaf on $X$, and $\mathrm{Q} = \Quot_X(F,n)$, then $\sigma_X$ has a canonical lift
\[
\sigma_{\mathrm{Q}}\colon \BT \times \mathrm{Q}\to \mathrm{Q}.
\]
This is proved in \cite[Prop.~4.1]{Kool_Fixed_Point_Loci}, but we sketch here the argument for the sake of completeness\footnote{We thank Martijn Kool for guiding us through the details of this construction.}. Let $p_2\colon \BT \times X \to X$ be the second projection and let $\vartheta\colon p_2^\ast F \simto \sigma_X^\ast F$ denote the chosen $\BT$-equivariant structure on $F$. Let $\pi_X\colon X\times \mathrm{Q} \to \mathrm{Q}$ and $\pi_{\mathrm{Q}}\colon X\times \mathrm{Q}\to \mathrm{Q}$ be the projections, and set $F_Q = \pi_X^\ast F$. Consider the universal exact sequence
\begin{equation}\label{diag_uni}
\begin{tikzcd}
& \mathcal S \into F_{\mathrm{Q}} \overset{u}{\onto} \mathcal T\arrow[dash]{d} & \\ 
& X\times \mathrm{Q}\arrow[swap]{dl}{\pi_X}\arrow{dr}{\pi_{\mathrm{Q}}} & \\
X & & \mathrm{Q} 
\end{tikzcd}
\end{equation}
and note that there is a commutative diagram
\[
\begin{tikzcd}[column sep = large,row sep = large]
\BT\times X\times \mathrm{Q}\arrow[swap]{d}{p_{12}}\arrow{r}{\sigma_X\times\id_{\mathrm{Q}}} & X\times \mathrm{Q}\arrow{d}{\pi_X} \\
\BT\times X\arrow{r}{\sigma_X} & X
\end{tikzcd}
\]
yielding an identity $(\sigma_X\times \id_{\mathrm{Q}})^\ast F_{\mathrm{Q}} = p_{12}^{\ast}\sigma_X^\ast F$.
The induced surjection
\[
(\sigma_X\times \id_{\mathrm{Q}})^\ast u \circ p_{12}^\ast\vartheta\colon F_{\BT\times \mathrm{Q}} \simto p_{12}^\ast\sigma_X^\ast F \onto (\sigma_X\times \id_{\mathrm{Q}})^\ast \mathcal T
\]
defines a $\BT\times \mathrm{Q}$-valued point of $\mathrm{Q}$, i.e.~a morphism 
\[
\sigma_{\mathrm{Q}}\colon \BT \times \mathrm{Q} \to \mathrm{Q}.
\]
It is straightforward to verify that $\sigma_{\mathrm{Q}}$ satisfies the axioms for a $\BT$-action.

\smallbreak
Next, we explain how to make the universal exact sequence \eqref{diag_uni} $\BT$-equivariant. The universal property of the Quot scheme applied to $\sigma_{\mathrm{Q}}$ implies that there is an isomorphism of surjections
\[
(\id_X \times \sigma_{\mathrm{Q}})^\ast u \,\simto\,(\sigma_X\times \id_{\mathrm{Q}})^\ast u \circ p_{12}^\ast\vartheta.
\]
This means that there is a commutative diagram
\begin{equation}\label{eqn:univ_property_quot}
\begin{tikzcd}
(\id_X \times \sigma_{\mathrm{Q}})^\ast F_{\mathrm{Q}}\arrow[equal]{d}\arrow[two heads]{rrr}{(\id_X \times \sigma_{\mathrm{Q}})^\ast u} & & & (\id_X \times \sigma_{\mathrm{Q}})^\ast\mathcal T \isoarrow{d} \\
p_{12}^\ast p_2^\ast F\arrow{r}{p_{12}^\ast\vartheta} & p_{12}^\ast \sigma_X^\ast F\arrow[two heads]{rr}{(\sigma_X\times \id_{\mathrm{Q}})^\ast u} & &  (\sigma_X\times \id_{\mathrm{Q}})^\ast \mathcal T
\end{tikzcd}
\end{equation}
where we used the identity $(\sigma_X\times \id_{\mathrm{Q}})^\ast F_{\mathrm{Q}} = p_{12}^{\ast}\sigma_X^\ast F$ in the bottom row.

Consider the morphism
\[
\varphi \colon X\times \mathrm{Q} \times \BT \to X\times \mathrm{Q}, \quad (x,f,t) \mapsto (\sigma_X(t, x),\sigma_{\mathrm{Q}}(t^{-1},f)).
\]
We view this as a $\BT$-action on $X\times \mathrm{Q}$. Note that $\pi_X\circ \varphi = \sigma_X\circ p_{12}$. 
The moduli map $\mathrm{Q} \times \BT \to \mathrm{Q}$ corresponding to the family of quotients 
\[
\begin{tikzcd}
\varphi^*u\circ p_{12}^\ast\vartheta\colon F_{\mathrm{Q} \times \BT} = p_{12}^\ast p_2^\ast F \simto p_{12}^\ast \sigma_X^\ast F = \varphi^\ast F_{\mathrm{Q}} \arrow[two heads]{r}{\varphi^*u} & \varphi^\ast \mathcal T
\end{tikzcd}
\]
is easily seen to agree with the first projection $p_1\colon \mathrm{Q} \times \BT \to \mathrm{Q}$. Indeed, if $\mathcal T_f$ denotes the quotient of $F$ corresponding to a point $f = [F \onto \mathcal T_f] \in \mathrm{Q}$, it is immediate to see that
\[
\varphi^\ast \mathcal T\big|_{X\times f\times t} = \mathcal T_{\sigma_{\mathrm{Q}}(\sigma_{\mathrm{Q}}(t,f),t^{-1})} = \mathcal T_{\sigma_{\mathrm{Q}}(tt^{-1},f)} = \mathcal T_f
\]
for all $t \in \BT$.

Let then $q = \id_X \times p_1\colon X\times \mathrm{Q} \times \BT \to X\times \mathrm{Q}$ be the projection. Since $\varphi^*u\circ p_{12}^\ast\vartheta$ corresponds to the projection $p_1\colon \mathrm{Q} \times \BT \to \mathrm{Q}$, by the universal property of $(\mathrm{Q},u)$ we obtain an isomorphism of surjections $q^\ast u \simto \varphi^\ast u$ that, after setting $\mathcal S = \ker (u\colon F_{\mathrm{Q}}\onto \mathcal T)$,  we can extend to an isomorphism of short exact sequences
\[
\begin{tikzcd}
q^\ast \mathcal S \arrow[hook]{rr}\isoarrow{d} & & 
 F_{\mathrm{Q} \times \BT} \arrow[two heads]{rr}{q^\ast u}\isoarrow{d} & & q^\ast \mathcal T\isoarrow{d} \\
\varphi^\ast \mathcal S \arrow[hook]{rr} & &  \varphi^\ast F_{\mathrm Q}\arrow[two heads]{rr}{\varphi^*u} & & \varphi^\ast \mathcal T 
\end{tikzcd}
\]
on $X\times \mathrm{Q}\times \mathbb T$, where the middle vertical isomorphism is $p_{12}^\ast\vartheta$ and is a $\BT$-equivariant structure on $F_{\mathrm{Q}}$ because $\vartheta$ is. The diagram allows us to conclude that a $\BT$-equivariant structure on $F$ induces a canonical $\BT$-equivariant structure on the universal short exact sequence
\[
0 \to \mathcal S \to F_{\mathrm{Q}} \to \mathcal T \to 0.
\]

%%%%%%%%%%%%%%%%%%%%%%%%%%%%%%%%%%%%%%%%%%%%%%%%%%%
\subsection{The (equivariant) obstruction theory}
Throughout this subsection, $F$ denotes an exceptional locally free sheaf of rank $r$ on a smooth projective toric $3$-fold $X$. In other words, $F$ is simple, i.e.~$\Hom(F,F) = \BC$, and $\Ext^i(F,F)=0$ for $i>0$. 

By \cite[Thm.~A]{Virtual_Quot}, there is a $0$-dimensional perfect obstruction theory 
\begin{equation}\label{global_pot}
\BE \to \BL_{\Quot_X(F,n)},
\end{equation}
governed by 
\[
    \Def\big|_{[S]} = \Ext^1(S,S),\quad
    \Obs\big|_{[S]} =\Ext^2(S,S)
\]
around a point $[S] \in \Quot_X(F,n)$. We set $\mathrm{Q} = \Quot_X(F,n)$ for brevity, and we denote by $\pi_{\mathrm{Q}}$ and $\pi_X$ the projections from $X\times \mathrm Q$, as in \eqref{diag_uni}. Note that $\omega_{\pi_{\mathrm{Q}}} = \pi_X^\ast \omega_X$.

As we recall during the (sketch of) proof of Proposition \ref{pot_global_equivariant} below, we have
\[
\BE = \RR\pi_{\mathrm Q\,\ast}(\RRlHom(\mathcal S,\mathcal S)_0\otimes \omega_{\pi_{\mathrm Q}})[2]
\]
where $\RRlHom(\mathcal S,\mathcal S)_0$ is the shifted cone of the trace map $\tr\colon\RRlHom(\mathcal S,\mathcal S) \to \OO_{X\times \mathrm Q}$. 
\begin{prop}\label{prop:global pot K theory}
There is an identity
\[
 \BE^\vee=\mathbf{R}\pi_{\mathrm Q\,\ast}\RRlHom(F_{\mathrm Q},F_{\mathrm Q})-\mathbf{R}\pi_{\mathrm Q\,\ast}\RRlHom\mathcal (\mathcal S,\mathcal S)\in K_0(\Quot_X(F,n)).
\]
\end{prop}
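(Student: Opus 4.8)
The plan is to obtain the identity by dualizing the description of the obstruction theory recalled above, namely $\BE = \RR\pi_{\mathrm Q\,\ast}\bigl(\RRlHom(\mathcal S,\mathcal S)_0\otimes\omega_{\pi_{\mathrm Q}}\bigr)[2]$, and then re-expressing the trace-free factor in terms of $\RRlHom(F_{\mathrm Q},F_{\mathrm Q})$ and $\RRlHom(\mathcal S,\mathcal S)$. First I would record the two inputs from \cite{Virtual_Quot} that are needed: that $\mathcal S$ (the kernel of the surjection from the locally free sheaf $F_{\mathrm Q}$ onto the $\mathrm Q$-flat, fibrewise finite-length sheaf $\mathcal T$) is a perfect complex on $X\times\mathrm Q$, so that $\RRlHom(\mathcal S,\mathcal S)=\mathcal S^\vee\otimes\mathcal S$ is perfect with $\RRlHom(\mathcal S,\mathcal S)^{\vee}\cong\RRlHom(\mathcal S,\mathcal S)$; and that, since we work in characteristic $0$ and $\mathcal S$ has rank $r$, the trace triangle $\RRlHom(\mathcal S,\mathcal S)_0\to\RRlHom(\mathcal S,\mathcal S)\xrightarrow{\ \tr\ }\OO_{X\times\mathrm Q}$ splits. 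In particular $\RRlHom(\mathcal S,\mathcal S)_0$ is perfect and $[\RRlHom(\mathcal S,\mathcal S)_0^{\vee}]=[\RRlHom(\mathcal S,\mathcal S)_0]$ in $K^0(X\times\mathrm Q)$.

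Next I would apply Grothendieck--Verdier duality along $\pi_{\mathrm Q}\colon X\times\mathrm Q\to\mathrm Q$, which is proper and smooth of relative dimension $3$ with $\omega_{\pi_{\mathrm Q}}=\pi_X^{\ast}\omega_X$ and $\pi_{\mathrm Q}^{!}\OO_{\mathrm Q}=\omega_{\pi_{\mathrm Q}}[3]$: for any perfect complex $\mathcal A$ on $X\times\mathrm Q$ one has a canonical isomorphism $(\RR\pi_{\mathrm Q\,\ast}\mathcal A)^{\vee}\simeq\RR\pi_{\mathrm Q\,\ast}(\mathcal A^{\vee}\otimes\omega_{\pi_{\mathrm Q}})[3]$. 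Taking $\mathcal A=\RRlHom(\mathcal S,\mathcal S)_0^{\vee}$, so that $\mathcal A^{\vee}=\RRlHom(\mathcal S,\mathcal S)_0$, the right-hand side is $\RR\pi_{\mathrm Q\,\ast}(\RRlHom(\mathcal S,\mathcal S)_0\otimes\omega_{\pi_{\mathrm Q}})[3]=\BE[1]$. Hence $\RR\pi_{\mathrm Q\,\ast}\RRlHom(\mathcal S,\mathcal S)_0^{\vee}=\BE^{\vee}[-1]$, so that
\[
\BE^{\vee}=\RR\pi_{\mathrm Q\,\ast}\RRlHom(\mathcal S,\mathcal S)_0^{\vee}[1]=-\,\RR\pi_{\mathrm Q\,\ast}\RRlHom(\mathcal S,\mathcal S)_0\quad\text{in }K_0(\mathrm Q),
\]
where the last step uses the equality of $K^0$-classes of $\RRlHom(\mathcal S,\mathcal S)_0$ and its dual together with additivity of $\RR\pi_{\mathrm Q\,\ast}$ on $K$-classes.

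To finish, I would push the trace triangle forward along $\pi_{\mathrm Q}$ and read it in $K_0(\mathrm Q)$, giving $\RR\pi_{\mathrm Q\,\ast}\RRlHom(\mathcal S,\mathcal S)_0=\RR\pi_{\mathrm Q\,\ast}\RRlHom(\mathcal S,\mathcal S)-\RR\pi_{\mathrm Q\,\ast}\OO_{X\times\mathrm Q}$, and then identify the last term with $\RR\pi_{\mathrm Q\,\ast}\RRlHom(F_{\mathrm Q},F_{\mathrm Q})$. Since $F_{\mathrm Q}=\pi_X^{\ast}F$ is locally free one has $\RRlHom(F_{\mathrm Q},F_{\mathrm Q})=\pi_X^{\ast}\RRlHom(F,F)$, and since $\pi_{\mathrm Q}$ is the base change of $X\to\Spec\BC$ along $\mathrm Q\to\Spec\BC$, flat base change gives $\RR\pi_{\mathrm Q\,\ast}\RRlHom(F_{\mathrm Q},F_{\mathrm Q})=\RHom_X(F,F)\otimes_{\BC}\OO_{\mathrm Q}$ and $\RR\pi_{\mathrm Q\,\ast}\OO_{X\times\mathrm Q}=\RR\Gamma(X,\OO_X)\otimes_{\BC}\OO_{\mathrm Q}$; as $F$ is exceptional $\RHom_X(F,F)=\BC$, and as $X$ is smooth projective toric $\RR\Gamma(X,\OO_X)=\BC$ (here $H^{>0}(X,\OO_X)=0$), so both pushforwards are $\OO_{\mathrm Q}$ and in particular have the same class in $K_0(\mathrm Q)$. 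Combining these three relations yields $\BE^{\vee}=\RR\pi_{\mathrm Q\,\ast}\RRlHom(F_{\mathrm Q},F_{\mathrm Q})-\RR\pi_{\mathrm Q\,\ast}\RRlHom(\mathcal S,\mathcal S)$ in $K_0(\mathrm Q)$, as claimed.

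The only genuinely delicate point I anticipate is the shift bookkeeping in the duality step: it is precisely the $[1]$ appearing there that turns the minus sign of $-\RR\pi_{\mathrm Q\,\ast}\RRlHom(\mathcal S,\mathcal S)_0$ into the stated expression, with $\RRlHom(F_{\mathrm Q},F_{\mathrm Q})$ entering with a plus sign and $\RRlHom(\mathcal S,\mathcal S)$ with a minus sign. Everything else — the perfectness of $\mathcal S$, the splitting of the trace, and the two $\RR\Gamma$ computations — is either elementary or already available from \cite{Virtual_Quot}, so no further obstacle should arise.
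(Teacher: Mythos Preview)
Your proof is correct and follows essentially the same approach as the paper: dualise the description of $\BE$ via Grothendieck duality (the paper cites this as the identity $\BE=(\RR\pi_{\mathrm Q\,\ast}\RRlHom(\mathcal S,\mathcal S)_0)^\vee[-1]$ from \cite[Thm.~2.5]{Virtual_Quot}), split off the trace-free part, and replace $\RR\pi_{\mathrm Q\,\ast}\OO_{X\times\mathrm Q}$ with $\RR\pi_{\mathrm Q\,\ast}\RRlHom(F_{\mathrm Q},F_{\mathrm Q})$. Your argument is in fact slightly more explicit than the paper's, spelling out the flat base change and the use of $H^{>0}(X,\OO_X)=0$ alongside exceptionality of $F$.
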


\begin{proof}
As in the proof of \cite[Theorem 2.5]{Virtual_Quot} we have
\begin{align*}
    \BE\,&=\,  (\RR\pi_{\mathrm Q\,\ast}\RRlHom(\mathcal S,\mathcal S)_0)^\vee[-1]\\
   \,&=\, (\RR\pi_{\mathrm Q\,\ast}\RRlHom(\mathcal S,\mathcal S))^\vee[-1]- (\RR\pi_{\mathrm Q\,\ast}\RRlHom(\OO,\OO))^\vee[-1]\\
     \,&=\,( \RR\pi_{\mathrm Q\,\ast}\RRlHom(\mathcal S,\mathcal S))^\vee[-1]- (\RR\pi_{\mathrm Q\,\ast}\RRlHom(F_{\mathrm Q},F_{\mathrm Q}))^\vee[-1],
\end{align*}
where the last identity uses that $F$ is an exceptional sheaf.
\end{proof}
The following result is proved in \cite[Thm.~B]{Equivariant_Atiyah_Class} in greater generality, but we sketch a proof here for the reader's convenience. Denote by $\BT = (\BC^\ast)^3$ the torus of $X$.

\begin{prop}\label{pot_global_equivariant}
Let $(X,F)$ be a pair consisting of a smooth projective toric $3$-fold $X$ along with an exceptional locally free $\BT$-equivariant sheaf $F$. Then the perfect obstruction theory \eqref{global_pot} on $\Quot_X(F,n)$ is $\BT$-equivariant.
\end{prop}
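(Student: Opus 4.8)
The plan is to run the same strategy used for the local model in \S\,\ref{sec: quot scheme local model}: exhibit the obstruction theory \eqref{global_pot} as arising functorially from the universal short exact sequence on $X\times\mathrm Q$, and then invoke the equivariance of that sequence established in \S\,\ref{sec:induced_T-action_quot}. First I would recall from the proof of Proposition \ref{prop:global pot K theory} (i.e.~from \cite[Thm.~2.5]{Virtual_Quot}) that $\BE$ is the shifted, relative-dualised pushforward of the traceless $\RRlHom$-complex,
\[
\BE = \RR\pi_{\mathrm Q\,\ast}\bigl(\RRlHom(\mathcal S,\mathcal S)_0\otimes\omega_{\pi_{\mathrm Q}}\bigr)[2],
\qquad \omega_{\pi_{\mathrm Q}}=\pi_X^\ast\omega_X,
\]
together with its map to $\BL_{\mathrm Q}$, which is constructed from the Atiyah class of $\mathcal S$ (or of the quotient $\mathcal T$, equivalently $\mathcal S$ via the universal sequence). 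So the whole obstruction theory is built out of: the universal sheaves $\mathcal S$, $F_{\mathrm Q}=\pi_X^\ast F$, $\mathcal T$; the trace map; the relative dualising sheaf $\omega_{\pi_{\mathrm Q}}$; the pushforward $\RR\pi_{\mathrm Q\,\ast}$; and the Atiyah class. The plan is to check that each of these ingredients is $\BT$-equivariant, so that the composite lives in $\derived^{[-1,0]}(\Coh^{\BT}_{\mathrm Q})$ and refines the given $\phi$.

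Concretely the steps, in order: (1) By \S\,\ref{sec:induced_T-action_quot}, the $\BT$-equivariant structure on $F$ induces one on $\mathrm Q$, on the action $\varphi$ on $X\times\mathrm Q$ (note $\pi_X\circ\varphi=\sigma_X\circ p_{12}$, $\pi_{\mathrm Q}\circ\varphi=\pi_{\mathrm Q}\circ p_{12}$ after the sign twist), and on the universal sequence $0\to\mathcal S\to F_{\mathrm Q}\to\mathcal T\to 0$; hence $\mathcal S$, $\mathcal T$, $F_{\mathrm Q}$ are canonically $\BT$-equivariant sheaves on $X\times\mathrm Q$. (2) The complex $\RRlHom(\mathcal S,\mathcal S)$ and the trace map $\tr\colon\RRlHom(\mathcal S,\mathcal S)\to\OO_{X\times\mathrm Q}$ are canonical, hence $\BT$-equivariant; its cone $\RRlHom(\mathcal S,\mathcal S)_0$ inherits an equivariant structure. (3) The relative dualising complex $\omega_{\pi_{\mathrm Q}}=\pi_X^\ast\omega_X$ is $\BT$-equivariant because $\omega_X$ is (on a smooth toric $X$, $\omega_X$ carries a canonical equivariant structure, pulled back along the equivariant $\pi_X$). (4) The derived pushforward $\RR\pi_{\mathrm Q\,\ast}$ of a $\BT$-equivariant complex along the $\BT$-equivariant, proper morphism $\pi_{\mathrm Q}$ is $\BT$-equivariant, so $\BE$ is a $\BT$-equivariant perfect complex on $\mathrm Q$. (5) The truncated cotangent complex $\BL_{\mathrm Q}$ is canonically $\BT$-equivariant since the $\BT$-action on $\mathrm Q$ is algebraic. (6) Finally the Atiyah-class construction of $\phi\colon\BE\to\BL_{\mathrm Q}$ is functorial with respect to the $\BT$-action: the universal Atiyah class $\At(\mathcal S)\in\Ext^1(\mathcal S,\mathcal S\otimes\BL_{X\times\mathrm Q})$ is canonical, hence $\BT$-equivariant, and the standard manipulation (Künneth, trace, relative duality) producing $\phi$ preserves equivariant structures; this is exactly what is carried out in detail in \cite[Thm.~B]{Equivariant_Atiyah_Class}. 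Therefore $\phi$ lifts to $\derived^{[-1,0]}(\Coh^{\BT}_{\mathrm Q})$, which is the assertion.

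The step I expect to be the genuine content — and the one \cite{Equivariant_Atiyah_Class} is cited for — is (6): showing that the morphism $\phi$ itself, not merely the source and target, is $\BT$-equivariant. Equivariance of objects is essentially formal once \S\,\ref{sec:induced_T-action_quot} is in place, but equivariance of the Atiyah-class-induced map requires checking that the relative Atiyah class is compatible with the equivariant structures on $\mathcal S$ and on $\BL_{X\times\mathrm Q}$, and that the reduction to the traceless part and the relative Serre duality isomorphism used to identify $(\RR\pi_{\mathrm Q\,\ast}\RRlHom(\mathcal S,\mathcal S)_0)^\vee[-1]$ with the pushforward of $\RRlHom(\mathcal S,\mathcal S)_0\otimes\omega_{\pi_{\mathrm Q}}[2]$ are equivariant isomorphisms. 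Since a complete treatment of equivariant Atiyah classes is the subject of \cite{Equivariant_Atiyah_Class}, I would only sketch this point here, noting that all the maps involved are the canonical ones and hence automatically respect any equivariant lift, and refer to \cite[Thm.~B]{Equivariant_Atiyah_Class} for the full argument. The remaining potential subtlety — that one must pass to the traceless complex so that $h^0$ of the obstruction theory matches the tangent sheaf, which uses $\Ext^0(F,F)=\BC$ and $\Ext^{>0}(F,F)=0$, i.e.~that $F$ is exceptional — is already built into the construction recalled in Proposition \ref{prop:global pot K theory} and poses no new difficulty in the equivariant setting, since the splitting off of the trace part is canonical.
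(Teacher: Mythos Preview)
Your proposal is correct and follows essentially the same approach as the paper: both reduce to the equivariance of the universal short exact sequence from \S\,\ref{sec:induced_T-action_quot} and then argue that the Atiyah-class construction of $\phi$ (including the Grothendieck duality step) respects the equivariant structures, deferring the technical verification to \cite{Equivariant_Atiyah_Class}. The paper's sketch phrases the crux slightly differently, isolating three ingredients --- that $\At_{\mathcal S}$ is a $\BT$-invariant extension, that Grothendieck duality preserves invariant extensions, and that invariant extensions correspond to morphisms in the equivariant derived category --- but this is exactly the content of your step~(6).
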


The definition of equivariant obstruction theory was recalled in Definition \ref{def:equivariant_pot}.

\begin{proof}
As we explained in \S\,\ref{sec:induced_T-action_quot}, by equivariance of $F$ the Quot scheme $\mathrm Q = \Quot_X(F,n)$ inherits a canonical $\BT$-action with respect to which the universal short exact sequence $\mathcal S \into F_{\mathrm{Q}} \onto \mathcal T$ can be made $\BT$-equivariant. On the other hand, the perfect obstruction theory \eqref{global_pot} is obtained by projecting the truncated Atiyah class
\[
\At_{\mathcal S} \,\in\, \Ext^1_{X \times \mathrm Q}(\mathcal S,\mathcal S\otimes \BL_{X \times \mathrm Q})
\]
onto the Ext group
\begin{equation}\label{Atiyah_Journey}
\begin{split}
    \Ext^1_{X \times \mathrm Q}(\RRlHom(\mathcal S,\mathcal S)_0,\pi_{\mathrm Q}^\ast \BL_{\mathrm Q}) \,\,&=\,\, \Ext^{-2}_{X\times \mathrm Q}(\RRlHom(\mathcal S,\mathcal S)_0 \otimes \omega_{\pi_{\mathrm Q}},\pi_{\mathrm Q}^\ast \BL_{\mathrm Q} \otimes \omega_{\pi_{\mathrm Q}}[3]) \\
    \,\,&=\,\, \Ext^{-2}_{X\times \mathrm Q}(\RRlHom(\mathcal S,\mathcal S)_0 \otimes \omega_{\pi_{\mathrm Q}},\pi_{\mathrm Q}^! \BL_{\mathrm Q}) \\
    \,\,&\cong \,\,\Ext^{-2}_{\mathrm Q}(\RR\pi_{\mathrm Q\,\ast}(\RRlHom(\mathcal S,\mathcal S)_0 \otimes \omega_{\pi_{\mathrm Q}}),\BL_{\mathrm Q}).
\end{split}
\end{equation}
The last isomorphism is Grothendieck duality along the smooth projective morphism $\pi_{\mathrm Q}$. Now we need three ingredients to finish the proof:

\begin{itemize}
    \item [$\circ$] The Atiyah class $\At_{\mathcal S}$ is a $\BT$-invariant extension,
    \item [$\circ$] Grothendieck duality preserves $\BT$-invariant extensions, and
    \item [$\circ$] $\BT$-invariant extensions correspond to morphisms in the equivariant derived category.
\end{itemize}
These assertions are proved in \cite{Equivariant_Atiyah_Class}.
\end{proof}

We let $\Delta(X)$ denote the set of vertices in the Newton polytope of the toric $3$-fold $X$. Then
\[
X^{\BT} = \Set{p_\alpha | \alpha \in \Delta(X)} \subset X
\]
will denote the fixed locus of $X$.
For a given vertex $\alpha$, let $U_\alpha \cong \BA^3$ be the canonical chart containing the fixed point $p_\alpha$. The $\BT$-action on this chart can be taken to be the standard action \eqref{standard_action}.
For every $\alpha$, there is a $\BT$-equivariant open immersion
\[
\iota_{n,\alpha}\colon \Quot_{U_\alpha}(F|_{U_\alpha},n)\into \mathrm Q = \Quot_X(F,n)
\]
parametrising quotients whose support is contained in $U_\alpha$. We think of $F|_{U_\alpha}$ as an equivariant sheaf on $\BA^3$, hence of the form described in \eqref{eqn:sheaf_with_equiv_weights}. We denote by $\BE_{n,\alpha}^{\crit}$ the critical obstruction theory on $\Quot_{U_\alpha}(F|_{U_\alpha},n)$ from Proposition \ref{prop:SPOT_A^3}.

It is natural to ask whether the restriction of the global perfect obstruction theory \eqref{global_pot} along $\iota_{n,\alpha}$ agrees with the critical symmetric perfect obstruction theory described in \S\,\ref{sec:critical_structure_Quot} (see Conjecture \ref{conj:pot_restricted}). However, what we really need is the following weaker result.

\begin{prop}\label{restriction of class in K theory}
Let $\BE \in K_0(\mathrm{Q})$ be the class of the global perfect obstruction theory \eqref{global_pot}. Then
\[
\BE_{n,\alpha}^{\crit}\,=\,\iota_{n,\alpha}^\ast \BE\,\in\,K_0(\Quot_{U_\alpha}(F|_{U_\alpha},n)).
\]
Considering the two obstruction theories as $\BT$-equivariant, the same identity holds in equivariant K-theory:
\[
\BE_{n,\alpha}^{\crit}\,=\,\iota_{n,\alpha}^\ast \BE\,\in\,K_0^{\BT}(\Quot_{U_\alpha}(F|_{U_\alpha},n)).
\]
\end{prop}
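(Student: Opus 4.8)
The plan is to compute both K-theory classes explicitly and match them term by term, using the local description of the universal sheaves on each toric chart together with the exceptionality of $F$.

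First I would reduce to an $\Ext$-computation on the chart. By Proposition \ref{prop:global pot K theory}, the global virtual tangent bundle is
\[
\BE^\vee = \mathbf{R}\pi_{\mathrm Q\,\ast}\RRlHom(F_{\mathrm Q},F_{\mathrm Q}) - \mathbf{R}\pi_{\mathrm Q\,\ast}\RRlHom(\mathcal S,\mathcal S)\,\in\, K_0(\mathrm Q),
\]
and since $\iota_{n,\alpha}$ is an open immersion, base change (which is trivial for open immersions) gives
\[
\iota_{n,\alpha}^\ast \BE^\vee = \mathbf{R}\pi_{\mathrm Q_\alpha\,\ast}\RRlHom(F_{\mathrm Q_\alpha},F_{\mathrm Q_\alpha}) - \mathbf{R}\pi_{\mathrm Q_\alpha\,\ast}\RRlHom(\mathcal S_\alpha,\mathcal S_\alpha),
\]
where $\mathrm Q_\alpha = \Quot_{U_\alpha}(F|_{U_\alpha},n)$ and $\mathcal S_\alpha$, $F_{\mathrm Q_\alpha}$ are the restrictions of the universal subsheaf and of $\pi_X^\ast F$. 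This already identifies $\iota_{n,\alpha}^\ast\BE^\vee$ with the class $\chi(F|_{U_\alpha},F|_{U_\alpha}) - \chi(S,S)$ appearing in the local computation of \S\,\ref{sec: variation vertex}, once one knows these two derived pushforwards compute the relevant $\Ext$-groups fibrewise; on $U_\alpha\cong\BA^3$ the higher pushforwards have no cohomology in the wrong degrees because everything is affine over $\mathrm Q_\alpha$ after restricting support, so the class is the alternating sum of the $\Ext^i$'s, i.e.~exactly $T^{\vir}_{S,\lambda}$ as computed in \S\,\ref{sec: variation vertex}. On the other side, the critical obstruction theory $\BE^{\crit}_{n,\alpha}$ has, by Equation \eqref{eqn:virtual_tg_quot} and the identification in Proposition \ref{prop:character_virtual_tangent}, the very same K-theory class $\chi(F|_{U_\alpha},F|_{U_\alpha}) - \chi(\mathcal S_\alpha,\mathcal S_\alpha)$. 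Comparing the two yields the non-equivariant identity.

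For the equivariant refinement I would run the same argument $\BT$-equivariantly: by Proposition \ref{pot_global_equivariant} the global obstruction theory is $\BT$-equivariant, the $\BT$-equivariant structure on the universal sequence restricts to the one on $\mathrm Q_\alpha$ described in \S\,\ref{sec:induced_T-action_quot}, and $F|_{U_\alpha}$ carries the equivariant structure \eqref{eqn:sheaf_with_equiv_weights} with weights $\lambda = (\lambda_i)$ read off from the $\BT$-weights of $F$ at $p_\alpha$. Since $\mathrm R\pi_{\mathrm Q_\alpha\,\ast}\RRlHom(-,-)$ is a functor of equivariant complexes, the displayed identity upgrades verbatim to $K_0^{\BT}(\mathrm Q_\alpha)$; the point is just that Lemma \ref{lemma:T_equivariance_of_POT} already shows $\BE^{\crit}_{n,\alpha}$ has equivariant K-class $T_{\NCQuot}-\Omega_{\NCQuot}\otimes\mathfrak t^{-1}$, which by the computation in the proof of Proposition \ref{prop:Tangent^vir} (done with $w_i$ replaced by $\lambda_iw_i$, as in \S\,\ref{sec: variation vertex}) equals $\chi(F|_{U_\alpha},F|_{U_\alpha})-\chi(\mathcal S_\alpha,\mathcal S_\alpha)$ in $K_0^{\BT}$.

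The main obstacle I anticipate is the bookkeeping in the equivariant base-change/duality step: one must check that restricting the global $\BE$ (built via the truncated Atiyah class and Grothendieck duality along $\pi_{\mathrm Q}$, as in \eqref{Atiyah_Journey}) to the open chart genuinely produces the \emph{same} equivariant complex as the one built directly on $U_\alpha\times\mathrm Q_\alpha$ — i.e.~that Grothendieck duality commutes with the open restriction compatibly with the $\BT$-linearizations, and that $\omega_{\pi_{\mathrm Q}}$ restricts to the relative dualizing sheaf of $U_\alpha\times\mathrm Q_\alpha\to\mathrm Q_\alpha$ with its natural weight $\mathfrak t$. At the level of K-theory classes, however, all of this collapses: the class only sees $\mathbf R\pi_{\mathrm Q\,\ast}\RRlHom(\mathcal S,\mathcal S)$ and $\mathbf R\pi_{\mathrm Q\,\ast}\RRlHom(F_{\mathrm Q},F_{\mathrm Q})$, so it suffices to invoke flat (here: open-immersion) base change for these pushforwards, which is immediate, and then quote Proposition \ref{prop:character_virtual_tangent} and \S\,\ref{sec: variation vertex}. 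So the proof is short once the statement is correctly reduced to a K-theory identity; I would not attempt the finer sheaf-level comparison, which is the content of Conjecture \ref{conj:pot_restricted} and is explicitly not needed here.
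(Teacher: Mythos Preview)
Your approach is different from the paper's, and while the underlying idea is sound, there is a genuine gap in the execution.

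The paper does not compute both classes as $\chi(F,F)-\chi(\mathcal S,\mathcal S)$ and match them. Instead, for the non-equivariant statement it simply observes that $U_\alpha$ is Calabi--Yau, so the restricted obstruction theory $\iota_{n,\alpha}^\ast\BE$ is again \emph{symmetric} (this is \cite[Prop.~2.9]{Virtual_Quot}); then Remark~\ref{rmk:K-class_of_symmetric_pot} forces any symmetric obstruction theory to have K-class $\Omega_{\mathrm Q_\alpha}-T_{\mathrm Q_\alpha}$, and the same holds for $\BE^{\crit}_{n,\alpha}$. For the equivariant statement the paper proves the \emph{equivariant} self-duality $\mathfrak t^{-1}\otimes(\iota_{n,\alpha}^\ast\BE)\cong(\iota_{n,\alpha}^\ast\BE)^\vee[1]$ directly, via Grothendieck duality on $U_\alpha\times\mathrm Q_\alpha$ together with the equivariant identification $\omega_\pi\cong\OO\otimes\mathfrak t$; taking cohomology sheaves then forces $\iota_{n,\alpha}^\ast\BE=\Omega-\mathfrak t\otimes T$, which is exactly the equivariant class \eqref{fucking_E_cr} of $\BE^{\crit}_{n,\alpha}$.

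The gap in your argument is the step where you invoke Proposition~\ref{prop:character_virtual_tangent} and Proposition~\ref{prop:Tangent^vir} to identify $(\BE^{\crit}_{n,\alpha})^\vee$ with $\chi(F|_{U_\alpha},F|_{U_\alpha})-\chi(\mathcal S_\alpha,\mathcal S_\alpha)$ in $K_0^{\BT}(\mathrm Q_\alpha)$. Those propositions are \emph{pointwise} identities in $K_0^{\TT}(\pt)$, valid at $\TT$-fixed points; they do not give you an identity of classes over all of $\mathrm Q_\alpha$. To promote them to the global statement you need, you would have to globalise the quiver-theoretic formula \eqref{eqn:tangents_NCQUOT} for $T_{\NCQuot^n_r}$ into an identity involving the universal quotient sheaf on $U_\alpha\times\mathrm Q_\alpha$, and check it is $\BT$-equivariantly correct. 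This is plausible and standard for quiver moduli, but it is not in the paper and you have not supplied it. A secondary (fixable) imprecision is that after base change along $\iota_{n,\alpha}$ you still have pushforwards from $X\times\mathrm Q_\alpha$, not from $U_\alpha\times\mathrm Q_\alpha$; individually neither $\mathbf R\pi_\ast\RRlHom(F_{\mathrm Q},F_{\mathrm Q})$ nor $\mathbf R\pi_\ast\RRlHom(\mathcal S,\mathcal S)$ can be replaced by its $U_\alpha$-counterpart, only their difference can (via the short exact sequence, since $\mathcal T$ is supported in $U_\alpha$). The paper's route via equivariant self-duality sidesteps both issues entirely.
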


\begin{proof}
The chart $U_\alpha$ is Calabi--Yau, so by \cite[Prop.~2.9]{Virtual_Quot} the induced perfect obstruction theory $\iota_{n,\alpha}^\ast \BE$ is symmetric. Since by Remark \ref{rmk:K-class_of_symmetric_pot} all symmetric perfect obstruction theories share the same class in K-theory, the first statement follows.

To prove the $\BT$-equivariant equality, we need a slightly more refined analysis. Just for this proof, let us shorten
\[
\BE_{\crr} = \BE_{n,\alpha}^{\crit}\quad\textrm{and}\quad \BE = \iota_{n,\alpha}^\ast \BE,
\]
to ease notation. We know by Diagram \eqref{equivariant_symmetric_POT_quot} that we can write
\begin{equation}\label{fucking_E_cr}
\BE_{\crr} = \big[\mathfrak t\otimes T_{\NCQuot_{r}^n}\big|_{\mathrm{Q}} \to \Omega_{\NCQuot_{r}^n}\big|_{\mathrm{Q}}\big]=\Omega - \mathfrak t\otimes T\,\in\,K_0^{\BT}(\Quot_{U_\alpha}(F|_{U_\alpha},n)),
\end{equation}
where $\Omega$ (resp.~$T$) denotes the cotangent sheaf (resp.~the tangent sheaf) of $\Quot_{U_\alpha}(F|_{U_\alpha},n)$, equipped with its natural equivariant structure. 

Let $\pi\colon U_\alpha \times \Quot_{U_\alpha}(F|_{U_\alpha},n) \to \Quot_{U_\alpha}(F|_{U_\alpha},n)$ be the projection, let $S$ be the universal kernel living on the product and set $\mathfrak{t}_{\pi} = \pi^\ast \mathfrak{t}^{-1}$.
By definition,
\[
\BE = \RR \pi_\ast (\RRlHom(S,S)_0\otimes \omega_\pi)[2].
\]
The equivariant isomorphism $\omega_\pi \simto \OO \otimes \mathfrak{t}_{\pi}^{-1}$ along with the projection formula yield
\begin{equation}\label{eqn:equivariant_POT1837}
\mathfrak{t}^{-1} \otimes \BE \simto \RR \pi_\ast \RRlHom(S,S)_0[2].
\end{equation}
We next show the right hand side is canonically isomorphic to $\BE^\vee[1]$. We have
\begin{align*}
  \BE^\vee[1] &= \RRlHom(\RR \pi_\ast (\RRlHom(S,S)_0 \otimes \omega_\pi),\OO)[-1] & \small{\textrm{definition of }(-)^\vee}\\
  &= \RR\pi_\ast \RRlHom(\RRlHom(S,S)_0 \otimes \omega_\pi,\omega_{\pi}[3])[-1] & \small{\textrm{Grothendieck duality}} \\
  &= \RR\pi_\ast \RRlHom(\RRlHom(S,S)_0,\OO)[2] & \small{\textrm{shift}}\\
  &= \RR\pi_\ast\RRlHom(S,S)_0^\vee [2] & \small{\textrm{definition of }(-)^\vee} \\
  &=\RR\pi_\ast\RRlHom(S,S)_0 [2] & \small{\RRlHom(S,S)_0\textrm{ is self-dual}}
\end{align*}
in the derived category of $\BT$-equivariant coherent sheaves on $\Quot_{U_\alpha}(F|_{U_\alpha},n)$, 
which by \eqref{eqn:equivariant_POT1837} proves that
\[
\mathfrak{t}^{-1} \otimes \BE \cong  \BE^\vee[1].
\]
We thus have 
\[
\Omega \cong h^0(\BE) \cong \mathfrak t \otimes h^0(\BE^\vee[1]) \cong \mathfrak t \otimes \lExt^2_\pi(S,S),
\]
where we use the standard notation $\lExt^i_\pi(-,-)$ for the $i$th derived functor of $\pi_\ast\circ \lHom(-,-)$. We conclude
\begin{align*}
 \BE 
 &= h^0(\BE) - h^{-1}(\BE) \\
 &= \Omega - h^1(\BE^\vee)^\vee \\
 &= \Omega - h^0(\BE^\vee[1])^\vee \\
 &= \Omega - \lExt^2_\pi(S,S)^\vee \\
 &= \Omega - (\mathfrak{t}^{-1}\otimes \Omega)^\vee \\
 &= \Omega - \mathfrak t \otimes T \\
 &= \BE_{\crr}.\qedhere
\end{align*}
\end{proof}

\subsection{The fixed locus of the Quot scheme and its virtual class}
In this subsection we describe $\Quot_X(F,n)^{\BT}$ and we compute its virtual fundamental class, obtained via Proposition \ref{pot_global_equivariant}. 

If $\mathbf n$ denotes a generic tuple $\set{n_\alpha | \alpha \in \Delta(X)}$ of non-negative integers, we set $|\mathbf n| = \sum_{\alpha \in \Delta(X)}n_\alpha$.

\begin{lemma}\label{T_1 fixed locus projective}
There is a scheme-theoretic identity
\[
\Quot_X(F,n)^{\BT} =  \coprod_{|\mathbf n| = n}\prod_{\alpha \in \Delta(X)} \Quot_{U_\alpha}(F|_{U_\alpha},n_\alpha)^{\BT}. 
\]
\end{lemma}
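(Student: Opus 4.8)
The plan is to establish the identity first on closed points, then promote it to a scheme-theoretic isomorphism by running the same argument in families. First I would analyze a single $\BT$-fixed point $[q\colon F \onto T] \in \Quot_X(F,n)^{\BT}$. Since $T$ is $0$-dimensional and its support is a $\BT$-invariant closed subscheme of $X$, it is set-theoretically contained in the fixed locus $X^{\BT} = \set{p_\alpha \mid \alpha \in \Delta(X)}$; as these points are isolated, $T$ splits canonically as a direct sum $T = \bigoplus_\alpha T_\alpha$ of its local pieces at the $p_\alpha$, each $T_\alpha$ being $\BT$-equivariant and supported at $p_\alpha$, with $n_\alpha = \length(T_\alpha) = \chi(T_\alpha)$ and $\sum_\alpha n_\alpha = n$. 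Restricting $q$ to the chart $U_\alpha \cong \BA^3$ and using that $T_\beta|_{U_\alpha} = 0$ for $\beta\neq\alpha$, one gets a $\BT$-fixed quotient $F|_{U_\alpha} \onto T_\alpha$, i.e.\ a point of $\Quot_{U_\alpha}(F|_{U_\alpha},n_\alpha)^{\BT}$. Conversely, given a tuple $\mathbf n$ with $|\mathbf n| = n$ and points $[F|_{U_\alpha} \onto T_\alpha]$ as above, I would extend each $T_\alpha$ by zero to a sheaf $(j_\alpha)_\ast T_\alpha$ on $X$ (legitimate since $\set{p_\alpha}$ is closed in $X$ and contained in the open $U_\alpha$), form $T = \bigoplus_\alpha (j_\alpha)_\ast T_\alpha$, and check on stalks that $F \onto T$ is surjective with the correct Euler characteristic. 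These two constructions are mutually inverse, giving a bijection of closed points.

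Next I would upgrade this to an isomorphism of schemes via the functor of points. By definition $\Quot_X(F,n)^{\BT}$ represents the functor sending a $\BC$-scheme $B$ with trivial $\BT$-action to the set of $\BT$-equivariant quotients $F_B \onto \mathcal T$ flat over $B$ of fibrewise length $n$. Given such a family, its support is a closed subscheme of $X\times B$ finite over $B$ and $\BT$-invariant, hence contained in $X^{\BT}\times B$, so $\mathcal T$ decomposes $\BT$-equivariantly as $\bigoplus_\alpha \mathcal T_\alpha$ with $\mathcal T_\alpha$ supported along $\set{p_\alpha}\times B$, each $B$-flat; the fibrewise lengths $n_\alpha$ are locally constant, so constant on connected components and summing to $n$. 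Restricting $\mathcal T_\alpha$ to $U_\alpha$ produces a $B$-flat family of $\BT$-equivariant quotients of $F|_{U_\alpha}$, i.e.\ a morphism $B \to \Quot_{U_\alpha}(F|_{U_\alpha},n_\alpha)^{\BT}$. The assignment $\mathcal T \mapsto (\mathcal T_\alpha)_\alpha$ and the reverse one, by extension-by-zero and direct sum, are natural in $B$ and mutually inverse, yielding the claimed identity. Alternatively one may invoke \cite[Prop.~4.1]{Kool_Fixed_Point_Loci}, whose argument applies once $F$ carries its $\BT$-equivariant structure, noting this is the global analogue of Lemma \ref{lemma:T_fixed_points}.

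The main obstacle is the scheme-theoretic (as opposed to set-theoretic) statement: one must verify that the decomposition of the universal quotient is compatible with base change — in particular that the local pieces $\mathcal T_\alpha$ stay $B$-flat and that extension-by-zero commutes with base change — so that the correspondence is an equivalence of functors, not merely a bijection on $\BC$-points. The equivariance bookkeeping is the other point requiring care but it is formal: one must take the splitting $\mathcal T = \bigoplus_\alpha \mathcal T_\alpha$ compatibly with the $\BT$-action and match it with the equivariant structure on $F$ restricted to each chart described in \S\,\ref{sec:induced_T-action_quot}.
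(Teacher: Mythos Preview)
Your proposal is correct and follows essentially the same approach as the paper: both argue via the functor of points, decompose a $\BT$-fixed $B$-family $F_B\onto\mathcal T$ into its pieces $\mathcal T_\alpha$ supported on the individual charts (equivalently, at the fixed points $p_\alpha$), and invert the construction by taking the direct sum of the local quotients. Your write-up is in fact slightly more explicit than the paper's on two points---the containment of the support in $X^{\BT}\times B$ and the extension-by-zero step---but the logical skeleton is identical.
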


\begin{proof}
Let $B$ be a (connected) scheme over $\BC$. Let $F_B$ be the pullback of $F$ along the first projection $X\times B \to X$, and fix a $B$-flat family of quotients
\[
\rho\colon F_B \onto \mathcal T
\]
defining a $B$-valued point $B \to \Quot_X(F,n)^{\BT}$. Then, by restriction, we obtain, for each $\alpha \in\Delta(X)$, a $B$-flat family of quotients
\begin{equation}\label{F_restricted_to_Ualpha}
\rho_\alpha \colon F_B\big|_{U_\alpha\times B} \onto \mathcal T_\alpha = \mathcal T\big|_{U_\alpha\times B},
\end{equation}
and we let $n_\alpha$ be the length of the fibres of $\mathcal T_\alpha$. Each $\rho_\alpha$ corresponds to a $B$-valued point $g_\alpha\colon B \to \Quot_{U_\alpha}(F|_{U_\alpha},n_\alpha)^{\BT}$, thus we obtain a $B$-valued point 
\[
(g_\alpha)_\alpha \colon B \to \prod_{\alpha \in \Delta(X)} \Quot_{U_\alpha}(F|_{U_\alpha},n_\alpha)^{\BT}.
\]
Note that the original family $\mathcal T$ is recovered as the direct sum $\bigoplus_\alpha \mathcal T_\alpha$, in particular $n = \sum_\alpha n_\alpha$.
Conversely, suppose given a tuple of $B$-families of $\BT$-fixed quotients 
\[
\left(\left(F|_{U_{\alpha}}\right)_B
\onto \mathcal T_\alpha\right)_\alpha.
\]
We obtain $B$-valued points
\[
B \to \Quot_{U_\alpha}(F|_{U_\alpha},n_\alpha)^{\BT} \subset \Quot_X(F,n_\alpha)^{\BT}.
\]
Since the support of these families is disjoint, we can form the direct sum
\[
\mathcal T = \bigoplus_\alpha \mathcal T_\alpha
\]
to obtain a new $B$-flat family, representing a $B$-valued point of $\Quot_X(F,n)^{\BT}$, as required.
\end{proof}

%%%%%%%%%%%%%%%%%%%%%%%%%%%%%%%%%%%%%%%%%%%%%%%%%%%%%%%%%%%%%%%

Our next goal is to show that, under the identification of Lemma \ref{T_1 fixed locus projective}, the induced virtual fundamental class of the $\mathbf n$-th connected component of $\Quot_X(F,n)^{\BT}$ is the box product of the  virtual fundamental classes of $\Quot_{U_\alpha}(F|_{U_\alpha},n_\alpha)^{\BT}$, whose perfect obstruction theory is the $\BT$-fixed part of the critical one, studied in \S\,\ref{sec:critical_structure_Quot}. For the rest of the section we restrict  our attention to each connected component 
\begin{equation}\label{fixed_n}
\Quot_X(F,n)^{\BT}_{\mathbf n} = \prod_{\alpha \in \Delta(X)} \Quot_{U_\alpha}(F|_{U_\alpha},n_\alpha)^{\BT} \subset \Quot_X(F,n)^{\BT},
\end{equation}
and we denote by
\begin{equation}\label{universal_structures}
\begin{tikzcd}
\mathcal S \into \mathcal F \onto \mathcal{T}\arrow[dash]{d} & \mathcal S_\alpha \into \mathcal F_\alpha \onto \mathcal{T}_\alpha \arrow[dash]{d}\\ 
X\times \Quot_X(F,n)^{\BT}_{\mathbf n}\arrow{d}{\pi} & U_\alpha \times \Quot_{U_\alpha}(F|_{U_\alpha},n_\alpha)^{\BT}\arrow{d}{\pi_\alpha} \\
\Quot_X(F,n)^{\BT}_{\mathbf n} \arrow{r}{p_\alpha} & \Quot_{U_\alpha}(F|_{U_\alpha},n_\alpha)^{\BT}
\end{tikzcd}
\end{equation}
the various universal structures and projection maps between these moduli spaces. For instance, $\mathcal F_\alpha$ is the pullback of $F|_{U_\alpha}$ along the projection $U_\alpha \times \Quot_{U_\alpha}(F|_{U_\alpha},n_\alpha)^{\BT}\to U_\alpha$. Let $\BE_{\mathbf n}$ be the restriction of $\BE \in \derived(\Quot_X(F,n))$ to the closed subscheme $\Quot_X(F,n)^{\BT}_{\mathbf n}  \subset \Quot_X(F,n)$.

\begin{prop}\label{prop: relative cech}
There are identities in  $K_0^{\BT}(\Quot_X(F,n)_{\mathbf n}^{\BT})$ 
\begin{multline*}
    \BE_{\mathbf n}^\vee = \mathbf{R}\pi_*\RRlHom(\mathcal F,\mathcal F)-\mathbf{R}\pi_*\RRlHom(\mathcal{S},\mathcal{S}) \\ =\sum_{\alpha\in \Delta(X)}p_\alpha^*\left(\mathbf{R}{\pi_{\alpha}}_*\RRlHom(\mathcal F_\alpha,\mathcal F_\alpha)-\mathbf{R}{\pi_\alpha}_*\RRlHom(\mathcal{S}_\alpha,\mathcal{S}_\alpha)\right).
\end{multline*}
\end{prop}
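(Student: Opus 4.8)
The plan is to leverage the disjointness of the supports of the summands of the universal quotient on the fixed component, reducing the computation to a sum of local computations on the affine charts $U_\alpha$. Write $Z=\Quot_X(F,n)^{\BT}_{\mathbf n}$ and $Z_\alpha=\Quot_{U_\alpha}(F|_{U_\alpha},n_\alpha)^{\BT}$, so that $Z=\prod_{\alpha\in\Delta(X)}Z_\alpha$ by Lemma \ref{T_1 fixed locus projective}. The first equality $\BE_{\mathbf n}^\vee=\RR\pi_\ast\RRlHom(\mathcal F,\mathcal F)-\RR\pi_\ast\RRlHom(\mathcal S,\mathcal S)$ I would obtain simply by restricting to the closed subscheme $Z\subset\Quot_X(F,n)$ the identity of Proposition \ref{prop:global pot K theory}, using compatibility of $\RR\pi_\ast$ with base change along $Z\hookrightarrow\Quot_X(F,n)$ (which holds since the projection $\pi$ is flat and $F_{\mathrm Q},\mathcal S$ are flat over the base). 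So it remains to establish the second equality.

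Next I would rewrite the integrand by means of the universal short exact sequence $0\to\mathcal S\to\mathcal F\to\mathcal T\to0$ on $X\times Z$. In $K_0^{\BT}(X\times Z)$ one has $[\mathcal S]=[\mathcal F]-[\mathcal T]$, so bilinearity of $\RRlHom$ gives
\[
\RRlHom(\mathcal F,\mathcal F)-\RRlHom(\mathcal S,\mathcal S)=\RRlHom(\mathcal F,\mathcal T)+\RRlHom(\mathcal T,\mathcal F)-\RRlHom(\mathcal T,\mathcal T).
\]
By the construction underlying Lemma \ref{T_1 fixed locus projective}, the universal quotient decomposes $\BT$-equivariantly as $\mathcal T=\bigoplus_{\alpha\in\Delta(X)}\mathcal T^{(\alpha)}$, where $\mathcal T^{(\alpha)}$ is supported on $\{p_\alpha\}\times Z\subset U_\alpha\times Z$ (recall $p_\beta\notin U_\alpha$ for $\beta\neq\alpha$, as $X$ is smooth toric) and, under the identification $U_\alpha\times Z\cong U_\alpha\times\bigl(Z_\alpha\times\prod_{\beta\neq\alpha}Z_\beta\bigr)$, equals $(\id\times p_\alpha)^\ast\mathcal T_\alpha$; similarly $\mathcal F|_{U_\alpha\times Z}=(\id\times p_\alpha)^\ast\mathcal F_\alpha$ and $\mathcal S|_{U_\alpha\times Z}=(\id\times p_\alpha)^\ast\mathcal S_\alpha$. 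Since the $\mathcal T^{(\alpha)}$ have pairwise disjoint supports in $X\times Z$, the cross terms $\RRlHom(\mathcal T^{(\alpha)},\mathcal T^{(\beta)})$ with $\alpha\neq\beta$ vanish, while $\RRlHom(\mathcal F,\mathcal T^{(\alpha)})$ and $\RRlHom(\mathcal T^{(\alpha)},\mathcal F)$ are supported on $U_\alpha\times Z$ and there depend only on $\mathcal F|_{U_\alpha\times Z}$. Running the above computation backwards chart by chart then yields, in $K_0^{\BT}(U_\alpha\times Z)$,
\[
\RRlHom(\mathcal F,\mathcal T^{(\alpha)})+\RRlHom(\mathcal T^{(\alpha)},\mathcal F)-\RRlHom(\mathcal T^{(\alpha)},\mathcal T^{(\alpha)})=(\id\times p_\alpha)^\ast\bigl(\RRlHom(\mathcal F_\alpha,\mathcal F_\alpha)-\RRlHom(\mathcal S_\alpha,\mathcal S_\alpha)\bigr).
\]

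Finally I would apply $\RR\pi_\ast$ and sum over $\alpha$. Each term above has support finite over $Z$ (it lives on the support of $\mathcal T^{(\alpha)}$), and $p_\alpha\colon Z\to Z_\alpha$ is flat, being a projection from a product; hence flat base change along the Cartesian square with corners $U_\alpha\times Z$, $U_\alpha\times Z_\alpha$, $Z$, $Z_\alpha$ gives $\RR\pi_\ast(\id\times p_\alpha)^\ast(-)=p_\alpha^\ast\RR{\pi_\alpha}_\ast(-)$ (using that $\RR\pi_\ast$ of a complex supported on the closed subscheme $U_\alpha\times Z$ computes the same as pushforward along $U_\alpha\times Z\to Z$, and likewise for $\pi_\alpha$), and the claimed formula follows. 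I expect the only genuinely delicate point to be the geometric identification $\mathcal T|_{U_\alpha\times Z}\cong(\id\times p_\alpha)^\ast\mathcal T_\alpha$ together with its analogues for $\mathcal F$ and $\mathcal S$ — that is, the compatibility of the universal families with the product decomposition $Z=\prod_\alpha Z_\alpha$ — which I would extract directly from the ``direct sum of pullbacks'' construction used in the proof of Lemma \ref{T_1 fixed locus projective}; once this is in place, everything else is formal manipulation in equivariant $K$-theory together with disjointness of supports.
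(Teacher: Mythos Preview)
Your proof is correct and follows essentially the same strategy as the paper: expand via the universal short exact sequence, localise using that $\mathcal T$ is supported at the fixed points $p_\alpha$, and conclude by flat base change along the projections $p_\alpha$. The only cosmetic difference is that the paper implements the localisation via a \v{C}ech resolution together with the Grothendieck spectral sequence (reducing $\RR\pi_\ast$ to $\pi_\ast\lExt^j$ and observing that only the degree-$0$ \v{C}ech term survives), whereas you work directly with the disjoint-support decomposition $\mathcal T=\bigoplus_\alpha\mathcal T^{(\alpha)}$ in $K$-theory, which is slightly more streamlined.
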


\begin{proof}
Exploiting the universal short exact sequence
    \[   
    0\to \mathcal{S}\to \mathcal F\to \mathcal{T}\to 0
    \]
on $X\times \Quot_X(F,n)^{\BT}_{\mathbf n} \subset X\times \Quot_X(F,n)$, and Proposition \ref{prop:global pot K theory}, we may write
    \begin{multline*}
    \BE_{\mathbf n}^\vee = \BE^\vee\big|_{\Quot_X(F,n)^{\BT}_{\mathbf n}} = \mathbf{R}\pi_*\RRlHom(\mathcal F,\mathcal F)-\mathbf{R}\pi_*\RRlHom(\mathcal{S},\mathcal{S}) \\
    = \mathbf{R}\pi_*\RRlHom(\mathcal{S},\mathcal{T})+\mathbf{R}\pi_*\RRlHom(\mathcal{T},\mathcal{S})+\mathbf{R}\pi_*\RRlHom(\mathcal{T},\mathcal{T}).        
    \end{multline*}
    Similarly, we have
    \begin{multline}\label{eqn:rhom_alpha}
        \mathbf{R}\pi_{\alpha*}\RRlHom(\mathcal F_{\alpha},\mathcal F_{\alpha})-\mathbf{R}\pi_{\alpha*}\RRlHom(\mathcal{S}_{\alpha},\mathcal{S}_{\alpha}) \\
        = \mathbf{R}\pi_{\alpha*}\RRlHom(\mathcal{S}_{\alpha},\mathcal{T}_{\alpha})+\mathbf{R}\pi_{\alpha*}\RRlHom(\mathcal{T}_{\alpha},\mathcal{S}_{\alpha})+\mathbf{R}\pi_{\alpha*}\RRlHom(\mathcal{T}_{\alpha},\mathcal{T}_{\alpha}).
    \end{multline}
In the following, we write $(G_1,G_2)$ for any of the three pairs $(\mathcal{S},\mathcal{T})$, $(\mathcal{T},\mathcal{S})$ or $(\mathcal{T},\mathcal{T})$. Applying the Grothendieck spectral sequence yields
\begin{align*}
    \mathbf{R}\pi_*\RRlHom(G_1,G_2)&= \sum_{i,j}(-1)^{i+j}\mathbf{R}^i\pi_* \lExt^j(G_1,G_2)\\
    &=\sum_{j}(-1)^{j}\pi_* \lExt^j(G_1,G_2),
\end{align*}
where we used cohomology and base change along with the fact that $\RR^{i} \pi_\ast$ of a $0$-dimensional sheaf vanishes for $i>0$. The standard \v{C}ech cover $\{U_\alpha\}_{\alpha\in \Delta(X)}$ of $X$ pulls back to a \v{C}ech cover $\{V_\alpha\}_{\alpha\in \Delta(X)}$ of $X\times \Quot_X(F,n)^{\BT}_{\mathbf n}$, where $V_\alpha=U_\alpha\times \Quot_X(F,n)^{\BT}_{\mathbf n} $. For a finite family of indices $I\subset \BN$, set $V_I=\bigcap_{\alpha\in I} V_\alpha$ and let  $j_I\colon V_I\to X\times \Quot_X(F,n)^{\BT}_{\mathbf n}$ be the natural open immersion. We have a \v{C}ech resolution $\lExt^j(G_1,G_2)\to \mathfrak{C}^\bullet $, where $\mathfrak{C}^\bullet$ is defined degree-wise (see e.g.~\cite[Lemma III.4.2]{Hartshorne_AG}) by
\[
\mathfrak{C}^k=\bigoplus_{|I|=k+1}{j_I}_\ast  j_I^\ast \lExt^j(G_1,G_2).
\]
Notice that $\mathcal{T}$    vanishes on the restriction to any double intersection $U_{\alpha\beta}\times \Quot_X(F,n)^{\BT}_{\mathbf n}$, where $U_{\alpha\beta}=U_\alpha\cap U_\beta$. This implies that the only contribution of the \v{C}ech cover is given by $\mathfrak{C}^0$, thus
\begin{align*}
\mathbf{R}\pi_*\RRlHom(G_1,G_2)
&= \sum_{j} (-1)^{j}\pi_* \sum_{\alpha\in \Delta(X)}{j_\alpha}_\ast j_\alpha^\ast\lExt^j(G_1,G_2)\\
&= \sum_{\alpha\in \Delta(X)} 
\sum_{j}(-1)^{j}(\pi\circ {j_\alpha})_\ast j_\alpha^\ast\lExt^j(G_1,G_2).
\end{align*}
Consider the following cartesian diagram
\[  
\begin{tikzcd}
U_\alpha\times \Quot_X(F,n)^{\BT}_{\mathbf n}\arrow[hook]{d}{j_\alpha} \arrow{r}{\widetilde p_\alpha} & U_\alpha \times \Quot_{U_\alpha}(F|_{U_\alpha},n_\alpha)^{\BT}\arrow[hook]{d}{} \arrow[dd,bend left=75, "\pi_\alpha "]\\
X\times \Quot_X(F,n)^{\BT}_{\mathbf n}\arrow{d}{\pi} \arrow{r}{} & X\times \Quot_{U_\alpha}(F|_{U_\alpha},n_\alpha)^{\BT} \arrow{d}\\
 \Quot_X(F,n)^{\BT}_{\mathbf n} \arrow{r}{p_\alpha}&  \Quot_{U_\alpha}(F|_{U_\alpha},n_\alpha)^{\BT}
\end{tikzcd}
\]
As it was already clear from the proof of Lemma \ref{T_1 fixed locus projective}, the universal short exact sequences in Diagram \eqref{universal_structures} satisfy $j_\alpha^\ast(\mathcal S \into \mathcal F \onto \mathcal T) = \widetilde p_\alpha^\ast(\mathcal S_\alpha \into \mathcal F_\alpha \onto \mathcal T_\alpha)$. If $(G_{1 \alpha},G_{2 \alpha})$ denotes any of the pairs belonging to the set $\set{(\mathcal{S}_{\alpha},\mathcal{T}_{\alpha}), (\mathcal{T}_{\alpha},\mathcal{S}_{\alpha}),(\mathcal{T}_{\alpha},\mathcal{T}_{\alpha})}$, we can write
\begin{align*}
   j_\alpha^\ast\lExt^j(G_1,G_2)
   &= \mathbf{L} j_\alpha^\ast\lExt^j(G_1,G_2) \\
   &= \lExt^j(\mathbf{L} j_\alpha^\ast G_1,\mathbf{L} j_\alpha^\ast G_2) \\
   &=\lExt^j(\widetilde{p}_\alpha^\ast G_{1,\alpha},\widetilde{p}_\alpha^\ast G_{2,\alpha}) \\
   &=\widetilde p_\alpha^*\lExt^j({G_1}_\alpha,{G_2}_\alpha).
\end{align*} 
We deduce, by flat base change,
\[
(\pi\circ {j_\alpha})_\ast j_\alpha^\ast\lExt^j(G_1,G_2)=(\pi\circ {j_\alpha})_\ast\widetilde p_\alpha^*\lExt^j({G_1}_\alpha,{G_2}_\alpha) = p_\alpha^{\ast}\pi_{\alpha\,\ast} \lExt^j({G_1}_\alpha,{G_2}_\alpha).
\]
  Combining again the Grothendieck spectral sequence, cohomology and base change and the vanishing of higher derived pushforwards on $0$-dimensional sheaves, we conclude that
\begin{align*}
 \mathbf{R}\pi_*\RRlHom(G_1,G_2)&\,=\,
 \sum_{\alpha\in \Delta(X)} 
p_\alpha^{\ast}\sum_{j}(-1)^{j}\pi_{\alpha\,\ast} \lExt^j({G_1}_\alpha,{G_2}_\alpha)\\
&\,=\,\sum_{\alpha\in \Delta(X)}p_\alpha^* \mathbf{R}{\pi_\alpha}_* \RRlHom({G_1}_\alpha,{G_2}_\alpha).
\end{align*}
Now the result follows from Equation \eqref{eqn:rhom_alpha}.
\end{proof}

\begin{corollary}\label{cor: pot as box product}
The virtual fundamental class of $\Quot_X(F,n)^{\BT}_{\mathbf n}$ is expressed as the product of the virtual fundamental classes
\[
\left[\Quot_X(F,n)^{\BT}_{\mathbf n}\right]^{\vir} = \prod_{\alpha \in \Delta(X)} p_\alpha^\ast \left[ \Quot_{U_\alpha}(F|_{U_\alpha},n_\alpha)^{\BT}\right]^{\vir}.
\]
\end{corollary}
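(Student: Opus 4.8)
The plan is to deduce Corollary~\ref{cor: pot as box product} directly from Proposition~\ref{prop: relative cech} together with the behaviour of virtual fundamental classes under products and under restriction to fixed loci. First I would recall that, by Proposition~\ref{pot_global_equivariant}, the perfect obstruction theory $\BE\to\BL_{\mathrm{Q}}$ on $\mathrm{Q}=\Quot_X(F,n)$ is $\BT$-equivariant, so by \cite[Prop.~1]{GPvirtual} it induces a canonical perfect obstruction theory on each connected component $\Quot_X(F,n)^{\BT}_{\mathbf n}$ of the fixed locus, namely the $\BT$-fixed part $\BE_{\mathbf n}^{\fix}\to\BL_{\Quot_X(F,n)^{\BT}_{\mathbf n}}$, whose associated virtual class is $[\Quot_X(F,n)^{\BT}_{\mathbf n}]^{\vir}$. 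By Siebert's theorem \cite[Thm.~4.6]{Siebert}, the induced virtual class depends only on the $K$-theory class of this fixed obstruction theory in $K_0^{\BT}(\Quot_X(F,n)^{\BT}_{\mathbf n})$ (more precisely, on its class together with the fixed part; one knows the fixed obstruction theory has the expected structure and its class is the fixed part of $\BE_{\mathbf n}$).

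The key step is then to match $K$-theory classes. Proposition~\ref{prop: relative cech} gives
\[
\BE_{\mathbf n}^\vee = \sum_{\alpha\in\Delta(X)}p_\alpha^\ast\left(\RR{\pi_\alpha}_\ast\RRlHom(\mathcal F_\alpha,\mathcal F_\alpha)-\RR{\pi_\alpha}_\ast\RRlHom(\mathcal S_\alpha,\mathcal S_\alpha)\right)\in K_0^{\BT}(\Quot_X(F,n)^{\BT}_{\mathbf n}),
\]
and by Proposition~\ref{prop:global pot K theory} applied on the chart $U_\alpha$, each summand $\RR{\pi_\alpha}_\ast\RRlHom(\mathcal F_\alpha,\mathcal F_\alpha)-\RR{\pi_\alpha}_\ast\RRlHom(\mathcal S_\alpha,\mathcal S_\alpha)$ restricted to $\Quot_{U_\alpha}(F|_{U_\alpha},n_\alpha)^{\BT}$ is (the dual of) the class $\BE_{n_\alpha,\alpha}^{\crit}$ of the critical obstruction theory on $\Quot_{U_\alpha}(F|_{U_\alpha},n_\alpha)$ restricted to its fixed locus; here I would invoke Proposition~\ref{restriction of class in K theory} to identify $\BE_{n_\alpha,\alpha}^{\crit}$ with $\iota_{n_\alpha,\alpha}^\ast\BE$ equivariantly. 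Taking $\BT$-fixed parts commutes with the pullbacks $p_\alpha^\ast$ (since $\BT$ acts trivially on $\Quot_X(F,n)^{\BT}_{\mathbf n}$ and on each factor), so
\[
\BE_{\mathbf n}^{\fix} = \bigoplus_{\alpha\in\Delta(X)}p_\alpha^\ast\left(\BE_{n_\alpha,\alpha}^{\crit}\right)^{\fix}\in K_0(\Quot_X(F,n)^{\BT}_{\mathbf n}),
\]
which is exactly the $K$-theory class of the product obstruction theory $\boxplus_\alpha (\BE_{n_\alpha,\alpha}^{\crit})^{\fix}$ on the product $\prod_\alpha \Quot_{U_\alpha}(F|_{U_\alpha},n_\alpha)^{\BT}$.

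Finally I would invoke the compatibility of virtual fundamental classes with exterior products: if $X_1,X_2$ carry perfect obstruction theories $\BE_1,\BE_2$, then $X_1\times X_2$ carries the obstruction theory $p_1^\ast\BE_1\oplus p_2^\ast\BE_2$ and $[X_1\times X_2]^{\vir}=p_1^\ast[X_1]^{\vir}\times p_2^\ast[X_2]^{\vir}$ (this is standard; it follows from the construction of the virtual class via the intrinsic normal cone, or from \cite[Prop.~5.10]{BFinc}). Combining this with Siebert's invariance and the $K$-theory identification above yields
\[
\left[\Quot_X(F,n)^{\BT}_{\mathbf n}\right]^{\vir} = \prod_{\alpha\in\Delta(X)} p_\alpha^\ast\left[\Quot_{U_\alpha}(F|_{U_\alpha},n_\alpha)^{\BT}\right]^{\vir},
\]
where each $[\Quot_{U_\alpha}(F|_{U_\alpha},n_\alpha)^{\BT}]^{\vir}$ is the virtual class of the $\BT$-fixed critical obstruction theory studied in \S\,\ref{sec:critical_structure_Quot}. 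The main obstacle I anticipate is the bookkeeping needed to be sure that Siebert's theorem legitimately applies to the \emph{fixed} obstruction theories — i.e.\ that the fixed obstruction theory on the product, built chart-by-chart, and the one induced from the global $\BE$ are not merely equal in $K$-theory but genuinely induce the same class. This is handled by the general principle that the virtual class of a fixed obstruction theory depends only on its $K$-theory class (Siebert, applied equivariantly as in \cite{GPvirtual}), but one should state this carefully; the exterior-product compatibility itself is routine.
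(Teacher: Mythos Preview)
Your proposal is correct and follows essentially the same approach as the paper: the paper's proof is the one-line ``take $\BT$-fixed parts in Proposition~\ref{prop: relative cech} and apply Siebert's result \cite[Thm.~4.6]{Siebert}'', with the identification via Proposition~\ref{restriction of class in K theory} spelled out in the paragraph preceding the proof. Your version is a more detailed unpacking of the same argument, including the explicit invocation of the product compatibility of virtual classes, which the paper leaves implicit.
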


Before we prove the corollary, let us explain what virtual classes are involved. The left hand side is the virtual class induced by the $\BT$-fixed obstruction theory
\[
\BE_{\mathbf n}^{\BT\textrm{-}\fix} \to \BL_{\Quot_X(F,n)^{\BT}_{\mathbf n}},
\]
whereas $\left[ \Quot_{U_\alpha}(F|_{U_\alpha},n_\alpha)^{\BT}\right]^{\vir}$ is the virtual class induced by the obstruction theory 
\[
\iota_{n_\alpha,\alpha}^\ast \BE \to \BL_{\Quot_{U_\alpha}(F|_{U_\alpha},n_\alpha)}
\]
by restricting to the $\BT$-fixed locus and taking the $\BT$-fixed part. Note that by Proposition \ref{restriction of class in K theory}, the perfect obstruction theory 
\[
\BE_{n_\alpha,\alpha}^{\crit}\big|_{\Quot_{U_\alpha}(F|_{U_\alpha},n_\alpha)^{\BT}}^{\BT\textrm{-}\fix} \to \BL_{\Quot_{U_\alpha}(F|_{U_\alpha},n_\alpha)^{\BT}}
\]
induces the same virtual class. This follows from the general fact that the (equivariant) virtual fundamental class depends only on the class in (equivariant) K-theory of the perfect obstruction theory --- cf.~\cite[Theorem 4.6]{Siebert}, where all the ingredients are naturally equivariant.

\begin{proof}
The statement follows by taking $\BT$-fixed parts in Proposition \ref{prop: relative cech} and by Siebert's result \cite[Theorem 4.6]{Siebert} mentioned above.
\end{proof}

\subsection{Higher rank Donaldson--Thomas invariants of compact 3-folds}\label{section on compact invariants}
For a pair $(X,F)$ consisting of a smooth projective toric $3$-fold $X$ and an exceptional locally free sheaf $F$, the perfect obstruction theory \eqref{global_pot} gives rise to a $0$-dimensional virtual fundamental class
\[
\bigl[\Quot_X(F,n)\bigr]^{\vir} \in A_0(\Quot_X(F,n)),
\]
allowing one to define higher rank Donaldson--Thomas invariants
\[
\DT_{F,n} = \int_{[\Quot_X(F,n)]^{\vir}}1 \in \BZ.
\]
Define the generating function
\[
\DT_F(q) = \sum_{n\geq 0} \DT_{F,n} q^n.
\]

We next compute this series in the case of a $\BT$-equivariant exceptional locally free sheaf, thus proving Theorem \ref{mainthm:projective_toric} from the Introduction.

\begin{theorem}\label{thm for toric proj}
Let $(X,F)$ be a pair consisting of a smooth projective toric $3$-fold $X$ along with an exceptional $\BT$-equivariant locally free sheaf $F$. Then
\[
\DT_F(q) = \mathsf M((-1)^rq)^{r\int_{X}c_3(T_X\otimes K_X)}.
\]
\end{theorem}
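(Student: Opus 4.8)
The plan is to reduce the global computation to the local vertex calculation already established in Theorem~\ref{thm:cohomological}, via the virtual localisation formula and the product structure of the $\BT$-fixed locus. First I would apply the Graber--Pandharipande virtual localisation formula \cite{GPvirtual} to the $0$-dimensional perfect obstruction theory \eqref{global_pot}, which is $\BT$-equivariant by Proposition~\ref{pot_global_equivariant}. This expresses $\DT_{F,n}$ as a sum over the connected components $\Quot_X(F,n)^{\BT}_{\mathbf n}$ of the fixed locus described in Lemma~\ref{T_1 fixed locus projective}, each contributing $\int_{[\Quot_X(F,n)^{\BT}_{\mathbf n}]^{\vir}} e^{\BT}(N^{\vir})^{-1}$, where $N^{\vir}$ is the moving part of $\BE_{\mathbf n}^\vee$. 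By Corollary~\ref{cor: pot as box product}, both the virtual class and (from Proposition~\ref{prop: relative cech}) the virtual normal bundle split as a box product / sum over the vertices $\alpha \in \Delta(X)$. Hence the whole integrand factorises, and combining with Proposition~\ref{restriction of class in K theory} (which identifies the restricted obstruction theory with the critical one $\BE^{\crit}_{n_\alpha,\alpha}$ in equivariant K-theory, so the Euler class of the virtual tangent space is computed by the local vertex $T^{\vir}_{S,\lambda}$ of \S\,\ref{sec: variation vertex}) one gets
\[
\DT_F(q) = \prod_{\alpha\in\Delta(X)} \DT_r^{\coh}(\BA^3,q,s^{(\alpha)},v)_{\lambda^{(\alpha)}},
\]
where $s^{(\alpha)}$ are the tangent weights of $X$ at $p_\alpha$ and $\lambda^{(\alpha)}$ the equivariant weights of $F|_{U_\alpha}$.

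Next I would invoke Corollary~\ref{cor:independence_on_lambda_w}, which says $\DT_r^{\coh}(\BA^3,q,s,v)_\lambda = \DT_r^{\coh}(\BA^3,q,s)$ is independent of both the framing parameters $v$ and the chosen $\BT_1$-equivariant weights $\lambda$. Together with Theorem~\ref{thm:cohomological} this gives, for each vertex,
\[
\DT_r^{\coh}(\BA^3,q,s^{(\alpha)},v)_{\lambda^{(\alpha)}} = \mathsf M((-1)^rq)^{-r\frac{(s_1^{(\alpha)}+s_2^{(\alpha)})(s_1^{(\alpha)}+s_3^{(\alpha)})(s_2^{(\alpha)}+s_3^{(\alpha)})}{s_1^{(\alpha)}s_2^{(\alpha)}s_3^{(\alpha)}}}.
\]
Taking the product over $\alpha$, the exponent becomes $-r\sum_{\alpha\in\Delta(X)} \frac{(s_1^{(\alpha)}+s_2^{(\alpha)})(s_1^{(\alpha)}+s_3^{(\alpha)})(s_2^{(\alpha)}+s_3^{(\alpha)})}{s_1^{(\alpha)}s_2^{(\alpha)}s_3^{(\alpha)}}$.

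The final step is a purely cohomological identity: I must show this equivariant sum equals $\int_X c_3(T_X \otimes K_X)$. This is the classical Bott/Atiyah--Bott residue computation. At the fixed point $p_\alpha$ the bundle $T_X \otimes K_X$ has $\BT$-weights $s_i^{(\alpha)} - (s_1^{(\alpha)}+s_2^{(\alpha)}+s_3^{(\alpha)}) = -(s_j^{(\alpha)}+s_k^{(\alpha)})$ for $\{i,j,k\}=\{1,2,3\}$, so $c_3(T_X\otimes K_X)|_{p_\alpha} = -(s_1^{(\alpha)}+s_2^{(\alpha)})(s_1^{(\alpha)}+s_3^{(\alpha)})(s_2^{(\alpha)}+s_3^{(\alpha)})$, while the equivariant Euler class of $T_X|_{p_\alpha}$ is $s_1^{(\alpha)}s_2^{(\alpha)}s_3^{(\alpha)}$; localisation for the integral $\int_X c_3(T_X\otimes K_X)$ thus yields exactly $\sum_\alpha \frac{-(s_1^{(\alpha)}+s_2^{(\alpha)})(s_1^{(\alpha)}+s_3^{(\alpha)})(s_2^{(\alpha)}+s_3^{(\alpha)})}{s_1^{(\alpha)}s_2^{(\alpha)}s_3^{(\alpha)}}$, matching the exponent up to the overall sign that was already built in. Substituting gives $\DT_F(q) = \mathsf M((-1)^rq)^{r\int_X c_3(T_X\otimes K_X)}$, as desired.

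I expect the main obstacle to be bookkeeping rather than conceptual: verifying carefully that the virtual localisation contribution at a component $\Quot_X(F,n)^{\BT}_{\mathbf n}$ really does equal $\prod_\alpha \int_{[\Quot_{U_\alpha}(F|_{U_\alpha},n_\alpha)^{\BT}]^{\vir}} e^{\BT}(N^{\vir}_\alpha)^{-1}$ with the correct equivariant weights --- i.e.\ that the splitting of $\BE^\vee_{\mathbf n}$ in Proposition~\ref{prop: relative cech} is compatible with taking moving parts and that no cross terms between different charts survive (they don't, because $\mathcal{T}$ vanishes on all double overlaps, which was the key point in the proof of Proposition~\ref{prop: relative cech}). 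Once that compatibility is in place, the rest is the local theorem plus a standard residue identity, and the argument closes.
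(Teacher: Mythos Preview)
Your proposal is correct and follows the same route as the paper: virtual localisation with respect to $\BT$, factorisation over vertices via Proposition~\ref{prop: relative cech} and Corollary~\ref{cor: pot as box product}, reduction to the local cohomological series via Proposition~\ref{restriction of class in K theory}, Corollary~\ref{cor:independence_on_lambda_w} and Theorem~\ref{thm:cohomological}, and finally Atiyah--Bott for the exponent. The one nuance you elide is that only $\BT=(\BC^*)^3$ acts on the global Quot scheme, so the paper inserts an explicit further localisation with respect to the framing torus $(\BC^*)^r$ on each chart before identifying the local contribution with $\DT_r^{\coh}(\BA^3,q,s^{(\alpha)},v)_{\lambda^{(\alpha)}}$; this is the only missing sentence in your argument.
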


\begin{proof}
Set $Q=\Quot_X(F,n) $ and $Q_\alpha=\Quot_{U_\alpha}(F|_{U_\alpha},n_\alpha)$. Since by Proposition \ref{pot_global_equivariant} the perfect obstruction theory on $Q$ is $\BT$-equivariant, we can apply the virtual localisation formula 
\[
\DT_{F,n} = 
\int_{[Q^{\BT}]^{\vir}}e^{\BT}(-N_{Q^{\BT}/Q}^{\vir}),
\]
where $N_{Q^{\BT}/Q}^{\vir} $ is the virtual normal bundle on the $\BT$-fixed locus computed in Lemma \ref{T_1 fixed locus projective}. By taking $\BT$-moving parts in Proposition \ref{prop: relative cech}, we obtain the K-theoretic identity
\[
N_{Q^{\BT}/Q}^{\vir} = \sum_{\alpha \in \Delta(X)} p_\alpha^\ast N_{Q_\alpha^{\BT}/Q_\alpha}^{\vir}
\]
of virtual normal bundles. Thus by Corollary \ref{cor: pot as box product} we have
\begin{align*}
    \int_{[Q^{\BT}]^{\vir}}e^{\BT}(-N_{Q^{\BT}/Q}^{\vir})&= \sum_{|{\bf n}|=n}\prod_{\alpha\in \Delta(X)} \int_{[\Quot_{U_\alpha}(F|_{U_\alpha},n_\alpha)^{\BT_1}]^{\vir}} e^{\BT}(-N_{Q_\alpha^{\BT}/Q_\alpha}^{\vir}).
\end{align*}
In particular, the virtual fundamental class $[\Quot_{U_\alpha}(F|_{U_\alpha},n_\alpha)^{\BT}]^{\vir}$ agrees with the one coming from the critical structure. Moreover, by the virtual localisation formula applied with respect to $(\BC^\ast)^r$, we have
\begin{align*}
    \int_{[\Quot_{U_\alpha}(F|_{U_\alpha},n_\alpha)^{\BT}]^{\vir}} e^{\BT}(-N_{Q_\alpha^{\BT}/Q_\alpha}^{\vir})=\int_{[\Quot_{U_\alpha}(F|_{U_\alpha},n_\alpha)]^{\vir}} 1,
\end{align*}
where the right hand side is defined equivariantly in \S\,\ref{sec: variation vertex}. Finally, by Corollary \ref{cor:independence_on_lambda_w}, we have an identity
\[
\int_{[\Quot_{U_\alpha}(F|_{U_\alpha},n_\alpha)]^{\vir}} 1=\int_{[\Quot_{U_\alpha}(\OO_{U_\alpha}^{\oplus r},n_\alpha)]^{\vir}} 1
\]
of equivariant integrals, where in the right hand side we take $\OO^{\oplus r}_{U_\alpha}$ with the trivial $\BT$-equivariant weights. Therefore we conclude
\begin{align*}
      \DT_F(q) 
      &=  \sum_{n\geq 0} q^n \sum_{|{\bf n}|=n}\prod_{\alpha\in \Delta(X)} \int_{[\Quot_{U_\alpha}(\OO_{U_\alpha}^{\oplus r},n_\alpha)]^{\vir}} 1\\
       &= \prod_{\alpha\in \Delta(X)}  \sum_{n_\alpha\geq 0} q^{n_\alpha} \int_{[\Quot_{U_\alpha}(\OO_{U_\alpha}^{\oplus r},n_\alpha)]^{\vir}} 1\\
       &=\prod_{\alpha\in \Delta(X)} \mathsf M((-1)^rq)^{-r\frac{(s^{\alpha}_1+s^{\alpha}_2)(s^{\alpha}_1+s^{\alpha}_3)(s^{\alpha}_2+s^{\alpha}_3)}{s^{\alpha}_1s^{\alpha}_2s^{\alpha}_3}}.
\end{align*}
We have used Theorem \ref{thm:cohomological} to obtain the last identity, in which we have denoted $s_1^\alpha, s_2^\alpha, s_3^\alpha$ the tangent weights at $p_\alpha$. We conclude taking logarithms:
\begin{align*}
\log \DT_F(q)&=\sum_{\alpha\in \Delta(X)}-r\frac{(s^{\alpha}_1+s^{\alpha}_2)(s^{\alpha}_1+s^{\alpha}_3)(s^{\alpha}_2+s^{\alpha}_3)}{s^{\alpha}_1s^{\alpha}_2s^{\alpha}_3}\log \mathsf M((-1)^rq)\\
&= r\int_{X}c_3(T_X\otimes K_X)\cdot \log \mathsf M((-1)^rq)
\end{align*}
where the prefactor is computed through ordinary Atiyah--Bott localisation.
\end{proof}          
We have thus proved Conjecture 3.5 in \cite{Virtual_Quot} in the toric case. The general case is still open and will be investigated in future work.
\subsection{Conjecture: two obstruction theories are the same}

We close this subsection with a couple of conjectures relating the different obstruction theories appeared in the previous section.

\begin{conjecture}\label{conj:pot_restricted}
Let $\BE$ be the perfect obstruction theory \eqref{global_pot}. Then its restriction along the open subscheme $\iota_{n,\alpha}\colon \Quot_{U_\alpha}(F|_{U_\alpha},n)\into \Quot_X(F,n)$ agrees, as a symmetric perfect obstruction theory, with the critical obstruction theory $\BE_{\crit}$ of Proposition \ref{prop:SPOT_A^3}.
\end{conjecture}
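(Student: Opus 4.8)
The plan is to upgrade the K-theoretic identity of Proposition \ref{restriction of class in K theory} to a genuine comparison of obstruction theories, not merely of K-theory classes. First I would unwind both sides of the claimed equality as explicit two-term complexes. On the global side, the perfect obstruction theory $\BE$ is $\RR\pi_{\mathrm Q\,\ast}(\RRlHom(\mathcal S,\mathcal S)_0\otimes\omega_{\pi_{\mathrm Q}})[2]$, and restricting along $\iota_{n,\alpha}$ replaces $\mathrm Q$ by $\Quot_{U_\alpha}(F|_{U_\alpha},n)$, $X$ by $U_\alpha\cong\BA^3$, and $\omega_{\pi_{\mathrm Q}}$ by $\OO\otimes\mathfrak t_\pi^{-1}$; since $U_\alpha$ is Calabi--Yau this restriction is already symmetric by \cite[Prop.~2.9]{Virtual_Quot}, and the duality chain used in the proof of Proposition \ref{restriction of class in K theory} shows $\mathfrak t^{-1}\otimes \iota_{n,\alpha}^\ast\BE\cong(\iota_{n,\alpha}^\ast\BE)^\vee[1]$. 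On the critical side, $\BE_{\crit}$ is the Hessian complex $[\mathfrak t\otimes T_{\NCQuot_r^n}|_{\mathrm Q}\to\Omega_{\NCQuot_r^n}|_{\mathrm Q}]$ of Diagram \eqref{equivariant_symmetric_POT_quot}, which satisfies the same self-duality $\mathfrak t^{-1}\otimes\BE_{\crit}\cong\BE_{\crit}^\vee[1]$. So both are symmetric obstruction theories with the same K-theory class $\Omega-\mathfrak t\otimes T$.

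The substance of the conjecture is therefore that these two symmetric structures on the fixed K-class actually coincide as morphisms to $\BL_{\Quot_{U_\alpha}(F|_{U_\alpha},n)}$, compatibly with the symmetrising isomorphism $\theta$. The natural route is to identify $\Quot_{U_\alpha}(F|_{U_\alpha},n)$ with the critical locus of a superpotential in two genuinely different ways and check the resulting Hessians agree. Concretely: the global obstruction theory arises from the truncated Atiyah class $\At_{\mathcal S}$, and on a Calabi--Yau threefold the shifted-cone-of-trace construction $\RRlHom(\mathcal S,\mathcal S)_0\otimes\omega_\pi[2]$ is, by Serre duality, the Hessian of the holomorphic Chern--Simons functional on the (derived) moduli of the family $\mathcal S$. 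On the other hand $\BE_{\crit}$ is the Hessian of $f_n=\Tr A_1[A_2,A_3]$ on $\NCQuot^n_r$. One knows (via the Beilinson-type equivalence between $\Db(\BA^3)$ supported at $0$ and modules over $\BC\langle x_1,x_2,x_3\rangle$ with potential $\mathsf W=A_1[A_2,A_3]$, or via the tilting bundle on $\BA^3$) that holomorphic Chern--Simons for compactly supported complexes on $\BA^3$ and the cubic superpotential $f_n$ are related by the standard dg-algebra quasi-isomorphism; the plan is to make this comparison functorial in the family $\mathcal S$ over $\Quot_{U_\alpha}(F|_{U_\alpha},n)$, so that it upgrades to an isomorphism of the two critical charts, hence of their Hessian complexes and of the induced maps to $\BL$. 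For the equivariant refinement one tracks the weight $\mathfrak t$ throughout, exactly as in the proof of Proposition \ref{restriction of class in K theory}, where the twist by $\omega_{\pi}\simto\OO\otimes\mathfrak t_\pi^{-1}$ is the bookkeeping device.

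The main obstacle I expect is precisely the \emph{functoriality} of this comparison over the moduli space: both obstruction theories are defined globally on $\Quot_{U_\alpha}(F|_{U_\alpha},n)$ (via a universal family, resp.~via descent from the smooth chart $\NCQuot^n_r$), and producing a canonical isomorphism between the two superpotential descriptions that is compatible with base change — rather than a pointwise or formal-neighbourhood statement — requires carefully matching the tilting/derived-equivalence data with the quiver presentation of \cite{BR18} at the level of families. A cleaner alternative, which I would attempt first, is to use Joyce--Upmeier / Kinjo-style uniqueness results for $(-1)$-shifted symplectic / d-critical structures: if one can show both obstruction theories arise from the \emph{same} $(-1)$-shifted symplectic structure on the derived Quot scheme (the global one from the $2$-form on the derived mapping stack $\mathrm{Map}(X,-)$, the local one from the derived critical locus of $f_n$), then they agree automatically. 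Either way, this is a statement genuinely finer than anything needed for Theorem \ref{mainthm:projective_toric}, which is why it is left as a conjecture; the honest expectation is that a complete proof needs the derived-geometric machinery rather than the elementary K-theoretic manipulations that sufficed for Proposition \ref{restriction of class in K theory}.
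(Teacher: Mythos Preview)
The statement you are addressing is a \emph{conjecture} in the paper, not a theorem: the authors present it in \S\,8.5 under the heading ``Conjecture: two obstruction theories are the same'' and give no proof. There is therefore no proof in the paper to compare your proposal against. Your write-up correctly recognises this (you say explicitly at the end ``this is why it is left as a conjecture''), and what you have written is not a proof but a sketch of possible strategies---matching the holomorphic Chern--Simons functional with the quiver superpotential via a tilting equivalence, or invoking uniqueness of $(-1)$-shifted symplectic structures on the derived Quot scheme. Both of these are reasonable lines of attack and are consistent with what the paper establishes in Proposition \ref{restriction of class in K theory} (agreement in equivariant K-theory), but neither is carried out; you correctly identify the functoriality of the comparison over the moduli space as the serious obstacle. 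So your proposal is an honest outline of what a proof would require, not a proof, and the paper contains nothing further on this point.
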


One can also ask whether $\iota_{n,\alpha}^\ast \BE$ and $\BE_{\crit}$ are $\BT$-\emph{equivariantly} isomorphic over the cotangent complex of $\Quot_{U_\alpha}(F|_{U_\alpha},n)$. This is of course stronger than the statement of Proposition \ref{restriction of class in K theory}.

\smallbreak
A similar conjecture (essentially the rank $1$ specialisation of Conjecture \ref{conj:pot_restricted}) can be stated for the moduli space $\Hilb^n(\BA^3) = \Quot_{\BA^3}(\OO,n)$, without reference to a compactification $\BA^3 \subset X$. The Hilbert scheme of points has two symmetric perfect obstruction theories: the critical obstruction theory $\BE_{\crit}$ (Proposition \ref{prop:SPOT_A^3}) and the one coming from moduli of ideal sheaves: if $\mathsf p\colon \BA^3 \times \Hilb^n(\BA^3) \to \Hilb^n(\BA^3)$ is the projection and $\mathfrak I$ is the universal ideal sheaf, one has the obstruction theory
\[
\RR \mathsf p_\ast \RRlHom(\mathfrak I,\mathfrak I)_0 [2] \to \BL_{\Hilb^n(\BA^3)}
\]
obtained from the Atiyah class $\At_{\mathfrak I}$ the way we sketched in \eqref{Atiyah_Journey}.

\begin{conjecture}
There is an isomorphism of perfect obstruction theories
\[
\begin{tikzcd}
\BE_{\crit} \arrow{rr}{\sim}\arrow{dr} & & \RR \mathsf p_\ast \RRlHom(\mathfrak I,\mathfrak I)_0 [2]\arrow{dl} \\
& \BL_{\Hilb^n(\BA^3)} &
\end{tikzcd}
\]
on the Hilbert scheme of points $\Hilb^n(\BA^3)$.
\end{conjecture}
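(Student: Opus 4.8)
The plan is to construct an explicit isomorphism between the two symmetric obstruction theories on $\Hilb^n(\BA^3)$ and check its compatibility with the maps to $\BL_{\Hilb^n(\BA^3)}$. Since the target of both obstruction theories is the same truncated cotangent complex, it suffices to produce a quasi-isomorphism $\BE_{\crit}\simto \RR\mathsf p_\ast\RRlHom(\mathfrak I,\mathfrak I)_0[2]$ commuting with the two structure morphisms $\phi$. The natural strategy is to pass through the non-commutative model: recall that $\Hilb^n(\BA^3)=\crit(f_n)\subset\NCHilb^n_1$, and the standard ADHM-type description identifies the universal ideal sheaf $\mathfrak I$ on $\BA^3\times\Hilb^n(\BA^3)$ with the kernel of the universal quotient $\OO\onto V$, where $V$ carries the tautological action of the matrices $A_1,A_2,A_3$. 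Under this identification the complex $\RR\mathsf p_\ast\RRlHom(\mathfrak I,\mathfrak I)_0$ can be computed fibrewise as the (shifted, reduced) Koszul-type complex built from the maps $A_i$ on $\Hom(V,V)$ together with the framing $\Hom(\BC,V)$; this is precisely the deformation complex of the $3$-loop quiver with superpotential $\mathsf W=A_1[A_2,A_3]$, so it should literally be the two-term complex $[T_{\NCHilb^n_1}|_{\Hilb}\xrightarrow{\mathsf{Hess}(f_n)}\Omega_{\NCHilb^n_1}|_{\Hilb}]$, which is $\BE_{\crit}$.

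Concretely I would proceed in the following steps. First, write down the global analogue of the quiver resolution: realize $\RRlHom(\mathfrak I,\mathfrak I)$ on $\BA^3\times\Hilb^n(\BA^3)$ via the $\BT$-equivariant Koszul resolution of the diagonal in $\BA^3\times\BA^3$ (the same device used in \S\,\ref{sec: virtual tangent space} at the level of characters), and push forward along $\mathsf p$; the result is the total complex of $\Hom(V,V)\otimes\Lambda^\bullet\BC^3$, shifted appropriately, with differential given by contraction with the $A_i$. Second, identify the \emph{traceless} part $\RRlHom(\mathfrak I,\mathfrak I)_0$ with the complex obtained by discarding the trivial summand and incorporating the framing term, and check that after the shift $[2]$ this is a two-term complex in degrees $[-1,0]$ with $h^0=T_{\NCHilb}|_{\Hilb}$ and $h^{-1}$ controlled by $\mathsf{Hess}(f_n)$. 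Third, exhibit the comparison map to the cotangent complex: both obstruction theories arise from an Atiyah-class / differential construction, so the equality of structure morphisms should reduce to the statement that the truncated Atiyah class $\At_{\mathfrak I}$ projected onto $\RRlHom(\mathfrak I,\mathfrak I)_0$ equals, after the quiver identification, the class of $(\dd f_n)^\vee$ appearing in diagram \eqref{symmetric_POT_quot}. Finally, invoke the general fact that a map of obstruction theories over $\BL_X$ inducing isomorphisms on $h^0$ and $h^{-1}$ is an isomorphism.

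The main obstacle will be the third step: matching the two \emph{morphisms to} $\BL_{\Hilb^n(\BA^3)}$, not merely the two source complexes. Showing that $\BE_{\crit}$ and $\RR\mathsf p_\ast\RRlHom(\mathfrak I,\mathfrak I)_0[2]$ have the same K-theory class is essentially Remark \ref{rmk:K-class_of_symmetric_pot} (both are symmetric), and identifying them as complexes is a somewhat mechanical, if delicate, Koszul computation; but identifying the \emph{obstruction-theory maps} requires comparing the Atiyah class of $\mathfrak I$ with the Hessian construction, and a priori these are built from different pieces of data (one from the universal family on $\BA^3\times\Hilb$, the other from the function $f_n$ on the smooth ambient $\NCHilb^n_1$). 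I expect the cleanest route is to factor both through the ambient deformation theory of $\NCHilb^n_1$ and use that $f_n$ is $\GL_n$-invariant, so that the descent of $\dd f_n$ along the free $\GL_n$-quotient $U_{1,n}\to\NCHilb^n_1$ matches the trace-free Atiyah class; a careful bookkeeping of the traceless condition (which is what removes the $\GL_n$-direction) should make the two agree. A fully equivariant version of this comparison, as alluded to after Conjecture \ref{conj:pot_restricted}, would additionally require tracking the $\BT$-weights through the Koszul resolution, which is routine once the non-equivariant statement is in place but adds another layer of care. I would not expect serious trouble in the $\BT$-equivariant refinement beyond this bookkeeping, since all the constructions involved — Koszul resolution of the diagonal, Atiyah class, Grothendieck duality along $\mathsf p$ — are naturally equivariant, exactly as used in the proof of Proposition \ref{restriction of class in K theory}.
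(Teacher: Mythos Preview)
The statement you are attempting to prove is stated in the paper as an open \emph{conjecture}, not as a theorem; the paper provides no proof whatsoever. It is introduced with the phrase ``We close this subsection with a couple of conjectures relating the different obstruction theories,'' and is left entirely unproven. So there is no ``paper's own proof'' against which to compare your proposal.

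Your outline is a reasonable sketch of how one might attack the problem, and you correctly identify the crux: matching the two \emph{morphisms} to $\BL_{\Hilb^n(\BA^3)}$, not merely the underlying complexes. Indeed, the paper already establishes the K-theoretic equality (Proposition \ref{restriction of class in K theory} and Remark \ref{rmk:K-class_of_symmetric_pot}), so the entire content of the conjecture lies in upgrading this to an isomorphism of obstruction theories. Your proposed route through the Koszul resolution of the diagonal and the quiver deformation complex is plausible, but your third step --- identifying the projected Atiyah class with $(\dd f_n)^\vee$ --- is precisely the point where a genuine argument is needed and none is supplied. Saying that both ``arise from an Atiyah-class / differential construction'' and that ``a careful bookkeeping \ldots should make the two agree'' is not a proof; it is a restatement of the conjecture. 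If you can carry out that comparison rigorously, you will have proved something the authors left open.
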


\bibliographystyle{amsplain-nodash}
\bibliography{bib}

\end{document}